\documentclass{amsart}
\usepackage{amssymb,amsmath}
\usepackage{graphicx}
\usepackage{enumerate}
\usepackage{empheq}
\usepackage{soul}
\usepackage{float}
\usepackage{colortbl}
\usepackage{cite}
\usepackage{xcolor}
\usepackage{amsrefs}

\usepackage{verbatim}

\usepackage{geometry}
\usepackage{comment}
\usepackage{graphicx,caption,subcaption}
\usepackage{tikz}
\usepackage{multirow}

\usepackage[title,titletoc]{appendix}

\colorlet{linkequation}{blue}
\usepackage[colorlinks,plainpages=true,pdfpagelabels,hypertexnames=true,colorlinks=true,pdfstartview=FitV,linkcolor=blue,citecolor=red,urlcolor=black]{hyperref}
\usepackage{hyperref}

\PassOptionsToPackage{unicode}{hyperref}
\PassOptionsToPackage{naturalnames}{hyperref}

\definecolor{dgreen}{rgb}{0,0.5,0}
\definecolor{violet}{rgb}{0.5,0,0.5}
\definecolor{dred}{rgb}{0.7,0,0}
\definecolor{ddred}{rgb}{0.5,0,0}
\definecolor{dblue}{rgb}{0,0,0.5}
\definecolor{ddblue}{rgb}{0,0,0.3}
\definecolor{llgray}{rgb}{0.9,0.9,0.9}
\definecolor{lgray}{rgb}{0.7,0.7,0.7}

\newtheorem{defn}{Definition}[section]
\newtheorem{definition}[defn]{Definition}
\newtheorem{lemma}[defn]{Lemma}
\newtheorem{proposition}[defn]{Proposition}
\newtheorem{theorem}[defn]{Theorem}
\newtheorem{cor}[defn]{Corollary}
\newtheorem{remark}[defn]{Remark}
\numberwithin{equation}{section}

\newcommand{\ep}{{ \epsilon  }}

\newcommand{\bq}{\begin{equation}}
\newcommand{\eq}{\end{equation}}
\newcommand{\pa}{\partial}
\newcommand{\R}{{ \mathbb{R}  }}

\newcommand{\si}{\sigma}

\newcommand{\bke}[1]{\left( #1 \right)}

\newcommand{\norm}[1]{\left\Vert #1 \right\Vert}

\newcommand{\om}{{ \omega  }}

\newcommand{\na}{\nabla}

\newcommand {\al}{\alpha}
\newcommand {\be}{\beta}

\newcommand{\Rn}{{\mathbb R}^{n-1}}
\newcommand{\De}{\Delta}
\newcommand{\de}{\delta}
\newcommand{\Ga}{\Gamma}

\AtBeginDocument{%
   \def\MR#1{}
}
\nocite{*}

\begin{document}

\title[Flow reversal of the Stokes system in the half space]{Flow reversal of the Stokes system with localized boundary data in the half space}
%Existence of weak solutions for nonlinear diffusion equations with drift for measure data
%
%Existence of weak solutions for nonlinear diffusion equations with measure data and divergence type of drift terms

\author{TongKeun Chang}
\address{TongKeun Chang: Department of
Mathematics, Yonsei University, 50 Yonsei-ro, Seodaemun-gu, Seoul,
South Korea 120-749 } \email{chang7357@yonsei.ac.kr }

\author{KyungKeun Kang}
\address{KyungKeun Kang : Department of
Mathematics, Yonsei University, 50 Yonsei-ro, Seodaemun-gu, Seoul,
South Korea 120-749 } \email{kkang@yonsei.ac.kr }

\author{ChanHong Min}
\address{ChanHong Min : Department of
Mathematics, Yonsei University, 50 Yonsei-ro, Seodaemun-gu, Seoul,
South Korea 120-749 } \email{chopinmin1@yonsei.ac.kr  }

\thanks{
T. Chang's work is supported by RS-2023-00244630. K. Kang's work is supported by NRF grant nos. RS-2024-00336346 and RS-2024-00406821. C. Min's work is supported by RS-2024-00336346, and the National Research Foundation of Korea Grant funded by the Korean Government RS-2023-00212227.}

\date{}

%%%%%%%%%%%%%%%%%%%
\makeatletter
\@namedef{subjclassname@2020}{%
  \textup{2020} Mathematics Subject Classification}
\makeatother
\subjclass[2020]{35K51,	35Q35, 76D07}
%{2020 AMS Subject Classification}\, :\,  35A01, 35K55, 35Q84, 92B05

%{Keywords}\,:\, Porous medium equation, weak solution, Wasserstein space, a bounded domain
\keywords{separation point, flow reversal, Stokes system, half space, Green function}
%\Addresses

%%%%%%%%%%%%%%%%%%%%%%%%%%%%%%%%%%%%%%%%%%%%%%%%%%%%%%%%%

\begin{abstract}
We consider the unsteady Stokes system in the half–space with zero initial data and nonzero, space–time localized boundary data. We show that there exist boundary influxes for which the induced flow exhibits flow reversal, in the sense that at least one component of the velocity field changes its sign in the half–space. This phenomenon is demonstrated by a careful analysis of the representation formula for the Stokes system in the half–space, including pointwise estimates, based on the Green tensor with nonzero boundary data. We construct solutions of the Stokes system such that the tangential components of the velocity field exhibit at least one sign change, while the normal component exhibits at least two sign changes. Moreover, the normal component of the constructed velocity field has the opposite sign to the tangential components near the boundary, whereas it has the same sign as the tangential components sufficiently far from the boundary.
\end{abstract}
\maketitle

\section{Introduction}

In this paper, we study the Stokes system in the half space \(\R_+^n:=\R^{n-1}\times \R_+\) (\(n\geq 2\)) with zero initial data,
\begin{align}\label{StokesRn+}
\begin{split}
& \partial_t w - \De w +  \na p =0,\qquad \mbox{div }\, w =0, \quad \mbox{ in }
\, \R_+^n \times (0,\infty),\\
& \qquad\quad \displaystyle w(x,t=0) =0, \qquad w(x', x_n=0,t) =g(x',t),\,
\end{split}
\end{align}
where  $ w:\R_+^n\times (0,\infty)\rightarrow \R^n$  indicates the velocity of the fluid, $p:\R_+^n\times (0,\infty)\rightarrow \R$ is the associated pressure and \(g:\R^{n-1}\times (0,\infty)\rightarrow \R^{n}\) is the boundary data.

In the present setting, the boundary data $g$ is assumed to be compactly supported in both spatial and temporal domains, and smooth only with respect to the spatial variables, not in temporal variable. In contrast to the classical heat equation, the Stokes system fails to exhibit local regularity near the boundary, primarily due to its nonlocal effect, even in the region that are spatially separated from the support of the boundary data.

We describe recent development on the singular behavior of the Stokes system and provide the motivation of our work. We consider the following setting in a local region near the boundary, specifically in a neighborhood that lies away from the support of the boundary data $g$.
\begin{equation}\label{Stokes-10}
\left\{
\begin{array}{c}
w_t - \De w + \na \pi =0,\\
\mbox{div } w =0,
\end{array}
\right .
\quad Q^+_{1}:=B^+_{1}\times (0, 1),
\end{equation}
where \(Q_r^+=B_r^+\times (0,1)\) with $B^+_{r}:=\{ x=(x', x_n )\in \R^n: |x|<r, x_n >0\}$.
Here no-slip boundary condition is given only on the flat boundary, i.e.
\begin{equation}\label{Stokes-20}
w=0 \quad \mbox{ on } \,\Sigma:=(B_{1}\cap\{x_n=0\})\times (0,1).
\end{equation}
In this situation, in \cite{Kang05}, a solution to the Stokes system in the half space was constructed such that its normal derivatives exhibit singular behavior near the boundary.
To be more specific, an example of the Stokes system
\eqref{Stokes-10}-\eqref{Stokes-20} was constructed such that
$\norm{\partial_{x_3}w}_{L^{\infty}(Q_{1/2}^+)}=\infty$,
although
$\norm{w}_{H^1(Q_1^+)}<\infty$ (in fact, $\norm{w}_{L^{\infty}(Q_1^+)}<\infty$).

Furthermore, one notable feature is that Caccioppoli’s inequality fails to hold near the boundary, as demonstrated in \cite{CK20}, despite its validity in the interior (see \cites{J,Wolf}; see also \cite{Jin-Kang17} for the case of non-Newtonian flows). 
Namely, in general, the inequality, $\norm{\nabla w}_{L^2(Q^+_{1/2})}\leq C\norm{w}_{L^2(Q^+_1)}$, does not hold for solutions to the Stokes system \eqref{Stokes-10}–\eqref{Stokes-20}. Similarly, the Caccioppoli-type inequality, $\norm{\nabla u}^2_{L^2(Q^+_{1/2})}\le
C\bke{\norm{u}^2_{L^2(Q^+_1)}+\norm{u}^3_{L^3(Q^+_1)}}$  is, in general, not valid for the Navier–Stokes equations either (see \cite{CK20}).
%\begin{equation*}\label{Stokes-555}
%\norm{\nabla w}_{L^2(Q^+_{\frac{1}{2}})}\le
%c\norm{w}_{L^2(Q^+_1)}.
%\end{equation*}

A number of contributions have further advanced the understanding of the regularity for the Stokes system near the boundary by constructing solutions whose normal derivatives blow up near the boundary (see \cites{CK24,KM,KangTsai22, KangTsai22FE, CCK18}).
A similar construction has been carried out for finite energy solutions to the Stokes and Navier–Stokes equations with homogeneous initial and boundary data, in the presence of non-smooth, spatially localized external forces, leading to a singular behavior of the solution near the boundary (see \cite{CK26}).

In the recent work \cite{CK25}, explicit examples were constructed in which the velocity field itself as well as its normal derivatives becomes unbounded near the boundary, i.e., $\norm{w}_{L^{\infty}(Q_{1/2}^+)}=\infty$, even away from the support of the boundary data, while both the velocity and its gradient remain locally square integrable.

One observation from our computations is that singularities  of the solution induced by some singular data may develop near the boundary in regions where the velocity vector—i.e., the fluid flow direction—undergoes a reversal. To the best of our knowledge, experimental evidence strongly suggests that such flow reversal is closely associated with the onset of boundary layer separation in the vicinity of the boundary (see \cites{Dassrinivasan, klemp, pedley}).

More recently, although relatively limited in number, there have been rigorous analytical efforts to describe boundary layer separation through asymptotic models that approximate the Navier–Stokes equations near the boundary, such as the Prandtl equation and the triple-deck model.
The Prandtl equation in two dimensions, derived by Prandtl \cite{Prandtl} in 1905 is given as follows.
\begin{align}\label{prandtl}
\begin{cases}
\partial_t u+u\partial_x u+v\partial_y v-\partial_y^2 u=-\partial_x p_E,\\
\partial_x u+\partial_y v=0,%&(x,y,t)\in \R_+^2\times \R_+,
\\
u|_{t=0}=u_0,\quad u|_{y=0}=0,\quad \displaystyle\lim_{y\rightarrow \infty}u(x,y,t)=u_E(x,t),
\end{cases}
\end{align}
where \((u_E, p_E)\) are related by Bernoulli condition
$\partial_t u_E + u_E\partial_x u_E=-\partial_x p_E$.

For the steady state case, it was shown in \cite{Oleinik} that the Prandtl equation is globally well-posed for all $x>0$ under the non-positive condition of pressure gradient, that is \(\frac{dp_E(x)}{dx} \leq 0\). In contrast, under the adverse pressure gradient condition \(\frac{dp_E(x)}{dx} > 0\), \cite{Oleinik} further established only local-wellposedness in the sense that there is a point \(x^*>0\)  beyond which classical solutions to \eqref{prandtl} cease to exist.
In addition, experiments indicate that, in this adverse-pressure-gradient regime, the normal derivative of the tangential velocity at the boundary vanishes as \(x \nearrow x^{*}\), which is referred as the \emph{Goldstein singularity} (see \cite{Goldstein}). This behavior was rigorously justified in the seminal work of Dalibard and Masmoudi \cite{Dali-Mas}, where the rate of vanishing was also quantitatively characterized (see also \cite{SWZ}).
There have been only a few recent developments regarding the description of solutions of the steady Prandtl equation \emph{beyond} the Goldstein singularity. In particular, \cites{masreversal,dalireversal} studied the stability near explicit solutions of the steady Prandtl equation that exhibit a change in the sign of \(u\) (flow reversal), known as \emph{Falkner--Skan solutions}.

For the unsteady case, based on numerical evidence in \cites{vanshen1,vanshen2}, it was conjectured that a necessary condition for separation at \(t_0\) is the blow-up of \(u\) as \(t \to t_0\). This conjecture was rigorously confirmed in \cite{eng} for the trivial outer flow under a symmetry assumption.  For nontrivial Euler outer flow, \cite{kvw} proved finite-time blow-up for the outer Euler flow given in \cite{vanshen1} by establishing the blow-up of a Lyapunov functional related to the \emph{displacement thickness}, using a convexity argument (see also \cites{unsteadyinvisicid,unsteadyviscous} for further developments).

Very recently, the triple-deck model was suggested in hope to provide more accurate description of the flow near the separation point than the Prandtl system. A significant progress has been made in the mathematical analysis of the model, particularly concerning its local well-posedness and ill-posedness properties (see \cites{Iyervicol, iyermaekawa, Dietertvaret, Varetiyer} and references therein).

%In spite of the considerable advances made so far, the complete mathematical characterization of the separation point and the associated flow reversal still remains largely open, to the best of the authors' knowledge.
Motivated by the consideration of the separation point and the reversal flow, the primary objective of this paper is to construct a solution to the Stokes system, the linearization of the Navier–Stokes equations, so that its the velocity components admit a sign change. This study aims to provide a deeper understanding of the mechanisms underlying flow reversal and its connection to singular behavior near the boundary, with potential implications for the phenomenon of boundary layer separation.

Compared to the Navier–Stokes equations, a key advantage of the Stokes system is that there exists an explicit representation formula for solutions with general initial and boundary data via the Green tensor. This representation was originally developed by Golovkin \cite{Golovkin} and Solonnikov \cites{So,So2} (see also \cite{KangTsai22} for the case of the unrestricted Green tensor). As a result, the Stokes system admits a detailed pointwise analysis of the flow, including in regions near the boundary.

\subsection{Main results}
In light of the preceding discussion and existing results, we now introduce suitable definitions of  the {\it separation point} and the {\it reversal point} in a form suitable for our analysis.

We begin by introducing the notion of a separation point, which refers to a boundary point at a specific time, where the tangential component of the velocity changes sign in a neighborhood, either immediately before or after that time. More precisely, it is defined as follows.

\begin{definition}\label{defsp}
Let \(w\) be the solution of the Stokes system and \(w_i\) be the \(i\)-th tangential component of \(w\). A point \(z^*=(x'_*, 0 ,t_*)\in \mathbb{R}_+^{n}\times \mathbb{R}_+\) is called a {\bf separation point} of \(w=(w_1,\cdots, w_n)\) if there exist \(i\in\{1,\cdots, n-1\}\), positive constants \(\delta, \delta_1,\delta_2\), a non-increasing function \(\alpha_i: (t_*-\delta_1, t_*)\rightarrow \mathbb{R}_{+}\) and a non-decreasing function \(\beta_{ik}:(t_*, t_*+\delta_2)\rightarrow (0,\delta)\) (\(k=1,2)\) satisfying \(\beta_{i1}(t)\leq \beta_{i2}(t)\) such that
\begin{itemize}
\item[(i)] \(w_i(x_*', x_n,t)>0\) (\(<0\), resp.) for all \(0<x_n<\alpha_i(t)\) for each \(t\in (t_*-\delta_1, t_*)\).

\item[(ii)] \(w_i(x_*', x_n,t)<0\) (\(>0\), resp.) for all \(0<x_n<\beta_{i1}(t)\) for each \(t\in (t_*, t_*+\delta_2)\).

\item[(iii)] \(w_i(x_*',x_n,t)>0\) (\(<0\), resp.) for all \(\beta_{i2}(t)<x_n<\delta\) for each \(t\in(t_*, t_*+\delta_2)\).

\item[(iv)] \(\displaystyle\lim_{t\rightarrow t_*^+}\beta_{i2}(t)=0\).
\end{itemize}
\end{definition}
The picture below(Figure \ref{fig:figure 1}) is an example showing the conditions in Definition \ref{defsp}.
\begin{figure}[H]
\centering
  \includegraphics[width=0.5\textwidth]{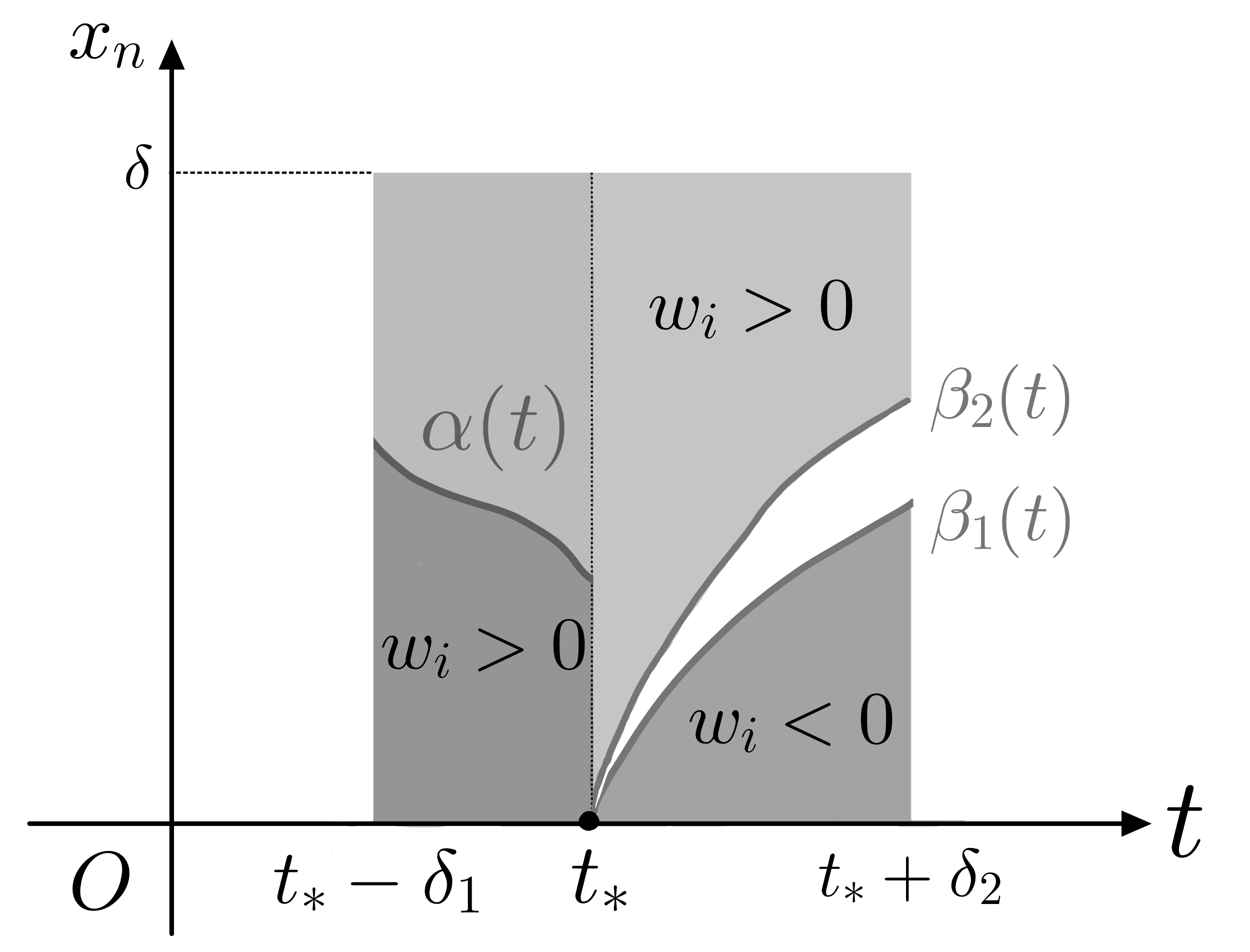}
  \caption{\label{fig:figure 1} Separation point}
\end{figure}
\begin{remark}
{\color{blue} The region \(\{(x,t) : t_*<t<t_*+\delta_2, 0<x_n<\beta_1(t)\}\) represents the reversed flow. Note that we do not impose sign change for \(t_*-\delta_1<t<t_*\) and \(\alpha(t)\) does not need to vanish as \(t\rightarrow t_*\).}
\end{remark}
\begin{remark}\label{rmkdefsp}
We remark that Definition~\ref{defsp}, while possibly known to experts, does not appear to be explicitly stated in the literature. It may be viewed as a geometric characterization of the flow, formulated in terms of the sign of the tangential velocity component near the boundary. In particular, the description of the flow behavior leading up to the separation point appears to be consistent with previous experimental observations (see \cites{Sears, Burgmann, Tani}) as well as with the definition proposed in~\cite{Wang}. To the best of the authors’ knowledge, however, the corresponding description of the flow beyond the separation point has not been previously formulated in this manner and thus seems to be new (see e.g. \cites{matsui, Wang} for different definitions of the separation point).
\end{remark}

We next introduce the notion of a reversal point, which refers to an interior point at a fixed time, where a component of the velocity changes sign with respect to the normal variable over a temporal neighborhood of that time. More precisely, it is defined as follows.

\begin{defn}\label{reversalpoint}
Let \(w\) be the solution of the Stokes system \eqref{StokesRn+} defined by \eqref{rep-bvp-stokes-w}. For \(1\leq i\leq n\), a point \(z^*=(x'_*, x_n^* ,t_*)\in \mathbb{R}^{n-1}\times\mathbb{R} \times \mathbb{R}_+\) is a {\bf reversal point (in the interior)} of \(w_i\) if there exist \(\delta_1, \delta_2>0\) and functions \(f:(t_*-\delta_1, t_*+\delta_1)\rightarrow (x_n^*-\delta_2, x_n^*+\delta_2)\),  \(g:(t_*-\delta_1, t_*+\delta_1)\rightarrow (x_n^*-\delta_2, x_n^*+\delta_2)\) and \(h:(t_*-\delta_1,t_*+\delta_1)\rightarrow (x_n^*-\delta_2, x_n^*+\delta_2)\) such that \(h(t_*)=x_n^*\) and for all \(t\in (t_*-\delta_1, t_*+\delta_1)\), \[w_i(x_*', h(t),t)=0,\quad 
f(t)<h(t)<g(t) \quad \text{and} \quad w_i(x_*',f(t),t)w_i(x_*',g(t),t)<0.
\]
\end{defn}

\begin{remark}
Similar to Remark \ref{rmkdefsp}, Definition \ref{reversalpoint} offers a geometric description of the flow in terms of the signs of the velocity component. This definition may be regarded as the unsteady analogue of the reversal point described in the recent works \cite{masreversal} and \cite{dalireversal}.
\end{remark}

Before we state our main results, we specify a boundary data $g:\mathbb R^{n-1}\times \mathbb
R_+\rightarrow  \R$ (\(n\geq 2\)) with only non-zero $j-$th component defined
as follows:
%\begin{align}\label{0502-6}
%g(y', s) = g_j (y',s)\textbf{e}_j= \psi (y')\phi(s)\textbf{e}_j,\qquad j=1,2,\cdots, n.
%\end{align}
\begin{align}\label{0502-6}
g(y', s) = g_j (y',s)\textbf{e}_n= \psi (y')\phi(s)\textbf{e}_n.
\end{align}
Here,  $\psi \geq 0$ and $\phi \geq 0$ are assumed to satisfy
%{\footnote{KK:{\color{blue} Notation convension: how about $g^1_n$ and $g^2_n$ as $g^s_n$ and $g^t_n$? }}}
\begin{align}\label{boundarydata}
\psi \in C^\infty_c (B'_\frac12), \quad
 {\rm supp} \,\, \phi \subset (0,1), \quad   \phi   \in L^1({\mathbb R}) \cap C^\infty([0, 1)).
\end{align}
In particular, for \(s\in \left(\frac{1}{2},1\right)\), we assume that $\phi (s)$ is given by
 \begin{align}\label{boundarydataspecific}
     \phi(s) = ( 1-s)^a,\qquad -1<a\leq a_0,
 \end{align}
where \(a_0\) is a positive number which we take it as \(1\) without loss of generality throughout the paper.

Finally we denote \(M\) the following quantity
\begin{align}\label{1216-10}
M=\int_0^{\frac12}\phi(s) ds.
\end{align}
 
Our first main result deals with the existence of the separation point the Stokes system in the half space \eqref{StokesRn+} with zero initial data and the boundary data given by \eqref{0502-6}-\eqref{boundarydataspecific}.

\begin{theorem}[\bf Existence of separation point]\label{thmseparationpoint}
Let \(-1<a\leq -\frac12\) and \(w\) be the solution of the Stokes system \eqref{StokesRn+} defined by \eqref{rep-bvp-stokes-w} with the boundary data \(g=g_n{\bf e}_n\) given by \eqref{0502-6}-\eqref{boundarydataspecific}. There exists a sufficiently large constant \(N>1\)  such that if  \(|x'|>N\), then \((x',0,1)\) is a separation point of \(w_i\) for \(1\leq i\leq n-1\).
%\begin{itemize}
%	\item[(i)] If \(-1<a\leq -\frac12\), then \((x',0,1)\) is a separation point of \(w_i\) for \(1\leq i\leq n-1\).
%	{\color{red}\item If \(a>0\), then for \(|x'|\) sufficiently large, there exists \(t_*\in (0,1)\) such that \((x',0,t_*)\) is a separation point of \(w_i\) for \(1\leq i\leq n-1\).}
%\end{itemize}
\end{theorem}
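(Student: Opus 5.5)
Since the boundary datum is normal, $g=\psi(y')\phi(s){\bf e}_n$, I would use the Golovkin--Solonnikov representation \eqref{rep-bvp-stokes-w}. For $1\le i\le n-1$ I would split the tangential component as
\[
w_i(x,t)=\int_0^t\int_{\R^{n-1}} K_{in}(x-y',t-s)\,\psi(y')\phi(s)\,dy'ds .
\]
The Poisson-type kernel $K_{in}$ has two parts: a harmonic (pressure-induced) part $\sim \pa_i E(x-y')\,\delta(t-s)$, and a caloric part involving $\pa_i\pa_n\int \Gamma(x-y'-z,t-s)E(z)\,dz$-type terms together with $\pa_n\Gamma$ terms. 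For $|x'|>N$ the point is far from $\mathrm{supp}\,\psi\subset B'_{1/2}$. I would therefore Taylor expand in $y'$ and replace $\psi(y')$ by $\|\psi\|_{L^1}\delta_0$, with the error controlled by an extra factor $|x'|^{-1}$. This reduces $w_i$ to an explicit singular kernel convolved in time with $\phi$, plus small errors.

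The first term is instantaneous, $c\,\phi(t)\,x_i/|x|^{n}$, with a definite sign (say positive along $x_i>0$). The caloric part gives a boundary-layer term of the form
\[
\int_0^t \phi(s)\,\frac{x_i}{|x|^{n}}\,F\!\left(\frac{x_n}{\sqrt{t-s}}\right)ds ,
\]
where $F(0)$ exactly cancels the instantaneous term at $x_n=0$, consistent with the no-slip condition $w_i=0$. This cancellation is what allows a sign change inside a layer of thickness $\sqrt{t-s}$.

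The key steps are as follows.
\begin{enumerate}
\item Derive pointwise asymptotics, for $|x'|>N$ and $x_n$ small, of the form
\[
w_i(x,t)\approx c\,\frac{x_i}{|x'|^n}\Big[\phi(t)-\int_0^t\phi(s)\,G\!\left(\tfrac{x_n^2}{t-s}\right)\frac{ds}{?}\Big],
\]
with $G$ explicit. In practice I would extract the small-$x_n$ behaviour $w_i\approx x_n\,\pa_n w_i(x',0,t)$. The sign of the wall shear $\pa_n w_i(x',0,t)$ is then governed by a fractional-derivative-like expression $\int_0^t(\phi(t)-\phi(s))(t-s)^{-3/2}\,ds$, plus the contribution $\phi(t)/\sqrt t$, plus a far-field term from $M$.
\item For $t<1$ close to $1$, $\phi(t)=(1-t)^a$ with $a\le-\tfrac12$ blows up. The shear then has a fixed sign of size $(1-t)^{a-1/2}$. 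This gives (i), with $\alpha_i(t)$ comparable to $\sqrt{1-t}$, or to a constant obtained by comparing with the interior, where $w_i$ is dominated by the same sign.
\item For $t>1$ we have $\phi(t)=0$, and $w_i$ is a pure memory term $\int_{1/2}^1(1-s)^a\,\cdot\,ds$. Near the wall the shear is
\[
-\int (1-s)^a (t-s)^{-3/2}\,ds\sim -(t-1)^{a-1/2},
\]
which is divergent with the opposite sign precisely because $a\le-\tfrac12$. Away from the layer, for $x_n\gg\sqrt{t-1}$, the smoother part of order $M$ and the term $\int(1-s)^a\,ds$ retain the original sign. This yields $\beta_{i1}(t)\sim c_1\sqrt{t-1}$ and $\beta_{i2}(t)\sim c_2\sqrt{t-1}$, so (iv) follows, together with the sign in (iii) up to a fixed $\delta$.
\end{enumerate}
I would make the $\beta$'s monotone by defining them via self-similar variables $x_n/\sqrt{t-1}$.

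The main obstacle is step 3: proving strict sign control uniformly in the intermediate region $x_n\sim\sqrt{t-1}$ up to $\delta$. This requires showing that the error terms (the Taylor remainder in $y'$, the contributions from $s\in(0,\tfrac12)$ weighted by $M$, and the higher-order kernel pieces) are dominated by the main self-similar profile. I would first try to isolate the leading part as $(t-1)^{a+1/2}H(x_n/\sqrt{t-1})$ with an explicit $H$ that has exactly one sign change. I would then choose $N$ large and $\delta_2,\delta$ small so that the remainders, which are bounded by $C|x'|^{-n}$ uniformly, are less than the profile's size outside small neighbourhoods of the zero of $H$. The condition $a\le-\tfrac12$ is exactly what makes the profile's amplitude $(t-1)^{a+1/2}$ not vanish, so that it dominates these bounded errors.
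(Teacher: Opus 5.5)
There is a genuine gap in your Step~3. For $t>1$ the sign change does not take place on the self-similar scale $x_n\sim\sqrt{t-1}$. So no profile $(t-1)^{a+\frac12}H(x_n/\sqrt{t-1})$ with a single zero exists, and with $\beta_{ik}(t)\sim c_k\sqrt{t-1}$ condition (iii) of Definition~\ref{defsp} is false.

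For $t>1$ one has $w_i=w^{(L)}_{in}+w^B_i$. The negative part is
$w^B_i\approx-\frac{x_ix_n}{|x'|^n}\int_{1/2}^1(1-s)^a(t-s)^{-3/2}e^{-x_n^2/(4(t-s))}\,ds$,
and it is not confined to a layer of width $\sqrt{t-1}$. The flux acted for all $s\in(\frac12,1)$ with the singular weight $(1-s)^a$. Hence for $\sqrt{t-1}\ll x_n\ll 1$ this term only decays algebraically, $w^B_i\approx -x_ix_n^{2a}|x'|^{-n}$. Its $M$-part is negative as well, so your claim that the order-$M$ part ``retains the original sign'' also fails.

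The positive part comes from $\widetilde L_{in}$, whose kernel $x_ix_n|x|^{-n-2}$ carries an extra factor $|x'|^{-2}$. There $w^{(L)}_{in}\approx x_ix_n|x'|^{-n-2}(t-1)^{a+\frac12}$, with logarithmic modifications near and at $a=-\frac12$. In the variable $\xi=x_n/\sqrt{t-1}$ the two terms are of size $|x'|^{1-n}(t-1)^a\xi^{2a}$ and $|x'|^{-1-n}(t-1)^{a+1}\xi$. Their ratio is $\approx\xi^{1-2a}(t-1)/|x'|^2$, so for every fixed $\xi$ the negative term wins as $t\to1^+$.

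The true zero, which the paper uses for $\beta_{i1}\le\beta_{i2}$ via Proposition~\ref{1stmainprop}, sits at $x_n\approx|x'|^{\frac{2}{1-2a}}(t-1)^{\frac{a+1/2}{2a-1}}$ for $a<-\frac12$. At $a=-\frac12$ it sits at $x_n\approx|x'|\bigl(\ln\frac{|x'|^2}{t-1}\bigr)^{-1/2}$. It still tends to $0$, but much more slowly than $\sqrt{t-1}$, and $w_i<0$ on the whole range between $\sqrt{t-1}$ and this zero.

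This also misplaces where the hypothesis enters. The divergence $-(t-1)^{a-\frac12}$ of the wall shear holds for every $a<\frac12$, so it is not ``precisely because $a\le-\frac12$''. What $a\le-\frac12$ gives is the divergence of $\int_{1/2}^1(1-s)^a(t-s)^{-1/2}\,ds\approx(t-1)^{a+\frac12}$ (logarithmic at $a=-\frac12$), the amplitude of the positive term. This amplitude has to beat the leading negative competitor $|x'|^2x_n^{2a-1}$, not merely bounded remainders, and its blow-up is what drags the zero to the wall. For $a>-\frac12$ the amplitude stays bounded and the zero stays at height $\approx(a+\frac12)^{1/2}|x'|$ or $(\ln|x'|)^{1/2}$, which is why (iv) fails there.

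To repair Step~3 you need the following, as in the paper:
\begin{itemize}
\item two-sided bounds of $w_i$ on $\sqrt{2(t-1)}\le x_n<\frac12$ by comparison functions of the form $\frac{x_ix_n}{|x'|^{n+2}}\bigl(c_1\frac{(t-1)^{a+1/2}}{a+1}-c_2|x'|^2x_n^{2a-1}\bigr)$, with logarithmic analogues as $a\to-\frac12$;
\item the negative sign of $w_i$ for $x_n<\sqrt{2(t-1)}$;
\item the positive sign of $w_i$ on $\frac12\le x_n\le|x'|$.
\end{itemize}
Then take $\beta_{i1}$ and $\beta_{i2}$ to be the zeros of the two comparison functions. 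Your Step~2 for $t<1$ is consistent with the paper, which simply takes $\alpha_i\equiv\frac12$.
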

\begin{remark}
\begin{itemize}
\item[(i)] In case that \(-\frac12<a<0\), we note that \((x',0,1)\) is not a separation point of \(w\), since the last condition in Definition \ref{defsp} is not satisfied. In fact, it 
turns out that \(\displaystyle \lim_{t\rightarrow 1^+}\beta_{i2}(t)>0\) (see (ii)-(d), (e) of Proposition \ref{1stmainprop},  for more details). 

\item[(ii)] When \(a>0\), we remark that there exists sufficiently small \(\delta>0\) such that \(w_i(x,t)<0\) for \(x_n\ll 1\) and \(t\in (1-\delta, 1+\delta)\), which implies that \((x',0,1)\) is still not a separation point of \(w\)  (see (ii)-(e) of Proposition \ref{1stmainprop} and (2) of Proposition \ref{3rdmainprop} for more details). It can be shown that there exists \(\phi(t)\) such that \(w_i(x,t)>0\) for \(t\ll 1\) with \(x_n<\sqrt{t}\), and thus we conjecture that there is \(t_*\in (0,1)\) such that \((x', 0, t_*)\) is a separation point of \(w\). 

% If \(a>0\), \((x',0,1)\) is still not a separation point of \(w\) since \(w_i(x,t)<0\) for \(x_n\ll 1\) and \(t\in (1-\delta, 1+\delta)\) for sufficiently small \(\delta>0\). It can be shown that there exists \(\phi(t)\) such that \(w_i(x,t)>0\) for \(t\ll 1\) with \(x_n<\sqrt{t}\). Hence we believe that there exists \(0<t_*<1\) such that \((x', 0, t_*)\) is a separation point of \(w\). It is an interesting open problem whether if \(t_*\) depends on \(|x'|\).
\end{itemize}
\end{remark}

To state the second main result, we denote \(x_{kn}^*=x_{kn}^*(x',t)\) the zero point of \(w_k\), i.e. \(w_k(x',x_{kn}^*,t)=0\).
The following theorem is about the asymptotics of \(x_{kn}^*\) near the time $t=1$.

\begin{theorem}[\bf Asymptotics of \(x_{kn}^*\)]\label{thmasymptotic}
Let \(w\) be the solution of the Stokes system \eqref{StokesRn+} defined by \eqref{rep-bvp-stokes-w} with the boundary data \(g=g_n\mathbf{e}_n\) given by \eqref{0502-6}-\eqref{boundarydataspecific}.  There exist \(N>1\) sufficiently large and \(\delta=\delta(|x'|)>0\) sufficiently small such that for \(|x'|>N\) and \(0<|t-1|<\delta\), there are positive constants \(c_i\) (\(i=1,\cdots, 8\)) and \(d_1, d_2\) depending only on \(n\) such that the following holds: for $i\neq n$,
\begin{align*}
x_{ni}^*&\approx \left[\left(\left\{(a+1)^{\frac{1}{1-2a}}|x'|^{\frac{2}{1-2a}}(t-1)^{\frac{a+\frac12}{2a-1}}\mathbf{1}_{|x'|^2 \geq \frac{c_1}{|a+\frac12|}}+\left|a+\frac12\right|^{\frac{1}{1-2a}}|x'|^{\frac{2}{1-2a}}(t-1)^{\frac{a+\frac12}{2a-1}}\mathbf{1}_{|x'|^2\leq \frac{c_2}{|a+\frac12|}}\right\}\mathbf{1}_{-1<a<-\frac12}\right.\right.\\
&\quad\left.+|x'|\left(\ln \frac{|x'|^2}{t-1}\right)^{-\frac12}\mathbf{1}_{a=-\frac12}+\left\{(a+\frac12)^{\frac12}|x'|\mathbf{1}_{|x'|^2\leq \frac{c_3}{a+\frac12}}  +   (\ln |x'|)^{\frac12}\mathbf{1}_{|x'|^2\geq \frac{c_4}{a+\frac12}}\right\}\mathbf{1}_{a>-\frac12}\right)\mathbf{1}_{1<t<1+\delta}\\
&\quad +\left.(\ln |x'|)^{\frac12}\mathbf{1}_{a>0}\mathbf{1}_{1-\delta<t<1}\right]\mathbf{1}_{x_n\leq |x'|},\\
x_{nn}^* &\approx |x'|\mathbf{1}_{x_n> d_2|x'|}
+ \Big[
(a+1)(t-1)^{\frac12-a}e^{-|x'|^2}\mathbf{1}_{1<t<1+\delta}\mathbf{1}_{-1<a<0}
+ \mathbf{1}_{a=\frac12} \\
&\quad
+ e^{-\frac{2a+1}{2a}|x'|^2}
\Big(
\{ \mathbf{1}_{|x'|\geq \frac{c_5}{\frac12-a}}
+ (1-2a)^{\frac{1}{2a}}\mathbf{1}_{|x'|^2\leq \frac{c_6}{\frac12-a}} \}
\mathbf{1}_{0<a<\frac12} \\
&\quad\quad
+ \{(2a-1)\mathbf{1}_{|x'|^2<\frac{c_7}{a-\frac12}}
+ \mathbf{1}_{|x'|\geq \frac{c_8}{a-\frac12}}\}
\mathbf{1}_{\frac12<a<1}
\Big)
\Big]\mathbf{1}_{x_n\leq d_1|x'|}.
\end{align*}

\end{theorem}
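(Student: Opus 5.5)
The plan is to reduce both asymptotic statements to the leading-order expansions of $w_i$ and $w_n$ in the far-field regime $|x'|>N$, $|t-1|<\delta$. These expansions come from the Green tensor representation \eqref{rep-bvp-stokes-w} and are recorded in Proposition \ref{1stmainprop} (tangential components) and Proposition \ref{3rdmainprop} (normal component). The zero $x^*_{kn}$ is then located by balancing the two competing terms of opposite sign in each expansion.

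Since $g=\psi\phi\,\mathbf e_n$ and $\psi$ is supported in $B'_{1/2}$, for $|x'|\gg1$ I would Taylor expand the boundary Poisson kernel $K_{kn}(x-y',t-s)$ in $y'$ around $y'=0$. The leading contribution is $\int_0^t K_{kn}(x,t-s)\phi(s)\,ds$, with errors of relative size $O(|x'|^{-1})$.

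\textbf{Decomposition of the kernel.} I would split $K_{kn}$ into two parts. The first is the heat-type part $\partial_{x_n}\Gamma(x,t-s)$ times $\delta_{kn}$, together with the $x_n$-derivative terms of the combined heat/Laplace kernel. The second is the nonlocal pressure part, which behaves like $x_k x_n/|x|^{n+2}$ (and like $\frac{1}{|x|^{n}}-\frac{n x_n^2}{|x|^{n+2}}$ for $k=n$). The time integral is split as $s\in(0,\frac12)$, which produces a term proportional to $M$ in \eqref{1216-10}, plus $s\in(\frac12,t\wedge 1)$, where $\phi(s)=(1-s)^a$.

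For the near-singular window with $t-1=\tau>0$ small, the substitution $s=1-\sigma$ gives
\[
\int_0^{1/2}\sigma^a(\tau+\sigma)^{-\frac{n}{2}-1}x_n e^{-\frac{|x|^2}{4(\tau+\sigma)}}\,d\sigma .
\]
Its behaviour depends on $a$ relative to $-\frac12$: the regime $a<-\frac12$ gives a power $\tau^{a+\frac12}$ (up to the exponential factor in $|x'|^2$), $a=-\frac12$ gives a logarithm $\ln\frac{|x'|^2}{\tau}$, and $a>-\frac12$ gives a bounded contribution. These cases produce the indicator splits in the statement, and the thresholds $|x'|^2\gtrless c/|a+\frac12|$ arise from comparing $|x'|^2$ with the constant produced by the Beta-type integral, which is of size $(a+\frac12)^{-1}$ or $(a+1)^{-1}$.

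\textbf{Locating the zeros.} For $w_i$ with $i<n$, I would write
\[
w_i\approx A(x',t)\,x_n\,F_1-B\,x_i x_n|x|^{-n-2}.
\]
Here $F_1$ is the heat-type factor: it is of size $\tau^{a+\frac12}$, or $\ln$, or $O(1)$, times Gaussian terms. Solving $w_i=0$ in the region $x_n\le|x'|$ then gives the stated power law $|x'|^{\frac2{1-2a}}\tau^{\frac{a+1/2}{2a-1}}$, the logarithmic law for $a=-\frac12$, and $(\ln|x'|)^{1/2}$ when the Gaussian $e^{-x_n^2/4}$ must balance a polynomial in $|x'|$. The same case appears for $t<1$, $a>0$, by Proposition \ref{3rdmainprop}(2).

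For $w_n$ I would argue the same way. Near the boundary, the balance of the $\tau^{\frac12-a}$-type term against $e^{-|x'|^2}$ gives the first bracket, and the cases $0<a<\frac12$, $a=\frac12$ and $\frac12<a<1$ give $e^{-\frac{2a+1}{2a}|x'|^2}$. The second zero at $x_n\approx|x'|$ comes from the sign change of $|x|^2-nx_n^2$ in the pressure-type kernel. To confirm that each zero is unique in its region, I would use monotonicity in $x_n$ of the ratio of the two terms, so that the intermediate value theorem gives a unique root, together with the two-sided bounds from the propositions, which turn the balance into $\approx$.

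\textbf{Main obstacle.} I expect the hardest step to be making the error terms uniformly smaller than both balancing terms across all transition regimes, especially near $a=\pm\frac12$, where the constants $(a+\frac12)^{-1}$ and $(a-\frac12)^{-1}$ blow up and the thresholds $c_j/|a\pm\frac12|$ appear. My first attempt would be to compute the $\sigma$-integral explicitly via incomplete Gamma functions and to use their two-sided asymptotics, $\Gamma(b,z)\approx z^{b-1}e^{-z}$ for large $z$ and $\approx|b|^{-1}$ for small $z$. I would then choose $\delta(|x'|)$ small enough that the Taylor remainder in $y'$ and the contribution from $s<\frac12$ are dominated.
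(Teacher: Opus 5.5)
Your overall reduction is the paper's route: two-sided bounds for $w_i,w_n$, followed by balancing two competing terms. The theorem is just Propositions \ref{1stmainprop}--\ref{4thmainprop} combined, with the constants $c_j$ coming from moving the case-split values of $a$ (Remark \ref{1217} and its analogue after Proposition \ref{2ndmainprop}). However, Proposition \ref{3rdmainprop} concerns $w_i$ for $t<1$, not the normal component. The normal component is Propositions \ref{2ndmainprop} and \ref{4thmainprop}, which you never invoke.

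The more serious problem is that your independent derivation does not produce the stated laws. The integral you single out, $\int_0^{1/2}\sigma^a(\tau+\sigma)^{-\frac n2-1}x_ne^{-\frac{|x|^2}{4(\tau+\sigma)}}d\sigma$, is exactly $x_n\mathcal{H}_{a,\frac{n+2}{2}}(|x|,t)$. By Lemma \ref{elementarylemma2}(ii)(c) it is $\approx x_n(|x|^{-2}e^{-|x|^2}+\frac{\tau^{a-n/2}}{a+1}e^{-c|x|^2/\tau})$. This is exponentially small in $|x'|^2$ and has no $\tau^{a+\frac12}$, no logarithm and no threshold at $a=-\frac12$.

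In the paper those features come from $\mathcal{G}_{a,\frac12}(x_n,t)$ (Lemma \ref{elementarylemma}), the time integral of the factor $(t-s)^{-1/2}\min\{1,\frac{x_n^2}{t-s}\}$ in $\widetilde L_{in}$ (Lemma \ref{estimateofkernelt<1}). It multiplies the \emph{positive}, polynomially decaying term $\frac{x_ix_n}{|x|^{n+2}}$. It is balanced against the \emph{negative} term $-\frac{x_ix_n}{|x'|^n}\mathcal{H}_{a,\frac32}(x_n,t)$ from $w_i^B$. The $x_n$-dependence of that term ($\approx x_n^{2a-1}$ on $\sqrt{2(t-1)}<x_n<\frac12$ for $a<0$, and $\approx e^{-x_n^2}$ for $x_n>\frac12$) is what pins down the root:
\begin{itemize}
\item $\frac{(t-1)^{a+1/2}}{a+1}\approx|x'|^2x_n^{2a-1}$ gives $|x'|^{\frac2{1-2a}}(t-1)^{\frac{a+1/2}{2a-1}}$;
\item $x_n^2\ln\frac{x_n^2}{t-1}\approx|x'|^2$ gives $|x'|(\ln\frac{|x'|^2}{t-1})^{-1/2}$ via Lemma \ref{lemma0630};
\item $|x'|^2e^{-x_n^2}\approx 1$ gives $(\ln|x'|)^{1/2}$.
\end{itemize}
With your assignment of roles ($\tau^{a+1/2}$ carried by a heat factor times a Gaussian in $|x'|$, against $x_ix_n|x|^{-n-2}$), the factor $x_n$ cancels. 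What remains is essentially independent of $x_n$ on $x_n\le|x'|$, where $|x|\approx|x'|$, so it cannot locate a root at all.

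Your error control also fails as stated. Expanding in $y'$ gives no relative error $O(|x'|^{-1})$ on Gaussian factors: on ${\rm supp}\,\psi$ the ratio $e^{-|x'-y'|^2/4\tau}/e^{-|x'|^2/4\tau}$ ranges roughly over $e^{\pm|x'|/(4\tau)}$. One only gets two-sided bounds with different constants in the exponent, which is why the paper's $\approx$ allows them.

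More seriously, for $t<1$ the pieces $w_i^N$ and $w_i^B$ are each of size $|x'|^{1-n}(1-t)^a$ at $x_n=0$ and cancel exactly. The paper needs the identity $w_i^B(x',0,t)=-w_i^N(x',0,t)=-2R_i'\psi(x')\phi(t)$, together with the rewriting via $\phi(s)-\phi(t)$ and $\int_{-\infty}^t$ (Lemma \ref{thoe1021-1}). It also needs the analogous exact cancellation in $w_n^N-w_n^{(L)}$ (Lemma \ref{0716}). A leftover relative error $O(|x'|^{-1})$ would leave a nonzero boundary value of size $|x'|^{-n}(1-t)^a$ in $w_i$, and terms like $x_n|x'|^{-n-1}(1-t)^a$ in $w_n$. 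These swamp the terms that fix the sign near the wall.

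Finally, be wary of the near-wall balance for $w_n$ ($x_ne^{-|x'|^2}$ against an $x_n^2$-term). For $|x'|>\frac12$ the field vanishes on the wall and is divergence free. Wherever it is $C^1$ up to the wall (e.g.\ for every $t>1$, where only $t-s\ge t-1>0$ enters), this gives $\partial_{x_n}w_n(x',0,t)=-\nabla'\cdot w'(x',0,t)=0$, so no nonzero term linear in $x_n$ can be present. In the paper that linear term is $\frac{n-1}{n}w^G$ in \eqref{1220-8w_n}. By Proposition \ref{lemwidetildeL}, however, $-\sum_{k<n}w^{(L)}_{kk}+\frac{n-1}{n}w^G=-\sum_{k<n}w^{L}_{kk}$ identically. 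The $z'$-first iterated integrals used to prove \eqref{240707} and \eqref{1020-2} compute $L_{kk}$, which is $O(x_n^2)$, not $\widetilde L_{kk}$. So the compensating $-\frac{n-1}{n}w^G$ hidden in $-\sum_{k<n}w^{(L)}_{kk}$ is lost while $+\frac{n-1}{n}w^G$ is kept. The near-wall bracket of the $x^*_{nn}$ formula therefore cannot be obtained from this balance, neither in your sketch nor from Propositions \ref{2ndmainprop} and \ref{4thmainprop} as written.

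Separately, uniqueness of the root is neither needed nor obtainable from two-sided bounds. The paper only shows that $w$ has a fixed sign outside an interval whose endpoints are comparable.
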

\begin{remark}
We remark that Theorem \ref{thmseparationpoint} is indeed a consequence of the asymptotics of \(x_{kn}^*\) in Theorem \ref{thmasymptotic}.
We note that if \(a<-\frac12\), the growth rate of \(\beta_{ik}(t)\) for \(w_i\) is given by \((t-1)^{\frac{a+\frac12}{2a-1}}\). 
%Since \(\frac{a+\frac12}{2a-1}<\frac12-a\) for \(a<-\frac12\), it follows that \(\beta_{ik}(t)\) for \(w_n\) grows much slower than that for \(w_i\).
\end{remark}
%\begin{remark}
%{\color{blue}We observe that, for fixed \(a\), the estimates of \(x_{ni}^*\) (\(1\leq i\leq n\)) obtained for \(t<1\) and \(t>1\) coincide whenever both exist.}
%\end{remark}
Theorem \ref{thmseparationpoint}, along with the asymptotic behavior of \(x_{kn}^*\) for general values of $t$, is established through a series of propositions:
Proposition \ref{1stmainprop}(\(x_{in}^*\), \(t>1\)), Proposition \ref{2ndmainprop}(\(x_{nn}^*\), \(t>1\)), Proposition \ref{3rdmainprop}(\(x_{in}^*\), \(t<1\)) and Proposition \ref{4thmainprop}(\(x_{nn}^*\), \(t<1\)). 

The next theorem describes the existence of reversal points of \(w_i\).

\begin{theorem}[\bf Existence of reversal point]\label{rp-flow}
Let \(w\) be the solution of the Stokes system \eqref{StokesRn+} defined by \eqref{rep-bvp-stokes-w} with the boundary data \(g=g_n\mathbf{e}_n\) given by \eqref{0502-6}-\eqref{boundarydataspecific}. Then there exist \(c_*>0\) and \(0<\delta\ll 1\) such that for fixed \((x',t)\) satisfying \(|x'|\geq c_*(1+\sqrt{t})\), the following holds.
\begin{itemize}
\item[(i)] For \(-1<a<0\), there exists at least one reversal point of \(w_i(x',x_n,t)\) (\(i\neq n\)) for all \(t>1\) and there exist at least two reversal points of \(w_n(x',x_n,t)\) for all \(t>1\).
\item[(ii)] For \(a>0\),  there exists at least one reversal point of \(w_i(x',x_n,t)\) (\(i\neq n\)) for all \(t>1-\delta\) and there exist at least two reversal points of \(w_n(x',x_n,t)\) for all \(t>1-\delta\).
\end{itemize}
\end{theorem}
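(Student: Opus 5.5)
The plan is to obtain Theorem \ref{rp-flow} from the sign structure of \(w_k(x',\cdot,t)\) along the normal line over a fixed \(x'\) with \(|x'|\) large, using the representation formula \eqref{rep-bvp-stokes-w}. Since \(g=\psi\phi\,\mathbf e_n\) with \(\psi\) supported in \(B'_{1/2}\), we have \(|x'-y'|\approx|x'|\) on the support. We would therefore expand the Poisson-type kernel \(K_{kn}(x-y',t-s)\) into two parts: a leading Gaussian part of size roughly \((t-s)^{-n/2}e^{-(|x'|^2+x_n^2)/4(t-s)}\) times polynomial factors in \(x_n/(t-s)\), and a nonlocal harmonic (pressure-type) part of size roughly \(x_n|x'|^{-n-1}\) for \(w_i\), or \(|x'|^{-n}\)-type quantities carrying the sign of \(\partial_n E\) for \(w_n\). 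These are exactly the expansions that underlie Propositions \ref{1stmainprop}--\ref{4thmainprop}, and we take them as given.

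\textbf{Step 1: sign near the boundary.} For \(t>1\) with \(-1<a<0\), and for \(t>1-\delta\) with \(a>0\), we show that the local Gaussian contribution dominates for \(x_n\) small. This uses the temporal profile \((1-s)^a\) near \(s=1\), and gives a definite sign of \(w_i\) (and of \(w_n\)) in a layer \(0<x_n<\beta_1(t)\). These are the items of Propositions \ref{1stmainprop} and \ref{3rdmainprop} for \(w_i\), and Propositions \ref{2ndmainprop} and \ref{4thmainprop} for \(w_n\).

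\textbf{Step 2: sign at moderate heights.} At intermediate heights \(x_n\) of order the zero-asymptotics in Theorem \ref{thmasymptotic} times a constant, but still with \(x_n\le d_1|x'|\), the Gaussian factor \(e^{-|x'|^2/4(t-s)}\) is exponentially small compared with the algebraic nonlocal term \(M|x'|^{-n-1}x_n\). Here we need \(|x'|\ge c_*(1+\sqrt t)\) so that \(|x'|^2/t\) is large. Consequently the sign is the opposite one.

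\textbf{Step 3: the second sign change of \(w_n\).} For \(x_n\ge d_2|x'|\) the nonlocal term of \(w_n\) behaves like \(\partial_n\) of a dipole, whose sign flips across \(x_n\approx|x'|\). This produces the second zero.

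\textbf{Step 4: continuity.} Having fixed heights \(f<g\) (and \(f<g_1<g_2\) for \(w_n\)) at which \(w_k\) takes opposite signs uniformly on a small time window around any given \(t_*\), continuity of \(w_k\) in \((x_n,t)\) away from the boundary and the intermediate value theorem give a zero \(h(t)\in(f,g)\) for every \(t\) in that window. This is exactly Definition \ref{reversalpoint}, with \(x_n^*=h(t_*)\). The definition does not require continuity of \(h\), so we may simply select a zero for each \(t\), for example the smallest one.

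\textbf{Main obstacle.} The hard part is making Steps 1--2 uniform in \(t\) on all of \((1,\infty)\) (respectively \((1-\delta,\infty)\)), not just near \(t=1\). For large \(t\) the Gaussian and nonlocal parts no longer separate by a fixed scale. The plan is to rescale \(x_n\) by \(\sqrt{t}\) and use \(|x'|\ge c_*\sqrt t\), so that the ratio of the two contributions is controlled by \(e^{-c|x'|^2/t}\), which is small uniformly. Separately, for \(a\in(-1,-\frac12]\) the singularity of \(\phi\) at \(s=1\) must be handled with the integral estimates of Proposition \ref{1stmainprop}.
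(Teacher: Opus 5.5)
Your architecture is sound: definite signs of $w_k(x',\cdot,t)$ at two (resp.\ three) heights, the intermediate value theorem, and continuity in $t$ to produce the time window demanded by Definition \ref{reversalpoint}. But the mechanism you give for those signs in Steps 1--2 is wrong for $w_i$, because your two-part picture of the kernel omits the term that decides the sign. In the splitting \eqref{1220-8}, the contribution that dominates near the wall is $w_i^B$, built from $B_{in}\approx -t^{-\frac32}e^{-\frac{x_n^2}{t}}\frac{x_ix_n}{|x'|^n}$ (see \eqref{0513-1}). This is a Riesz transform of $\psi$ in $x'$, algebraic of size $|x'|^{1-n}$, times a Gaussian in $x_n$ only. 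It has the sign opposite to the $\widetilde L_{in}$-part $\frac{x_ix_n}{|x|^{n+2}}(\cdots)$ and exceeds it near $x_n=0$. It loses only once $|x'|^2e^{-x_n^2/t}/t$ becomes small, which is why the zero sits at $x_n\approx (t\ln(|x'|^2/t))^{1/2}$ in Proposition \ref{1stmainprop}(i).

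The terms that do carry $e^{-c|x|^2/(t-s)}$ (e.g.\ $x_i\mathcal K_{a,\frac{n+2}{2}}$ in Lemma \ref{w1L-w1B}) vanish quadratically in $x_n$ and are negligible at every height. So Step 1 cannot be proved for $w_i$ as you state it, and the comparison in Step 2 ($e^{-|x'|^2/4(t-s)}$ against $M|x'|^{-n-1}x_n$) is not what produces the sign change. Likewise, for $t<1$ the near-wall sign of $w_i$ comes from the cancellation $w_i^B(x',0,t)=-w_i^N(x',0,t)$ and the sign of $\phi(s)-\phi(t)$ (Lemma \ref{thoe1021-1}). That is again a nonlocal effect, and it is where the sign of $a$ enters.

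For $w_n$ your picture is correct: $w^G>0$ in a layer at the wall, $w_n<0$ in the middle, $w_n>0$ for $x_n\gtrsim|x'|$. However, Step 1 rests on a fact you never identify: the non-Gaussian part vanishes to second order at $x_n=0$. For $t>1$ this holds because $w_n^N=0$ and $\sum_k w^{(L)}_{kk}$ carries $\min\{1,x_n^2/(t-s)\}$ from \eqref{240707}. For $t<1$ it is the cancellation between $w_n^N$ and $w_n^{(L)}$ in Lemma \ref{0716}. If the nonlocal part had the linear size used in your Step 2, $w^G\approx x_ne^{-|x'|^2}$ would lose at every height; correspondingly, the first zero lies at an exponentially small height. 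Also, for $t<1$ the negative middle region exists only because $w_n^N\approx(1-t)^ax_n|x'|^{-n}$ is beaten by $-w_n^{(L)}\approx-|x'|^{-n}$ (Case 2 of Proposition \ref{4thmainprop}). This requires $a>0$ and $\phi(t)$ small.

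If you replace your heuristics by the crude sign statements proved inside the Cases of Propositions \ref{1stmainprop}--\ref{4thmainprop}, the argument goes through. Examples of such statements: $w_i<0$ for $x_n<\sqrt{2(t-1)}$ and $w_i>0$ for $x_n>|x'|$; $w_n<0$ for $\frac12<x_n\le c|x'|$ and $w_n>0$ for $x_n\ge C|x'|$. With these, your fixed-$x'$ argument works for every admissible $(x',t)$. It is then lighter than the paper's proof. The paper locates the zeros via the intervals $S^{\mp\pm}$, and therefore must vary $|x'|$ with the time subinterval near $t=1$ (its $N,N'$ covering) so that $(x',t)$ falls into one of the sets $A_a^l,B_a^l,C_a^l,D_a^l$. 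Your Step 4 also makes explicit the time-window requirement that the paper leaves implicit. Note that the delicate regime there is $t\to1^{\pm}$; large $t$ is immediate once $|x'|^2/t$ is large.
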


\begin{remark}
It can be verified by direct computations that the tangential derivative of the pressure \(\partial_{x_i}p\) is positive for \(t>1\) for some admissible boundary data, provided that \(x_i\) is sufficiently large. On the other hand, there exists \(\delta=\delta(a)>0\) such that for \(t\in (1-\delta, 1)\), \(\partial_{x_i}p\) is positive when \(a>0\) and negative when \(a<0\). This behavior is closely related to the phenomena known as favorable and adverse pressure gradients, respectively. The adverse pressure gradient, in particular, is recognized as a primary mechanism leading to boundary layer separation near the boundary, especially when the outer flow is a shear flow governed by the Euler equations.
\end{remark}

%The diagrams of the regions for which \(w_1\) and \(w_n\) have fixed signs are given in Figure 1 and 2 respectively.
% 
%Detailed statements, in particular regarding on the signs of \(w_i\) and \(w_n\) will be proved in Section 4(see Proposition \ref{1stmainprop}) and Section 5(see Proposition \ref{2ndmainprop}) respectively.
The structure of the paper is as follows. In Section~\ref{sect3}, we introduce the standard notations, recall the solution formula for the Stokes equation associated with the Poisson problem, and present several integral estimates that will be useful in our analysis. Sections~\ref{sign-w} and \ref{sect4} are devoted to the proof of Theorem \ref{thmasymptotic}. As mentioned before, Theorem \ref{thmasymptotic} is divided into Proposition \ref{1stmainprop}, Proposition \ref{2ndmainprop}, Proposition \ref{3rdmainprop}, and Proposition \ref{4thmainprop}. The proof of Theorem \ref{rp-flow} is then divided into cases according to the above propositions and is given after the proof of each proposition. At the end of Section~\ref{sect4}, we prove Theorem \ref{reversalpoint}. Finally, auxiliary results, including proofs of technical integral estimates and elementary inequalities, are collected in the Appendix.

%---------------------------------------
%---------------------------------------
%---------------------------------------

\section{Preliminaries}\label{sect3}
\setcounter{equation}{0}
%\subsection{Notations}
We first introduce the following convention. For \(1\leq i\leq M\), \(1\leq j\leq N\) and functions $ f=f(x,t)$, $g_i=g_i(x,t)$, $h_j=h_j(x,t)$ on \(\mathbb{R}^n\times \mathbb{R}_+\),  we denote $ f \approx \sum_{i=1}^M g_i+\sum_{j=1}^N h_j e^{-t}$ if there exist positive constants $c_i, d_i,\widetilde{c}_j,\widetilde{d}_j, \alpha_j,\beta_j$, independent of \(x\) and \(t\) such that
\begin{align*}
\sum_{i=1}^M c_ig_i +\sum_{j=1}^N \widetilde{c}_jh_je^{-\alpha_j t} \leq  f \leq \sum_{i=1}^M d_i g_i+\sum_{i=1}^N \widetilde{d}_jh_je^{-\beta_j t} .
\end{align*}
Similarly, we denote
$ f \lesssim \sum_{i=1}^M g_i+\sum_{j=1}^N h_j e^{-t}$ if there exist positive constants $c_i, \widetilde{c}_j, \alpha_j$, independent of \(x\) and \(t\) such that
\begin{align*}
f\leq \sum_{i=1}^M c_ig_i +\sum_{j=1}^N \widetilde{c}_jh_je^{-\alpha_j t}.
\end{align*}

%\subsection{Set up: the Poisson kernel of the Stokes equation in the half space}
Next, we recall the the Poisson kernel of the Stokes equation in the half space.
It is known that the solution to the Stokes equation in the half space \eqref{StokesRn+} with general boundary data \(g=(g_1,\cdots, g_n)\) is represented by
\begin{align}\label{rep-bvp-stokes-w}
w_i(x,t) & = \sum_{j=1}^{n}\int_0^t \int_{\Rn} K_{ij}(
x-y^{\prime},t-s)g_j(y^{\prime},s) dy^{\prime}ds,
\end{align}
where \(x-y':=(x'-y',x_n)\) and the Poisson kernel $K_{ij} $ of the Stokes system in $\R_+^n\times (0, \infty)$ is  given as follows   (see \cite{So}):
\begin{align}\label{Poisson-tensor-K}
\begin{split}
  K_{ij}(x,t) &  =  -2 \delta_{ij} \partial_{x_n}\Ga(x,t)
+4L_{ij} (x,t) +2  \de_{jn} \de(t)  \partial_{x_i} N(x),\quad i,j,=1,2,\cdots,n,
\end{split}
\end{align}
where
\begin{align}\label{L-tensor}
L_{ij} (x,t) & =  \partial_{x_j}\int_0^{x_n} \int_{\Rn}   \partial_{z_n}
\Ga(z,t) \partial_{x_i}   N(x-z)  dz,'dz_n\quad
i,j=1,2,\cdots, n.
\end{align}
Here $\Ga$ and $N$ are  the $n$-dimensional heat kernel and  Newtonian kernel defined respectively by
\begin{align}\label{H-L-10}
\displaystyle \Ga(x,t) = \left\{ \begin{array}{cc}\vspace{2mm}
(4\pi t)^{-\frac{n}{2}} e^{-\frac{ |x|^2}{4t}}& \quad t > 0,\\
0 &\quad t \leq 0,
\end{array}
\right.
\quad N(x) = \left\{\begin{array}{ll}\vspace{2mm}
\frac1{2\pi} \ln |x| &  \quad n =2,\\
-\frac1{n(n -2)\om_n} |x|^{-n +2} &\quad n \geq 3,\\
\end{array}
\right.
\end{align}
where \(\omega_n\) is the volume of the \(n\)-dimensional unit ball. We also denote by \(\Gamma'=\Gamma'(x',t)\) the \((n-1)\)-dimensional heat kernel.
  
It is known in \cite{So} that $L_{ij}$ is smooth in \(x\neq 0\) and \(t>0\) and satisfies the following relations:
\begin{align}\label{1006-3}
\sum_{i=1}^{n} L_{ii} = \frac12 \partial_{x_n} \Ga,
 \quad L_{ij} = L_{ji} \,\,(  1 \leq i,j \leq n-1),  \quad
  L_{in} =
L_{ni}  + B_{in} \,\, ( i \neq n),
\end{align}
where
\begin{equation}\label{B-tensor}
B_{in}(x,t): = \text{p.v.}\int_{\Rn}\partial_{x_n} \Ga(x -z^\prime , t) \partial_{z_i} N( z^{\prime},0) dz^\prime.%  =\partial_{x_n}R'_i  \Gamma (\cdot, x_n, t)(x').
\end{equation}

\subsection{Integral estimates}
In this subsection we provide some integral estimates which will be used throughout this paper. The main integral we will deal with is \(L_{ij}\) given in \eqref{L-tensor}, which will be estimated with the help of Lemma \ref{technicallemma} and Lemma \ref{lemma0709-1}.

For $x\in\mathbb{R}_+^n$, $t>0$ and nonnegative integers \(m\) and \(k\), we introduce the following integral:
\begin{equation}\label{I-mk}
  I_{m,k}(x,t):=\int_0^{x_n}\frac{x_n-z_n}{t^{\frac{3}{2}}}e^{-\frac{(x_n-z_n)^2}{4t}}\frac{z_n^k}{(|x'|^2+z_n^2)^{\frac{m}{2}}}dz_n.
\end{equation}
We begin with the estimate of \(I_{m,k}\) whose proof will be given in Appendix \ref{proofoftechnicallemma}.
\begin{lemma}\label{technicallemma}
Let $ I_{m,k}$ be the integral given in \eqref{I-mk}.
   For \(x\in \mathbb{R}_+^n\) and \(t>0\) such that $|x'|\geq \sqrt{t}$, we have 
    \begin{align*}
        I_{m,k}(x,t)&\approx t^{-\frac{1}{2}}x_n^k|x|^{-m}\min\left\{1,x_n^2t^{-1}\right\}.
    \end{align*}
\end{lemma}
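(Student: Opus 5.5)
The plan is to change variables to $u=x_n-z_n$, so that
\[
I_{m,k}(x,t)=\int_0^{x_n}\frac{u}{t^{3/2}}e^{-\frac{u^2}{4t}}\frac{(x_n-u)^k}{(|x'|^2+(x_n-u)^2)^{\frac m2}}\,du .
\]
The heat-type weight $u\,t^{-3/2}e^{-u^2/4t}$ has total mass on $(0,x_n)$ comparable to $t^{-1/2}\min\{1,x_n^2t^{-1}\}$, and this mass concentrates at $u\sim\sqrt t$. So the task is to show that, where this weight lives, the remaining factor $(x_n-u)^k(|x'|^2+(x_n-u)^2)^{-m/2}$ is comparable to $x_n^k|x|^{-m}$. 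The hypothesis $|x'|\ge\sqrt t$ is what makes this work. I split into two cases according to whether $x_n\le 2\sqrt t$ or $x_n\ge 2\sqrt t$.

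\emph{Case $x_n\le 2\sqrt t$.} Here $x_n\le 2|x'|$, so $|x'|^2+z_n^2\approx|x'|^2\approx|x|^2$ uniformly for $z_n\in(0,x_n)$. Also $u^2/4t\le 1$, so the exponential is comparable to $1$. Hence
\[
I_{m,k}\approx t^{-3/2}|x|^{-m}\int_0^{x_n}u(x_n-u)^k\,du=c_k\,t^{-3/2}x_n^{k+2}|x|^{-m},
\]
using the Beta-integral identity $\int_0^{x_n}u(x_n-u)^k\,du=c_kx_n^{k+2}$. This is exactly $t^{-1/2}x_n^k|x|^{-m}\min\{1,x_n^2/t\}$ in this regime.

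\emph{Case $x_n\ge 2\sqrt t$.} The target is now $t^{-1/2}x_n^k|x|^{-m}$.
\begin{itemize}
\item For the lower bound, I restrict to $u\in(0,\sqrt t)\subset(0,x_n/2)$. There $x_n-u\approx x_n$, so $|x'|^2+(x_n-u)^2\approx|x|^2$, and $e^{-u^2/4t}\ge e^{-1/4}$. This gives $I_{m,k}\gtrsim t^{-3/2}x_n^k|x|^{-m}\int_0^{\sqrt t}u\,du\approx t^{-1/2}x_n^k|x|^{-m}$.
\item For the upper bound on $u\in(0,x_n/2)$, I use the same comparabilities and bound the heat weight by its full mass $\int_0^\infty u\,t^{-3/2}e^{-u^2/4t}\,du\lesssim t^{-1/2}$.
\item For the upper bound on $u\in(x_n/2,x_n)$, I estimate crudely: $(x_n-u)^k\le x_n^k$, the denominator is at least $|x'|^m$, and $e^{-u^2/4t}\le e^{-x_n^2/16t}$. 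This bounds that piece by $t^{-3/2}x_n^{k+2}|x'|^{-m}e^{-x_n^2/16t}$.
\end{itemize}

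The step needing the most care is this far tail $u\in(x_n/2,x_n)$. There $z_n$ is small, so the denominator is only $|x'|^m$ rather than $|x|^m$; when $x_n\gg|x'|$ this is much larger than the target size. I compare the tail bound with the target $t^{-1/2}x_n^k|x|^{-m}$; their ratio is
\[
\frac{x_n^2}{t}\Big(\frac{|x|}{|x'|}\Big)^m e^{-\frac{x_n^2}{16t}}\le\frac{x_n^2}{t}\Big(1+\frac{x_n^2}{t}\Big)^{\frac m2}e^{-\frac{x_n^2}{16t}}\le C_m .
\]
The first inequality uses $|x'|\ge\sqrt t$ to get $(|x|/|x'|)^2=1+x_n^2/|x'|^2\le1+x_n^2/t$. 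The second holds because a polynomial in $x_n^2/t$ times a Gaussian in $x_n/\sqrt t$ is bounded. This is the only point where the hypothesis $|x'|\ge\sqrt t$ is essential. Combining the two cases gives the claimed two-sided estimate, with constants depending only on $m$ and $k$.
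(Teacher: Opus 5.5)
Your argument is correct and is essentially the paper's proof. Both use the same split at $z_n=x_n/2$ (your $u=x_n/2$): the half $z_n\in(x_n/2,x_n)$ carries the main term, and the half $z_n\in(0,x_n/2)$ is controlled by $e^{-x_n^2/16t}$ against the polynomial loss from $|x'|^{-m}$ versus $|x|^{-m}$. Your bound $(|x|/|x'|)^2\le 1+x_n^2/t$ simply merges the paper's two subcases $x_n\le 2|x'|$ and $x_n\ge 2|x'|$. Your preliminary split at $x_n=2\sqrt t$ plays the role of the paper's $1-e^{-\theta}\approx\min\{1,\theta\}$.

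One correction to your closing remark: the hypothesis $|x'|\ge\sqrt t$ is not used only in the tail estimate. Your Case 1 also uses it to get $x_n\le 2|x'|$, and hence $|x'|^2+z_n^2\approx|x|^2$ on all of $(0,x_n)$. It is genuinely needed there: for $m>k+1$, letting $|x'|\to0$ with $x_n=\sqrt t$ makes the region near $z_n=0$ dominate.
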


We now provide convolution-type estimates of the heat and Newtonian kernels. The main term of each convolution turns out to be the derivative of the Newtonian kernel, while the error term is of higher order in \(x\).
\begin{lemma}\label{lemma0709-1}
Let $ (x',t)\in (\mathbb{R}^{n-1}\setminus\left\{0\right\})\times \R_+$ satisfies \(|x'|\geq \max\{1, \sqrt{t}\}\) and \(1\leq i\leq n-1\).  Then the following hold.
\begin{itemize}
\item[(i)]
 For $ x_n > 0$, 
\begin{align} \label{0515-1}
\int_{\Rn}    \Ga'(x'-z',t)    \partial_{z_i } \partial_{x_n} N( z',x_n) dz' =   \partial_{x_i } \partial_{x_n}  N( x)  +    \partial_{x_i}  \Ga'(x',t)  \mathbf{1}_{x_n \leq  2|x'|}  + J_1(x, t),
\end{align}
where \(\displaystyle |J_1(x, t) |  \lesssim  \frac{tx_n}{|x|^{n+2}|x'|}\).

\item[ii)]
For $ x_n > 0$, 
\begin{align}\label{0306-1}
\int_{\Rn}  \Ga'(x'-z',t)   \partial_{z_i} \partial_{z_j}  N( z', x_n) dz' =\partial_{x_j} \partial_{x_i} N(x)  + J_2(x,t),
\end{align}
where \(\displaystyle  |J_2(x,t)|\lesssim  \Big(    t+ \de_{ij}  x_n^2 e^{-\frac{c|x'|^2}{t} }\Big)|x|^{-n-2}\).
 \item[(iii)] 
 \begin{align}\label{0515-2}
\textup{p.v.}\int_{\Rn}    \Ga'(x'-z',t)    \partial_{z_i }  N( z',0) dz' =   \partial_{x_i }  N( x', 0 )  + J_3(x', t),
\end{align}
where \(\displaystyle |J_3(x', t) |  \lesssim   t|x'|^{-n-1 }\).
 \end{itemize}
\end{lemma}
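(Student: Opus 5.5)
The plan is to treat all three items in the same way. Write each convolution as the value of the Newtonian-kernel derivative at $x'$, plus a remainder. Then split the $z'$-integral with a smooth cutoff $\chi(z')$ that equals $1$ on $|z'|\le |x'|/4$ and vanishes for $|z'|\ge |x'|/2$. On the support of $1-\chi$ the kernel $z'\mapsto \pa^\al N(z',x_n)$ is smooth at scale $|x'|$ near $z'=x'$, so a Taylor expansion applies there. The piece supported near $z'=0$, where the Newtonian kernel is singular or concentrated, sees $\Ga'(x'-z',t)$ only at distance $\gtrsim|x'|$ from its centre. There it is a Gaussian tail, or it is expanded to first order around $z'=0$.

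We will use repeatedly the elementary bound $t^{-\frac{n-1}{2}-k}|x'|^{m}e^{-c|x'|^2/t}\lesssim t\,|x'|^{-n-1-2k+m}$, valid for $|x'|\ge\sqrt t$ (with a smaller $c$ if needed). This converts all exponential tails into the stated algebraic errors.

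\emph{Item (ii).} Since $x_n>0$, the function $F(z')=\pa_{z_i}\pa_{z_j}N(z',x_n)$ is smooth. We write
\[
\int_{\Rn}\Ga'(x'-z',t)F(z')\,dz'-F(x')=\int_{\Rn}\Ga'(x'-z',t)\big(F(z')-F(x')\big)\,dz' .
\]
On $|z'-x'|\le |x'|/2$ we expand $F$ to second order about $x'$. The linear term integrates to zero because $\Ga'$ is radial. The quadratic term is bounded by $t\sup|D^4N|\lesssim t|x|^{-n-2}$, since $|z|\approx|x|$ on this set. On $|z'-x'|>|x'|/2$ the heat kernel is at most $t^{-\frac{n-1}2}e^{-c|x'|^2/t}$. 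The contribution of $F(x')$ and of the part of $F$ with $|z'|\gtrsim|x'|$ is then handled by the conversion bound above. The only delicate part is $\int_{|z'|\lesssim |x'|}F(z')\,dz'$ near the origin. For $i\ne j$ this vanishes by oddness, after also expanding $\Ga'(x'-z',t)$ about $z'=0$ so that the first-order term gains a factor $|z'|$. For $i=j$ it equals a flux integral of size $\lesssim x_n^{2}|x|^{-n}$ after rescaling. Combined with the Gaussian factor, this gives the $\de_{ij}x_n^2e^{-c|x'|^2/t}|x|^{-n-2}$ term.

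\emph{Item (iii).} Here $\pa_{z_i}N(z',0)$ is odd and homogeneous of degree $-(n-1)$. The far piece is treated by the same second-order Taylor argument as in (ii) and gives $t|x'|^{-n-1}$. For the near piece we expand $\Ga'(x'-z',t)=\Ga'(x',t)-z'\cdot\na\Ga'(x',t)+\cdots$. The constant term vanishes in principal value by oddness. The linear term is bounded by $|\na\Ga'(x',t)|\int_{|z'|<|x'|/2}|z'||z'|^{1-n}\,dz'\lesssim t^{-\frac{n+1}2}|x'|^{2}e^{-c|x'|^2/t}$. By the conversion bound this is $\lesssim t|x'|^{-n-1}$.

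\emph{Item (i).} We use $\pa_{z_i}\pa_{x_n}N(z',x_n)=\pa_{z_i}\big(\pa_{x_n}N(z',x_n)\big)$ and the identity $\pa_{x_n}N(\cdot,x_n)=\tfrac12P_{x_n}$, where $P_{x_n}$ is the Poisson kernel of total mass $1$. We then integrate by parts to move $\pa_{z_i}$ onto $\Ga'$. The far part $(1-\chi)\pa_{x_n}N$ is handled exactly as in (ii). This produces $\pa_{x_i}\pa_{x_n}N(x)$ together with an error $\lesssim t\,\sup_{|z|\approx|x|}|D^3\pa_nN|$. Keeping the extra factor $x_n$ present in $\pa_n$-derivatives, this is of order $t x_n|x|^{-n-2}|x'|^{-1}$. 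The near part gives $\pa_{x_i}\Ga'(x',t)\int\chi\,\tfrac12P_{x_n}\,dz'$ plus first-moment corrections, which are of lower order by the conversion bound. The mass $\int\chi P_{x_n}$ is comparable to a constant when $x_n\le 2|x'|$. When $x_n>2|x'|$ it is $O\big((|x'|/x_n)^{n-1}\big)$, and then the whole term times $e^{-c|x'|^2/t}$ is absorbed into $J_1$. This is what produces the indicator $\mathbf 1_{x_n\le 2|x'|}$.

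\emph{Main obstacle.} I expect the hardest step to be this last bookkeeping in (i). One must check that the difference between the exact mass and a constant, and the tail of the Poisson kernel outside the cutoff, both fit the precise error $tx_n|x|^{-n-2}|x'|^{-1}$ uniformly in the two regimes $x_n\ll|x'|$ and $x_n\gg|x'|$. It must also be checked that the constant in front of $\pa_{x_i}\Ga'$ (with its sign and the normalization $\tfrac12$) matches the statement. I would first verify the computation in the model case $n=2$, where everything is explicit, and then redo the estimates using the homogeneity of $P_{x_n}$.
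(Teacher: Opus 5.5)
Your architecture is the paper's. You use a second-order Taylor expansion on a ball around $x'$, with the linear term killed by the symmetry of $\Gamma'$, and Gaussian tails away from $x'$. In (i) you integrate by parts near the origin to move $\pa_{z_i}$ onto $\Gamma'$, and then use the mass and vanishing first moment of $\pa_nN(\cdot,x_n)$. Up to the small points at the end, this closes for (ii), (iii), and for (i) when $x_n\le 2|x'|$. In that last regime the bookkeeping you worried about works: $\int(1-\chi)\pa_nN\,dz'\lesssim x_n/|x'|$ supplies exactly the needed factor $x_n$, since $|\pa_{x_i}\Gamma'(x',t)|\,x_n/|x'|\lesssim t\,x_n|x'|^{-n-3}$.

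The genuine gap is (i) for $x_n>2|x'|$. The term you propose to absorb there satisfies
\[
\Big|\pa_{x_i}\Gamma'(x',t)\int\chi\,\pa_nN(z',x_n)\,dz'\Big|\approx |x_i|\,t^{-\frac{n+1}{2}}e^{-\frac{|x'|^2}{4t}}\Big(\frac{|x'|}{x_n}\Big)^{n-1},
\]
while $J_1$ may only be of size $t x_n|x|^{-n-2}|x'|^{-1}\approx t\,x_n^{-n-1}|x'|^{-1}$. For $|x_i|\approx|x'|$ the ratio is $\approx(|x'|^2/t)^{\frac{n+3}{2}}e^{-\frac{|x'|^2}{4t}}\,(x_n/|x'|)^2$. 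This is unbounded as $x_n/|x'|\to\infty$, for example at $|x'|=\sqrt t$. The Gaussian factor only produces powers of $t/|x'|^2\le1$, never the missing $(|x'|/x_n)^2$.

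Your far part has the same defect. Because you cut off $\pa_nN$ rather than $\pa_{z_i}\pa_nN$, your near part equals $\int\Gamma'\chi\,\pa_{z_i}\pa_nN\,dz'+\int\Gamma'\,\pa_{z_i}\chi\,\pa_nN\,dz'$, and your far part contains $-\int\Gamma'\,\pa_{z_i}\chi\,\pa_nN\,dz'$. These two $\nabla\chi$ terms cancel exactly. But each is of size $x_n^{1-n}$ times a Gaussian factor, so estimated separately neither fits into $J_1$.

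The missing step, which is what the paper does, is to skip the integration by parts when $x_n\ge 2|x'|$ and bound the near-origin piece of the original integral directly. For $|z'|\le|x'|/2$ one has $|\pa_{z_i}\pa_nN(z',x_n)|\lesssim|z'|\,x_n^{-n-1}$ and $\Gamma'(x'-z',t)\lesssim t^{-\frac{n-1}{2}}e^{-c|x'|^2/t}$, so
\[
\int_{|z'|\le|x'|/2}\Gamma'(x'-z',t)\,|\pa_{z_i}\pa_nN(z',x_n)|\,dz'\lesssim|x'|^{n}x_n^{-n-1}t^{-\frac{n-1}{2}}e^{-c\frac{|x'|^2}{t}}\lesssim\frac{t}{x_n^{n+1}|x'|}.
\]
The gain of $|z'|/x_n^2$ over $\pa_nN\approx x_n^{1-n}$ comes from the tangential derivative of the Poisson kernel at scale $x_n$. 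Integrating by parts throws it away and pays $|\nabla\Gamma'|\approx(|x'|/t)\Gamma'$ instead.

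To make this work, put the cutoff on the full kernel $\pa_{z_i}\pa_nN$ (or use sharp sets as the paper does), so no $\nabla\chi$ terms arise. In the Gaussian-tail region, keep the weight $|z'|x_n^{-n-1}$ instead of a supremum of the kernel. The indicator $\mathbf 1_{x_n\le 2|x'|}$ then just records that for $x_n\gg|x'|$ the whole near-origin contribution is an error, whereas $|\pa_{x_i}\Gamma'(x',t)|$ is not.

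Two smaller remarks.

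First, your suspicion about the constant is justified. With the paper's $N$, $\int_{\Rn}\pa_nN(z',x_n)\,dz'=\frac12$, so your argument yields $+\frac12\pa_{x_i}\Gamma'(x',t)\mathbf 1_{x_n\le 2|x'|}$. Letting $x_n\to0$ in (i), where $\pa_{x_i}\pa_{x_n}N(x',0)=0$ and $J_1\to0$, confirms the coefficient $+\frac12$. The paper's proof uses mass $1$, which is harmless for its later two-sided estimates.

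Second, in (ii) with $i\ne j$, one factor $|z'|$ from expanding $\Gamma'$ is not enough: it leaves $\int_0^{|x'|}dr/r$ as $x_n\to0$. You need the linear term to vanish by oddness as well and then use the second-order remainder; the paper instead integrates by parts once. For $i=j$ the flux integral is $\approx|x'|^{n-1}|x|^{-n}$, not $x_n^2|x|^{-n}$. After multiplying by $\Gamma'(x',t)$ it is still absorbed by $(t+x_n^2e^{-c|x'|^2/t})|x|^{-n-2}$.
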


%\begin{proof}
The proof of Lemma \ref{lemma0709-1} will be given in Appendix \ref{proofoflemma}.
%\end{proof}

\begin{remark}
Since  $ \partial_{x_1} N(x',0) + J_3(x',t) \approx \partial_{x_1} N(x',0)$ for $ x_i > c |x'| $ and $ |x'|> 1$, we have from (3) of Lemma \ref{lemma0709-1} that
\begin{align}\label{0513-1}
 B_{in}(x,t) \approx -t^{-\frac32} e^{-\frac{x_n^2}t} \frac{x_i x_n}{|x'|^n},\quad 1\leq i\leq n-1,
\end{align}
where \(B_{in}(x,t)\) is given in \eqref{B-tensor}.
\end{remark}

Let  \(1\leq i,j\leq n\), and \(\widetilde{B}_\epsilon(0):=\left\{x\in \mathbb{R}_+^n \mid |x|<\epsilon\right\}\). We introduce the following singular integral
\begin{align}\label{tilde-Lij}
\begin{split}
\widetilde{L}_{ij}(x,t)&:=\text{p.v.}\int_{0}^{x_n}\int_{\mathbb{R}^{n-1}}\partial_{x_n}\Gamma(x-z,t)\partial_{z_i}\partial_{z_j}N(z)dz\\
&:=\lim_{\epsilon\rightarrow 0+}\int_{\mathbb{R}^{n-1}\times (0,x_n)-\widetilde{B}_\epsilon(0)}\partial_{x_n}\Gamma(x-z,t)\partial_{z_i}\partial_{z_j}N(z)dz.
\end{split}
\end{align}
The next proposition provides the relation between $L_{ij}$ and $\widetilde{L}_{ij}$.
\begin{proposition}\label{lemwidetildeL}
Let $L_{ij}$ and $\widetilde{L}_{ij}$ be given in \eqref{L-tensor} and \eqref{tilde-Lij}, respectively. Then for \(x\in \R_+^n\) and \(t>0\), the following identity holds:
\begin{align}\label{widetildeL}
L_{ij}(x,t)=\widetilde{L}_{ij}(x,t)+\frac{\delta_{ij}}{2n}\partial_{x_n}\Gamma(x,t)+\delta_{i<n,j=n}B_{in}(x,t),
\end{align}
where $B_{in}(x,t)$ is given in \eqref{B-tensor}.
\end{proposition}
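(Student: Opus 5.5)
The plan is to start from \eqref{L-tensor}, move the outer derivative $\partial_{x_j}$ inside, and keep track of the boundary terms this produces. Write
\[
L_{ij}(x,t)=\partial_{x_j}\int_0^{x_n}\int_{\mathbb{R}^{n-1}}\partial_{z_n}\Gamma(z,t)\,\partial_{x_i}N(x-z)\,dz'\,dz_n .
\]
First substitute $z\mapsto x-z$. Since $\partial_{z_n}\Gamma(x-z,t)=-\partial_{x_n}\Gamma(x-z,t)$ and $\partial_{x_i}N(z)=\partial_{z_i}N(z)$, we get
\[
L_{ij}(x,t)=\partial_{x_j}\int_0^{x_n}\int_{\mathbb{R}^{n-1}}\partial_{x_n}\Gamma(x-z,t)\,\partial_{z_i}N(z)\,dz'\,dz_n ,
\]
up to a sign check, which I will fix by consistency with \eqref{1006-3}.

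\emph{Case $j<n$.} Here $\partial_{x_j}$ commutes with the $z_n$-limits. I write $\partial_{x_j}\Gamma(x-z,t)=-\partial_{z_j}\Gamma(x-z,t)$ and integrate by parts in $z_j$, which moves the derivative onto $\partial_{z_i}N$. Because $\partial_{z_i}\partial_{z_j}N$ is not locally integrable at $z=0$, I do this on the domain with the half-ball $\widetilde B_\epsilon(0)$ removed. This produces the principal-value integral $\widetilde L_{ij}$ plus a boundary term over the hemisphere $\{|z|=\epsilon,\ z_n>0\}$. As $\epsilon\to0$, this term tends to
\[
\partial_{x_n}\Gamma(x,t)\int_{|\omega|=1,\ \omega_n>0}\partial_iN(\omega)\,\omega_j\,d\sigma .
\]
Using $\partial_iN(\omega)=\omega_i/(n\omega_n)$ on the unit sphere, this integral equals $\delta_{ij}/(2n)$, by symmetry over the hemisphere and $|S^{n-1}|=n\omega_n$. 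That gives the $\frac{\delta_{ij}}{2n}\partial_{x_n}\Gamma$ term.

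\emph{Case $j=n$.} Now $\partial_{x_n}$ also hits the upper limit $x_n$. This boundary term is
\[
\int_{\mathbb{R}^{n-1}}\partial_{x_n}\Gamma(x'-z',0,t)\,\partial_{z_i}N(z',x_n)\,dz' .
\]
Since $\partial_{x_n}\Gamma$ vanishes at $x_n-z_n=0$, this term is actually zero. The important contribution comes instead from the interior. There I write $\partial_{x_n}\Gamma(x-z)=-\partial_{z_n}\Gamma(x-z)$ and integrate by parts in $z_n$ over $(0,x_n)$, again excising $\widetilde B_\epsilon$. The endpoint contributions are:
\begin{itemize}
\item at $z_n=x_n$: zero, because $\partial_{x_n}\Gamma$ vanishes there;
\item at $z_n=0$: $\mathrm{p.v.}\int\partial_{x_n}\Gamma(x-z',t)\,\partial_{z_i}N(z',0)\,dz'$, which is exactly $B_{in}$ from \eqref{B-tensor} when $i<n$.
\end{itemize}
The hemisphere term is computed as before and yields $\frac{\delta_{in}}{2n}\partial_{x_n}\Gamma$. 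For $i=n$, the $z_n=0$ boundary term involves $\partial_{z_n}N(z',0)=0$ for $z'\neq0$, so there is no $B$-term.

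The main obstacle is the rigorous justification of these integrations by parts. This requires:
\begin{itemize}
\item uniform convergence of the truncated integrals as $\epsilon\to0$, using the cancellation of $\partial_i\partial_jN$ on spheres;
\item decay of $\Gamma$ at infinity to kill the far-field boundary terms;
\item the principal value at the corner where the plane $\{z_n=0\}$ meets $\partial\widetilde B_\epsilon$.
\end{itemize}
To handle these, I first smooth by replacing $\Gamma(x-z,t)$ with its value minus $\Gamma(x,t)$ near $z=0$. This removes the singular part, and the difference is controlled by $|z|\,|\partial_i\partial_jN(z)|\lesssim|z|^{1-n}$, which is integrable. Finally, I check the consistency of the resulting identity with the trace relation $\sum_iL_{ii}=\tfrac12\partial_{x_n}\Gamma$ in \eqref{1006-3}, since $\sum_i\widetilde L_{ii}$ should reduce to the hemisphere contributions.
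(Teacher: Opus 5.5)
Your proposal is correct and follows the paper's proof essentially step for step. You move $\partial_{x_j}$ under the integral and integrate by parts in $z_j$ on $\mathbb{R}^{n-1}\times(0,x_n)\setminus\widetilde B_\epsilon(0)$. The term $\frac{\delta_{ij}}{2n}\partial_{x_n}\Gamma(x,t)$ comes from the hemisphere, via $\partial_{x_n}\Gamma(x-z,t)=\partial_{x_n}\Gamma(x,t)+O(\epsilon)$ there and symmetry. For $j=n$, $B_{in}$ comes from the face $z_n=0$ (nothing when $i=n$, since $\partial_{z_n}N(z',0)=0$), and the face $z_n=x_n$ drops out because $\partial_{x_n}\Gamma(x',0,t)=0$.

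Two small corrections. First, no sign ambiguity arises in your substitution, since the integrand becomes $(\partial_n\Gamma)(x-z,t)=+\partial_{x_n}[\Gamma(x-z,t)]$, and you can track the signs directly through the outward normal of the excised domain. Second, in your consistency check it is $\sum_i L_{ii}$ that reduces to the hemisphere contributions $\tfrac12\partial_{x_n}\Gamma$; $\sum_i\widetilde L_{ii}$ vanishes because $\Delta N=0$ away from the origin.
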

%The proof of Lemma \ref{lemwidetildeL} is in Appendix \ref{proofofwidetildeL}.
\begin{proof} We first note that the integrand of \(L_{ij}\) has an integrable singularity and thus the integral converges absolutely. Thus the interchange of the differentiation sign \(\partial_{x_j}\) in \eqref{L-tensor} under the integration sign is justified.
\begin{align}\label{L_ij}
L_{ij} (x,t) %& =  \int_0^{x_n} \int_{\Rn}  \partial_{x_j} \partial_{x_n}
%\Ga(x-z,t) \partial_{z_i}   N(z)  dz\nonumber\\
&=\lim_{\epsilon\rightarrow 0+}\int_{\mathbb{R}^{n-1}\times (0,x_n)-\widetilde{B}_\epsilon(0)}\partial_{x_j} \partial_{x_n}
\Ga(x-z,t) \partial_{z_i}   N(z)  dz\nonumber\\
&=-\lim_{\epsilon\rightarrow 0+}\int_{\partial(\mathbb{R}^{n-1}\times (0,x_n)-\widetilde{B}_\epsilon(0))}\partial_{x_n}\Gamma(x-z,t)\partial_{z_i}N(z)\nu_j dz+\widetilde{L}_{ij}(x,t),
\end{align}
where \(\nu_j\) denotes the \(j\)th component of the normal vector of the boundary \(\partial(\mathbb{R}^{n-1}\times (0,x_n)-\widetilde{B}_\epsilon(0))\). Note that the first equality also holds for \(j=n\) since the extra boundary term coming from evaluating the integrand at \(z_n=x_n\) vanishes using the fact that \(\partial_{x_n}\Gamma(x',0,t)=0\).

{\(\bullet\) (\bf Case 1: \(i\neq j\),  \(j< n\))} \quad Since \(j\neq n\), we only need to consider the boundary of the half ball \(\widetilde{B}_\epsilon(0)\), denoted as \(\widetilde{S}_\epsilon(0)\). Then
\begin{align*}
&\int_{\partial(\mathbb{R}^{n-1}\times (0,x_n)-\widetilde{B}_\epsilon(0))}\partial_{x_n}\Gamma(x-z,t)\partial_{z_i}N(z)\nu_j dS_z=-\int_{\widetilde{S}_\epsilon(0)}\partial_{x_n}\Gamma(x-z,t)\partial_{z_i}N(z)\frac{z_j}{|z|} dS_z\\
&=-\int_{\widetilde{S}_\epsilon(0)}\left(\partial_{x_n}\Gamma(x-z,t)-\partial_{x_n}\Gamma(x,t)\right)\partial_{z_i}N(z)\frac{z_j}{|z|}dS_z-\partial_{x_n}\Gamma(x,t)\int_{\widetilde{S}_\epsilon(0)}\partial_{z_i}N(z)\frac{z_j}{|z|}dS_z.
\end{align*}
The last integral vanishes due to the odd symmetry of the integrand in \(z_i\) and \(z_j\). For the first integral, by the mean value theorem, there exists \(\xi=x-\theta z\) for some \(\theta\in(0,1)\) such that
\begin{align*}
|\partial_{x_n}\Gamma(x-z,t)-\partial_{x_n}\Gamma(x,t)|&\leq c|z|\left|\nabla\partial_{x_n}\Gamma(\xi,t)\right|
%\leq c|z|\left(\frac{1}{t}+\frac{\xi_n|\xi|}{t^2}\right)e^{-\frac{|\xi|^2}{4t}}\\
%&
\leq c|z|\left(\frac{1}{t}+\frac{x_n\left(|x|+\epsilon\right)}{t^2}\right)e^{-\frac{(|x|-\epsilon)^2}{4t}}.
\end{align*}
Therefore, we obtain
\begin{align*}
\left|\int_{\widetilde{S}_\epsilon(0)}\left(\partial_{x_n}\Gamma(x-z,t)-\partial_{x_n}\Gamma(x,t)\right)\partial_{z_i}N(z)\frac{z_j}{|z|}dS_z\right|&\leq \int_{\widetilde{S}_\epsilon(0)}c\epsilon\left(\frac{1}{t}+\frac{x_n\left(|x|+\epsilon\right)}{t^2}\right)e^{-\frac{(|x|-\epsilon)^2}{4t}}\frac{1}{\epsilon^{n-1}}dS_z\\
&\leq c\epsilon\left(\frac{1}{t}+\frac{x_n\left(|x|+\epsilon\right)}{t^2}\right)e^{-\frac{(|x|-\epsilon)^2}{4t}},
\end{align*}
which vanishes as \(\epsilon\rightarrow 0\). Thus the boundary integral vanishes as \(\epsilon\rightarrow 0\), and we have $L_{ij}(x,t)=\widetilde{L}_{ij}(x,t)$.
\\
{\(\bullet\) (\bf Case  2: \(i=j< n\))}\quad
The boundary term in \eqref{L_ij} becomes
\begin{align*}
&\int_{\partial(\mathbb{R}^{n-1}\times (0,x_n)-\widetilde{B}_\epsilon(0))}\partial_{x_n}\Gamma(x-z,t)\partial_{z_i}N(z)\nu_i dS_z\\
&=-\int_{\widetilde{S}_\epsilon(0)}\left(\partial_{x_n}\Gamma(x-z,t)-\partial_{x_n}\Gamma(x,t)\right)\partial_{z_i}N(z)\frac{z_i}{|z|}dS_z-\partial_{x_n}\Gamma(x,t)\int_{\widetilde{S}_\epsilon(0)}\partial_{z_i}N(z)\frac{z_i}{|z|}dS_z.
\end{align*}
Similarly as in the Case 1, the first integral vanishes as \(\epsilon\rightarrow 0\), and for the second integral we see that
\begin{align}\label{1009-1}
\int_{\widetilde{S}_\epsilon(0)}\partial_{z_i}N(z)\frac{z_i}{|z|}dS_z
%&=\int_{|z|=\epsilon, z_n>0}\frac{c_nz_i}{|z|^{n}}\frac{z_i}{|z|}dS_z\\
%&
=\int_{|z|=\epsilon, z_n>0}\frac{c_nz_i^2}{\epsilon^{n+1}}dS_z
%=\frac{1}{n}\int_{|z|=\epsilon, z_n>0}\frac{|z|^2}{n(n-2)\omega_n\epsilon^{n+1}}dS_z\\
%&=\frac{1}{n}\frac{1}{\epsilon^{n-1}}\times \frac{1}{2}{\epsilon^{n-1}}
=\frac{1}{2n}.
\end{align}
This with \eqref{1009-1} and \eqref{L_ij} gives \(L_{ij}=\widetilde{L}_{ij}+\frac{1}{2n}\partial_n \Gamma(x,t)\).

{\(\bullet\) (\bf Case 3: \(i=j=n\))} \quad The boundary term in \eqref{L_ij} becomes
\begin{align*}
\int_{\partial(\mathbb{R}^{n-1}\times (0,x_n)-\widetilde{B}_\epsilon(0))}&\partial_{x_n}\Gamma(x-z,t)\partial_{z_n}N(z)\nu_n dz\\
&=-\int_{|z'|>\epsilon}\partial_{x_n}\Gamma(x'-z',x_n,t)\partial_{z_n}N(z',0)dz'+\int_{\mathbb{R}^{n-1}}\partial_{x_n}\Gamma(x'-z',0,t)\partial_{z_n}N(z',x_n)dz'\\
&-\int_{\widetilde{S}_\epsilon(0)}\partial_{x_n}\Gamma(x-z,t)\partial_{z_n}N(z)\frac{z_n}{|z|}dz:=-I_1+I_2-I_3.
\end{align*}
We note that \(I_1=I_2=0\) since \(\partial_{z_n}N(z',0)=0\) and \(\partial_{x_n}\Gamma(x'-z',0,t)=0\). For the last integral \(I_3\), similar calculations given in the case  \(i=j< n\) gives that $I_3\rightarrow \frac{1}{2n}\partial_{x_n}\Gamma(x,t) $ as $\epsilon\rightarrow 0$.
%\begin{align*}
%I_3\rightarrow \frac{1}{2n}\partial_{x_n}\Gamma(x.t) \quad \text{as} \quad \epsilon\rightarrow 0.
%\end{align*}
%%%%%%%%%%%%%%%%%%%%%%%%%%%%%%%%%%%%%%%%%%%%%%%%%
\\
{\(\bullet\) (\bf Case 4: \(i< j=n\))}\quad
%We still have the equality \eqref{L_ij} since \(\partial_{x_n}\Gamma(x',0,t)=0\). For the boundary term, we write
We note that the boundary term in \eqref{L_ij} becomes
\begin{align*}
\int_{\partial(\mathbb{R}^{n-1}\times (0,x_n)-\widetilde{B}_\epsilon(0))}&\partial_{x_n}\Gamma(x-z,t)\partial_{z_i}N(z)\nu_n dz\\
&=-\int_{|z'|>\epsilon}\partial_{x_n}\Gamma(x'-z',x_n,t)\partial_{z_i}N(z',0)dz'+\int_{\mathbb{R}^{n-1}}\partial_{x_n}\Gamma(x'-z',0,t)\partial_{z_i}N(z',x_n)dz'\\
&-\int_{\widetilde{S}_\epsilon(0)}\partial_{x_n}\Gamma(x-z,t)\partial_{z_i}N(z)\frac{z_n}{|z|}dz=-J_1+J_2-J_3.
\end{align*}
Similarly as in Case 1, \(J_2=0\) since \(\partial_{x_n}\Gamma(x'-z',0,t)=0\) and \(J_3\rightarrow 0\) as \(\epsilon\rightarrow 0\) as in the case \(i\neq j\),  \(j< n\). Finally  we note that \(J_1\rightarrow B_{in}\) as \(\epsilon\rightarrow 0\).
This completes the proof.
\end{proof}

\begin{remark}
We note that the identity \eqref{widetildeL} reconfirms formulas
\eqref{1006-3}. Since its verification is straightforward, we skip its details.
\end{remark}

The following lemma, which is our main estimate in this section, provides the pointwise estimates for some \(\widetilde{L}_{ij}\), which will appear in the solution formula of \(w_i\) and \(w_n\)(see \eqref{wis}, \eqref{1220-8}, and \eqref{1220-8w_n}).

\begin{lemma}\label{estimateofkernelt<1}
Let $ 1 \leq i \leq n-1$.  There exist $N=N(n)> 1$, $0<\ep_0 <1$ and \(c_0>1\) such that for \(|x'|\geq N\), \(x_n>0\) and $\sqrt{t} < \ep_0 |x'|$, the following holds:
\begin{align}\label{240707}
\sum_{k=1}^{n-1}\widetilde{L}_{kk}(x,t)\approx \frac{|x'|^2-(n-1)x_n^2}{t^{\frac12}|x|^{n+2}}\min\left\{1,\frac{x_n^2}{t}\right\}.
\end{align}
If moreover, $ |x'| \leq c_0 x_i$, then
\begin{align}\label{L^Lestimate}
\widetilde{L}_{ni}(x,t)
&\approx   \left(\frac{x_n}{|x|^{n+2}}+\frac{1}{t^{\frac{n+1}{2}}}e^{-\frac{c|x|^2}{t}}\right)\frac{x_i}{t^{\frac{1}{2}}} \min\left\{1,\frac{x_n^2}{t}\right\}.
\end{align}
\end{lemma}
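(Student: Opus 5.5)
The plan is to separate the $z'$ and $z_n$ integrations in $\widetilde{L}_{ij}$. Since $\Gamma(x-z,t)=\Gamma'(x'-z',t)\Gamma_1(x_n-z_n,t)$, we have $\partial_{x_n}\Gamma(x-z,t)=\Gamma'(x'-z',t)\,\partial_{x_n}\Gamma_1(x_n-z_n,t)$, where $\Gamma_1$ is the one-dimensional heat kernel. Hence
\begin{align*}
\widetilde{L}_{ij}(x,t)=\int_0^{x_n}\partial_{x_n}\Gamma_1(x_n-z_n,t)\Big(\int_{\Rn}\Gamma'(x'-z',t)\,\partial_{z_i}\partial_{z_j}N(z',z_n)\,dz'\Big)dz_n .
\end{align*}
The principal value is harmless here: $x'\neq 0$ with $|x'|\geq N$, and the singularity at $z=0$ is damped by the Gaussian factor $e^{-|x'|^2/4t}$. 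For the inner integral I apply Lemma \ref{lemma0709-1}(ii) with $x$ replaced by $(x',z_n)$. This gives $\partial_{z_i}\partial_{z_j}N(x',z_n)+J_2(x',z_n,t)$, where $|J_2|\lesssim (t+\delta_{ij}z_n^2e^{-c|x'|^2/t})(|x'|^2+z_n^2)^{-\frac{n+2}{2}}$. Note that $-\partial_{x_n}\Gamma_1(x_n-z_n,t)=c\,\frac{x_n-z_n}{t^{3/2}}e^{-(x_n-z_n)^2/4t}$, so after this substitution everything reduces to integrals of the form $I_{m,k}$ from \eqref{I-mk}. These are controlled by Lemma \ref{technicallemma}, which applies because $|x'|\geq\sqrt t/\ep_0\geq \sqrt t$.

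For \eqref{240707}, I sum over $k\le n-1$ and use $\sum_{k<n}\partial_k^2N(x',z_n)=c_n\frac{|x'|^2-(n-1)z_n^2}{(|x'|^2+z_n^2)^{\frac{n+2}{2}}}$, which follows from $\Delta N=0$ and the formula for $\partial_n^2N$. The main term is then a combination $c(|x'|^2I_{n+2,0}-(n-1)I_{n+2,2})$. The hard part is that this difference can change sign: both pieces are individually comparable to the right-hand side, but a comparability statement cannot simply be subtracted. To handle this I would not use Lemma \ref{technicallemma} directly. Instead, I would write the integrand as the positive weight $\frac{x_n-z_n}{t^{3/2}}e^{-(x_n-z_n)^2/4t}$ times $f(z_n)=\frac{|x'|^2-(n-1)z_n^2}{(|x'|^2+z_n^2)^{(n+2)/2}}$, and argue case by case:
\begin{itemize}
\item If $x_n^2\le \frac{1}{2(n-1)}|x'|^2$, then $f\approx |x'|^{-n}$ on $(0,x_n)$, and the conclusion follows as in Lemma \ref{technicallemma}.
\item If $x_n^2\geq 2|x'|^2/(n-1)$, then $x_n\gtrsim|x'|\gg\sqrt t$, so the weight concentrates within $\sqrt t$ of $z_n=x_n$, where $f(z_n)\approx f(x_n)<0$. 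The mass of the weight is of order $t^{-1/2}$, and the contribution from $z_n$ far from $x_n$ is exponentially small.
\item Near the zero set $x_n\approx|x'|/\sqrt{n-1}$, I would Taylor expand $f$ around $x_n$: $\int w(z_n)f(z_n)=f(x_n)\int w+O(\sqrt t\,|f'|\int w)$, and $|f'|\lesssim|x|^{-n-1}$.
\end{itemize}
This expansion error, together with $J_2$, whose contribution is $\lesssim t^{1/2}|x|^{-n-2}$ by Lemma \ref{technicallemma}, is of relative size $\sqrt t/|x|\le\ep_0$. It can be absorbed only where $|f(x_n)|\gtrsim \ep_0|x|^{-n}$. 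Exactly at the zero set the two-sided bound must be read with the displayed signed main term, so I expect this to be the main obstacle; choosing $\ep_0$ small and $N$ large is what closes the argument.

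For \eqref{L^Lestimate}, I take $j=i$ replaced by the pair $(n,i)$. The inner integral is $\int\Gamma'(x'-z',t)\partial_{z_i}\partial_{z_n}N(z',z_n)dz'$, which equals the expansion in Lemma \ref{lemma0709-1}(i) evaluated at $(x',z_n)$:
\begin{align*}
\partial_i\partial_nN(x',z_n)+\partial_{x_i}\Gamma'(x',t)\mathbf 1_{z_n\le2|x'|}+J_1 .
\end{align*}
Here $\partial_i\partial_nN(x',z_n)=c\,x_iz_n|(x',z_n)|^{-n-2}$ is positive under the assumption $x_i\ge|x'|/c_0$. Its contribution is $c\,x_iI_{n+2,1}\approx x_i t^{-1/2}x_n|x|^{-n-2}\min\{1,x_n^2/t\}$ by Lemma \ref{technicallemma}. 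The term $\partial_{x_i}\Gamma'(x',t)=-\frac{x_i}{2t}\Gamma'$ has a fixed sign, and integrating it against $\partial_{x_n}\Gamma_1$ yields $\frac{x_i}{t^{1/2}}t^{-\frac{n+1}{2}}e^{-c|x|^2/t}\min\{1,x_n^2/t\}$, with the constant $c$ adjusted between the upper and lower bounds. This term is needed only for $x_n\lesssim|x'|$, which the indicator ensures. The sign conventions must be checked carefully so that both pieces come with the same sign. Finally, the error $J_1$ contributes at most $t\,x_n/(|x|^{n+2}|x'|)\cdot t^{-1/2}\min\{\cdot\}$. Since $x_i\approx|x'|$ and $t\le\ep_0^2|x'|^2$, this is $\lesssim \ep_0^2$ times the main term, so it is absorbed.
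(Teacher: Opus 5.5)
Your treatment of \eqref{L^Lestimate} is the paper's argument and is fine, up to a sign slip. In fact $\partial_i\partial_nN(x',z_n)=-x_iz_n/(\omega_n|(x',z_n)|^{n+2})$ is negative, like $\partial_{x_i}\Gamma'(x',t)$, so both pieces become positive after multiplication by $\partial_{x_n}\Gamma_1(x_n-z_n,t)<0$. For \eqref{240707} you correctly identify what the paper's proof glosses over. It writes the main term as $|x'|^2I_{n+2,0}-(n-1)I_{n+2,2}$ and applies Lemma \ref{technicallemma} to each piece, i.e.\ it subtracts two comparabilities. Your first two bullets are a valid replacement, since the $J_2$-contribution there has relative size $\lesssim\ep_0^2+e^{-c/\ep_0^2}$. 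There are, however, two genuine gaps, and in both cases it is the statement, not just the argument, that has to change.

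First, the third bullet cannot be closed by any choice of $\ep_0$ and $N$, because \eqref{240707} is false near the cone $|x'|^2=(n-1)x_n^2$. At such a point the right-hand side vanishes. The weight $\frac{x_n-z_n}{t^{3/2}}e^{-(x_n-z_n)^2/4t}$, however, lives on $z_n<x_n$, where $f(z_n)>0$. Your own expansion gives $\sum_k\widetilde L_{kk}\approx -f'(x_n)\int_0^{x_n}\frac{u^2}{t^{3/2}}e^{-u^2/4t}\,du\approx |x|^{-n-1}>0$, while $J_2$ contributes only $O(t^{1/2}|x|^{-n-2})$. So the first-moment term is the leading term there, not an error term. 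Consequently the two-sided bound fails on a strip $\big||x'|^2-(n-1)x_n^2\big|\lesssim\sqrt t\,|x|$, and shrinking $\ep_0$ only makes this strip thinner relative to $|x|^2$. What is true is comparability for $x_n\le c|x'|/\sqrt{n-1}$ and for $x_n\ge C|x'|/\sqrt{n-1}$, with fixed $c<1<C$ and $\ep_0=\ep_0(c,C)$. That is exactly what your first two bullets give, and it is the form in which the estimate is used in the proof of Proposition \ref{2ndmainprop}.

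Second, the principal value is not harmless for the diagonal entries. Consider the slice-by-slice integral $\int_0^{x_n}\partial_{x_n}\Gamma_1(x_n-z_n,t)\bigl(\int_{\Rn}\Gamma'(x'-z',t)\,\partial_{z_k}^2N(z',z_n)\,dz'\bigr)dz_n$. In the limit $\ep\to0$ it differs from the half-ball principal value by $\partial_{x_n}\Gamma(x,t)$ times the slice-wise integral of $\partial_k^2N$ over $\widetilde B_\ep(0)$. By the divergence theorem in $z'$ on each slice, the latter equals the half-sphere flux $\int_{\widetilde S_\ep(0)}\partial_kN\,\frac{z_k}{|z|}\,dS=\frac{1}{2n}$, not $0$. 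So what you compute is $L_{kk}$, and by Proposition \ref{lemwidetildeL}, $\sum_{k<n}\widetilde L_{kk}=\sum_{k<n}L_{kk}-\frac{n-1}{2n}\partial_{x_n}\Gamma(x,t)$.

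The Gaussian makes this extra term small, but it is linear in $x_n$, whereas your main term is of order $x_n^2t^{-3/2}|x'|^{-n}$. For $x_n\lesssim|x'|^nt^{-\frac{n-1}{2}}e^{-|x'|^2/4t}$ the extra term dominates, and the upper bound in \eqref{240707} fails. The paper's proof performs the same slice-wise computation, so both arguments actually establish \eqref{240707} for $\sum_kL_{kk}$.

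This is not cosmetic. Away from $\mathrm{supp}\,\psi$ one has $w_n=0$ and $\partial_nw_n=-\nabla'\cdot w'=0$ on $\{x_n=0\}$, so $w_n=O(x_n^2)$ there. The linear term $\frac{n-1}{n}w^G$ in \eqref{1220-8w_n} is cancelled exactly by this piece of $-\sum_kw^{(L)}_{kk}$. For $\widetilde L_{ni}$ no such correction arises, since $\partial_i\partial_nN$ is odd in $z_i$.
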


%\begin{lemma}
%Let $ 1 \leq i, \, j \leq n-1$.  There are $\de_0>> 1$ and $0<\ep_0 <<1$ such that for \(|x'|\geq \de_0\) and $\sqrt{t} < \ep_0 |x'|$, the following holds:
%\begin{itemize}
%\item[(1)] 
% If $ x_i x_j  < \frac{\de_{ij}-c_0}{n} |x'|^2$ for some $ 0 < c_0 < 1   $, then
%\begin{align}
%\widetilde{L}_{ij} (x,t)
% & \approx -\frac{|x'|^2}{t^{\frac{1}{2}}|x|^{n+2}}\min\left\{\frac{x_n^2}{t},1\right\}-\delta_{ij}\frac{x_n^2}{t^\frac{1}{2}|x|^{n+2}} \min (1,\frac{x_n^2}{t}).
%\end{align}
%\item[(2)]
%If   $  \frac{\de_{ij}+ d_1}{n} |x'|^2 < x_i x_j < \frac{\de_{ij}+d_2}{n} |x'|^2  $ for some $ 0 < d_1 < n-1$  and $ d_1 <  d_2$,  then
%  \begin{align}
%\widetilde{L}_{ij}(x,t)    & \approx\textcolor{blue}{\frac{|x'|^2}{t^{\frac{1}{2}}|x|^{n+2}}\min\left\{1,\frac{x_n^2}{t}\right\}   
%                   -\delta_{ij}\frac{x_n^2}{t^\frac{1}{2}|x|^{n +2}} }\min (1,\frac{x_n^2}{t}).
%\end{align}
%
%\item[(3)]
%If $ |x'| \leq c_0 x_i$ for some $ c_0> 0$, then
%  \begin{align}
%\widetilde{L}_{ni}(x,t)
%&\approx \frac{x_nx_i}{|x|^{n+2}t^{\frac{1}{2}}}\min\left\{1,\frac{x_n^2}{t}\right\}+\frac{x_i}{t^{\frac{n+2}{2}}}e^{-\frac{c|x|^2}{t}}   \min\left\{1,\frac{x_n^2}{t}\right\}.   
%\end{align}
%\end{itemize}
% 
%  
%  
%
%
%Here, constants used in $''\approx''$ are independent of $ x,t$.
%\end{lemma}
The proof of Lemma \ref{estimateofkernelt<1} is given in Appendix \ref{proofofkernel}.
%\begin{remark}\label{Lijsymmetry}
%It is immediate from the definition of \(\widetilde{L}_{ij}\) that 
%\begin{enumerate}
%\item \(\widetilde{L}_{ij}=\widetilde{L}_{ji}\).
%\item If \(i,j<n\) and \(i\neq j\), then \(\widetilde{L}_{ij}(x,t)\) is odd with respect to both \(x_i\) and \(x_j\).
%\item If \(i<n\), then \(\widetilde{L}_{in}(x,t)\) is odd with respect to \(x_i\).
%\item If \(i<n\), then \(\widetilde{L}_{ii}(x,t)\) is even with respect to \(x_i\).
%\end{enumerate}
%\end{remark}

We now introduce the decomposition formula of the solution of \eqref{StokesRn+} with the boundary data given by \eqref{0502-6}. Let us denote
\begin{align}\label{wis}
\begin{split}
  w^L_{ij}(x,t) & :=    4\int_0^t   \int_{\Rn} L_{ij}(x- y', t
-s) \psi(y') \phi(s) dy' ds, \quad 1 \leq i, j \leq n,\\
  w^{(L)}_{ij}(x,t) & :=    4\int_0^t   \int_{\Rn} \widetilde{L}_{ij}(x- y', t
-s) \psi(y') \phi(s) dy' ds, \quad 1 \leq i, j \leq n,\\
 w^B_i(x,t) & :=  4\int_0^t   \int_{\Rn} B_{i n}(x- y',  t
-s) \psi(y') \phi(s)dy' ds, \quad 1 \leq i \leq n-1, \\
 w_i^N(x,t) & 
:=   2\phi(t)\int_{\Rn} \partial_{x_i} N(x -y')\psi(y')  dy', \quad 1\leq i \leq n,\\
w^G (x,t) &  :=  -2\int_0^t   \int_{\Rn}\partial_{x_n}\Ga(x- y', t
-s)\psi(y') \phi(s) dy' ds,
\end{split}
\end{align}
where \(x-y':=(x'-y',x_n)\).

Then from \eqref{rep-bvp-stokes-w}, \eqref{Poisson-tensor-K} and Proposition \ref{lemwidetildeL}, we have 
\begin{align}
  w_i(x,t) & =   w^{(L)}_{in}(x,t) + w^{B}_{i}(x,t)+ w^{N}_i(x,t), \quad   1 \leq i \leq n-1,\label{1220-8}\\
  w_n(x,t) 
&:=     -\sum_{i =1}^{n-1}  w^{L}_{ii}(x,t) + w^{N}_n(x,t) + w^G(x,t)\nonumber\\ 
& =     -\sum_{i =1}^{n-1}  w^{(L)}_{ii}(x,t) + w^{N}_n(x,t) + \frac{n-1}n  w^G(x,t).\label{1220-8w_n}
\end{align}

In the next lemma, we provide the estimates of the zero of some nontrivial algebraic equations, which are shown in the estimates of \(w_i\) and \(w_n\) and thus these estimates will help in determining the asymptotic behavior of the reversal points.

\begin{lemma}\label{lemma0630}
Let $ h(\theta) = \theta^a \ln \theta  $ for $ \theta > 0$, where $ a>0$.  
\begin{itemize}

\item[(i)]
If $ M < \frac12$, then $   h(\theta) > -M $ for all $ 0 < \theta < \theta_1^*$ and $  h(\theta) < -M$ for all $\theta_2^* < \theta < e^{-1}$, where 
\begin{align*}
0 <   \theta_1^* =\left(\frac{eaM}{e+1}\right)^{\frac1a}\left(\ln \frac{e+1}{eaM}\right)^{-\frac{1}{a}}< \theta_2^* = (aM)^{\frac1a}\left(\ln \frac{1}{aM}\right)^{-\frac{1}{a}}  <  e^{-\frac1a}.
\end{align*}

\item[(ii)]
If $ M>2$ and \(c>0\), then $h(\theta) < cM$ for all  $ \sqrt{2} < \theta< \theta_1^*$ and $ h(\theta) > cM$ for $ \theta > \theta_2^*$, where \(\theta_i^*:=A_i M^{\frac1a}(\ln M)^{-\frac1a}\) with
\begin{align*}
A_1:=2^{\frac1a}\max\{4, (ac)^{\frac1a}\}, \quad A_2:=2^{\frac1a}\min\left\{4, \left(\frac{ac}{2a+3}\right)^{\frac1a}\right\}.
\end{align*}

\end{itemize}
\end{lemma}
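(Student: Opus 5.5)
Both parts rest on the monotonicity of $h$. Since $h'(\theta)=\theta^{a-1}(a\ln\theta+1)$, the function $h$ is strictly decreasing on $(0,e^{-1/a})$, with minimum value $h(e^{-1/a})=-\frac{1}{ae}$, and strictly increasing on $(e^{-1/a},\infty)$. In particular $h$ is increasing on $(1,\infty)$, where it is positive. So in each part it suffices to evaluate $h$ at the two explicit thresholds and compare with $-M$ or $cM$; the inequalities on the corresponding intervals then follow from monotonicity.

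\emph{Part (i).} Write $\theta=(b/L)^{1/a}$ with $L=\ln(1/b)$ and $0<b<e^{-1}$, so that $L>1$. Then $\theta^a=b/L$ and $\ln\theta=-\frac1a(L+\ln L)$, hence
\begin{align*}
h(\theta)=-\frac{b}{a}\Bigl(1+\frac{\ln L}{L}\Bigr).
\end{align*}
For $\theta_2^*$ take $b=aM$. Then $h(\theta_2^*)=-M\bigl(1+\frac{\ln L}{L}\bigr)<-M$, since $\ln L>0$.

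For $\theta_1^*$ take $b=\frac{eaM}{e+1}$. Then $h(\theta_1^*)=-\frac{e}{e+1}M\bigl(1+\frac{\ln L_1}{L_1}\bigr)$, and the elementary bound $\frac{\ln L}{L}\le \frac1e$ gives $h(\theta_1^*)\ge -M$. Strictness holds unless $L_1=e$; in that borderline case I would shrink the constant slightly.

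The same formula gives $\theta_1^*<\theta_2^*$, because $b\mapsto b/\ln(1/b)$ is increasing. It also gives $\theta_2^*<e^{-1/a}$, because $(aM/L)<e^{-1}$. Monotonicity of $h$ on $(0,e^{-1/a})$ then finishes part (i). This requires $aM<e^{-1}$, so that $L>1$ and $\theta_2^*$ lies in the decreasing range. I expect this is the intended reading of the hypothesis "$M<\frac12$", together with the right endpoint $e^{-1/a}$ (which coincides with the stated $e^{-1}$ when $a=1$). I would make this explicit in the proof.

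\emph{Part (ii).} Put $\theta=AM^{1/a}(\ln M)^{-1/a}$. Then
\begin{align*}
h(\theta)=A^a\frac{M}{\ln M}\Bigl(\ln A+\frac1a\ln M-\frac1a\ln\ln M\Bigr)
=\frac{A^a}{a}M\Bigl(1+\frac{a\ln A-\ln\ln M}{\ln M}\Bigr).
\end{align*}
Since $M>2$, the correction $\frac{a\ln A-\ln\ln M}{\ln M}$ is bounded in terms of $a$ and $A$ alone, using $0\le\frac{\ln\ln M}{\ln M}\le\frac1e$ for $M\ge e$ and a direct check on $2<M<e$.

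The claim then reduces to two inequalities. For the threshold where $h<cM$ is required, we need $\frac{A^a}{a}(1+\text{correction})<c$. For the threshold where $h>cM$ is required, we need $\frac{A^a}{a}(1+\text{correction})>c$. The factor $2^{1/a}$ in $A_i$ contributes $2$ to $A^a$. The $\max$/$\min$ with $4$ ensures $\ln A$ is controlled and that the thresholds exceed $\sqrt2$, so they lie in the increasing range $\theta>1$ of $h$. Monotonicity on $(1,\infty)$ then transfers the inequalities from the thresholds to the stated intervals.

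\emph{Main obstacle.} The only delicate step is this constant bookkeeping in (ii): checking that $A_1,A_2$ as defined really land on the correct sides of $c$ uniformly in $M>2$. In particular the orientation must be confirmed, since as written the $\max$ in $A_1$ suggests $\theta_1^*\ge\theta_2^*$. If it fails as stated, I would swap the roles of the two thresholds or enlarge and shrink the constants accordingly. Nothing beyond calculus is needed.
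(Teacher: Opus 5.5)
Your part (i) is correct and follows the paper's argument: $h$ is strictly decreasing on $(0,e^{-1/a})$, you evaluate $h\bigl((b/L)^{1/a}\bigr)=-\frac{b}{a}\bigl(1+\frac{\ln L}{L}\bigr)$ at the two thresholds, and you use $\frac{\ln L}{L}\le\frac1e$. You are also right that the argument really needs $aM<e^{-1}$ and the endpoint $e^{-1/a}$; the paper's proof also works on $(0,e^{-1/a})$. The borderline case $L_1=e$ needs no adjustment. Strict monotonicity already gives $h(\theta)>h(\theta_1^*)\ge -M$ for every $\theta<\theta_1^*$.

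The gap is in part (ii). You reduce it to constant bookkeeping and stop there, but that bookkeeping is the whole content of (ii), and with the printed constants it does not go through.

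First, the orientation really is reversed. Since $A_1^a=2\max\{4^a,ac\}\ge 2ac$, $\ln A_1>0$ and $\frac{\ln\ln M}{\ln M}\le\frac1e$, you get
\[
h(\theta_1^*)=\frac{A_1^a}{a}\,M\Bigl(1+\frac{a\ln A_1-\ln\ln M}{\ln M}\Bigr)\ge 2\Bigl(1-\frac1e\Bigr)cM>cM,
\]
so $h<cM$ fails just below $\theta_1^*$. What is true, and what this display proves (as $\theta_1^*>4$ lies in the increasing range), is $h>cM$ on $(\theta_1^*,\infty)$. The other assertion must then read $h<cM$ on $(\sqrt2,\theta_2^*)$.

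Second, even after the swap the printed $A_2$ is too large, because it gives $A_2^a=2\min\{4^a,\frac{ac}{2a+3}\}$. For example, $a=1$, $c=10$ gives $A_2=4$. For $M$ slightly above $2$ this yields $\theta_2^*\approx 11.5$ and $h(\theta_2^*)\approx 28>cM\approx 20$.

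The paper's proof instead uses $A_2\le 2^{2+1/a}$ together with $A_2^a\le\frac{ac}{2a+3}$. Then $\frac{\ln A_2}{\ln M}<2+\frac1a$ because $\ln M>\ln 2$, and $-\frac{\ln\ln M}{\ln M}<1$ for $M>2$. Hence
\[
h(\theta_2^*)=A_2^aM\Bigl(\frac{\ln A_2}{\ln M}+\frac1a-\frac1a\,\frac{\ln\ln M}{\ln M}\Bigr)<\frac{2a+3}{a}\,A_2^aM\le cM,
\]
and monotonicity on $(1,\infty)$ concludes. If $\theta_2^*\le\sqrt2$ the claim is simply vacuous. So, contrary to your remark, the $\min$ with $4$ does not keep $\theta_2^*$ above $\sqrt2$, though nothing is lost.

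You should commit to this corrected statement, for instance with $A_2=\min\{2^{2+1/a},(\frac{ac}{2a+3})^{1/a}\}$, and write out these two estimates. Saying you would swap or adjust the constants if something fails is not yet a proof.
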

The proof is given in Appendix \ref{proofoflemma0630}.
  
\begin{remark}We also remind the following elementary inequalities which are useful in determining the asymptotic behavior of the reversal points. For a fixed \(\epsilon>0\), and for all \(x>1\),
\begin{align}\label{lemma0630-2}
\epsilon \ln x \mathbf{1}_{x\leq e^{\frac{1}{\epsilon}}}  +  \frac{e-1}{e}x^{\epsilon}\mathbf{1}_{x>e^{\frac{1}{\epsilon}}}&\leq x^\epsilon-1\leq (e-1)\epsilon x\mathbf{1}_{x\leq e^{\frac{1}{\epsilon}}}  +  x^\epsilon \mathbf{1}_{x>e^{\frac{1}{\epsilon}}}.
\end{align}
\end{remark}

To end this section, we introduce the notation for the set of intervals for which a measurable function changes its sign.
\begin{definition}\label{1202}
Let \(0\leq a<b\leq \infty\) and \(f:[0,\infty)\rightarrow \mathbb{R}\) be a measurable function.
We define
\begin{align*}
&S^{-+}(f;a,b):=\left\{(y_{1}, y_{2})\subset (a,b) \mid f(s)<0 \textrm{ if } a<s<y_{1}, f(s)>0 \textrm{ if } y_{2}<s<b\right\},\\
&S^{+-}(f;a,b):=\left\{(y_{1},y_{2})\subset  (a,b) \mid f(s)>0 \textrm{ if } a<s<y_{1}, f(s)<0 \textrm{ if } y_{2}<s<b\right\}.
%&S^{-+}(f):=S^{-+}(f;0,\infty), \quad S^{+-}(f):= S^{+-}(f;0, \infty).
\end{align*}
\end{definition}
\begin{remark}\label{1125}
Suppose that \(f\) is continuous and bounded in \(\R\). Then by the intermediate value theorem, if there exists \((y_{1}^*, y_{2}^*) \in S^{-+}(f;a,b)\cup  S^{+-}(f;a,b)\), then there exists \(y^*\in (y_{1}^*, y_{2}^*)\) such that \(f(y^*)=0\). This will be used to conclude the argument on the existence of flow reversal in the proof of forthcoming propositions.
\end{remark}

%When $ g = \psi(x') \phi(t) {\bf  e}_j $, $1\leq j\leq n-1$ from  \eqref{Poisson-tensor-K} and   \eqref{rep-bvp-stokes-w},  we have 
%\begin{align}\label{1220-8-2}
%\begin{split}
%w_i(x,t) 
%& = \frac{n-1}{n}w^G(x,t) \de_{ij} + w^{(L)}_{ij}(x,t), \quad 1 \leq i \leq n.
%\end{split}
%\end{align}

%----------------------------------------------------------
%----------------------------------------------------------
%----------------------------------------------------------

\section{Signs of velocity components and asymptotics of reversal points for \texorpdfstring{$t>1$}{}}
%{} for (normal boundary condition)\texorpdfstring{\(g=g_n{\bf e}_n\)}{}}
\label{sign-w}
%\setcounter{equation}{0}

%\subsection{Signs of \texorpdfstring{\(w\)}{} for \(t>1\)}

In this section, we will derive some asymptotics of \(x_n^*\) of \(w_j(x,t)\) (\(1\leq j\leq n\)) for \(t>1\). The main results are presented in Proposition~\ref{1stmainprop} and Proposition~\ref{2ndmainprop}, which correspond to the tangential and normal components of the velocity, respectively.
\subsection{Tangential components of the velocity field}

\subsubsection{{\bf Estimates of \texorpdfstring{\(w_i\)}{}}}
We begin by introducing several integrals which will appear in the estimates of \(w_i\). For \(x_n>0\), \(r>0\), \(t>1\), \(a>-1\) and \(k \geq 0\), we define the following integrals.  %For $ t>1$, 
\begin{align}
  \mathcal{G}_{a,k}(x_n,t)&:=\int_{\frac{1}{2}}^1\frac{(1-s)^a}{(t-s)^k}\min\left\{1,\frac{x_n^2}{t-s}\right\}ds,\label{G-ak}\\ 
  \mathcal{H}_{a,k}(r,t)&:=\int_{\frac{1}{2}}^1\frac{(1-s)^a}{(t-s)^k}e^{-\frac{r^2}{4(t-s)}}ds,\label{H-ak}\\
  \mathcal {K}_{a,k}(x,t)& :=     \int_\frac12^1 \frac{(1-s)^a }{(t-s)^{k}}e^{-\frac{c|x|^2}{t-s}}   \min\left\{ 1,\frac{x_n^2}{t-s} \right\}  ds.\label{K_ak}
\end{align}
In the following lemma, we give the pointwise estimates of $w_i$ in terms of \(\mathcal{G}_{a,k}\), 
\(\mathcal{H}_{a,k}\) and \(\mathcal{K}_{a,k}\).
\begin{lemma}[{{\bf  Estimates of $w_i$}}]\label{w1L-w1B}
There exists \(N\geq 1\) sufficiently large depending only on \(n\) such that the following holds: Let \(t>1\), $ |x'| \geq N(1+\sqrt{t})$ and $x_i \geq \frac12 |x'|$ and \(i\neq n\) and denote \(M:=\int_0^{\frac12}\phi(s)ds\). Then
\begin{align}\label{est-wi}
\begin{split}
w_i(x,t) &  \approx \frac{Mx_n x_i }{|x|^{n+2}} \frac{1}{\sqrt{t}}\min\left\{1,\frac{x_n^2}{t}\right\}  +   \frac{Mx_i }{t^{\frac{n+2}{2}}}e^{-\frac{c|x|^2}{t}}  \min\left\{1,\frac{x_n^2}{t}\right\}
+  \frac{x_n x_i }{|x|^{n+2}}  \mathcal{G}_{a,\frac{1}{2}}(x_n,t)  +  x_i {\mathcal K}_{a,\frac{n+2}{2}}(x,t)\\
& \quad 
- \frac{x_i x_n}{|x'|^n}\Big( M t^{-\frac32 } e^{-\frac{x_n^2}t}  + \mathcal{H}_{a,\frac{3}{2}}(x_n,t)  \Big).
\end{split}
\end{align}
\end{lemma}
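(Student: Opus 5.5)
We start from the decomposition \eqref{1220-8}, $w_i=w^{(L)}_{in}+w^B_i+w^N_i$, and note that for $t>1$ we have $\phi(t)=0$ because ${\rm supp}\,\phi\subset(0,1)$. Hence $w^N_i(x,t)=0$, and only $w^{(L)}_{in}$ and $w^B_i$ need to be estimated. In both terms we split the time integral as $\int_0^{1/2}+\int_{1/2}^1$. On $(1/2,1)$ we use the explicit profile $\phi(s)=(1-s)^a$. On $(0,1/2)$ we have $t-s\in(t-\frac12,t)$, so $t-s\approx t$ for $t>1$, and the contribution is comparable to the kernel at time $t$ multiplied by $M=\int_0^{1/2}\phi$.

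For the spatial convolution we use the hypotheses ${\rm supp}\,\psi\subset B'_{1/2}$ and $|x'|\ge N(1+\sqrt t)$. For $|y'|<\frac12$ these give $|x'-y'|\approx|x'|$, $|x-y'|\approx|x|$ and $(x-y')_i=x_i-y_i\approx x_i$, the last because $x_i\ge\frac12|x'|\ge N/2\gg 1$. We also need $|x'-y'|\le c_0(x_i-y_i)$, which follows from $x_i\ge\frac12|x'|$ after enlarging $c_0$. Moreover $\sqrt{t-s}\le\sqrt t<\epsilon_0|x'-y'|$ once $N\ge 2/\epsilon_0$. These are exactly the hypotheses of \eqref{L^Lestimate} in Lemma \ref{estimateofkernelt<1}, applied at the point $(x'-y',x_n)$ and time $t-s$. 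Since $\widetilde L_{in}=\widetilde L_{ni}$ by symmetry of $\partial_i\partial_nN$, we obtain a two-sided bound
\[
\widetilde L_{in}(x-y',t-s)\approx\Big(\frac{x_n}{|x|^{n+2}}+(t-s)^{-\frac{n+1}{2}}e^{-\frac{c|x|^2}{t-s}}\Big)\frac{x_i}{(t-s)^{1/2}}\min\Big\{1,\frac{x_n^2}{t-s}\Big\},
\]
uniformly in $y'\in{\rm supp}\,\psi$. Every term here is positive, so integrating against $\psi\phi\ge0$ preserves $\approx$. The $(0,1/2)$ piece gives $M x_nx_i|x|^{-n-2}t^{-1/2}\min\{1,x_n^2/t\}+Mx_it^{-\frac{n+2}{2}}e^{-c|x|^2/t}\min\{1,x_n^2/t\}$. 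The $(1/2,1)$ piece gives $\frac{x_nx_i}{|x|^{n+2}}\mathcal G_{a,1/2}(x_n,t)+x_i\mathcal K_{a,\frac{n+2}{2}}(x,t)$ directly from the definitions \eqref{G-ak} and \eqref{K_ak}. In the exponential we absorb the change of constant $c$ and the factor $\|\psi\|_{L^1}$ into the implicit constants; this is allowed by the $\approx$ convention, whose constants may vary in exponents.

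For $w^B_i$ we apply \eqref{0513-1}:
\[
B_{in}(x-y',t-s)\approx-(t-s)^{-3/2}e^{-\frac{x_n^2}{t-s}}\frac{(x_i-y_i)x_n}{|x'-y'|^n},
\]
which is valid because $x_i-y_i\gtrsim|x'-y'|$, $|x'-y'|>1$, and Lemma \ref{lemma0709-1}(iii) applies since $|x'-y'|\ge\max\{1,\sqrt{t-s}\}$. Here it is important that $J_3$ is $O(t|x'|^{-n-1})$, which is small relative to $\partial_iN(x',0)\sim x_i|x'|^{-n}$ when $|x'|\gg\sqrt t$. It then remains to check that the representation \eqref{B-tensor} really factors as the one-dimensional $\partial_{x_n}$ of the $x_n$-heat kernel, $\sim x_n(t-s)^{-3/2}e^{-x_n^2/4(t-s)}$, times the tangential heat convolution of $\partial_iN(\cdot,0)$. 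Integrating over $s$ as before gives $-\frac{x_ix_n}{|x'|^n}\big(Mt^{-3/2}e^{-x_n^2/t}+\mathcal H_{a,3/2}(x_n,t)\big)$, with the exponential constants again adjusted.

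The main obstacle is that $w^{(L)}_{in}$ is positive while $w^B_i$ is negative, so the two-sided bounds cannot simply be added. What we actually establish is that each piece separately is $\approx$ its own expression, with constants depending only on $n$. The statement \eqref{est-wi} is then read in this sense: a sum of positive terms minus a positive term, each comparable to its stated form. The delicate points are uniformity of these comparisons in $y'\in B'_{1/2}$ and in $s$ near $1$, where $t-s$ can be small for $t$ close to $1$. The latter is harmless because Lemma \ref{estimateofkernelt<1} and \eqref{0513-1} only require $\sqrt{t-s}$ to be small relative to $|x'|$, not bounded below.
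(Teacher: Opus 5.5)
Your proposal is correct and is essentially the paper's proof. You decompose $w_i=w^{(L)}_{in}+w^B_i+w^N_i$, apply \eqref{L^Lestimate} to $w^{(L)}_{in}$ and \eqref{0513-1} to $w^B_i$, and split the time integral at $s=\frac12$ to produce the $M$-terms and the integrals $\mathcal G_{a,\frac12}$, $\mathcal K_{a,\frac{n+2}{2}}$, $\mathcal H_{a,\frac32}$. The paper's much terser argument uses implicitly the points you make explicit: that $w^N_i=0$ for $t>1$ because $\phi(t)=0$, the symmetry $\widetilde L_{in}=\widetilde L_{ni}$, the check of the kernel lemmas' hypotheses at $(x'-y',x_n,t-s)$, and the termwise reading of $\approx$ for the signed sum, which is exactly the paper's convention of independent constants for each term.
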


\begin{proof}
 Recalling \eqref{1220-8}, via \(\eqref{wis} _2\) and \eqref{L^Lestimate}, it follows that
\begin{align*}
w_i^{(L)}(x,t) &\approx \int_0^1  \left(\frac{x_n}{|x|^{n+2}}+\frac{1}{(t-s)^{\frac{n+1}{2}}}e^{-\frac{c|x|^2}{t-s}}\right)\frac{x_i}{(t-s)^{\frac{1}{2}}} \min\left\{1,\frac{x_n^2}{t-s}\right\} \phi(s)    ds\\
&=\int_0^{\frac12}\cdots ds+\int_{\frac12}^{1}\cdots ds = I + J.
\end{align*}
We estimate $I$ and $J$ separately.
\begin{align*}
\begin{split}
I&=   \frac{x_n x_i }{|x|^{n+2}} \int_0^\frac12 \frac{1}{\sqrt{t-s}}\min\left\{1,\frac{x_n^2}{t-s}\right\}\phi(s)ds
 + x_i \int_0^\frac12  \frac{1}{(t-s)^{\frac{n+2}{2}}}e^{-\frac{c|x|^2}{t-s}}  \min\left\{1,\frac{x_n^2}{t-s}\right\} \phi(s)ds\\
&  \approx    \frac{Mx_n x_i }{|x|^{n+2}} \frac{1}{\sqrt{t}}\min\left\{1,\frac{x_n^2}{t}\right\}  
+   \frac{Mx_i }{t^{\frac{n+2}{2}}}e^{-\frac{c|x|^2}{t}}  \min\left\{1,\frac{x_n^2}{t}\right\},
\end{split}
\end{align*}

\begin{align*}
\begin{split}
J  &=  \frac{x_n x_i }{|x|^{n+2}} \int_\frac12^1 \frac{1}{\sqrt{t-s}}\min\left\{1,\frac{x_n^2}{t-s}\right\}\phi(s)ds + x_i \int_\frac12^1  \frac{1}{(t-s)^{\frac{n+2}{2}}}e^{-\frac{c|x|^2}{t-s}}  \min\left\{1,\frac{x_n^2}{t-s}\right\} \phi(s)ds\\
&  \approx     \frac{x_n x_i }{|x|^{n+2}}  \mathcal{G}_{a,\frac{1}{2}}(x_n,t)  + x_i {\mathcal K}_{a,\frac{n+2}{2}}(x,t).
\end{split}
\end{align*}

On the other hand, by \(\eqref{wis}_3\), \eqref{0513-1}, we find that 
\begin{align*}
\begin{split}
    w_i^B(x,t)  &\approx-\frac{x_ix_n}{|x'|^n}\int_0^1\frac{1}{(t-s)^{\frac{3}{2}}}e^{-\frac{x_n^2}{4(t-s)}} \phi(s)ds  \approx-\frac{x_i x_n}{|x'|^n} \Big( M t^{-\frac32 } e^{-\frac{x_n^2}t}  + \mathcal{H}_{a,\frac{3}{2}}(x_n,t)  \Big).
   \end{split}
\end{align*}
Summing all the estimates gives the result.
\end{proof}

In the following auxiliary lemmas, we give the pointwise estimates of  the integrals \eqref{G-ak}-\eqref{K_ak}. We remark that the constants in $''\approx''$ are independent of $a$ and $k$. The proofs of these lemmas are given in the Appendix \ref{proofofelementarylemma}, \ref{proofofelementarylemma2} and \ref{proofofelementarylemma3}, respectively.

\begin{lemma}\label{elementarylemma}
Let \(x_n>0\), \(t>1\), \(a>-1\) and \(k \geq 0\). Then the following holds.
\begin{itemize}
\item[(i)]
If $ t\geq\frac98$, we have % and $ x_n >0$, 
   \(\displaystyle \mathcal{G}_{a,k}(x_n,t)\approx \frac{1}{a+1}(t-1)^{-k} \min\left\{1,\frac{x_n^2}{t-1}\right\}.\)
\item[(ii)]
If $ 1 < t < \frac98$ and $x_n\leq \sqrt{2(t-1)}$, we have 
\begin{align*}
\mathcal{G}_{a,k}(x_n,t)
& \approx  x_n^2  \left( \frac{2^{k-a} - (t-1)^{a-k} }{ a-k }\mathbf{1}_{a\neq k} +   |\ln (2(t-1))|\mathbf{1}_{a=k} + \frac1{1 +a}   (t-1)^{a-k}\right).
\end{align*}
\item[(iii)]
If  $ 1 < t < \frac98$ and \(x_n\geq \frac12\), we have 
\begin{align*}
\mathcal{G}_{a,k}(x_n,t)
& \approx  \frac{2^{k-a-1} - (t-1)^{-k+a+1} }{1-k+a } \mathbf{1}_{a\neq k-1} + |\ln (2(t-1))|\mathbf{1}_{a=k-1}   +\frac1{1 +a} (t-1)^{-k+a+1}.
\end{align*}
\item[(iv)]
If $ 1 < t < \frac98$ and \(\sqrt{2(t-1)}\leq x_n\leq \frac12\), we have 
\begin{align*}
    \mathcal{G}_{a,k}(x_n,t) & \approx \frac{(t-1)^{a-k+1}}{1+a}\mathbf{1}_{-1<a<-1+\frac{k}{2}}+\frac{x_n^{2a-2k+2}-(t-1)^{a+1-k}}{a+1-k}\mathbf{1}_{-1+\frac{k}{2}\leq a<k-\frac12,a\neq k-1}\\
    &\quad+\ln\left(\frac{x_n^2}{t-1}\right)\mathbf{1}_{a=k-1}+x_n\mathbf{1}_{a=k-\frac12}+x_n^2\frac{2^{k-a}-x_n^{2a-2k}}{a-k}\mathbf{1}_{k-\frac12<a<k+\frac12,a\neq k}\\
    &\quad+x_n^2|\ln(2x_n^2)|\mathbf{1}_{a=k}+\frac{x_n^2}{2^a a}\mathbf{1}_{a\geq k+\frac12}.
     %& \approx  x_n^{2a -2k +2} \left(  \frac1{a-k}\big(  (2x_n^2)^{k -a}  -1 \big)\mathbf{1}_{a\neq k} + |\ln (2x_n^2)| \mathbf{1}_{a=k}\right. \\
%&\left. \quad  + \frac1{1-k +a} \big( 1 -(\frac{x_n^2}{t-1})^{k -a -1} \big)  \mathbf{1}_{a\neq l} +  \ln \frac{x_n^2}{t-1}\delta_{k -a =1}\right)
%+ \frac1{1 +a} (t-1)^{-k+a+1}.
\end{align*}
\end{itemize}
 \end{lemma}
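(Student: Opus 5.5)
The plan is to substitute $\sigma = t-s$ and split the $\sigma$-range according to which branch of $\min\{1, x_n^2/\sigma\}$ is active. With $\sigma\in(t-1, t-\frac12)$ we have
\[
\mathcal{G}_{a,k}(x_n,t)=\int_{t-1}^{t-\frac12}\frac{(\sigma-(t-1))^a}{\sigma^k}\min\Big\{1,\frac{x_n^2}{\sigma}\Big\}\,d\sigma .
\]
All constants must be independent of $a$ and $k$, so every step is an exact elementary integration followed by two-sided comparisons with absolute constants.

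For (i), $t\ge \frac98$ gives $t-1\ge \frac18$, so $\sigma\approx t-1$ uniformly on the interval: indeed $t-1\le\sigma\le t-\frac12\le 5(t-1)$. Hence $\sigma^{-k}\approx (t-1)^{-k}$ and $\min\{1,x_n^2/\sigma\}\approx \min\{1,x_n^2/(t-1)\}$. What remains is $\int_{1/2}^1(1-s)^a\,ds=2^{-a-1}/(a+1)\approx 1/(a+1)$. The worry is that the constant $5^{k}$ depends on $k$. Since $k$ only takes a few fixed values in the applications ($\frac12,\frac32,\frac{n+2}{2}$), I would allow dependence on an upper bound for $k$. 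The same remark applies to $2^{-a}$, since $a\le a_0=1$.

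For (ii)–(iv) we have $t-1<\frac18$, and I split at $\sigma = 2(t-1)$. On the inner piece $\sigma\in(t-1,2(t-1))$ we have $\sigma\approx t-1$, so it contributes $\approx \frac{(t-1)^{a+1-k}}{a+1}\min\{1,x_n^2/(t-1)\}$. On the outer piece $\sigma\in(2(t-1),t-\frac12)$ we have $\sigma-(t-1)\approx\sigma$, so the integrand is $\approx \sigma^{a-k}\min\{1,x_n^2/\sigma\}$.

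In case (ii), $x_n^2\le 2(t-1)\le\sigma$ on the outer piece, so the minimum equals $x_n^2/\sigma$. The outer piece is then $x_n^2\int_{2(t-1)}^{\approx 1/2}\sigma^{a-k-1}\,d\sigma$, which integrates exactly to the $\frac{2^{k-a}-(t-1)^{a-k}}{a-k}$ term, or to a logarithm when $a=k$. The inner piece gives $\frac{x_n^2}{a+1}(t-1)^{a-k}$.

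In case (iii), $x_n\ge\frac12$ makes the minimum $\approx 1$ throughout. The outer piece is then $\int\sigma^{a-k}\,d\sigma$, giving the stated formula with the logarithm at $a=k-1$.

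In case (iv) the outer piece must be split once more at $\sigma=x_n^2$. This gives
\[
\int_{2(t-1)}^{x_n^2}\sigma^{a-k}\,d\sigma+x_n^2\int_{x_n^2}^{1/2}\sigma^{a-k-1}\,d\sigma ,
\]
together with the inner piece $\frac{(t-1)^{a+1-k}}{a+1}$.

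The main obstacle is the final regime bookkeeping in (iv): deciding, uniformly in $a$, which of these three terms dominates and showing that the others are absorbed. The thresholds $a=-1+\frac k2$, $a=k-\frac12$ and $a=k+\frac12$ come from comparing them. For $a$ near $-1$ the factor $\frac1{a+1}$ makes the inner piece dominate. For $a$ between the thresholds the first outer integral dominates, with the logarithm at $a=k-1$. At $a=k-\frac12$ the two outer integrals balance and give $x_n$. For larger $a$ the second outer integral dominates, with the logarithm at $a=k$, and it saturates at $x_n^2$ once $a\ge k+\frac12$.

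I would handle each comparison with the elementary inequality \eqref{lemma0630-2}, applied to the differences of powers $\frac{X^\epsilon-1}{\epsilon}$. These behave like $\ln X$ or like $X^\epsilon/\epsilon$, which provides the uniformity in $a$ near each threshold.
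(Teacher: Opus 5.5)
Your proposal matches the paper's proof step for step. The paper also splits at $s=2-t$ (your $\sigma=2(t-1)$) and, in case (iv), at $s=t-x_n^2$ (your $\sigma=x_n^2$), compares $t-s$ with $1-s$ or $t-1$ on the pieces, integrates exactly, and leaves the regime comparison in (iv) to direct computation; its constants also depend on $k$ (factors $2^{-k-1}$ in the lower bounds), as you allow.

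One small correction to your bookkeeping concerns the middle regime, including the logarithmic case $a=k-1$. There the stated expression $\frac{x_n^{2a-2k+2}-(t-1)^{a+1-k}}{a+1-k}$ is the inner piece plus your first outer integral, and that integral vanishes at $x_n^2=2(t-1)$. So it is their sum, not the outer integral alone, that dominates there.
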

\begin{lemma}\label{elementarylemma2}
Let \(r>0\), \(t>1\), \(a>-1\) and \(k\geq 0\). Then the following holds.
    \begin{itemize}
        \item[(i)] If \(t>\frac98\), then \(
            \mathcal{H}_{a,k}(r,t)\approx \frac{1}{a+1}(t-1)^{-k}e^{-\frac{cr^2}{t-1}}\).
        \item[(ii)] Let \(1<t<\frac98\).
        \begin{itemize}
        \item[(a)] If \(r\leq \sqrt{2(t-1)}\), then \[
            \mathcal{H}_{a,k}(r,t)\approx \frac{1-(t-1)^{a+1-k}}{a+1-k}\mathbf{1}_{a\neq k-1}+|\ln(t-1)|\mathbf{1}_{a=k-1} +  \frac{1}{a+1}(t-1)^{a+1-k}e^{-\frac{cr^2}{t-1}}.\]
        \item[(b)] If \(\sqrt{2(t-1)}\leq r\leq \frac12\), then \[       \mathcal{H}_{a,k}(r,t)\approx \frac{1-r^{2(a-k+1)}}{a+1-k}\mathbf{1}_{a\neq k-1}+|\ln r|\mathbf{1}_{a=k-1}  +  \frac{1}{a+1}(t-1)^{a+1-k}e^{-\frac{cr^2}{t-1}}.\]
		\item[(c)] If \(r\geq \frac12\), then \(\displaystyle
		\mathcal{H}_{a,k}(r,t)\approx r^{-2}e^{-r^2}  +  \frac{1}{a+1}(t-1)^{a+1-k}e^{-\frac{cr^2}{t-1}}\).
%		\item[(d)] if \(r^2\geq 2(t-1)\), \(r^2\geq \frac43\), \(t\geq \frac53\), then \(\mathcal{H}_{a,k}(r,t)\approx t^{-1}r^{-2}e^{-r^2}\).
        \end{itemize}
    \end{itemize}
\end{lemma}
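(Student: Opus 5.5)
The plan is a direct estimate of
\[
\mathcal{H}_{a,k}(r,t)=\int_{1/2}^1 (1-s)^a (t-s)^{-k} e^{-\frac{r^2}{4(t-s)}}\,ds .
\]
Substituting $\tau=t-s$, we have $\tau\in(t-1,t-\tfrac12)$ and $1-s=\tau-(t-1)$. Write $\epsilon:=t-1$, so that
\[
\mathcal{H}_{a,k}(r,t)=\int_{\epsilon}^{\epsilon+\frac12}(\tau-\epsilon)^a\tau^{-k}e^{-\frac{r^2}{4\tau}}\,d\tau .
\]
The interval splits naturally into a near part $\tau\in(\epsilon,2\epsilon)$, where $\tau\approx\epsilon$, and a far part $\tau\in(2\epsilon,\epsilon+\tfrac12)$, where $\tau-\epsilon\approx\tau$. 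The far part is empty or negligible when $\epsilon$ is large.

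For (i), $t>\frac98$ means $\epsilon>\frac18$. Then $\tau\in(\epsilon,\epsilon+\frac12)$ is comparable to $\epsilon$, with constants independent of $a$ and $k$ only if $k$ is kept bounded. So $\tau^{-k}\approx\epsilon^{-k}$ and $e^{-r^2/(4\tau)}$ lies between $e^{-r^2/(4\epsilon)}$ and $e^{-cr^2/\epsilon}$. The remaining factor integrates exactly:
\[
\int_0^{1/2}u^a\,du=\frac{2^{-a-1}}{a+1}\approx\frac{1}{a+1},
\]
which gives (i). The $\approx$ convention allows different constants $c$ in the exponent on the two sides.

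For (ii), $1<t<\frac98$. The near part gives $\epsilon^{-k}e^{-cr^2/\epsilon}\int_0^{\epsilon}u^a\,du=\frac{1}{a+1}\epsilon^{a+1-k}e^{-cr^2/\epsilon}$, which is the common last term in (a)–(c). The far part is comparable to
\[
F:=\int_{2\epsilon}^{1/2}\tau^{a-k}e^{-\frac{r^2}{4\tau}}\,d\tau ,
\]
using $\epsilon+\frac12\approx\frac12$.
\begin{itemize}
\item[(a)] If $r^2\le2\epsilon$, the exponential is $\approx1$ on the whole range. Then $F\approx\int_{2\epsilon}^{1/2}\tau^{a-k}\,d\tau$, which produces $\frac{1-\epsilon^{a+1-k}}{a+1-k}$ up to constants, or $|\ln\epsilon|$ when $a=k-1$.
\item[(b)] If $2\epsilon\le r^2\le\frac14$, split $F$ at $\tau=r^2$. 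On $\tau>r^2$ the exponential is $\approx1$, giving $\frac{1-r^{2(a+1-k)}}{a+1-k}$ or $|\ln r|$. On $2\epsilon<\tau<r^2$, substitute $\sigma=r^2/\tau$; the contribution is $r^{2(a+1-k)}\int_1^{r^2/2\epsilon}\sigma^{k-a-2}e^{-\sigma/4}\,d\sigma\lesssim r^{2(a+1-k)}$, which is absorbed into the main term.
\item[(c)] If $r\ge\frac12$, the exponential decays throughout the range. The largest values occur near $\tau=\frac12$, and integrating there gives roughly $r^{-2}e^{-r^2/2}$, consistent with the claimed $r^{-2}e^{-r^2}$ up to the constant convention.
\end{itemize}
The absorption in (b) works because the exponential cuts off small $\tau$.

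The main obstacle is making the constants uniform in $a$ and $k$, as the remark before the lemma asserts. Near $a=k-1$ the expressions $\frac{1-x^{a+1-k}}{a+1-k}$ must be shown comparable to the actual integrals, with the logarithm as the limiting case. I would handle this with the elementary inequality \eqref{lemma0630-2}, applied with $\epsilon=|a+1-k|$. I expect $k$ to range only over the bounded values actually used, such as $\frac32$ and $\frac{n+2}{2}$, and would use that boundedness when comparing $\tau^{-k}$ with $\epsilon^{-k}$.
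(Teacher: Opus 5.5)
Your proposal is correct and follows the paper's proof. Both use the same split at $s=2-t$ (your $\tau=2(t-1)$): the piece $2-t<s<1$ produces the common term $\frac{1}{a+1}(t-1)^{a+1-k}e^{-\frac{cr^2}{t-1}}$, and the piece $\frac12<s<2-t$, where $t-s\approx 1-s$, is estimated regime by regime. The paper instead substitutes $u=\frac{cr^2}{1-s}$ to rewrite that piece as $r^{2(a-k+1)}I\bigl(k-a-2;2cr^2,\frac{cr^2}{t-1}\bigr)$ and reads off (a)--(c) from the cases of Lemma~\ref{twosidedgammaestimate}, whereas you carry out those incomplete-gamma estimates by hand; part (iii) of that lemma is the rigorous version of your endpoint heuristic in (c).
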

\begin{lemma}\label{elementarylemma3}
Let \(|x'|\geq 2\),  \(x_n>0\), \(t>1\), \(a>-1\) and \(k\geq 0\). Then the following holds.
\begin{itemize}
\item[(i)] Let \(t\geq  \frac98\), then \(\displaystyle  \mathcal{K}_{a,k}(x,t)\approx \frac{1}{a+1}(t-1)^{-k}e^{-\frac{c|x|^2}{t-1}}\min\left\{1,\frac{x_n^2}{t-1}\right\}\).
\item[(ii)] Let \(1<t<\frac98\), then \(\displaystyle  
\mathcal{K}_{a,k}(x,t)\approx |x|^{-2}e^{-|x|^2}\min\left\{1, x_n^2\right\}+\frac{1}{1+a}(t-1)^{a+1-k}e^{-\frac{c|x|^2}{t-1}}\min\left\{1,\frac{x_n^2}{t-1}\right\}.\)
\end{itemize}
\end{lemma}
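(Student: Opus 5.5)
We need to prove Lemma \ref{elementarylemma3}, the two-sided estimate of
\[
\mathcal{K}_{a,k}(x,t)=\int_{1/2}^1 \frac{(1-s)^a}{(t-s)^k}\,e^{-\frac{c|x|^2}{t-s}}\min\Big\{1,\frac{x_n^2}{t-s}\Big\}\,ds .
\]
The plan is to substitute $\tau=t-s$, which ranges over $(t-1,t-\frac12)$, so that $1-s=\tau-(t-1)$. This gives
\[
\mathcal{K}_{a,k}=\int_{t-1}^{t-\frac12}(\tau-(t-1))^a\,\tau^{-k}\,e^{-c|x|^2/\tau}\min\{1,x_n^2/\tau\}\,d\tau .
\]
As in the other $\approx$ statements of the paper, the constant $c$ in the exponent may differ between the upper and lower bounds.

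\textbf{Case (i), $t\ge \frac98$.} Here $\tau\in(t-1,t-\frac12)$ and $t-\frac12\le 4(t-1)$, so $\tau\approx t-1$ uniformly. Hence $\tau^{-k}\approx (t-1)^{-k}$ and $\min\{1,x_n^2/\tau\}\approx\min\{1,x_n^2/(t-1)\}$. The exponential is squeezed between $e^{-c|x|^2/(t-1)}$ and $e^{-4c|x|^2/(t-1)}$. The remaining weight integrates to $\int_{t-1}^{t-1/2}(\tau-(t-1))^a\,d\tau=\frac{2^{-a-1}}{a+1}$, which is comparable to $\frac{1}{a+1}$ because $a\in(-1,1]$. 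This gives the claim, with constants uniform in $a\le a_0=1$ and in $k$ in the bounded range of use. (If uniformity in $k$ is needed, one absorbs $4^k$ by adjusting $c$, since $|x|\ge 2$.)

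\textbf{Case (ii), $1<t<\frac98$.} Write $\epsilon=t-1<\frac18$ and split the $\tau$-range at $2\epsilon$.

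On the near part $\tau\in(\epsilon,2\epsilon)$, we have $\tau\approx\epsilon$ and all factors freeze as in case (i). The weight gives $\int_0^{\epsilon}u^a\,du=\epsilon^{a+1}/(a+1)$, so this part is
\[
\approx\frac{1}{1+a}\,\epsilon^{a+1-k}e^{-c|x|^2/\epsilon}\min\{1,x_n^2/\epsilon\},
\]
which is the second term.

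On the far part $\tau\in(2\epsilon,t-\frac12)$, we have $\tau-\epsilon\approx\tau$, so the integrand is comparable to
\[
\tau^{a-k}e^{-c|x|^2/\tau}\min\{1,x_n^2/\tau\},\qquad \tau\in(2\epsilon,\tfrac58).
\]
Substitute $\sigma=|x|^2/\tau$. Since $|x|\ge2$, the range of $\sigma$ lies in $[\frac85|x|^2,\frac{|x|^2}{2\epsilon}]$, so $\sigma\gtrsim|x|^2\ge4$. The integral $\int\sigma^{k-a-2}e^{-c\sigma}\,d\sigma$ is dominated by its lower endpoint, i.e.\ by $\tau\approx\frac58$. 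Near that endpoint the polynomial factors $\tau^{a-k}\approx1$ and $\min\{1,x_n^2/\tau\}\approx\min\{1,x_n^2\}$.

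For the upper bound, we use $\sigma^{m}e^{-c\sigma}\lesssim e^{-c'\sigma}$ and integrate, which gives
\[
\int_{\sigma_0}^\infty e^{-c'\sigma}\,d\sigma\cdot|x|^2\sigma_0^{-2}\approx|x|^{-2}e^{-c'|x|^2}.
\]
For the lower bound, we restrict to $\tau\in(\frac12,\frac58)$. This interval is nonempty and lies inside the far range because $2\epsilon<\frac14$. On it the integrand is $\gtrsim e^{-2c|x|^2}\min\{1,x_n^2\}$ over an interval of length $\frac18$. Since $|x|\ge2$, we have $e^{-2c|x|^2}\gtrsim|x|^{-2}e^{-c''|x|^2}$, which matches the claimed form $|x|^{-2}e^{-|x|^2}\min\{1,x_n^2\}$ up to the change of constant in the exponent.

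Summing the near and far parts gives (ii).

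\textbf{Main obstacle.} The delicate point is the far part in case (ii): we must confirm that it is genuinely of size $|x|^{-2}e^{-c|x|^2}$, and not larger from the region $\tau\sim\epsilon$, uniformly in $a$ and $k$. The exponent $e^{-c|x|^2/\tau}$ is increasing in $\tau$, so for $|x|\ge2$ it strongly favors large $\tau$. Any polynomial growth coming from $\tau^{a-k}$ or from $a$ near $-1$ is harmless, because on $\tau\ge2\epsilon$ we have $\tau-\epsilon\ge\tau/2$, so no singular factor $(1-s)^a$ survives there. One then absorbs $\tau^{-k-1}\le (|x|^2/\tau)^{k+1}|x|^{-2k-2}$ into the exponential by reducing $c$. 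This absorption is exactly where the flexibility of $c$ in the $\approx$ convention is used.
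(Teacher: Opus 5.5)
The overall route is the paper's. Your split at $\tau=2\epsilon$ is the paper's split at $s=2-t$, and the near part is frozen the same way. On the far part you replace the paper's two-sided incomplete-gamma estimate (Lemma \ref{twosidedgammaestimate}(iii), applied separately for $x_n^2<2(t-1)$, $2(t-1)\le x_n^2<\frac14$ and $x_n\ge\frac12$) by absorbing polynomial factors into the exponential. That is legitimate, since the exponent constant may differ between the two sides.

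\textbf{The far-part lower bound fails as written.} The far range is $\tau\in(2\epsilon,t-\frac12)=(2\epsilon,\frac12+\epsilon)$, not $(2\epsilon,\frac58)$. Since $\epsilon<\frac18$, its right endpoint lies strictly below $\frac58$, so $(\frac12,\frac58)$ is not contained in it. The actual overlap $(\frac12,\frac12+\epsilon)$ has length $\epsilon$, so it only gives a lower bound of order $\epsilon\,e^{-2c|x|^2}\min\{1,x_n^2\}$, which degenerates as $t\to1^+$. The same confusion appears when you say the integral is dominated by $\tau\approx\frac58$; the endpoint is $t-\frac12\in(\frac12,\frac58)$.

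The repair is to integrate over $\tau\in(\frac14,\frac12)$. This interval does lie in the far range, because $2\epsilon<\frac14<\frac12<\frac12+\epsilon$. On it:
\begin{itemize}
\item $(\tau-\epsilon)^a\tau^{-k}\approx 1$, since $\tau-\epsilon\in(\frac18,\frac12)$;
\item $e^{-c|x|^2/\tau}\ge e^{-4c|x|^2}$;
\item $\min\{1,x_n^2/\tau\}\ge\min\{1,x_n^2\}$.
\end{itemize}
This gives $\gtrsim e^{-4c|x|^2}\min\{1,x_n^2\}\gtrsim |x|^{-2}e^{-4c|x|^2}\min\{1,x_n^2\}$, uniformly in $t$.

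\textbf{The far-part upper bound needs one inequality stated.} Use $\min\{1,x_n^2/\tau\}\le \tau^{-1}\min\{1,x_n^2\}$ for $\tau\le 1$. Without it, the claim that the factor is $\approx\min\{1,x_n^2\}$ near the endpoint does not control the whole range; for $\tau<x_n^2<1$ the factor equals $1$. This extra $\tau^{-1}$ is exactly what your absorption of $\tau^{-k-1}$ covers.

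\textbf{Minor slips in (i).} For $t\ge\frac98$ one only has $t-\frac12\le 5(t-1)$, not $4(t-1)$. The exponential then lies between $e^{-c|x|^2/(t-1)}$ and $e^{-c|x|^2/(5(t-1))}$; you inverted the factor. Neither slip affects the conclusion. Your parenthetical about absorbing $4^k$ into the exponent fails when $|x|^2/(t-1)$ is small (large $t$). However, $k$ only takes finitely many values in the paper, so $k$-dependent constants are harmless.
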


The following lemma shows that for certain conditions on \((x,t)\) and \(a\), the integral \(\mathcal{K}_{a,\frac{n+2}{2}}\) is controlled by the integral \(\mathcal{G}_{a,\frac12}\).
\begin{lemma}\label{1211}
Let \(1<t<\frac98\). If either \(\sqrt{2(t-1)}\leq x_n<\frac12\) and \(-1<a\leq 0\), or \(\frac12\leq x_n\leq |x'|\), then
\begin{align*}
\mathcal{K}_{a,\frac{n+2}{2}}(x,t)\lesssim \frac{x_n}{|x|^{n+2}}\mathcal{G}_{a,\frac12}(x_n,t).
\end{align*}
\begin{proof}
We first consider the case \(\sqrt{2(t-1)}\leq x_n<\frac12\) and \(-1<a\leq 0\). We shall show that \(\frac{|x|^{n+2}}{x_n}\mathcal{K}_{a, \frac{n+2}{2}}(x,t)\lesssim x_n\lesssim \mathcal{G}_{a,\frac12}(x_n,t)\).
For the right inequality, note that \(\mathcal{G}_{a,\frac12}(x_n,t)\) is a decreasing function in \(a\). Thus from Lemma \ref{elementarylemma}, we find that for \(-1<a\leq 0\),
\begin{align*}
\mathcal{G}_{a,\frac12}(x_n,t)\geq \mathcal{G}_{0,\frac12}(x_n,t)\approx x_n.
\end{align*}
We now show the left inequality. By some change of variables,
\begin{align*}
\frac{|x|^{n+2}}{x_n}\mathcal{K}_{a,\frac{n+2}{2}}(x,t)
&= x_n |x|^{n+2}
\Bigg(
\int_{2(t-1)}^{t-\frac12}
u^{a-\frac{n+4}{2}} e^{-\frac{|x|^2}{u}} \, du
+ \frac{e^{-\frac{|x|^2}{t-1}}}{(t-1)^{\frac{n+4}{2}}}
\int_{2-t}^1 (1-s)^a \, ds
\Bigg) \\
&= x_n |x|^{n+2}
\int_{\frac{|x|^2}{t-\frac12}}^{\frac{|x|^2}{2(t-1)}}
\left(\frac{|x|^2}{v}\right)^{a-\frac{n+4}{2}}
e^{-v}\frac{|x|^2}{v} \, dv+ x_n \frac{|x|^{n+2}}{(t-1)^{\frac{n+2}{2}-a}}
e^{-\frac{|x|^2}{t-1}} \\
&\lesssim
x_n |x|^{2a}
\int_0^{\infty} v^{\frac n2-a} e^{-v} \, dv
+ x_n |x|^{2a} \lesssim x_n .
\end{align*}

We now consider the case \(\frac12\leq x_n\leq |x'|\). Here we find that
\begin{align*}
\frac{|x|^{n+2}}{x_n}\mathcal{K}_{a,\frac{n+2}{2}}(x,t)&=\frac{|x|^{n+2}}{x_n}\int_\frac12^1 \frac{(1-s)^a}{(t-s)^{\frac{n+2}{2}}}e^{-\frac{c|x|^2}{t-s}}\min\left\{1, \frac{x_n^2}{t-s}\right\} ds\\
&\lesssim \frac{1}{x_n}\int_{\frac12}^1 (1-s)^a e^{-\frac{cx_n^2}{t-s}}\min\left\{1, \frac{x_n^2}{t-s}\right\} ds \\
&\leq \frac{(t-1/2)^{\frac12}}{x_n}\int_{\frac12}^1 \frac{(1-s)^a}{(t-s)^{\frac12}}e^{-\frac{cx_n^2}{t-s}}\min\left\{1, \frac{x_n^2}{t-s}\right\} ds\\
&\lesssim \int_{\frac12}^1 \frac{(1-s)^a}{(t-s)^{\frac12}}\min\left\{1, \frac{x_n^2}{t-s}\right\} ds = \mathcal{G}_{a,\frac12}(x_n,t).
\end{align*}
This finishes the proof.
\end{proof}
\end{lemma}

The following corollary, which is a direct consequence of Lemma \ref{1211},  serves to simplify the expression for \(w_i\) given in \eqref{est-wi}.
\begin{cor}\label{1216} Let \(1<t<\frac98\).
\begin{itemize}
\item[(i)] If \(\sqrt{2(t-1)}\leq x_n <\frac12\) and \(-1<a\leq 0\), then
\begin{align*}
w_i(x,t)\approx \frac{Mx_n^3x_i}{|x|^{n+2}} + \frac{x_nx_i}{|x|^{n+2}}\mathcal{G}_{a,\frac12}(x_n,t) -\frac{x_ix_n}{|x'|^{n}}\left(M + \mathcal{H}_{a,\frac32}(x_n,t)\right).
\end{align*}
\item[(ii)] If \(\sqrt{2(t-1)}\leq x_n<\frac12\) and \(a>0\), then
\begin{align*}
w_i(x,t)\approx  x_i \mathcal{K}_{a,\frac{n+2}{2}}(x,t) - \frac{x_ix_n}{|x'|^n}\left(M + \mathcal{H}_{a,\frac32}(x_n,t)\right).
\end{align*}
\item[(iii)] If \(\frac12\leq x_n\leq |x'|\), then
\begin{align*}
w_i(x,t)\approx \frac{x_ix_n}{|x'|^{n+2}}\left(M+\mathcal{G}_{a,\frac12}(x_n,t)-|x'|^2 (Me^{-x_n^2}+\mathcal{H}_{a,\frac32}(x_n,t))\right).
\end{align*}
\end{itemize}
\end{cor}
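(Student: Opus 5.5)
The plan is to start from the pointwise estimate \eqref{est-wi} of Lemma \ref{w1L-w1B}. That lemma applies because $1<t<\frac98$ and $|x'|$ is large. In each of the three regimes I will decide which of its five terms dominate and which can be absorbed, using Lemma \ref{1211} and the explicit asymptotics in Lemmas \ref{elementarylemma}--\ref{elementarylemma3}. The relation ``$\approx$'' allows positive terms to be absorbed into other positive terms of the same sign group, and likewise for negative terms. So it suffices to show that each discarded term is bounded by a constant times a retained term of the same sign.

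For (i), take $\sqrt{2(t-1)}\le x_n<\frac12$ and $-1<a\le 0$. Since $t\approx 1$ and $x_n^2/t\approx x_n^2<1$, the first term of \eqref{est-wi} is $\approx Mx_n^3x_i|x|^{-n-2}$. The second term, $Mx_it^{-\frac{n+2}{2}}e^{-c|x|^2/t}x_n^2$, is exponentially small in $|x'|^2$. Because $|x|\ge|x'|\ge N$, I can choose $N$ large so that $e^{-c|x|^2}\lesssim x_n|x|^{-n-2}$, and then this term is dominated by the first. Lemma \ref{1211} gives $x_i\mathcal K_{a,\frac{n+2}{2}}\lesssim x_nx_i|x|^{-n-2}\mathcal G_{a,\frac12}$, so the fourth term is absorbed into the third. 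In the negative part, $t^{-3/2}e^{-x_n^2/t}\approx 1$ because $x_n<\frac12$ and $t<\frac98$. This yields the form stated in (i).

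For (ii), where $a>0$, Lemma \ref{1211} is no longer available. Instead, I will show that the positive terms $Mx_n^3x_i|x|^{-n-2}$ and $x_nx_i|x|^{-n-2}\mathcal G_{a,\frac12}$ are dominated by the negative term $x_ix_nM|x'|^{-n}$. This holds because $|x|^{-n-2}\approx|x'|^{-n-2}$ when $x_n<\frac12\le|x'|$, and $\mathcal G_{a,\frac12}\lesssim x_n\lesssim 1$ for $a>0$ by Lemma \ref{elementarylemma}(iv). Both terms are therefore $O(|x'|^{-2})$ relative to the negative one, and they can be absorbed once $N$ is large. The exponential term $Mx_it^{-\frac{n+2}{2}}e^{-c|x|^2/t}x_n^2$ is absorbed in the same way. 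Only $x_i\mathcal K$ is kept, because by Lemma \ref{elementarylemma3}(ii) it contains the possibly large piece $(t-1)^{a-\frac n2}e^{-c|x|^2/(t-1)}$.

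The delicate point is the meaning of ``$\approx$'' for a difference. If the two-sided constants are applied separately to the positive group and the negative group, then absorbing a positive term into a negative one is not legitimate. I would therefore state explicitly that a positive term $P$ with $P\le\varepsilon Q$, where $Q$ is a retained negative term, can be moved into $Q$ at the cost of replacing the constant $c$ by $c-\varepsilon$. This works since $|x'|^{-2}\le N^{-2}$ is small. I expect making this bookkeeping rigorous to be the main obstacle.

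For (iii), take $\frac12\le x_n\le|x'|$. Then $|x|\approx|x'|$, so the first term is $\approx Mx_ix_n|x'|^{-n-2}$, since $\min\{1,x_n^2/t\}\approx1$. The second term is exponentially small and is absorbed into the first for large $N$. By Lemma \ref{1211}, the fourth term is absorbed into the third, which becomes $x_ix_n|x'|^{-n-2}\mathcal G_{a,\frac12}$. For the negative part, $t^{-3/2}e^{-x_n^2/t}\approx e^{-cx_n^2}$ in the $\approx$ convention. Factoring out $x_ix_n|x'|^{-n-2}$ then gives (iii).
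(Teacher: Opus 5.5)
Your route is the paper's. You start from \eqref{est-wi} and use Lemma~\ref{1211} to absorb $x_i\mathcal K_{a,\frac{n+2}{2}}$ into the $\mathcal G$-term in (i) and (iii). In (ii) you show that every positive term except $x_i\mathcal K_{a,\frac{n+2}{2}}$ is $O(|x'|^{-2})$ relative to the negative term, and you move it across with exactly the constant bookkeeping you describe. The paper does (ii) via $\mathcal G_{a,\frac12}\approx x_n^2\mathcal H_{a,\frac32}$ for $a>0$; your bound $\mathcal G_{a,\frac12}\lesssim x_n$, compared against the $M$-part of the negative term, works equally well. Parts (ii) and (iii) are fine as written. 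In (iii), absorbing the Gaussian term into the first term is legitimate because $x_n\ge\frac12$ there.

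The one step that fails is that same absorption in (i). You dispose of $Mx_it^{-\frac{n+2}{2}}e^{-c|x|^2/t}\min\{1,x_n^2/t\}\approx Mx_ix_n^2e^{-c|x|^2}$ by comparing it with $Mx_n^3x_i|x|^{-n-2}$. That requires $|x|^{n+2}e^{-c|x|^2}\lesssim x_n$. In (i) the only lower bound is $x_n\ge\sqrt{2(t-1)}$, which tends to $0$ as $t\to1^+$. So the ratio is unbounded on the region, and no $N$ independent of $t$ makes your inequality hold. Uniformity in $t$ is needed: the $N$ of Lemma~\ref{w1L-w1B} depends only on $n$, and Proposition~\ref{1stmainprop} uses the corollary uniformly in $t$.

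The fix is to compare with the third term instead. For $-1<a\le0$ one has $(1-s)^a\ge1$ on $(\frac12,1)$, so $\mathcal G_{a,\frac12}(x_n,t)\ge\mathcal G_{0,\frac12}(x_n,t)\approx x_n$ by Lemma~\ref{elementarylemma}(iv); this is the lower bound used in the proof of Lemma~\ref{1211}. Hence $\frac{x_nx_i}{|x|^{n+2}}\mathcal G_{a,\frac12}\gtrsim x_n^2x_i|x|^{-n-2}$. The Gaussian term is at most $CM|x|^{n+2}e^{-c|x|^2}$ times this, which is bounded uniformly in $t$ and $x_n$ once $|x'|\ge N$. Alternatively, the Gaussian term is at most a constant times $x_ne^{-c|x|^2}|x'|^n$ times the negative $M$-term, so you can move it across as in your (ii). With either replacement your proof of (i) is complete.
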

\begin{proof}
(i) and (iii) are straightforward from \eqref{est-wi} and Lemma \ref{1211}.

We now prove (ii). Observe that for \(a>0\), \(\mathcal{G}_{a,\frac12}(x_n,t)\approx x_n^2 \mathcal{H}_{a,\frac32}(x_n,t)\). Then we note that
\begin{align*}
\frac{Mx_i}{t^{\frac{n+2}{2}}}e^{-\frac{c|x|^2}{t}}\min\left\{1, \frac{x_n^2}{t}\right\} + \frac{x_nx_i}{|x|^{n+2}}\mathcal{G}_{a,\frac12}(x_n,t) &\approx Mx_i e^{-c|x'|^2}x_n^2 + \frac{x_nx_i}{|x'|^{n+2}}\mathcal{G}_{a,\frac12}(x_n,t)\\
%&\approx x_ix_n^2 \left(Me^{-c|x'|^2} + \frac{x_n}{|x'|^{n+2}}\mathcal{H}_{a,\frac32}(x_n,t)\right)\\
&\approx \frac{ x_ix_n^2}{|x'|^{n+2}} \left(M|x'|^{n+2}e^{-c|x'|^2} + \mathcal{H}_{a,\frac32}(x_n,t)\right)\\
&\lesssim \frac{1}{|x'|^2}\frac{x_ix_n^2}{|x'|^{n}}\left(M+\mathcal{H}_{a,\frac32}(x_n,t)\right)\\
&\lesssim \frac{1}{|x'|^2}\frac{x_ix_n^2}{|x'|^n}\left(Mt^{-\frac32}e^{-\frac{x_n^2}{t}}+ \mathcal{H}_{a,\frac32}(x_n,t)\right).
\end{align*}
The result then follows from the inequality \(\displaystyle \frac{Mx_i x_n^3}{|x'|^{n+2}}\lesssim \frac{1}{|x'|^2}\frac{x_ix_n}{|x'|^n}\).
\end{proof}

\subsubsection{{\bf Asymptotics of \texorpdfstring{\(x_{ni}^*\)}{} and sign of \texorpdfstring{\(w_i\)}{}}}\quad Before stating our result on the asymptotics of \(x_{ni}^*\), we introduce certain sets whose defining conditions involve the symbols ``\(\lesssim\)'' and ``\(\gtrsim\)''. We begin by clarifying the meanings of these symbols. For example, for a given \(f:\mathbb{R}^{n-1}\times \R_+\rightarrow \R_+\) and \(A, B>0\), we define
\begin{align}
U:=\left\{(x',t)\in \mathbb{R}^{n-1} \times \R_+ \mid c_1A\leq f(x',t)\leq c_2B\right\}
\end{align}
for some constants \(c_1\) and \(c_2\) depending on \(n\), \(a\), and \(M\). For notational convenience, from now on, we denote the above set as \[U=\{(x',t)\in \R^{n-1}\times \R_+ \mid A\lesssim f(x',t)\lesssim B\}.\] 

Now we introduce the sets which will appear in the next proposition: for \(0\leq\alpha<\beta\),
\begin{align*}
&U_1(\alpha,\beta):=\left\{(x',t)\in \R^{n-1}\times \R_+   \mid  \alpha\lesssim |a+1||x'|^2\lesssim \beta\right\},\\
&U_2(\alpha,\beta):=\left\{(x',t)\in \R^{n-1}\times \R_+  \, \Big| \, \alpha\lesssim \left|a+\frac12\right||x'|^2\lesssim \beta\right\},\\
&U_3(\alpha,\beta):=\left\{(x',t)\in \R^{n-1}\times \R_+  \, \bigg|\,  \alpha\lesssim |x'|^2 \left(\ln \frac{|x'|^2}{t-1}\right)^{-1}\lesssim \beta\right\}.
\end{align*}
We then define the following sets \(A_a^i\subset \R^{n-1}\times \R_+\), \(i\in \left\{1,2\right\}\) as follows.
%: for \(1<t<\frac32\) and \(i\in \left\{1,2,3,4\right\}\), the sets \(A_a^i\subset \R_+^n\times \R_+\) are given by
\begin{align}\label{0804-2}
A_a^1:=\begin{cases}
U_1(t-1,(t-1)^{a+\frac12}), &\textrm{ if } -1<a\leq -\frac34,\\
U_2(e^{-\frac{2}{1+2a}}(t-1), (t-1)^{a+\frac12}), &\textrm{ if } -\frac34<a<-\frac12,\\
U_2(e^{\frac{2}{1+2a}}(t-1), 1), &\textrm{ if } -\frac12<a\leq -\frac14,
\end{cases}
\end{align}
\begin{align}\label{0804-3}
A_a^2:=\begin{cases}
U_3\left(t-1, \min \left\{e^{-\frac{2}{1+2a}}(t-1), \frac14\right\}\right), &\textrm{ if } -\frac34<a<-\frac12,\\
U_3(t-1, 1),&\textrm{ if } a=-\frac12,\\
U_3\left(t-1, \min \left\{e^{\frac{1-2a}{1+2a}}(t-1), 2^{2a-1}(t-1)^{a+\frac12}\right\}\right),&\textrm{ if } -\frac12<a\leq -\frac14.\\
\end{cases}
\end{align}
In addition, for any measurable function \(f:\R_+\rightarrow \R_+\), we denote
\[
U(f):=\left\{(x',t)\in \R^{n-1}\times \R_+\,\Big|\, 1\lesssim \frac{|x'|^2}{f(t)}\lesssim e^{|x'|^2}\right\}.
\] 
We then define the set
\begin{align}\label{0804-4}
A_a^3:=U(f_a),
\end{align}
where \(f_a=f_a(t)\) is given by
\begin{align*} 
f_a(t):=&\frac{(t-1)^{a+\frac12}}{a+1}\mathbf{1}_{-1<a\leq -\frac34}+|\ln(t-1)|\mathbf{1}_{a=-\frac12}\\ 
&+\left(\frac{1}{|a+\frac12|}(t-1)^{-|a+\frac12|}\mathbf{1}_{t\leq 1+\frac14 e^{-\frac{1}{|a+\frac12|}}}+|\ln(t-1)|\mathbf{1}_{t\geq 1+\frac14 e^{-\frac{1}{|a+\frac12|}}}\right)\\
&\times\left(\mathbf{1}_{-\frac34<a<-\frac12}+(t-1)^{|a+\frac12|}\mathbf{1}_{-\frac12<a\leq -\frac14}\right) + \mathbf{1}_{a>-\frac14}. 
\end{align*} 

Now we are ready to prove the following proposition, which gives asymptotics of \(x_{ni}^*\) when $t>1$.

\begin{proposition}\label{1stmainprop}
Let \(w\) be the solution of the Stokes system \eqref{StokesRn+} defined by \eqref{rep-bvp-stokes-w} with the boundary data \(g=g_n{\bf e}_n\) given by \eqref{0502-6}-\eqref{boundarydataspecific}.  There exists positive constants \(c_j^l\) (\(l=1,2,3\), \(j=1,2\))  in \eqref{0804-2}-\eqref{0804-4} and $c_*>0$  depending on \(n\), \(c_j^l\) and \(M\) such that if  $ |x'|>c_*(1+\sqrt{t})$ and $ t>1$, then for the sets \(A_a^l\), \(l\in \left\{1,2,3\right\}\) given in \eqref{0804-2}-\eqref{0804-4}, there are $(x_{ni1}^*, x_{ni2}^*)\in S^{-+}(w_i(x', \cdot, t);0,\infty)$, where \(S^{+-}\) is defined in Definition \ref{1202}, satisfying the following: for \(k=1,2\),

\begin{itemize}
\item[(i)] Let $ t \geq \frac98$, then \(x^*_{nik}\approx \left( t\ln\left( \frac{(a+1)|x'|^2}{t}\right)\right)^{\frac12}\).

\item[(ii)] Let $ 1<t< \frac98$. 
\begin{itemize}

\item[(a)]
For $ -1 < a<  -\frac34$,
\begin{align*}
x_{nik}^*\approx                \left(\frac{(a+1)|x'|^2}{(t-1)^{a+\frac12}}\right)^{\frac{1}{1-2a}}\mathbf{1}_{A_a^1}+\Big( \ln \frac{(a +1)|x'|^2}{(t -1)^{a +\frac12}}  \Big)^{\frac12}\mathbf{1}_{A_a^3}.
\end{align*}
 
\item[(b)]
For $ -\frac34 \leq  a< -\frac12$,
\begin{align*}
x_{nik}^*\approx &   \left(\frac{|a+\frac12||x'|^2}{(t-1)^{a+\frac12}}\right)^{\frac{1}{1-2a}}\mathbf{1}_{A_a^1}  +   |x'|\left(\ln \frac{|x'|^2}{t-1}\right)^{-\frac12}\mathbf{1}_{A_{a}^2}\\
&+\left( \left(\ln \frac{|x'|^2}{|\ln (t-1)|}\right)^{\frac12} \mathbf{1}_{t\geq 1+\frac14e^{\frac{1}{a+\frac12}}} +\left(\ln \frac{|a+\frac12||x'|^2}{(t-1)^{a+\frac12}}\right)^{\frac12}\mathbf{1}_{t<1+\frac14 e^{\frac{1}{a+\frac12}}}\right)\mathbf{1}_{A_a^3}.
\end{align*}

\item[(c)]
For $ a =-\frac12$, \(
x_{nik}^* \approx 
 |x'|\left(\ln\displaystyle  \frac{|x'|^2}{t-1} \right)^{-\frac12}\mathbf{1}_{A_a^2}
 +\Big(\displaystyle \ln \frac{|x'|^2 }{ |\ln(t-1)|} \Big)^\frac12\mathbf{1}_{A_a^3}\).

\item[(d)]
For $-\frac12 < a < -\frac14$,
\begin{align*}
x_{nik}^*& \approx
\left(a +\frac12\right)^\frac12 |x'|\mathbf{1}_{A_a^1}  +  \left(\frac{|x'|^2}{(t-1)^{a+\frac12}\left(\ln \frac{|x'|^2}{t-1}\right)}\right)^{\frac{1}{1-2a}} \mathbf{1}_{A_a^2}\\
&\quad+\left(\left(\ln \frac{|x'|^2}{(t-1)^{a+\frac12}|\ln(t-1)|}\right)^{\frac12}\mathbf{1}_{t>1+e^{-\frac{1}{a+\frac12}}}+\left(\ln \left(\left(a+\frac12\right)|x'|^2\right)\right)^{\frac12}\mathbf{1}_{t<1+e^{-\frac{1}{a+\frac12}}}\right)\mathbf{1}_{A_a^3}.
\end{align*}

\item[(e)]
For $-\frac14\leq a<1$, \(x_{nik}^*  \approx ( \ln   |x'|   )^{\frac12}\mathbf{1}_{A_a^3}.\)
\end{itemize}

\end{itemize}
Here \(x_{nik}^*\in (\sqrt{2(t-1)},\frac12)\) if \((x',t)\in A_a^1\cup A_a^2\), and \(x_{nik}^*\in (\frac12, |x'|)\) if \((x',t)\in A_a^3\).
\end{proposition}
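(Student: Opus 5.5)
The plan is to work entirely from the two-sided estimate of Lemma \ref{w1L-w1B} and its simplified versions in Corollary \ref{1216}. These express $w_i$ (for $x_i\geq \frac12|x'|$, the region where the constants are uniform; the general case follows by the odd symmetry of $w_i$ in $x_i$ and rotation) as a positive part minus a negative part. The positive part is essentially $\frac{x_ix_n}{|x|^{n+2}}(M t^{-1/2}\min\{1,x_n^2/t\}+\mathcal G_{a,\frac12})$, plus the $\mathcal K$-term when $a>0$. The negative part is $\frac{x_ix_n}{|x'|^n}(Mt^{-3/2}e^{-x_n^2/t}+\mathcal H_{a,\frac32}(x_n,t))$.

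Since $\approx$ only gives two-sided bounds with different constants, I will find the sign of $w_i$ by comparison rather than by solving an equation. If the lower bound of the negative part dominates the upper bound of the positive part, then $w_i<0$. If the reverse holds, then $w_i>0$. Cancelling the common factor $x_ix_n$, it suffices to compare
\[
P(x_n,t):=|x'|^{-2}\bigl(M+\mathcal G_{a,\frac12}(x_n,t)\bigr)
\quad\text{with}\quad
Q(x_n,t):=Me^{-x_n^2/t}+\mathcal H_{a,\frac32}(x_n,t),
\]
with suitable modifications in the regime $x_n<\sqrt{2(t-1)}$ and in case (ii) of Corollary \ref{1216}. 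Near $x_n=0$ the factor $\min\{1,x_n^2\}$ makes $P\ll Q$, so $w_i<0$. For $x_n\sim|x'|$ the Gaussian kills $Q$, so $w_i>0$. I will then define $x^*_{ni1}$ as the largest scale below which $Q\gtrsim P$ holds with the constants of the lower bound, and $x^*_{ni2}$ as the smallest scale above which $P\gtrsim Q$. By construction $(x^*_{ni1},x^*_{ni2})\in S^{-+}(w_i(x',\cdot,t);0,\infty)$, and both endpoints are comparable to the same explicit function, which is what $x^*_{nik}\approx\cdots$ means. Remark \ref{1125} then gives the zero.

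Case (i), $t\ge\frac98$, uses Lemma \ref{elementarylemma}(i), Lemma \ref{elementarylemma2}(i) and Lemma \ref{elementarylemma3}(i). All terms become $\frac1{a+1}$ times powers of $t-1\approx t$ times Gaussians. The comparison reduces to $e^{-cx_n^2/t}\approx t/((a+1)|x'|^2)$, which gives $x_n\approx(t\ln((a+1)|x'|^2/t))^{1/2}$. For $1<t<\frac98$ I split $x_n$ into the three ranges $x_n\le\sqrt{2(t-1)}$, $\sqrt{2(t-1)}\le x_n\le\frac12$ and $\frac12\le x_n\le|x'|$, and insert the corresponding branches of Lemmas \ref{elementarylemma}, \ref{elementarylemma2} and \ref{elementarylemma3}.

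In the first range $Q$ dominates, so $w_i<0$. In the middle range, for $a<0$, the main terms are $\mathcal G_{a,\frac12}\approx \frac{x_n^{2a+1}-(t-1)^{a+1/2}}{a+1/2}$ (plus the $(t-1)^{a+1/2}/(a+1)$ term) against $\mathcal H_{a,\frac32}\approx\frac{1-x_n^{2a-1}}{a-1/2}+\dots$. Balancing $|x'|^{-2}\mathcal G$ against $\mathcal H\sim x_n^{2a-1}$ gives $x_n^{1-2a}\approx |a+\frac12||x'|^2/(t-1)^{a+1/2}$, or the $(a+1)$-version when $a\le-\frac34$ and the $(t-1)^{a+1/2}/(a+1)$ term is dominant. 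This is exactly the set $A^1_a$. When $a=-\frac12$, or when the two pieces of $\mathcal G$ nearly cancel, a logarithm $\ln(x_n^2/(t-1))$ appears, and I will use \eqref{lemma0630-2} and Lemma \ref{lemma0630}(ii) to invert $\theta^{\alpha}\ln\theta\approx cM$. This produces the $|x'|(\ln\frac{|x'|^2}{t-1})^{-1/2}$ behaviour on $A^2_a$. In the last range I use Corollary \ref{1216}(iii) and Lemma \ref{elementarylemma2}(ii)(c). The comparison becomes $|x'|^2e^{-cx_n^2}\approx f_a(t)$, with $f_a$ the dominant size of $M+\mathcal G_{a,\frac12}$, so $x_n\approx(\ln(|x'|^2/f_a(t)))^{1/2}$ on $A^3_a$. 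This yields all the logarithmic formulas in (a) through (e). For $a\ge-\frac14$ both $\mathcal G$ and $\mathcal H$ are bounded, giving $(\ln|x'|)^{1/2}$.

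The main obstacle is the bookkeeping in the middle range. There, $\mathcal G_{a,\frac12}$ and $\mathcal H_{a,\frac32}$ have competing pieces whose dominance depends on the sign of $a+\frac12$, on whether $a\lessgtr-\frac34$, and on $|a+\frac12|\ln(x_n^2/(t-1))\lessgtr1$. The sets $A^l_a$ have to be chosen so that on each one a single balance is valid uniformly in both the upper and lower bounds. I would handle this by fixing the ranges first using \eqref{lemma0630-2}, so that each $\frac{y^{\epsilon}-1}{\epsilon}$ is replaced by either $\ln y$ or $y^\epsilon/\epsilon$. Only after that would I perform the monotone comparison, taking $c_*$ large so that the positive part stays negligible near $x_n=\sqrt{2(t-1)}$ and the ranges do not overlap inconsistently.
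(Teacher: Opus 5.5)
Your plan is the paper's proof. It sandwiches $w_i$ between two comparison functions of identical form, taken from Lemma \ref{w1L-w1B} and Corollary \ref{1216}, and treats $t\ge\frac98$ separately. For $1<t<\frac98$ it splits into the ranges $x_n<\sqrt{2(t-1)}$, $\sqrt{2(t-1)}\le x_n<\frac12$, $\frac12\le x_n\le|x'|$ and $x_n>|x'|$, and it inverts the resulting balances with \eqref{lemma0630-2} and Lemma \ref{lemma0630}, exactly as you describe.

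The one thing to drop is your remark that general $x'$ follows by odd symmetry in $x_i$ and by rotation. $\psi$ is neither even nor radial, and an odd reflection in $x_i$ would turn $S^{-+}$ into $S^{+-}$. Moreover, for $|x_i|\ll|x'|$ the estimate \eqref{L^Lestimate} is not available. So, as in the paper, the argument only covers $x_i\ge\frac12|x'|$.
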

\begin{remark}\label{1217}
%The constants \(c_1\) and \(c_2\) appearing in Theorem \ref{thmasymptotic} are obtained noting that for \(-1+\epsilon < a<-\frac12 -\epsilon\) for some \(\epsilon>0\), the asymptotics of \(x_{nik}^*\) corresponding to the set \(A_a^1\) are equivalent since \(a+1 \approx |a+\frac12|\) holds. 
The numbers \(-\frac34\) and \(-\frac14\) appearing in the conditions for \(a\) can be replaced by any number between \(-1\) and \(-\frac12\) and larger than \(-\frac12\) respectively, in particular depending on \(|x'|\). This provides the justification of the constants \(c_i\) (\(i=1,\cdots, 4\)) appearing in Theorem \ref{thmasymptotic}. The choices of the numbers \(-\frac34\) and \(-\frac14\) come from computational convenience in using Lemma \ref{elementarylemma}.
\end{remark}
%\begin{remark}
%The asymptotics of \(x_{nik}\) given in (ii)-(d) must be carefully understood since \((a+1/2)^{\frac12}|x'|\) seems bigger than \((\ln((a+1/2)|x'|)))^{1/2}\). However, one should note that when \(-1/2<a\leq -1/4\), the set \(A_a^2\) and \(A_a^4\) are given so that \((a+1/2)^{\frac12}|x'|\lesssim 1/2\) when \((x',t)\in A_a^2\) and \((\ln ((a+1/2)|x'|))^{1/2}\gtrsim 1/2\) when \((x',t)\in A_a^4\). Thus when \(t<1+e^{-1/(a+1/2)}\), \(x_{nik}^*\) first attains the asymptotics \(x_{nik}^*\approx (a+1/2)^{1/2}|x'|\) and then \(x_{nik}^* \approx (\ln ((a+1/2)|x'|))^{1/2}\) as \(|x'|\) grows and still \(x_{nik}^*\) attains an asymptotic which is an increasing function in \(|x'|\). 
%\end{remark}
\begin{remark}
We note that the set \(A_a^1 \cup (A_a^3\{ t<1+e^{-\frac{1}{a+1/2}}\})\) vanishes as \(a\rightarrow -1/2\). Hence the limit of the asymptotics given in (b) and (d) as \(a\rightarrow -1/2\) matches with that given in (c). This fact will be considered in detail when we prove the existence of the reversal point.
\end{remark}
%\begin{remark}
%It seems from the asymptotics of \(x_{nik}^*\) given in (i) that \(x_{nik}^*\rightarrow \infty\) as \(a\rightarrow \infty\). However, we can construct \(\phi\) such that \(M\approx 2^{-a}\) and this may prevents such blow up of \(x_{nik}^*\) for large \(a\), which is physically irrelevant.
%\end{remark}}
\begin{proof}  
Before entering the case-by-case analysis, we explain the common structure of the proof. 
In each subcase, $w_i(x,t)$ is bounded from above and below by two
explicit comparison functions, say $g_1$ and $g_2$. These functions have the same algebraic structure and differ only by multiplicative constants.

Depending on the regions of \((x,t)\), the sign of $w_i$ is determined in one of the following ways:
either (i) both comparison functions have a definite sign on the relevant interval of $x_n$,
which directly implies the sign of $w_i$, or
(ii) the comparison functions vanish at points $x_{ni1}^*$ and $x_{ni2}^*$, which implies the sign change of $w_i$.

All subcases below follow this scheme; only the explicit expressions of the bounds differ.

Throughout the proof, the constants \(c_i=c_i(n, M), d_i=d_i(n, M)>0\), \(i=1,2,3,4\), may vary in each case. In each subcase, the constants will satisfy \(c_1\leq d_1\), \(c_2\geq d_2\) and \(c_3\leq d_3\). 

\noindent
{\bf \(\bullet\) (Case 1: \(1<t<\frac98\), \(x_n<\sqrt{2(t-1)}\))}\quad

By  \eqref{est-wi}, (ii) of  Lemma \ref{elementarylemma} and  (ii) of Lemma \ref{elementarylemma2}, we have 
 \begin{align*}
 \begin{split}
w_i(x,t)&\approx \frac{x_i x_n}{|x'|^{n+2}}\left\{ ( x_n^2- |x'|^2  ) \left(  \frac{(t-1)^{a-\frac12}-1}{\frac12-a}\mathbf{1}_{a\geq -\frac14,a\neq \frac12}+  |\ln(t-1)|\mathbf{1}_{a=\frac12}  +  \frac{(t-1)^{a-\frac12}}{a+1}\right)\right.\\
&\left.\quad+x_n\left(e^{-|x'|^2}+\frac{(t-1)^{a-\frac{n}{2}-1}}{a+1}e^{-\frac{|x'|^2}{t-1}}\right) + M\left(x_n+|x'|^{n+2}e^{-|x'|^2}\right)\right\}\\
&\approx \frac{x_i x_n}{|x'|^{n+2}}\left\{ - |x'|^2  \left(  \frac{(t-1)^{a-\frac12}-1}{\frac12-a}\mathbf{1}_{a\geq -\frac14,a\neq \frac12}+  |\ln(t-1)|\mathbf{1}_{a=\frac12}  +  \frac{(t-1)^{a-\frac12}}{a+1}\right)\right.\\
&\left.\quad+x_n\left(e^{-|x'|^2}+\frac{(t-1)^{a-\frac{n}{2}-1}}{a+1}e^{-\frac{|x'|^2}{t-1}}\right) + M\left(x_n+|x'|^{n+2}e^{-|x'|^2}\right)\right\}\\
&\lesssim\frac{x_i x_n}{|x'|^{n+2}}\left\{ - |x'|^2  \left(  \frac{(t-1)^{a-\frac12}-1}{\frac12-a}\mathbf{1}_{a\geq -\frac14,a\neq \frac12}+  |\ln(t-1)|\mathbf{1}_{a=\frac12}  +  \frac{(t-1)^{a-\frac12}}{a+1}\right)\right.\\
&\left. \quad + \left(1+\frac{1}{a+1}\right)|x'|^{n+2}e^{-|x'|^2}  + M\right\}<0
\end{split}
\end{align*}
by taking \(|x'|\) sufficiently large.

\noindent 
{\bf \(\bullet\) (Case 2: \( 1 < t<\frac98\), \(  \sqrt{2(t -1)} \leq x_n < \frac12\))}

{\bf \underline{Subcase 2-1}: \(-1 < a< -\frac34\).} \quad  From (i) of Corollary \ref{1216}, (iii) of Lemma \ref{elementarylemma}, (ii)-(b) of Lemma \ref{elementarylemma2}, we have 
\begin{align*}
\begin{split}
%w_i(x,t)&\geq 
\frac{x_ix_n}{|x'|^{n+2}}\left(  c_1 \frac{(t-1)^{a+\frac12}}{a+1}  -c_2 |x'|^2 x_n^{2a-1} \right)  \leq
w_i(x,t)\leq \frac{x_ix_n}{|x'|^{n+2}}\left(d_1 \frac{(t-1)^{a+\frac12}}{a+1}  -d_2|x'|^2 x_n^{2a-1} \right).
\end{split}
\end{align*}

If \(\frac{d_1}{d_2}\frac{2^{\frac12-a}}{a+1}(t-1)\leq |x'|^2\leq \frac{c_1}{c_2}\frac{2^{2a-1}}{a+1}(t-1)^{a+\frac12}\), then let us denote 
 \begin{align*}
&x_{ni1}^*:=\left(  \frac{c_1}{ c_2}\right)^{\frac1{2a-1}} |x'|^{\frac{2}{1-2a}} \Big( \frac{(t-1)^{a+\frac12}}{a+1}\Big)^{\frac1{2a -1}},\quad x_{ni2}^*: =\left( \frac{d_1}{ d_2}\right)^{\frac1{2a-1}} |x'|^{\frac{2}{1-2a}} \Big( \frac{ (t-1)^{a+1}}{a+1}\Big)^{\frac1{2a -1}},
 \end{align*}
 so that   \(x_{ni1}^*\geq x_{ni2}^*\)  and  \((x_{ni2}^*, x_{ni1}^*)\in S^{-+}(w_i(x',\cdot, t); \sqrt{2(t-1)},\frac12)\).
 
On the other hand, if \(|x'|^2\geq \frac{d_1}{d_2}\frac{2^{2a-1}}{a+1}(t-1)^{a+\frac12}\), then \(w_i(x,t)<0\) for all \(\sqrt{2(t-1)}\leq x_n\leq \frac12\), while  if \(|x'|^2<\frac{c_1}{c_2}\frac{2^{\frac12-a}}{a+1}(t-1)\), then \(w_i(x,t)>0\) for all \(\sqrt{2(t-1)}\leq x_n<\frac12\).

{\bf  \underline{Subcase 2-2}: \(-\frac34 \leq   a <  -\frac12\).}  \quad   From (i) of Corollary \ref{1216}, (iii) of Lemma \ref{elementarylemma}, (ii)-(b) of Lemma \ref{elementarylemma2}, we have 
\begin{align}\label{0626-11} 
\begin{split} 
w_i(x,t)&\approx \frac{x_ix_n}{|x'|^{n+2}}\left(  c_1 \frac{x_n^{2a +1} - (t-1)^{a+\frac12}}{a+\frac12}  -c_2 |x'|^2 x_n^{2a-1} \right). 
\end{split} 
\end{align}

1) Suppose  \(x_n\leq e^{-\frac{1}{2a+1}}\sqrt{t-1}\). By  \eqref{lemma0630-2}, we have  $   \frac{x_n^{2a +1} - (t-1)^{a+\frac12}}{a+\frac12} \approx  x_n^{2a +1} \ln \frac{x_n^2}{t-1}$. From \eqref{0626-11}, we have 
\begin{align*}
\frac{x_ix_n}{|x'|^{n+2}} \Big( c_1 x_n^{2a+1}\ln \frac{x_n^2}{t-1}-c_2|x'|^2x_n^{2a-1} \Big) \leq w_i(x,t) \leq  \frac{x_ix_n}{|x'|^{n+2}}\Big( d_1 x_n^{2a+1}\ln \frac{x_n^2}{t-1}-d_2|x'|^2 x_n^{2a-1}\Big). 
\end{align*}

Then by Lemma \ref{lemma0630}, we have 
\begin{align*}
w_i(x,t)>0 &  \text{ for } x_n\geq \max\left\{2,\left(\frac{c_2}{c_1}\right)^{\frac12}\right\}   |x'|\left(\ln\frac{|x'|^2}{t-1}\right)^{-\frac12}=:x_{ni1}^*,\\ 
 w_i(x,t)<0 & \text{ for } x_n\leq \min\left\{2, \left(\frac{d_2}{5d_1}\right)^{\frac12}\right\}   |x'|\left(\ln\frac{|x'|^2}{t-1}\right)^{-\frac12}=: x_{ni2}^*. 
\end{align*}
Note that \(x^*_{ni2}  \leq x^*_{ni1}   \).  If
\begin{align}\label{0716-1}
\frac{\sqrt{2(t-1)} }{\min\left\{2, \left(\frac{d_2}{5d_1}\right)^{\frac12}\right\}}  \leq |x'|\left(\ln \frac{|x'|^2}{t-1}\right)^{-\frac12}\leq \frac{\min\{e^{-\frac{1}{2a+1}}\sqrt{t-1},\frac12\}}{\max\left\{2,\left(\frac{c_2}{c_1}\right)^{\frac12}\right\} },
\end{align}
then, we have   \(\sqrt{2(t-1)}\leq x^*_{nik}\leq \min\{e^{-\frac{1}{2a+1}}\sqrt{t-1},\frac12\}\) so that \\
 $(x^*_{ni2}, x^*_{ni1}) \in S^{-+}(w_i(x', \cdot, t); \sqrt{2(t-1)}, \min\{e^{-\frac{1}{2a+1}}\sqrt{t-1},\frac12\} )$. 

On the other hand, if  $\min\{e^{-\frac{1}{2a+1}}\sqrt{t-1},\frac12\} \leq \min\left\{2,\left(\frac{d_2}{5d_1}\right)^{\frac12}\right\} |x'|\left(\ln \frac{|x'|^2}{t-1}\right)^{-\frac12}$, then we have  $ w_i (x,t) < 0$ for $ \sqrt{2(t-1)} \leq x_n\leq\min\{e^{-\frac{1}{2a+1}}\sqrt{t-1},\frac12\}$.

2) Suppose  \(x_n\geq e^{-\frac{1}{2a+1}}\sqrt{t-1}\).   From \eqref{lemma0630-2}, we have $ \displaystyle \frac{x_n^{2a +1} - (t-1)^{a+\frac12}}{a+\frac12} \approx  \frac{(t-1)^{a +\frac12} }{|a+\frac12|}$. From \eqref{0626-11}, we have 
\begin{align*}
\frac{x_ix_n}{|x'|^{n+2}}  \Big( \frac{e-1}{e}c_1\frac{(t-1)^{a+\frac12}}{|a+\frac12|}-c_2|x'|^{2}x_n^{2a-1} \Big) \leq w_i(x,t) \leq \frac{x_ix_n}{|x'|^{n+2}}  \Big(d_1\frac{(t-1)^{a+\frac12}}{|a+\frac12|}-d_2|x'|^2x_n^{2a-1} \Big). 
\end{align*}
Then, we have 
\begin{align*}
&w_i(x,t)>0 \text{ for } x_n\geq \left(\frac{e-1}{e}\frac{c_1}{c_2}\right)^{\frac{1}{2a-1}}\left|a+\frac12\right|^{\frac{1}{1-2a}}|x'|^{\frac{2}{1-2a}}(t-1)^{\frac{a+\frac12}{2a-1}}=:x_{ni1}^*,\\
&w_i(x,t)<0 \text{ for } x_n\leq \left(\frac{d_1}{d_2}\right)^{\frac{1}{2a-1}}\left|a+\frac12\right|^{\frac{1}{1-2a}}|x'|^{\frac{2}{1-2a}}(t-1)^{\frac{a+\frac12}{2a-1}}=:x_{ni2}^*. 
\end{align*} 
Note that \(x_{ni1}^*\geq x_{ni2}^*\).  If \(
\frac{d_1}{d_2} e^{-\frac{1-2a}{1+2a}} (t-1)\leq \left|a+\frac12\right||x'|^2\leq 2^{2a-1}    \frac{e-1}{e}\frac{c_1}{c_2}  (t-1)^{a+\frac12}\),  
then we have     \\
\( \min (e^{-\frac{1}{2a+1}}\sqrt{t-1},\frac12 )\leq x^*_{nik} \leq \frac12\) so that  $(x^*_{ni2}, x^*_{ni1}) \in S^{-+}\left(w_i(x', \cdot, t); \min\{e^{-\frac{1}{2a+1}}\sqrt{t-1},\frac12\},\frac12 \right)$.
 
On the other hand, if  $ 2^{2a-1}    \frac{e-1}{e}\frac{c_1}{c_2}  (t-1)^{a+\frac12} \leq   \left|a+\frac12\right||x'|^2  $, then we have  $ w_i (x,t) < 0$ for\\ $  \min\{e^{-\frac{1}{2a+1}}\sqrt{t-1},\frac12\} \leq x_n\leq \frac12$.

{\bf  \underline{Subcase 2-3}: \(a=-\frac12\).}  \quad  From (i) of Corollary \ref{1216}, (iii) of Lemma \ref{elementarylemma}, (ii)-(b) of Lemma \ref{elementarylemma2}, we have
\begin{align*}
  x_n^{-2} \Big( c_1 x_n^2\ln \frac{x_n^2}{t-1}-c_2 |x'|^2  \Big) \leq w_i(x,t)\leq  x_n^{-2} \Big( d_1 x_n^2 \ln \frac{x_n^2}{t-1}-d_2 |x'|^2  \Big).
\end{align*}

We now search for \(x_{nik}^*\).  By Lemma \ref{lemma0630}, we have 
\begin{align*}
w_i (x,t)>0 \text{ for } x_n \geq A_2|x'|\left(\ln \frac{|x'|^2}{t-1}\right)^{-\frac12}=:x_{ni1}^*,\quad w_i (x,t)<0 \text{ for } x_n\leq A_1|x'|\left(\ln \frac{|x'|^2}{t-1}\right)^{-\frac12}:=x_{ni2}^*.
\end{align*}
  
 If \(
 \frac{A_2^2}{4}|x'|^2\leq\ln \frac{|x'|^2}{t-1}\leq \frac{A_1^2}{2}\frac{|x'|^2}{t-1}\), we have  \(\sqrt{2(t-1)}\leq x^*_{nik}\leq \frac12\) and so \((x_{ni2}^*, x_{ni1}^*)\in S^{-+}(w_i(x',\cdot, t); \sqrt{2(t-1)},\frac12) \), while if \(|x'|^2\geq \frac{d_1}{4d_2}|\ln(4(t-1))|\), then \(w_i(x,t)<0\) for \(\sqrt{2(t-1)}\leq x_n\leq \frac12\).

{\bf  \underline{Subcase 2-4}: \(-\frac12 < a<0\).}  \quad   From (i) of Corollary \ref{1216}, (iii) of Lemma \ref{elementarylemma}, (ii)-(b) of Lemma \ref{elementarylemma2}, we have 
\begin{align}\label{0626-12} 
\begin{split} 
%w_i(x,t)&\geq \frac{x_ix_n}{|x'|^{n+2}}\left(  c_1 \frac{x_n^{2a +1} - (t-1)^{a+\frac12}}{a+\frac12}  -c_2 |x'|^2 x_n^{2a-1} \right):=\frac{x_ix_n}{|x'|^{n+2}}g_1(x,t),\\
w_i(x,t)&\approx \frac{x_ix_n}{|x'|^{n+2}}\left(  \frac{x_n^{2a +1}- (t-1)^{a+\frac12}}{a+\frac12}  - |x'|^2 x_n^{2a-1} \right).
\end{split} 
\end{align}

1. Suppose that    \(x_n<e^{\frac{1}{2a+1}}\sqrt{t-1}\). Then by \eqref{lemma0630-2}, \(\frac{x_n^{2a+1}-(t-1)^{a+\frac12}}{a+\frac12}\approx (t-1)^{a+\frac12}\ln \frac{x_n^2}{t-1}\) and it follows from \eqref{0626-12} that 
\begin{align*}
\frac{x_ix_n}{|x'|^{n+2}} \Big( c_1(t-1)^{a+\frac12}\ln \frac{x_n^2}{t-1}-c_2|x'|^2 x_n^{2a-1} \Big) \leq w_i(x,t) \leq \frac{x_ix_n}{|x'|^{n+2}} \Big(d_1 (t-1)^{a+\frac12}\ln \frac{x_n^2}{t-1}-d_2|x'|^2 x_n^{2a-1} \Big).
\end{align*}

 Then by Lemma \ref{lemma0630}, we have 
\begin{align*}
&w_i(x,t)>0 \text{ for } x_n\geq A_1|x'|^{\frac{2}{1-2a}}(t-1)^{-\frac{a+\frac12}{1-2a}}\left(\ln \frac{|x'|^2}{t-1}\right)^{-\frac{1}{1-2a}}=:x_{ni1}^*,\\
&w_i(x,t)<0 \text{ for } x_n\leq A_2|x'|^{\frac{2}{1-2a}}(t-1)^{-\frac{a+\frac12}{1-2a}}\left(\ln \frac{|x'|^2}{t-1}\right)^{-\frac{1}{1-2a}}=:x_{ni2}^*,
\end{align*} 
where \(\displaystyle A_1:=\max\left\{2, \left(\frac{1-2a}{2}\frac{c_2}{c_1}\right)^{\frac{1}{1-2a}}\right\}\) and \(\displaystyle A_2:=\min\left\{2, \left(\frac{1-2a}{8-4a}\frac{d_2}{d_1}\right)^{\frac{1}{1-2a}}\right\}\). Clearly we have \(A_1\geq A_2\).  If 
\begin{align}\label{0716-3} 
\frac{2^{\frac12-a}}{A_2^{1-2a}}(t-1)\leq |x'|^2\left(\ln\frac{|x'|^2}{t-1}\right)^{-1}\leq \frac{1}{A_1^{1-2a}}\min\left\{e^{\frac{1-2a}{1+2a}}(t-1),2^{2a-1}(t-1)^{a+\frac12}\right\},
\end{align} 
then we have   \(\sqrt{2(t-1)}\leq x_{ni2}^*\leq x_{ni1}^*\leq \min\{e^{\frac{1}{2a+1}}\sqrt{t-1},\frac12\}\) and so that\\
 $(x^*_{ni2}, x^*_{ni1}) \in S^{-+}\left(w_i (x,\cdot ,t); \sqrt{2(t-1)}, \min\{e^{\frac{1}{2a+1}}\sqrt{t-1},\frac12\}\right)$. 
 
On the other hand, if \(|x'|^2\geq \frac{2e^{\frac{1-2a}{1+2a}}}{1+2a}\frac{d_1}{d_2}(t-1)\), then \(w_i(x,t)<0\) for \(\sqrt{2(t-1)}\leq x_n\leq e^{\frac{1}{2a+1}}\sqrt{t-1}\).

2. Suppose $ x_n>e^{\frac{1}{2a+1}}\sqrt{t-1} $. Then by \eqref{lemma0630-2}, \(\frac{x_n^{2a+1}-(t-1)^{a+\frac12}}{a+\frac12}\approx \frac{x_n^{2a+1}}{a+\frac12}\) and we have from \eqref{0626-12} that
\begin{align*}
 \frac{x_ix_n}{|x'|^{n+2}}\left({c}_1\frac{x_n^{2a+1}}{a+\frac12}-c_2|x'|^2 x_n^{2a-1}\right) \leq w_i (x,t) \leq  \frac{x_ix_n}{|x'|^{n+2}}\left(d_1\frac{x_n^{2a+1}}{a+\frac12}-d_2|x'|^2 x_n^{2a-1}\right).
\end{align*}
Let \(
x_{ni1}^*:=\sqrt{\frac{c_2}{{c}_1}\left(a+\frac12\right)}|x'|,\quad x_{ni2}^*:=\sqrt{\frac{d_2}{d_1}\left(a+\frac12\right)}|x'|\)
so that \(x_{ni1}^*\geq x_{ni2}^*\). To ensure that \(x_{nik}^*\) (\(k=1,2\)) satisfy the condition \(e^{\frac{1}{2a+1}}\sqrt{t-1}<x_n<\frac12\), we require that 
\begin{align*}
e^{\frac{1}{2a+1}}\sqrt{\frac{d_1}{d_2}}\sqrt{t-1}\leq \left|a+\frac12\right|^{\frac12}|x'|\leq \sqrt{\frac{{c}_1}{4c_2}}.
\end{align*}
 Then it follows that \((x_{ni2}^*, x_{ni1}^*) \in S^{-+}\left(w_i(x',\cdot, t);  \min\{e^{\frac{1}{2a+1}}\sqrt{t-1},\frac12\},\frac12\right)\). 
 
 On the other hand, if \(|x'|^2\geq \frac{2}{a+\frac12}\frac{d_1}{d_2}\), then \(w_i(x,t)<0\) for \(e^{\frac{1}{2a+1}}\sqrt{t-1}<x_n\leq \sqrt{2(t-1)}\).

 {\bf  \underline{Subcase 2-5}: $a=0$.}  \quad From (i) of Corollary \ref{1216}, (iii) of Lemma \ref{elementarylemma}, (ii)-(b) of Lemma \ref{elementarylemma2},  we have for sufficiently large \(|x'|\),
\begin{align*}
\begin{split}
w_i(x,t)&\approx \frac{x_i}{|x'|^{n+2}}(Mx_n+1)(x_n^2-|x'|^2)<0.
\end{split}
\end{align*}

 {\bf  \underline{Subcase 2-6}: $a>0$.}  \quad From (ii) of Corollary \ref{1216}, (iii) of Lemma \ref{elementarylemma}, (ii)-(b) of Lemma \ref{elementarylemma2}, we have for sufficiently large \(|x'|\),
\begin{align*}
\begin{split}
w_i(x,t)&\lesssim x_ix_n\left[-\left(M+\frac{x_n^{2a-1}-1}{\frac12-a}\mathbf{1}_{0<a<1, a\neq \frac12}  +|\ln(2x_n^2)|\mathbf{1}_{a=\frac12} + x_n^{2a-1}\mathbf{1}_{a\geq 1}\right)\frac{1}{|x'|^n} + x_ne^{-|x'|^2}\right] <0.\\
\end{split}
\end{align*}

\noindent
{\bf \(\bullet\) (Case 3: \( 1 < t<\frac98\), \( \frac12<x_n<|x'|\))}

From (iii) of Corollary \ref{1216}, (iii) of Lemma \ref{elementarylemma} and (iii) of Lemma \ref{elementarylemma2}, we have 
\begin{align}\label{0727-1}
w_i(x,t)&\approx\frac{x_ix_n}{|x'|^{n+2}}\left( 1+\frac{2^{-a-\frac12}-(t-1)^{a+\frac12}}{a+\frac12}\mathbf{1}_{a\neq -\frac12}+|\ln(2(t-1))|\mathbf{1}_{a=-\frac12}+\frac{(t-1)^{a+\frac12}}{a+1}-    |x'|^2 e^{- x_n^2}\right).
%& \approx \frac{x_ix_n}{|x'|^{n+2}}\left(  1+\max\Big(\ln \frac1{t-1}, |a +\frac12|^{-1} (t -1)^{a +\frac12}\Big)_{a\neq -\frac12}+|\ln(2(t-1))|\mathbf{1}_{a=-\frac12}+\frac{(t-1)^{a+\frac12}}{a+1}-  |x'|^2 e^{- x_n^2 }\right)
\end{align}

{\bf \underline{Subcase 3-1}: \( -1 < a< -\frac34\).}

From \eqref{0727-1}, we have
\begin{align*}
 \frac{x_ix_n}{|x'|^{n+2}}\left(  c_1 \frac{(t-1)^{a+\frac12}}{a +1} - c_2 |x'|^2 e^{-c_3x_n^2}  \right)\leq 
 w_i(x ,t)   \leq \frac{x_ix_n}{|x'|^{n+2}}\left(  d_1 \frac{(t-1)^{a+\frac12}}{a +1} - d_2 |x'|^2 e^{-d_3x_n^2}  \right).
\end{align*}
Let \(x_{ni1}^*:=\left(\frac{1}{c_3}\ln\left(\frac{(a+1)c_2}{c_1}\frac{|x'|^2}{(t-1)^{a+\frac12}}\right)\right)^{\frac12}\), \(x_{ni2}^*:=\left(\frac{1}{d_3}\ln\left(\frac{(a+1)d_2}{d_1}\frac{|x'|^2}{(t-1)^{a+\frac12}}\right)\right)^{\frac12}\) so that \(x_{ni1}^*\geq x_{ni2}^*\) and \(
 w_i (x,t)>0\) for \(x_n\geq x_{ni1}^*\),  while \(w_i (x,t)<0\) for \(x_n\leq x_{ni2}^*\). This implies that if   \( \frac{d_1}{d_2}e^{\frac{d_3}{4}}<\frac{(a+1)|x'|^2}{(t-1)^{a+\frac12}}<\frac{c_1}{c_2}e^{c_3|x'|^2}\), then \((x_{ni2}^*,x_{ni1}^*)\in S^{-+}(w_i(x',\cdot, t); \frac12, |x'|)\). 
 
 On the other hand, if \(|x'|^2\leq \frac{c_1e^{\frac{c_3}{4}}}{c_2}\frac{(t-1)^{a+\frac12}}{a+1}\) then \(w_i(x,t)>0\) for all \(\frac12\leq x_n\leq |x'|\).

{\bf  \underline{Subcase 3-2}: \( -\frac34 \leq a <-\frac12 \).} \quad  From \eqref{0727-1}, we have 
\begin{align}\label{0729-1}
\begin{split}
 % \frac{x_ix_n}{|x'|^{n+2}}\left(  c_1 \frac{2^{-a-\frac12}-(t-1)^{a+\frac12}}{a+\frac12}- c_2 |x'|^2 e^{-c_3x_n^2}  \right) \leq 
 w_i(x ,t)   \approx \frac{x_ix_n}{|x'|^{n+2}}\left(   \frac{2^{-a-\frac12}-(t-1)^{a+\frac12}}{a+\frac12}-   |x'|^2 e^{-d_3x_n^2}  \right).
\end{split}
\end{align}

1. Suppose that  \(t\geq 1+\frac12 e^{\frac{2}{2a+1}}\).
By \eqref{lemma0630-2}, we have   \(\frac{2^{-a-\frac12}-(t-1)^{a+\frac12}}{a+\frac12} \approx |\ln(t-1)| \). Thus \eqref{0729-1} gives
\begin{align*}
 \frac{x_ix_n}{|x'|^{n+2}}\left(c_1 |\ln(t-1)|   -c_2|x'|^2e^{-c_3x_n^2}\right) \leq 
w_i(x,t) \leq \frac{x_ix_n}{|x'|^{n+2}}\left(d_1  |\ln(t-1)| -d_2|x'|^2e^{-d_3x_n^2}\right).
\end{align*}
Let   $x_{ni1}^* = \left(\frac{1}{c_3}\ln\left(\frac{c_2}{c_1}\frac{|x'|^2}{|\ln(t-1)|}\right)\right)^{\frac12}$ and $
x_{ni2}^* =\left(\frac{1}{d_3}\ln\left(\frac{d_2}{d_1}\frac{|x'|^2}{|\ln(t-1)|}\right)\right)^{\frac12}$ 
so that \(x_{ni1}^*\geq x_{ni2}^*\) and \(
 w_i (x,t)>0\) for \(x_n\geq x_{ni1}^*\),  while \(w_i (x,t)<0\) for \(x_n\leq x_{ni2}^*\). This implies that if  \(\frac{d_1}{d_2}e^{\frac{d_3}{4}}\leq \frac{|x'|^2}{|\ln(t-1)|}\leq \frac{c_1}{c_2}e^{c_3|x'|^2}\), then  \((x_{ni2}^*, x_{ni1}^*)\in S^{-+}(w_i(x',\cdot, t); \frac12, |x'|)\).

On the other hand, if  \( \frac{e|a+\frac12|}{e-1} \frac{|x'|^2}{(t-1)^{a+\frac12}}\leq \frac{d_1}{d_2}e^{\frac{d_3}{4}} \), then  \( w_i(x,t) < 0\) and if  \(\frac{c_1}{c_2}e^{c_3|x'|^2}\leq \frac{e|a+\frac12|}{e-1} \frac{|x'|^2}{(t-1)^{a+\frac12}} \), then  \( w_i(x,t) > 0\).

2. Suppose that  \(t\leq 1+\frac12 e^{\frac{2}{2a+1}}\). By \eqref{lemma0630-2}, we have \(\frac{2^{-a-\frac12}-(t-1)^{a+\frac12}}{a+\frac12} \approx \frac{(t-1)^{a+\frac12}}{|a+\frac12|} \). Thus \eqref{0729-1} gives
\begin{align*}
  \frac{x_ix_n}{|x'|^{n+2}}\left( c_1 \frac{e-1}{e}\frac{(t-1)^{a+\frac12}}{|a+\frac12|} -c_2|x'|^2e^{-c_3x_n^2}\right)\leq w_i(x,t) \leq \frac{x_ix_n}{|x'|^{n+2}}\left( d_1 \frac{(t-1)^{a+\frac12}}{|a+\frac12|}  -d_2|x'|^2e^{-d_3x_n^2}\right).
\end{align*}
Let \(\displaystyle
x_{ni1}^*=  \left(\frac{1}{c_3}\ln\left(\frac{e|a+\frac12|c_2}{(e-1)c_1}\frac{|x'|^2}{(t-1)^{a+\frac12}}\right)\right)^{\frac12},\quad 
x_{ni2}^*= \left(\frac{1}{d_3}\ln\left(\frac{e|a+\frac12|d_2}{(e-1)d_1}\frac{|x'|^2}{(t-1)^{a+\frac12}}\right)\right)^{\frac12}\)
so that \(x_{ni1}^*\geq x_{ni2}^*\) and \(
 w_i (x,t)>0\) for \(x_n\geq x_{ni1}^*\),  while \(w_i (x,t)<0\) for \(x_n\leq x_{ni2}^*\). This implies that if \(\frac{d_1}{d_2}e^{\frac{d_3}{4}}\leq \frac{e|a+\frac12|}{e-1} \frac{|x'|^2}{(t-1)^{a+\frac12}}\leq \frac{c_1}{c_2}e^{c_3|x'|^2}\), then \((x_{ni2}^*, x_{ni1}^*)\in S^{-+}(w_i(x',\cdot, t); \frac12, |x'|)\). 
 
 On the other hand, if \(  \frac{e|a+\frac12|}{e-1} \frac{|x'|^2}{(t-1)^{a+\frac12}}\leq \frac{d_1}{d_2}e^{\frac{d_3}{4}}\), then \(w_i(x,t) > 0\) and if \(\frac{c_1}{c_2}e^{c_3|x'|^2} \leq \frac{e|a+\frac12|}{e-1} \frac{|x'|^2}{(t-1)^{a+\frac12}}\), then \(w_i(x,t) < 0\) for all \(\frac12<x_n<|x'|\).

{\bf  \underline{Subcase 3-3}: \( a= -\frac12\).} \quad  From \eqref{0727-1}, we have 
\begin{align*}
 \frac{x_ix_n}{|x'|^{n+2}}\left(c_1 |\ln(t-1)| -  c_2|x'|^2 e^{-c_3x_n^2} \right) \leq  
w_i(x,t)   \leq \frac{x_ix_n}{|x'|^{n+2}}\left(d_1 |\ln(t-1)| -  d_2|x'|^2 e^{-d_3x_n^2} \right).
\end{align*}
Let \(
x_{ni1}^*:=\left(\frac{1}{c_3}\ln\left(\frac{c_2}{c_1}\frac{|x'|^2}{|\ln(t-1)|}\right)\right)^{\frac12}, x_{ni2}^*:=\left(\frac{1}{d_3}\ln\left(\frac{d_2}{d_1}\frac{|x'|^2}{|\ln(t-1)|}\right)\right)^{\frac12}\),
so that \(x_{ni1}^*\geq x_{ni2}^*\) and \(
 w_i (x,t)>0\) for \(x_n\geq x_{ni1}^*\),  while \(w_i (x,t)<0\) for \(x_n\leq x_{ni2}^*\). This implies that if \(\frac{d_1}{d_2}e^{\frac{d_3}{4}}\leq \frac{|x'|^2}{|\ln(t-1)|}\leq \frac{c_1}{c_2}e^{c_3|x'|^2}\), then \((x_{ni2}^*, x_{ni1}^*)\in S^{-+}(w_i(x',\cdot,t);\frac12, |x'|)\). 
 
 On the other hand, if \(|x'|^2\leq \frac{c_1e^{\frac{c_3}{4}}}{c_2}|\ln(t-1)|\), then \(w_i(x,t)>0\) for all \(\frac12\leq x_n\leq |x'|\).

{\bf  \underline{Subcase 3-4}: \( -\frac12 < a <-\frac14\).} \quad  It was already noted in Subcase 3-2 that if \(|x'|^2\leq \frac{c_1e^{\frac{c_3}{4}}}{c_2}\frac{2^{-a-\frac12}-(t-1)^{a+\frac12}}{a+\frac12}\), then \(w_i(x,t)>0\) for all \(\frac12\leq x_n\leq |x'|\) and if \(|x'|^2e^{-d_3|x'|^2}\geq \frac{d_1}{d_2}\frac{2^{-a-\frac12}-(t-1)^{a+\frac12}}{a+\frac12}\), then \(w_i(x,t)<0\) for all \(\frac12\leq x_n\leq |x'|\).

From \eqref{lemma0630-2}, we have \(\frac{2^{-a-\frac12}-(t-1)^{a+\frac12}}{a+\frac12}\approx (t-1)^{a+\frac12}|\ln(t-1)|\mathbf{1}_{t\geq 1+\frac12 e^{-\frac{1}{a+\frac12}}}+\frac{1}{|a+\frac12|}\mathbf{1}_{t< 1+\frac12 e^{-\frac{1}{a+\frac12}}}\). Then \eqref{0729-1} gives 
\begin{align*}
w_i(x,t)\geq \frac{x_ix_n}{|x'|^{n+2}}
&\left[c_1\left((t-1)^{a+\frac12}|\ln(t-1)|\mathbf{1}_{t\geq 1+\frac12 e^{-\frac{1}{a+\frac12}}}+\frac{e-1}{e}\frac{1}{|a+\frac12|}\mathbf{1}_{t< 1+\frac12 e^{-\frac{1}{a+\frac12}}}\right)-c_2|x'|^2e^{-c_3x_n^2}\right],\\
w_i(x,t)\leq \frac{x_ix_n}{|x'|^{n+2}}&\left[(e-1)d_1\left((t-1)^{a+\frac12}|\ln(t-1)|\mathbf{1}_{t\geq 1+\frac12 e^{-\frac{1}{a+\frac12}}}+\frac{1}{e|a+\frac12|}\mathbf{1}_{t< 1+\frac12 e^{-\frac{1}{a+\frac12}}}\right)-d_2|x'|^2e^{-d_3x_n^2}\right].
\end{align*}
Let
\begin{align*}
x_{ni1}^*&=\left(\frac{1}{c_3}\ln\left(\frac{c_2}{c_1}\frac{|x'|^2}{(t-1)^{a+\frac12}|\ln(t-1)|}\right)\right)^{\frac12}\mathbf{1}_{t\geq 1+\frac12 e^{-\frac{1}{a+\frac12}}}+\left(\frac{1}{c_3}\ln\left(\frac{e|a+\frac12|c_2}{(e-1)c_1}|x'|^2\right)\right)^{\frac12}\mathbf{1}_{t\leq 1+\frac12 e^{-\frac{1}{a+\frac12}}},\\
x_{ni2}^*&=\left(\frac{1}{d_3}\ln\left(\frac{d_2}{d_1}\frac{|x'|^2}{(t-1)^{a+\frac12}|\ln(t-1)|}\right)\right)^{\frac12}\mathbf{1}_{t\geq 1+\frac12 e^{-\frac{1}{a+\frac12}}}+\left(\frac{1}{d_3}\ln\left(\frac{e|a+\frac12|d_2}{(e-1)d_1}|x'|^2\right)\right)^{\frac12}\mathbf{1}_{t\leq 1+\frac12 e^{-\frac{1}{a+\frac12}}}.
\end{align*}
We note that \(x_{ni1}^*\geq x_{ni2}^*\) and \(
 w_i (x,t)>0\) for \(x_n\geq x_{ni1}^*\),  while \(w_i (x,t)<0\) for \(x_n\leq x_{ni2}^*\). This implies that if \(\frac{d_1}{d_2}e^{\frac{d_3}{4}}\leq \frac{|x'|^2}{(t-1)^{a+\frac12}|\ln(t-1)|}\mathbf{1}_{t\geq 1+\frac12 e^{-\frac{1}{a+\frac12}}}+\frac{e|a+\frac12||x'|^2}{e-1}\mathbf{1}_{t\leq 1+\frac12 e^{-\frac{1}{a+\frac12}}}\leq \frac{c_1}{c_2}e^{c_3|x'|^2}\), then \((x_{ni2}^*, x_{ni1}^*)\in S^{-+}(w_i(x',\cdot,t);\frac12, |x'|)\). 
 
 On the other hand, if \(|x'|^2\leq \frac{c_2 e^{\frac{c_3}{4}}}{c_1}\left((t-1)^{a+\frac12}|\ln(t-1)|\mathbf{1}_{t\geq 1+\frac12 e^{-\frac{1}{a+\frac12}}}+\frac{e-1}{e}\frac{1}{|a+\frac12|}\mathbf{1}_{t\leq 1+\frac12 e^{-\frac{1}{a+\frac12}}}\right)\), then \(w_i(x,t)>0\) for \(\frac12<x_n<|x'|\).

{\bf  \underline{Subcase 3-5}: \(a\geq -\frac14\).} \quad  From \eqref{0727-1}, we have 
\begin{align*}
%w_i(x',1,t) & \approx \frac{x_ix_n}{|x'|^{n+2}}\left(   \frac{(t-1)^{a+\frac12}}{a +1} -  |x'|^2  \right) > 0 \quad  (a +1)|x'|^2 < \frac{c_2}{c_1} (t -1)^{a +\frac12},\\
\frac{x_ix_n}{|x'|^{n+2}}\left(   c_1- c_2 |x'|^2 e^{-c_3x_n^2}  \right) \leq w_i(x ,t) & \leq \frac{x_ix_n}{|x'|^{n+2}}\left(   d_1- d_2 |x'|^2 e^{-d_3x_n^2}  \right).
\end{align*}

Let
\(\displaystyle
x_{ni1}^*:=\left(\frac{1}{c_3}\ln\frac{c_2|x'|^2}{c_1}\right)^{\frac12}, x_{ni2}^*:=\left(\frac{1}{d_3}\ln\frac{d_2|x'|^2}{d_1}\right)^{\frac12}\)
so that \(x_{ni1}^*\geq x_{ni2}^*\) and \(
 w_i (x,t)>0\) for \(x_n\geq x_{ni1}^*\),  while \(w_i (x,t)<0\) for \(x_n\leq x_{ni2}^*\). This implies that if \(e^{\frac{d_3}{4}}\frac{d_1}{d_2}\leq |x'|^2\leq \frac{c_1}{c_2}e^{c_3|x'|^2}\), then \((x_{ni2}^*, x_{ni1}^*)\in S^{-+}(w_i(x',\cdot,t);\frac12, |x'|)\). 
 
 On the other hand, if \(|x'|^2e^{-\frac{c_3}{4}}>\frac{c_1}{c_2}\), then \(w_i(x,t)>0\) for \(\frac12<x_n<|x'|\).

 \noindent
{\bf \(\bullet\) (Case 4: \( 1 < t<\frac98\), \( x_n>|x'|\))} 

From \eqref{est-wi}, (iii) of Lemma \ref{elementarylemma} and (iii) of Lemma \ref{elementarylemma2}, we have 
 \begin{align*}
 w_i(x,t)\geq \frac{x_i x_n}{x_n^{n+2}|x'|^n}\left(c_1|x'|^n-c_2x_n^{n+2}e^{-c_3x_n^2}\right)>0,
 \end{align*}
 by taking \(|x'|\) sufficiently large depending on \(n\) and \(M\).

{\bf \(\bullet\) (Case 5: \(t>\frac98\), \(x_n<\sqrt{t}\) or \(x_n>|x'|\))}  From \eqref{est-wi}, Lemma \ref{w1L-w1B}, (i) of Lemma \ref{elementarylemma} and (i) of Lemma \ref{elementarylemma2}, we find that for \(t\geq2\), if \(x_n<\sqrt{t}\), then
\begin{align*}
w_i(x,t)&\lesssim (M+\frac{1}{a+1}) \frac{x_nx_i}{t^{\frac32}}\left(\frac{t}{|x|^{n+2}} + \frac{e^{-\frac{c|x|^2}{t}}}{t^{\frac{n+1}{2}}}- \frac{1}{|x'|^n}\right)\lesssim -(M+\frac{1}{a+1}) \frac{x_ix_n}{t^{\frac32}|x'|^n}<0
\end{align*}
for sufficiently large \(|x'|\), and if \(x_n>|x'|\), then \(w_i(x,t)\geq \frac{x_ix_n}{x_n^{n+2}t^{\frac32}}\left(c_1 t-c_2\frac{x_n^{n+2}}{|x'|^n}e^{-\frac{c_3x_n^2}{t}}\right)>0\) for sufficiently large \(|x'|\).

{\bf \(\bullet\) (Case 6: \(t>\frac98\), \( \sqrt{t}<x_n<|x'|\))}
  From \eqref{est-wi}, Lemma \ref{w1L-w1B} and  (i) of Lemma \ref{elementarylemma}, and (i) of Lemma \ref{elementarylemma2}, we have 
\begin{align*}
\frac{x_nx_i}{t^{\frac12}|x'|^{n+2}}\left(\frac{c_1}{a+1}-c_2\frac{|x'|^2}{t}e^{-\frac{c_3x_n^2}{t}}\right)\leq w_i(x,t)\leq \frac{x_nx_i}{t^{\frac12}|x'|^{n+2}}\left(\frac{d_1}{a+1}-d_2\frac{|x'|^2}{t}e^{-\frac{d_3x_n^2}{t}}\right).
\end{align*}
Note that the above inequality implies that \(c_1<d_1\), \(c_2>d_2\) and \(c_3<d_3\). Let \(
x_{ni1}^*:=\sqrt{\frac{2
t}{c_3}\ln\left(\frac{c_2(a+1)}{c_1}\frac{|x'|^2}{t}\right)}\), \(x_{ni2}^*:=\sqrt{\frac{t}{d_3}\ln\left(\frac{d_2(a+1)}{d_1}\frac{|x'|^2}{t}\right)}\)
so that \(x_{ni1}^*\geq x_{ni2}^*\) and \(w_i(x,t)>0\) for \(x_n\geq x_{ni1}^*\), while \(w_i(x,t)<0\) for \(x_n\leq x_{ni2}^*\). This implies that if \(\frac{d_1}{d_2}e^{d_3}t< |x'|^2< \frac{c_1}{c_2}te^{\frac{c_3|x'|^2}{t}}\), then \((x_{ni2}^*, x_{ni1}^*)\in S^{-+}(w_i(x',\cdot,t);\sqrt{t},|x'|)\). 
This completes the proof.
\end{proof}

We now show the continuity of \(x_{ni}^*\) in \(a\), which is an immediate corollary of Proposition \ref{1stmainprop}.
\begin{cor}
Let \(w\) be the solution of the Stokes system \eqref{StokesRn+} defined by \eqref{rep-bvp-stokes-w} with the boundary data \(g=g_n \mathbf{e}_n\) given by \eqref{0502-6}-\eqref{boundarydataspecific}. For \(-1<a<0\) and \(i<n\), let \(x_{n}^*=x_{n}^*(x',t,a)\) be the flow reversal point of \(w_i\).
For any \(\epsilon>0\) and \(|x'|\geq c_*\), where \(c_*>0\) depends only on \(n\) and \(M:=\int_0^{\frac12}\phi(s)ds\), there exists \(\delta_0=\delta_0(\epsilon, |x'|)>0\) such that the following holds: if \(a\in(-\frac12-\delta_0, -\frac12+\delta_0)\), there exists \(t_0=t_0(\epsilon, |x'|,a)>1\) such that \(0<x_n^*(x',t,a)<\epsilon\) for any \(t\in (1, t_0)\).
\end{cor}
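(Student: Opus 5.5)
The plan is to read off the corollary from the case analysis in Proposition \ref{1stmainprop}, using the explicit asymptotics of \(x_{nik}^*\) for \(1<t<\frac98\) in the regimes near \(a=-\frac12\), namely (ii)-(b), (c) and (d). Fix \(\epsilon>0\) and \(|x'|\geq c_*\), with \(c_*\) as in Proposition \ref{1stmainprop}; we may assume \(\epsilon<\frac12\). The mechanism is that, for \(a\) close to \(-\frac12\) and \(t\) close to \(1\), the pair \((x',t)\) falls in the set \(A_a^2\) (or \(A_a^1\) with the \(|a+\frac12|\)-small formula). There the zero \(x_{ni}^*\in(\sqrt{2(t-1)},\frac12)\) is comparable either to \(|x'|(\ln\frac{|x'|^2}{t-1})^{-\frac12}\) or to \((a+\frac12)^{\frac12}|x'|\), and both tend to \(0\).

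\emph{Case \(a=-\frac12\).} By (ii)-(c), for \((x',t)\in A_{-1/2}^2=U_3(t-1,1)\) we have \(x_{ni}^*\approx |x'|(\ln\frac{|x'|^2}{t-1})^{-\frac12}\). The defining condition of \(U_3\) holds once \(t-1\) is small relative to \(e^{-c|x'|^2}\): the lower bound \(t-1\lesssim |x'|^2/\ln(|x'|^2/(t-1))\) is trivial, and the upper bound \(|x'|^2/\ln(\cdot)\lesssim 1\) requires \(\ln\frac{|x'|^2}{t-1}\gtrsim |x'|^2\). Taking \(t-1\) even smaller, we can make \(|x'|(\ln\frac{|x'|^2}{t-1})^{-\frac12}<\epsilon/C\), where \(C\) is the implicit constant. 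This fixes \(t_0\).

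\emph{Case \(a\) near \(-\frac12\).} For \(-\frac34\le a<-\frac12\), Subcase 2-2 part 1) gives \(x_{ni}^*\approx |x'|(\ln\frac{|x'|^2}{t-1})^{-\frac12}\) under the condition \eqref{0716-1}. Its upper constraint involves \(e^{-\frac{1}{2a+1}}\sqrt{t-1}\); since \(e^{-\frac{1}{2a+1}}=e^{\frac{1}{|2a+1|}}\) is large when \(a\) is near \(-\frac12\), the constraint reduces to the \(\frac12\) bound, as in the case \(a=-\frac12\). For \(-\frac12<a<-\frac14\), Subcase 2-4 part 1) gives
\[
x_{ni}^*\approx\Big(\frac{|x'|^2}{(t-1)^{a+\frac12}\ln\frac{|x'|^2}{t-1}}\Big)^{\frac{1}{1-2a}}
\]
under \eqref{0716-3}. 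The right side of \eqref{0716-3} contains \(e^{\frac{1-2a}{1+2a}}(t-1)\), which is huge when \(a+\frac12\) is small, so it is again dominated by the \((t-1)^{a+\frac12}\) term.

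So the steps are:
\begin{enumerate}
\item Choose \(\delta_0\) so small that \(e^{\pm\frac{1}{2a+1}}\) exceeds what is needed, and that \((t-1)^{-(a+\frac12)/(1-2a)}\) stays bounded by \(2\) on the relevant range.
\item Given such \(a\), pick \(t_0\) with \(t_0-1\le e^{-C|x'|^2}\), so that \((x',t)\in A_a^2\) and the explicit formula is less than \(\epsilon\). The constants of the \(\approx\) relations are independent of \(a\), per the remark preceding Lemma \ref{elementarylemma}.
\item Conclude using Remark \ref{1125}: \(w_i\) vanishes somewhere in \((x_{ni2}^*,x_{ni1}^*)\subset(0,\epsilon)\).
\end{enumerate}

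The main obstacle is uniformity in \(a\). The admissible \(t\)-window for \(A_a^2\) when \(a>-\frac12\) also has an upper restriction \(2^{2a-1}(t-1)^{a+\frac12}\). We must check that for each fixed \(a\) in the window there is still a nonempty interval \((1,t_0)\). We need \(\ln\frac{|x'|^2}{t-1}\gtrsim |x'|^2(t-1)^{-(a+\frac12)}\) to hold as \(t\to1\), and this fails for any fixed \(a>-\frac12\) as \(t\to1\). I would therefore handle \(a>-\frac12\) differently: as \(t\to1^+\) with \(a>-\frac12\) fixed, Subcase 2-4 part 2) (the \(A_a^1\) regime) applies and gives \(x_{ni}^*\approx (a+\frac12)^{\frac12}|x'|\). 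This is less than \(\epsilon\) once \(\delta_0<\epsilon^2/(C|x'|^2)\), and the condition \(e^{\frac{1}{2a+1}}\sqrt{t-1}\lesssim (a+\frac12)^{\frac12}|x'|\lesssim 1\) holds for \(t\) close to \(1\). This dependence of \(t_0\) on \(a\) is what the statement allows.
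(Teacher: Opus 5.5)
Your handling of $a=-\frac12$, and of $a>-\frac12$ via Subcase 2-4, part 2), where $x_{ni}^*\approx (a+\frac12)^{\frac12}|x'|$, is sound and matches the paper. The gap is the range $a\in(-\frac12-\delta_0,-\frac12)$.

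There you claim that the constraint $\min\{e^{-\frac{1}{2a+1}}\sqrt{t-1},\frac12\}$ in \eqref{0716-1} reduces to the $\frac12$ bound because $e^{-\frac{1}{2a+1}}=e^{\frac{1}{|2a+1|}}$ is large. But $a$ is fixed before $t\to1^+$, so this factor is just a fixed constant. Once $t-1<\frac14e^{-\frac{2}{|2a+1|}}$, the constraint reads $|x'|\big(\ln\frac{|x'|^2}{t-1}\big)^{-\frac12}\lesssim e^{\frac{1}{|2a+1|}}\sqrt{t-1}$. This fails as $t\to1^+$, because the left side decays only logarithmically. Equivalently, the upper bound $e^{-\frac{2}{1+2a}}(t-1)$ in the definition \eqref{0804-3} of $A_a^2$ tends to $0$.

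Hence $(x',t)\notin A_a^2$ for $1<t<t_1(a)$, where $t_1(a)>1$ and roughly $t_1(a)-1\approx e^{-\frac{1}{|a+\frac12|}}|a+\frac12|\,|x'|^2$. The logarithmic regime is a window $(t_1(a),t_2(a))$, which is exactly what the paper's proof records; it is not an interval $(1,t_0)$. Your step 2 (choose $t_0-1\le e^{-C|x'|^2}$ so that $(x',t)\in A_a^2$ for all $t\in(1,t_0)$) is therefore false for $a<-\frac12$. No choice of $\delta_0$ in step 1 repairs it, since $t$ is sent to $1$ after $a$ is fixed. This is the same obstruction you correctly spotted for $a>-\frac12$; it occurs symmetrically below $-\frac12$.

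The fix mirrors your own treatment of $a>-\frac12$. For $t-1\lesssim e^{-\frac{1}{|a+\frac12|}}|a+\frac12|\,|x'|^2$, the pair $(x',t)$ lies in $A_a^1$ (Subcase 2-2, part 2) of Proposition \ref{1stmainprop}). There
\[
x_{ni}^*\approx\Big(\big|a+\tfrac12\big|\,|x'|^2\Big)^{\frac{1}{1-2a}}(t-1)^{\frac{a+\frac12}{2a-1}} .
\]
Since $\frac{a+\frac12}{2a-1}>0$ and $t-1<1$, this is at most a constant times $\big(|a+\frac12|\,|x'|^2\big)^{\frac{1}{1-2a}}$. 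That quantity is below $\epsilon$ once $\delta_0$ is small in terms of $\epsilon$ and $|x'|$. In fact, for each fixed $a<-\frac12$ it tends to $0$ as $t\to1^+$, which is the separation-point behavior behind Theorem \ref{thmseparationpoint}.

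To close the case, take $t_0\le t_1(a)$. Alternatively, use the power-law regime on $(1,t_1(a)]$ and your logarithmic estimate on $(t_1(a),t_0)$.
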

\begin{proof}
We fix \(|x'|>c_*\) where \(c_*\) was given in Proposition \ref{1stmainprop}, and let \(\epsilon>0\) be given.

1) We recall that if \(-\frac34<a<-\frac12\), then \(x_{n}^*\approx |x'|\left(\ln \frac{|x'|^2}{t-1}\right)^{-\frac12}\) for \((x',t)\in A_a^3\) and \(x_{n}^*\approx |a+\frac12|^{\frac{1}{1-2a}}|x'|^{\frac{2}{1-2a}}(t-1)^{\frac{a+\frac12}{2a-1}}\) for \((x',t)\in A_a^2\). From \eqref{0804-2}, we find that \(A_a^2\rightarrow \varnothing\) as \(a\rightarrow -\frac12\). On the other hand, from \eqref{0804-3}, we find that if \(t<1+\frac14 e^{\frac{2}{1+2a}}\), we find that there exist \(t_1(a), t_2(a)\) such that \(t_1(a)<t<t_2(a)\) for \((x',t)\in A_a^3\) with \(t_1(a)\rightarrow 1\) as \(a\rightarrow -\frac12\). Indeed, \(t_1(a)\) and \(t_2(a)\) satisfies \(t_i(a)=1+\frac{|x'|^2}{x_i(a)}\), \(i=1,2\), where \(x_i(a)\) are the solutions to the equation \(x=ce^{-\frac{2}{1+2a}}\ln x\) for some \(c\) depending only on \(n\) with \(x_2(a)<x_1(a)\). Then we find that \(x_1(a)\rightarrow \infty\) as \(a\rightarrow -\frac12\). 

2) We recall that if \(-\frac12<a<-\frac14\) and \(t\leq 1+\frac14 e^{-\frac{1}{a+\frac12}}\), then for \(|x'|\) such that \(e^{\frac{1}{2a+1}}\sqrt{t-1}\lesssim (a+\frac12)^{\frac12}|x'|\lesssim 1\) we have \(x_{n}^*\approx (a+\frac12)^{\frac12}|x'|\), where the constants from \(\lesssim\) and \(\approx\) depend only on \(n\) and \(M\).
Then we choose \(\delta_0\) so that \((\delta_0+\frac12)^{\frac12}|x'|<\epsilon\), and for each \(a\in(-\frac12-\delta_0,-\frac12+\delta_0)\),  choose \(t_0\) such that \(e^{\frac{1}{2a+1}}\sqrt{t_0-1}\lesssim (a+\frac12)^\frac12 |x'|\).
\end{proof}

\subsection{Normal component of the velocity field}\label{sect5}

In this subsection, we  treat the normal component of velocity field and shall prove that normal velocity shows at least two reversal points when $t>1$.

\subsubsection{{\bf Estimates of \texorpdfstring{\(w_n\)}{}}}
The following lemma provides the pointwise estimate of \(w_n\).
\begin{lemma}\label{w_nformula}
There exists \(N\geq 1\) sufficiently large depending only on \(n\) such that the following holds: Let \(t>1\), and \(|x'|\geq N(1+\sqrt{t})\). Then 
\begin{align}\label{est-wn}
\begin{split}
w_n(x,t)\approx &\left(\frac{M}{t^{\frac{1}{2}}}\min\left\{1,\frac{x_n^2}{t}\right\}+\mathcal{G}_{a,\frac{1}{2}}(x_n,t)\right)\frac{(n-1)x_n^2-|x'|^2}{|x|^{n+2}} 
+   \frac{Mx_n }{t^{\frac{n+2}{2}}}e^{-\frac{c|x|^2}{t}} + x_n\mathcal{H}_{a,\frac{n+2}{2}}(|x|,t).
\end{split}
\end{align}
\end{lemma}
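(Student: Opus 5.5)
We start from the decomposition \eqref{1220-8w_n},
\[
w_n=-\sum_{k=1}^{n-1}w^{(L)}_{kk}+w^N_n+\tfrac{n-1}{n}w^G ,
\]
and note first that $w^N_n(x,t)=2\phi(t)\int\partial_{x_n}N(x-y')\psi(y')dy'$ vanishes for $t>1$, because $\operatorname{supp}\phi\subset(0,1)$. Two terms remain. The main one is $\sum_k w^{(L)}_{kk}$. Since $\operatorname{supp}\psi\subset B'_{1/2}$ and $|x'|\ge N(1+\sqrt t)$, we have $|x'-y'|\approx|x'|$ and $|x-y'|\approx|x|$ uniformly in $y'\in\operatorname{supp}\psi$. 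Also $\sqrt{t-s}\le\sqrt t<\epsilon_0|x'-y'|$ once $N$ is large. So \eqref{240707} of Lemma \ref{estimateofkernelt<1} applies to $\sum_k\widetilde L_{kk}(x-y',t-s)$ for every $s\in(0,t)$, and gives
\[
-\sum_k w^{(L)}_{kk}(x,t)\approx \frac{(n-1)x_n^2-|x'|^2}{|x|^{n+2}}\int_0^1\frac{\phi(s)}{(t-s)^{1/2}}\min\Big\{1,\frac{x_n^2}{t-s}\Big\}ds .
\]
The quantity $(n-1)x_n^2-|x'-y'|^2$ is replaced by $(n-1)x_n^2-|x'|^2$ up to an error of size $O(|x'|)$. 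That error is harmless away from the zero set $x_n\approx|x'|/\sqrt{n-1}$. Near the zero set I expect it to be absorbed by the positive Gaussian term below, or else to be harmless for the ``$\approx$'' in the sense of the paper's convention with comparison functions $g_1,g_2$. This is a point to check.

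Next we split the $s$-integral at $s=\tfrac12$. On $(0,\tfrac12)$ we have $t-s\approx t$ for $t>1$, so that piece is $\approx M t^{-1/2}\min\{1,x_n^2/t\}$. On $(\tfrac12,1)$ we have $\phi(s)=(1-s)^a$, so that piece is exactly $\mathcal G_{a,\frac12}(x_n,t)$ from \eqref{G-ak}. This produces the first group of terms in \eqref{est-wn}.

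The remaining term is $\frac{n-1}{n}w^G$, with
\[
-\partial_{x_n}\Gamma(x-y',t-s)=\frac{x_n}{2(t-s)}(4\pi(t-s))^{-n/2}e^{-|x-y'|^2/4(t-s)}\ge0 .
\]
Since $|y'|<\tfrac12\ll|x'|$, we have $|x-y'|^2=|x|^2+O(|x'|)$, which we absorb by changing the constant in the exponent, e.g. $|x|^2/2\le|x-y'|^2\le2|x|^2$. Then
\[
w^G\approx x_n\int_0^1\frac{\phi(s)}{(t-s)^{\frac{n+2}{2}}}e^{-\frac{c|x|^2}{t-s}}ds .
\]
As before we split at $s=\tfrac12$. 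The first piece gives $M x_n t^{-\frac{n+2}{2}}e^{-c|x|^2/t}$, using $t-s\in(t-\tfrac12,t)$ and $t>1$ so that $t-\tfrac12\approx t$. The second piece is $x_n\mathcal H_{a,\frac{n+2}{2}}(|x|,t)$, with the convention that the constant $c$ in the exponent may vary, as in Lemma \ref{elementarylemma2}. Summing the two pieces yields \eqref{est-wn}.

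The main obstacle is the sign-indefinite factor $(n-1)x_n^2-|x'|^2$. Two-sided bounds ``$\approx$'' for a quantity changing sign must be read as upper and lower comparison functions with different constants on each sign region. The $y'$-perturbation near $x_n^2\approx|x'|^2/(n-1)$ must therefore be handled by either of two routes:
\begin{itemize}
\item[(a)] appeal to the fact that Lemma \ref{estimateofkernelt<1} is itself stated with this same structure, so the integrated estimate inherits it; or
\item[(b)] check directly that the $O(|x'|\,|x|^{-n-2})$ correction is dominated by the other terms in the regime where they are used.
\end{itemize}
I would try (a) first.
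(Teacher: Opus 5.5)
Your proposal is correct and follows essentially the same route as the paper: decompose via \eqref{1220-8w_n}, apply \eqref{240707} and split the $s$-integral at $s=\tfrac12$ to get $Mt^{-1/2}\min\{1,x_n^2/t\}+\mathcal{G}_{a,\frac12}(x_n,t)$, and compute $w^G$ directly with the same split. Your remark that $w^N_n\equiv 0$ for $t>1$ is accurate and explains why no $w^N_n$ term appears in \eqref{est-wn}. Your route (a) also settles the sign concern you flagged, because \eqref{240707} is proved as the split bound $|x'|^2I_{n+2,0}-(n-1)I_{n+2,2}$ with each term estimated separately; replacing $|x'-y'|$ by $|x'|$ therefore only changes each piece by a bounded factor, and integrating against $\psi\ge 0$ preserves the bound.
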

\begin{proof}
We first estimate $\sum_{i=1}^{n-1} w_{ii}^{(L)}(x,t)$. Using \(\eqref{wis}_2\), \eqref{240707},  
\begin{align*}
    \sum_{i=1}^{n-1}w_{ii}^{(L)}(x,t) 
    %& = 4\int_0^t \int_{\R^{n-1}}\sum_{i=1}^{n-1}\widetilde{L}_{ii}(x-y',t-s)\psi(y')\phi(s)dy'ds\\
  %  & \approx \int_0^t\int_{\R^{n-1}}\frac{|x'-y'|^2-x_n^2}{(t-s)^{\frac12}(|x'-y'|^2+x_n^2)^{\frac{n+2}{2}}}\min\left\{1,\frac{x_n^2}{t-s}\right\}\psi(y')\phi(s)dy'ds\\
    &\approx\frac{|x'|^2-(n-1)x_n^2}{|x|^{n+2}}\norm{\psi}_{L^1}\int_0^1\frac{1}{\sqrt{t-s}}\min\left\{1,\frac{x_n^2}{t-s}\right\} \phi(s)ds\\
     & \approx-\frac{(n-1)x_n^2-|x'|^2}{|x|^{n+2}}\left(Mt^{-\frac{1}{2}}\min\left\{1,\frac{x_n^2}{t}\right\}+\mathcal{G}_{a,\frac{1}{2}}(x_n,t)\right).
\end{align*}

Next, we estimate \(w^G(x,t)\). The direct calculation gives
 \begin{align*}
 w^G(x,t) 
 &\approx \int_0^{\frac{1}{2}}x_n(t-s)^{-\frac{n+2}{2}}e^{-\frac{|x|^2}{t-s}}\phi(s)ds+\int_{\frac{1}{2}}^1x_n(t-s)^{-\frac{n+2}{2}}e^{-\frac{|x|^2}{t-s}}(1-s)^a ds\\
 &\approx Mx_n t^{-\frac{n+2}2} e^{-\frac{|x|^2}t}+x_n\mathcal{H}_{a,\frac{n+2}{2}}(|x|,t).
 \end{align*}
 Finally we have \(w_n^N(x,t)\approx x_n|x|^{-n}\phi(t)\). Combining all the estimates, we  deduce the lemma.
\end{proof}

\subsubsection{{\bf Asymptotics of \texorpdfstring{\(x_{nn}^*\)}{} and sign of \texorpdfstring{\(w_n\)}{}}}
To state our result, we need to introduce the following sets. For \(n\geq 1\) and \(0<\alpha\lesssim\beta<\infty\), we denote 
\[
S(\alpha,\beta):=\{(x',t)\,\Big|\, \alpha\lesssim e^{-|x'|^2}\lesssim \beta\},\qquad S(0,\beta):=\{(x',t)\,\Big|\, e^{-|x'|^2}\lesssim \beta\},
\]
\[
S(\alpha,\infty):=\{(x',t)\,\Big|\, e^{-|x'|^2}\gtrsim \alpha\}, \qquad V(\alpha,\beta):=\left\{(x',t)\,\bigg|\,\alpha\lesssim \frac{|x'|^{n}e^{-|x'|^2}}{\ln\left(|x'|^{-n}e^{|x'|^2}\right)}\lesssim \beta\right\}.
\]

We consider the function \(f_{ia}:[1,\infty)\rightarrow \R_+\), \(i=1,2,3,4\), which are given by
\begin{align*}
f_{1a}(t):=&\frac{(t-1)^a}{a+1}\mathbf{1}_{-1<a<0}+(t-1)^a\mathbf{1}_{0<a\leq \frac14}\\
&+\left((t-1)^{\frac12}|\ln(t-1)|\mathbf{1}_{t>1+e^{\frac{2}{2a-1}}}+\frac{(t-1)^a}{1-2a}\mathbf{1}_{t\leq1+e^{\frac{2}{2a-1}}}\right)\mathbf{1}_{\frac14<a<\frac12}+(t-1)^{\frac12}|\ln(t-1)|\mathbf{1}_{a=\frac12}\\
&+\left((t-1)^{a-\frac12}|\ln(t-1)|\mathbf{1}_{t>1+e^{\frac{2}{1-2a}}}+\frac{(t-1)^{\frac12}}{2a-1}\mathbf{1}_{t\leq1+e^{\frac{2}{1-2a}}}\right)\mathbf{1}_{\frac12<a\leq \frac34}+(t-1)^{\frac12}\mathbf{1}_{a>\frac34},\\
f_{2a}(t):=&(t-1)^a\mathbf{1}_{0<a\leq\frac14}+\frac{(t-1)^a}{1-2a}\mathbf{1}_{\frac14<a<\frac12}+\frac{(t-1)^{\frac12}}{2a-1}\mathbf{1}_{\frac12<a\leq \frac34}+(t-1)^{\frac12}\mathbf{1}_{a>\frac34},\\
f_{3a}(t):=&\mathbf{1}_{0<a\leq \frac14, a>\frac34}+\frac{e^{\frac{2a}{2a-1}}}{1-2a}\mathbf{1}_{\frac14<a<\frac12}+ \frac{1}{2a-1}\mathbf{1}_{\frac12<a\leq \frac34},\\
f_{4a}(t):=&\max\left\{ \frac{1}{\sqrt{2}}e^{\frac{1}{2a-1}}, (2(t-1))^{\frac12}\right\}\mathbf{1}_{\frac14<a<\frac12}  +  (t-1)^{\frac12}\mathbf{1}_{a=\frac12} +  \max\left\{ 2^{-a}e^{\frac{2a}{1-2a}}, 2^a(1-t)^a\right\}\mathbf{1}_{\frac12<a<\frac34}.
\end{align*}
We then define the sets
\begin{align}\label{1204}
B_{a}^1:=S_{n-2}(0, f_{1a}(t)),\quad B_{a}^2:=S_{n-2}(f_{2a}(t), f_{3a}(t)),\quad B_{a}^3:=V_{n-2}(f_{4a}(t),1).
\end{align}

Now we are ready to prove the following proposition, which gives asymptotics of \(x_{nn}^*\) when $t>1$.

\begin{proposition}\label{2ndmainprop}
Let \(w\) be the solution of the Stokes system \eqref{StokesRn+} defined by \eqref{rep-bvp-stokes-w} with the boundary data \(g=g_n{\bf e}_n\) given by \eqref{0502-6}-\eqref{boundarydataspecific}. There is $c_*>0$  depending on \(n\) and \(M\) such that if $ |x'|>c_*$ and \(t>1\), then for the sets \(B_a^l\), \(l\in \left\{1,2,3\right\}\) given in \eqref{1204}, there are $(x_{nn1}^*, x_{nn2}^*)\in S^{+-}(w_n(x', \cdot, t);0,1/2)$, where \(S^{+-}\) is defined in Definition \ref{1202}, satisfying the following: for \(k=1,2\),
\begin{enumerate}
 \item If \(t>\frac98\), then \(
 x_{nnk}^*\approx |x'|^n t^{-\frac{n-1}{2}}e^{-\frac{|x'|^2}{t}}\).
 \item Let $ 1< t\leq \frac98$.
 \begin{itemize}
 \item[(i)] If $ -1 < a < 0$, then \(x_{nnk}^* \approx\left( (a+1)e^{-|x'|^2}+(t-1)^{a}e^{-\frac{|x'|^2}{t-1}}\right)(t-1)^{\frac12-a}\mathbf{1}_{B_a^1}.\) %subcase1-1

 \item[(ii)]
If $ 0 < a <  \frac14$, then \(x_{nnk}^* \approx (t-1)^{\frac12-a}  e^{-|x'|^2}\mathbf{1}_{B_a^1}+e^{-\frac{|x'|^2}{2a}}\mathbf{1}_{B_a^2}\). %subcases1-2,2-6

\item[(iii)]
If $ \frac14 \leq  a< \frac12$, then
\begin{align*}
x_{nnk}^*&\approx  \left(\frac{e^{-|x'|^2}}{|\ln(t-1)|} \mathbf{1}_{t>1+e^{\frac{1}{a-\frac12}}}+  (\frac12 -a) (t -1)^{\frac12 -a} e^{-|x'|^2} \mathbf{1}_{t<1+e^{\frac{1}{a-\frac12}}}\right)\mathbf{1}_{B_a^1}\\
& +((1-2a)e^{-|x'|^2})^{\frac{1}{2a}}\mathbf{1}_{B_a^2} + e^{-|x'|^2}\mathbf{1}_{B_a^3}.
\end{align*}

 \item[(iv)]
If  $a =\frac12$, then \(
x_{nnk}^*\approx  |\ln (t-1)|^{-1}e^{- |x'|^2} \mathbf{1}_{B_a^1}+e^{-|x'|^2}\mathbf{1}_{B_a^3}\).

 \item[(v)]
If $ \frac12 < a  <   \frac34$, then
 \begin{align*}
 x_{nnk}^*&\approx \left(\frac{(t-1)^{\frac12-a}}{|\ln(t-1)|}e^{-|x'|^2}\mathbf{1}_{t>1+e^{\frac{2}{2a-1}}}+(a-\frac12)e^{-|x'|^2}\mathbf{1}_{t<1+e^{\frac{2}{2a-1}}}\right)\mathbf{1}_{B_a^1}\\
 &+(2a-1)e^{-|x'|^2}\mathbf{1}_{B_a^2}  + e^{-\frac{|x'|^2}{2a}}\mathbf{1}_{B_a^3}.
 \end{align*}

 \item[(vi)]
 If $ \frac34\leq a\leq 1$, then \(
 x_{nnk}^*\approx e^{-|x'|^2}\left(\mathbf{1}_{B_a^1}+\mathbf{1}_{B_a^2}\right)\).

 \end{itemize}
 \end{enumerate}
Here \(x_{nnk}^*\in (0,\sqrt{2(t-1)})\) if \((x',t)\in B_a^1\), while \(x_{nnk}^*\in (\sqrt{2(t-1)},\frac12)\) if \((x',t)\in B_a^2\cup B_a^3\).

Moreover, for \(a>0\), there are \((x_{nn1}^*, x_{nn2}^*)\in S^{-+}(w_n(x',\cdot, t); 1/2, \infty)\), where \(S^{-+}\) is defined in Definition \ref{1202}, satisfying \(x_{nnk}^*\approx |x'|\).
\end{proposition}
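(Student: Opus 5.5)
The plan follows the scheme used for Proposition \ref{1stmainprop}. We start from the two-sided estimate \eqref{est-wn} of Lemma \ref{w_nformula} and insert the asymptotics of $\mathcal{G}_{a,\frac12}$ and $\mathcal{H}_{a,\frac{n+2}{2}}$ from Lemmas \ref{elementarylemma} and \ref{elementarylemma2}. In each regime of $(x_n,t)$ this gives $g_1\le w_n\le g_2$, where $g_1,g_2$ have the same algebraic form and differ only in constants. The key observation is the following. For $x_n\le \frac12\ll |x'|$ the factor $(n-1)x_n^2-|x'|^2\approx -|x'|^2$, so the first term is negative, of size $|x'|^{-n}(M t^{-1/2}\min\{1,x_n^2/t\}+\mathcal{G}_{a,\frac12})$. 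The Gaussian term $x_n(Mt^{-\frac{n+2}{2}}e^{-c|x|^2/t}+\mathcal{H}_{a,\frac{n+2}{2}}(|x|,t))$ is positive and linear in $x_n$. Since the negative term carries the extra factor $\min\{1,x_n^2/\cdot\}$, it is quadratic in $x_n$ for small $x_n$ and loses to the positive term there. Hence $w_n>0$ near $x_n=0$ and $w_n<0$ once $x_n$ exceeds a threshold. Balancing the two terms gives $x^*_{nn}$.

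First, for $t>\frac98$ we use the (i) parts of the lemmas. Balancing $|x'|^{-n}t^{-3/2}x_n^2$ against $x_n t^{-\frac{n+2}{2}}e^{-|x'|^2/t}$ gives $x_{nnk}^*\approx |x'|^n t^{-\frac{n-1}{2}}e^{-|x'|^2/t}$, with different constants for $k=1,2$. This yields an element of $S^{+-}(w_n;0,1/2)$.

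Next, for $1<t\le\frac98$ we split according to $x_n<\sqrt{2(t-1)}$ or $\sqrt{2(t-1)}\le x_n<\frac12$. In the first region Lemma \ref{elementarylemma}(ii) gives $\mathcal{G}_{a,\frac12}\approx x_n^2F_a(t)$, where $F_a$ is the bracket with $k=\frac12$. This bracket behaves like $(t-1)^{a-\frac12}/(a+1)$ for $a<0$, like $(t-1)^{a-\frac12}/(\frac12-a)$ or $|\ln(t-1)|$ near $a=\frac12$, and like $1/(a-\frac12)$ beyond, with the usual dichotomies from \eqref{lemma0630-2}. Lemma \ref{elementarylemma2}(ii)(c) with $r=|x|$ gives a positive term of order $x_n(e^{-|x'|^2}+(t-1)^{a-\frac n2}e^{-|x'|^2/(t-1)}/(a+1))$, after absorbing powers of $|x'|$. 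Solving $|x'|^{-n}x_n^2F_a\approx x_n(\cdots)$ gives $x^*\approx (\cdots)/F_a$, which reproduces the $B_a^1$ formulas in (i)--(vi). The set $B_a^1$ is exactly the constraint $x^*\lesssim\sqrt{2(t-1)}$, i.e.\ $e^{-|x'|^2}\lesssim f_{1a}(t)$. In the second region we use Lemma \ref{elementarylemma}(iv) with $k=\frac12$. For $a>0$ the profile is $x_n^{2a}$-type; for $a$ near $\frac12$ it is $x_n^2|\ln x_n|$ or $x_n^2/(a-\frac12)$. Balancing against $x_ne^{-|x'|^2}$ gives $x^{2a-1}\approx e^{-|x'|^2}$-type equations. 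For the logarithmic profile we use Lemma \ref{lemma0630}(i) with $h(\theta)=\theta^{\cdot}\ln\theta$. This produces the thresholds $e^{-|x'|^2/(2a)}$, $((1-2a)e^{-|x'|^2})^{1/(2a)}$, and $e^{-|x'|^2}$, valid on $B_a^2$ and $B_a^3$. The log-type solution explains the $V$ sets, through $\ln(|x'|^{-n}e^{|x'|^2})$. For $a<0$ we check that in this region $\mathcal{G}_{a,\frac12}\gtrsim x_n$ dominates, so $w_n<0$ and no extra zero appears. Continuity of $w_n$ and Remark \ref{1125} then give the zeros.

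Finally, for $a>0$ and $x_n\ge\frac12$ we use Lemma \ref{elementarylemma}(iii) and Lemma \ref{elementarylemma2}(c). Here the Gaussian term is negligible, of order $e^{-|x|^2}$, and the sign is that of $(n-1)x_n^2-|x'|^2$. This is negative for $x_n<d_1|x'|$ and positive for $x_n>d_2|x'|$, giving an element of $S^{-+}(w_n;1/2,\infty)$ with $x_{nnk}^*\approx|x'|$. For $a<0$ the additional $w^N_n$ term $\phi(t)=0$ for $t>1$ plays no role. The main obstacle is the bookkeeping in the second region for $\frac14\le a<\frac34$. There the three-term structure of Lemma \ref{elementarylemma}(iv) forces the logarithmic balance, and we must verify that the solutions of Lemma \ref{lemma0630} actually lie in $(\sqrt{2(t-1)},\frac12)$ exactly on $B_a^2\cup B_a^3$. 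We would first handle this by fixing the constants $c_i\le d_i$ uniformly in each subcase, as in Subcases 2-2 and 2-4 of Proposition \ref{1stmainprop}.
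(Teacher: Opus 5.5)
Your proposal follows essentially the same route as the paper: two-sided bounds from \eqref{est-wn} combined with Lemmas~\ref{elementarylemma} and \ref{elementarylemma2}, a split at $x_n=\sqrt{2(t-1)}$ and $x_n=\frac12$ with subcases in $a$, balancing the negative $\mathcal{G}_{a,\frac12}$-term (quadratic in $x_n$ near the boundary) against the positive Gaussian term (linear in $x_n$), Lemma~\ref{lemma0630} for the logarithmic profiles, and the sign of $(n-1)x_n^2-|x'|^2$ in the far region. One small slip: in the region $\sqrt{2(t-1)}\le x_n<\frac12$ with $0<a<\frac12$ you have $\mathcal{G}_{a,\frac12}\approx x_n^{2a+1}/(\frac12-a)$, so the balance reads $x_n^{2a}\approx(\frac12-a)e^{-|x'|^2}$ up to powers of $|x'|$, not $x^{2a-1}\approx e^{-|x'|^2}$; it is this corrected balance that gives the thresholds $e^{-|x'|^2/(2a)}$ and $((1-2a)e^{-|x'|^2})^{1/(2a)}$ you list.
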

\begin{remark}
Similarly as Remark \ref{1217}, the numbers \(\frac14\) and \(\frac34\) in the conditions on \(a\) can be replaced by any number between \(0\) and \(\frac12\) and between \(\frac12\) and \(1\) respectively. This justifies the appearance of constants \(c_i\) (\(i=5,\cdots, 8\)) in the Theorem \ref{thmasymptotic}.
\end{remark}
\begin{remark}
Note that the set \(B_a^2\) vanishes in the limit \(a\rightarrow \frac12\). Thus near \(a=\frac12\), the term when \((x',t)\in B_a^3\) serves as the bound for \(x_{nnk}^*\) for "most" values of \(t\). Note also that when \(a=\frac12\), the similar term of the bound for \(x_{nnk}^*\) comes from \(B_a^3\), which confirms the continuity of \(x_{nnk}^*\) in \(a\) at \(a=\frac12\).
\end{remark}

\begin{proof}
The structure of the proof follows that from the proof of Proposition \ref{1stmainprop}.
%Before entering the case-by-case analysis, we explain the common structure of the proof. We determine \(g_k\) (\(k=1,2\)) as illustrated at the beginning of the proof of Proposition \ref{1stmainprop}. For the case where we seek the asymptotics of \(w_i\), the corresponding \(g_k\) will be of the form \(g_k(x,t)=x_n(-a_kf(x_n)+b_k)\), where \(a_k, b_k>0\) are constants, independent of \(x_n\) and \(f\) satisfies \(f(x_n)\geq 0\). We then find \(x_{nn1}^*, x_{nn2}^*\) such that \(g_1(x,t)>0\) for \(x_n<x_{nn1}^*\), \(g_2(x,t)<0\) for \(x_n>x_{nn2}^*\) and \(x_{nn1}^*\approx x_{nn2}^*\). Moreover, in the case that \(x_{nn1}^*, x_{nn2}^*\) lies in the pre-determined region of \((x_n,t)\), we conclude that \(x_{nn}^*\), the zero of \(w_n\), satisfies the asymptotics \(x_{nn}^*\approx x_{nnk}^*\). 
%
%On the other hand, to show \(w_n\) is positive(negative, resp.) we directly show that \(g_1\)(\(g_2\), resp.) is positive(negative resp.). This finishes the main stream of the proof.

Throughout the proof, the constants \(c_i=c_i(n, M), d_i=d_i(n, M)>0\), \(i=1,\cdots,5\), may vary in each case. In each subcase, the constants will satisfy \(c_1\geq d_1\), \(c_2\leq d_2\), \(c_3\geq d_3\), \(c_4\leq d_4\) and \(c_5\geq d_5\).
 
{\bf \(\bullet\) (Case 1: \(1<t<\frac98 \), \(x_n<\sqrt{2(t-1)}\))}\quad  From \eqref{est-wn}, (i) of Lemma \ref{elementarylemma} and (iv) of Lemma \ref{elementarylemma2},   we have

\begin{align}\label{0726-1}
\begin{split}
  w_n(x,t) 
%  & \approx  -\frac{x_n^2}{|x'|^n}\left( 1 +\frac{(t-1)^{a-\frac12}-2^{\frac12-a}}{\frac12-a}\mathbf{1}_{a \neq \frac12}  + |\ln (t-1)|\mathbf{1}_{a =\frac12}+ \frac{(t-1)^{a-\frac12}}{a+1}\right)+ x_n|x'|^{-2}e^{-|x'|^2}\\
  &\approx -\frac{x_n^2}{|x'|^n}\Big(\frac{(t-1)^{a-\frac12}}{a+1}\mathbf{1}_{-1<a<0}+\frac{(t-1)^{a-\frac12}-2^{\frac12-a}}{\frac12-a}\mathbf{1}_{0<a<\frac34,a\neq \frac12}\\
  & \qquad +|\ln(t-1)|\mathbf{1}_{a=\frac12}+\frac{1}{a}\mathbf{1}_{a>\frac34}\Big)
  +  x_n\left(e^{-|x'|^2}  +  \frac{(t-1)^{a-\frac{n}{2}}}{a+1}e^{-\frac{|x'|^2}{t-1}}\right).
 \end{split}
\end{align}

{\bf \underline{Subcase 1-1}: \(-1 < a  < 0 \).} \quad 
From \eqref{0726-1}, we have 
\begin{align*} 
 &w_n(x,t)\geq -c_1\frac{x_n^2}{|x'|^n}    \frac{ (t-1)^{a-\frac12}  }{a +1}  + x_n\left(c_2 e^{-c_3|x'|^2}+c_4\frac{(t-1)^{a-\frac{n}{2}}}{a+1}e^{-\frac{c_5|x'|^2}{t-1}}\right),\\
 &w_n(x,t)    \leq   -d_1\frac{x_n^2}{|x'|^n}    \frac{ (t-1)^{a-\frac12}  }{a +1}  +x_n\left(d_2 e^{-d_3|x'|^2}+d_4\frac{(t-1)^{a-\frac{n}{2}}}{a+1}e^{-\frac{d_5|x'|^2}{t-1}}\right).
 \end{align*}
If    \(d_2(a+1)|x'|^ne^{-d_3|x'|^2}+d_4(t-1)^{a-\frac{n}{2}}|x'|^ne^{-\frac{d_5|x'|^2}{t-1}} <d_1\sqrt{2}(t-1)^a\), then let us denote 
 \begin{align*}
x_{nn1}^*&:= \frac{c_2(a+1)|x'|^ne^{-c_3|x'|^2}+c_4(t-1)^{a-\frac{n}{2}}|x'|^ne^{-\frac{c_5|x'|^2}{t-1}}}{c_1(t-1)^{a-\frac12}},\\
x_{nn2}^*&:= \frac{d_2(a+1)|x'|^ne^{-d_3|x'|^2}+d_4(t-1)^{a-\frac{n}{2}}|x'|^ne^{-\frac{d_5|x'|^2}{t-1}}}{d_1(t-1)^{a-\frac12}}.
\end{align*}
so that    \(x_{nn1}^*\leq x_{nn2}^*\)  and $ (x^*_{nn1}, x^*_{nn2}) \in S^{+-}(w_n(x',\cdot,t); 0, \sqrt{2(t-1)})$.  

On the other hand, if \( c_2(a+1)|x'|^ne^{-c_3|x'|^2}  +c_4(t-1)^{a-\frac{n}{2}}|x'|^ne^{-\frac{c_5|x'|^2}{t-1}}>\sqrt{2}c_1(t-1)^a\), then \(w_n(x,t)>0\) for \(x_n\leq\sqrt{2(t-1)}\).

{\bf \underline{Subcase 1-2}: \(0 < a  <  \frac14 \).} \quad From \eqref{0726-1}, we have 
 \begin{align*}
   \frac{x_n}{|x'|^n}  \Big(- c_1 x_n   (t-1)^{a-\frac12}  + c_2 |x'|^{n -2}e^{-c_3|x'|^2}  \Big) \leq 
    w_n(x,t)  \leq     \frac{x_n}{|x'|^n}  \Big(- d_1x_n   (t-1)^{a-\frac12}    + d_2 |x'|^{n -2}e^{-d_3|x'|^2}  \Big).
\end{align*}

If \(|x'|^{n-2}e^{-c_3|x'|^2}< \frac{\sqrt{2}c_1}{c_2}(t-1)^a\), then let us denote 
\begin{align*}
x_{nn1}^* = \frac{c_2}{c_1} (t-1)^{-a +\frac12}  |x'|^{n -2}e^{-\alpha_1|x'|^2}, \quad  x_{nn2}^* =  \frac{d_2}{d_1} (t-1)^{-a +\frac12}  |x'|^{n -2}e^{-\beta_1|x'|^2},
\end{align*}
so that  \(x_{nn1}^*\leq x_{nn2}^*\)  and $ (x^*_{nn1}, x^*_{nn2}) \in S^{+-}(w_n(x',\cdot,t); \sqrt{2(t-1)}, \frac12)$. 

On the other hand, if \(|x'|^{n-2}e^{-c_3|x'|^2}> \frac{\sqrt{2}c_1}{c_2}(t-1)^a\), then \(w_n(x,t)>0\) for all \(x_n\leq \sqrt{2(t-1)}\).

{\bf \underline{Subcase 1-3}: \(\frac14 \leq a  <\frac12\).} \quad  From \eqref{0726-1}, we have 
  \begin{align*}
w_n(x,t) & \approx      \frac{x_n}{|x'|^n}  \Big(-  x_n \frac{ 1- (t-1)^{a-\frac12} }{a-\frac12}  +   |x'|^{n -2}e^{-|x'|^2}  \Big).
\end{align*}

1) Suppose $t>1+e^{\frac{2}{2a-1}}  $, then by \eqref{lemma0630-2}, $\frac{ 1- (t-1)^{a-\frac12} }{a-\frac12}\approx |\ln(t-1)|$. Then, we have 
 \begin{align*}
\frac{x_n}{|x'|^n}\left(- c_1 x_n|\ln(t-1)|+c_2|x'|^{n-2}e^{-c_3|x'|^2}\right)\leq w_n(x,t) \leq \frac{x_n}{|x'|^n}\left(-d_1 x_n|\ln(t-1)|+d_2|x'|^{n-2}e^{-d_3|x'|^2}\right).
\end{align*}

If \(|x'|^{n-2}e^{-c_3|x'|^2}<\frac{\sqrt{2}{c}_1}{c_2}(t-1)^{\frac12}|\ln(t-1)|\), then let us denote 
\begin{align*}
x_{nn1}^*:=\frac{c_2}{{c}_1}\frac{1}{|\ln(t-1)|}|x'|^{n-2}e^{-c_3|x'|^2},\quad x_{nn2}^*:=\frac{d_2}{d_1}\frac{1}{|\ln(t-1)|}|x'|^{n-2}e^{-d_3|x'|^2},
\end{align*}
so that  \(x_{nn1}^*\leq x_{nn2}^*\)  and $ (x^*_{nn1}, x^*_{nn2}) \in S^{+-}(w_n(x',\cdot,t); \sqrt{2(t-1)}, \frac12)$.

On the other hand, if \(|x'|^{n-2}e^{-c_3|x'|^2}>\frac{\sqrt{2} c_1}{c_2}(t-1)^{\frac12}|\ln(t-1)|\), then \(w_n(x,t)>0\) for all \(x_n\leq \sqrt{2(t-1)}\).

2) Suppose $t<1+e^{\frac{2}{2a-1}} $, then by \eqref{lemma0630-2}, $\frac{ 1- (t-1)^{a-\frac12} }{a-\frac12}\approx  \frac{(t -1)^{a -\frac12}}{\frac12 -a} $. Then, we have 
 \begin{align*}
\frac{x_n}{|x'|^n}  \Big(c_1 x_n \frac{(t -1)^{a -\frac12}}{a-\frac12}  +  c_2 |x'|^{n -2}e^{-c_3|x'|^2}  \Big) \leq  w_n(x,t) \leq   \frac{x_n}{|x'|^n}  \Big(d_1 x_n \frac{(t -1)^{a -\frac12}}{a-\frac12}  +  d_2 |x'|^{n -2}e^{-d_3|x'|^2}  \Big).
\end{align*}
If \(|x'|^{n-2}e^{-c_3|x'|^2}< \frac{2\sqrt{2}c_1}{(1-2a)c_2}(t-1)^a\), then let us denote 
\begin{align*}
x_{nn1}^*:=\frac{c_2}{c_1} (\frac12 -a) (t -1)^{\frac12 -a} |x'|^{n -2}e^{-c_3|x'|^2} ,\quad x_{nn2}^*:=\frac{d_2}{d_1} (\frac12 -a) (t -1)^{\frac12 -a} |x'|^{n -2}e^{-d_3|x'|^2},
\end{align*}
so that  \(x_{nn1}^*\leq x_{nn2}^*\)  and $ (x^*_{nn1}, x^*_{nn2}) \in S^{+-}(w_n(x',\cdot,t); \sqrt{2(t-1)}, \frac12)$. 

On the other hand, if \(|x'|^{n-2}e^{-c_3|x'|^2}> \frac{2\sqrt{2}c_1}{(1-2a)c_2}(t-1)^a\), then \(w_n(x,t)>0\) for all \(x_n\leq \sqrt{2(t-1)}\).

{\bf  \underline{Subcase 1-4}: \(a=\frac12\).}  \quad  From \eqref{0726-1}, we have 
\begin{align*}
  -c_1\frac{x_n^2}{|x'|^n}   |\ln (t-1)| + c_2x_n|x'|^{-2}e^{-c_3 |x'|^2} \leq   w_n(x,t) & \leq  -d_1\frac{x_n^2}{|x'|^n}   |\ln (t-1)| + d_2x_n|x'|^{-2}e^{-d_3 |x'|^2}.
\end{align*}
If $ |x'|^{n-2}e^{-c_3 |x'|^2} <\frac{\sqrt{2}c_1}{c_2} (t-1)^\frac12 |\ln (t-1)|   $, then let us denote 
\begin{align*}
x_{nn1}^*:=\frac{c_2}{c_1}\frac{1}{|\ln(t-1)|}|x'|^{n-2}e^{-c_3|x'|^2}, \quad x_{nn2}^*:=\frac{d_2}{d_1}\frac{1}{|\ln(t-1)|}|x'|^{n-2}e^{-d_3|x'|^2},
\end{align*}
so that \(x_{nn1}^*\leq x_{nn2}^*\)  and $ (x^*_{nn1}, x^*_{nn2}) \in S^{+-}(w_n(x',\cdot,t); \sqrt{2(t-1)}, \frac12)$.

On the other hand, if \(|x'|^{n-2}e^{-c_3 |x'|^2} >\frac{\sqrt{2}c_1}{c_2} (t-1)^\frac12 |\ln (t-1)|\), then \(w_n(x,t)>0\) for all \(x_n\leq \sqrt{2(t-1)}\).

{\bf  \underline{Subcase 1-5}: \(\frac12 < a  < \frac34 \).} \quad From \eqref{0726-1},  we have 
\begin{align*}
  w_n(x,t) & \approx    \frac{x_n}{|x'|^n}  \Big(-  x_n \frac{ 1- (t-1)^{a-\frac12} }{a-\frac12}  +   |x'|^{n -2}e^{-|x'|^2}  \Big).
\end{align*}

1) Suppose $t>1+e^{\frac{2}{1-2a}}  $, then by \eqref{lemma0630-2}, $\frac{ 1- (t-1)^{a-\frac12} }{a-\frac12}\approx (t -1)^{a -\frac12} \ln \frac1{t -1} $. Then, we have 
 \begin{align*}
    w_n(x,t)&\geq -c_1\frac{x_n^2}{|x'|^n}(t-1)^{a-\frac12}|\ln(t-1)|+ c_2\frac{x_n}{|x'|^{2}}e^{-c_3|x'|^2},\\
  w_n(x,t)  &\leq  -d_1\frac{x_n^2}{|x'|^n} (t-1)^{a-\frac12}|\ln(t-1)|  +d_2\frac{x_n}{|x'|^{2}}e^{-d_3|x'|^2}.
\end{align*}
If \(|x'|^{n-2}e^{-c_3|x'|^2}<\frac{\sqrt{2}{c}_1}{c_2}(t-1)^{a-\frac12}|\ln(t-1)|\), then let us denote 
\begin{align*}
x_{nn1}^*:=\frac{c_2}{c_1}\frac{(t-1)^{\frac12-a}}{|\ln(t-1)|}|x'|^{n-2}e^{-c_3|x'|^2},\quad x_{nn2}^*:=\frac{d_2}{d_1}\frac{(t-1)^{\frac12-a}}{|\ln(t-1)|}|x'|^{n-2}e^{-d_3|x'|^2},
\end{align*}
so that  \(x_{nn1}^*\leq x_{nn2}^*\)  and $ (x^*_{nn1}, x^*_{nn2}) \in S^{+-}(w_n(x',\cdot,t); \sqrt{2(t-1)}, \frac12)$.

On the other hand, if \(|x'|^{n-2}e^{-c_3|x'|^2}>\frac{\sqrt{2}{c}_1}{c_2}(t-1)^{a-\frac12}|\ln(t-1)|\), then \(w_n(x,t)>0\) for all \(x_n\leq \sqrt{2(t-1)}\).

2) Suppose $t<1+e^{\frac{2}{1-2a}}  $, then by \eqref{lemma0630-2},  $\frac{ 1- (t-1)^{a-\frac12} }{a-\frac12}\approx  
  \frac1{a -\frac12} $. Then, we have 
 \begin{align*}
\frac{x_n}{|x'|^n}  \Big(-   \frac{c_1}{a-\frac12}x_n  +  c_2 |x'|^{n -2}e^{-c_3|x'|^2}  \Big)\leq w_n(x,t) & \leq   \frac{x_n}{|x'|^n}  \Big(-   \frac{d_1}{a-\frac12}x_n  +  d_2 |x'|^{n -2}e^{-d_3|x'|^2}  \Big).
\end{align*}
If \(|x'|^{n-2}e^{-c_3|x'|^2}<\frac{\sqrt{2}c_1}{c_2(a-\frac12)}(t-1)^{\frac12}\),  then let us denote  
\begin{align*}
x_{nn1}^*:=\frac{c_2(a-\frac12)}{c_1}|x'|^{n-2}e^{-c_3|x'|^2},\quad x_{nn2}^*:=\frac{d_2(a-\frac12)}{{d}_1}|x'|^{n-2}e^{-d_3|x'|^2},
\end{align*}
so that  \(x_{nn1}^*\leq x_{nn2}^*\)  and $ (x^*_{nn1}, x^*_{nn2}) \in S^{+-}(w_n(x',\cdot,t); \sqrt{2(t-1)}, \frac12)$. 
 
 On the other hand, if \(|x'|^{n-2}e^{-c_3|x'|^2}>\frac{\sqrt{2}c_1}{c_2(a-\frac12)}(t-1)^{\frac12}\), then \(w_n(x,t)>0\) for all \(x_n\leq \sqrt{2(t-1)}\).

{\bf  \underline{Subcase 1-6}: \(a\geq \frac34\).} \quad From \eqref{0726-1}, we have 
\begin{align*}
  -c_1\frac{x_n^2}{|x'|^n} + c_2x_n|x'|^{-2}e^{-c_3|x'|^2} \leq  w_n(x,t) & \leq  -d_1\frac{x_n^2}{|x'|^n} + d_2x_n|x'|^{-2}e^{-d_3|x'|^2}.
\end{align*}
If \(|x'|^{n-2}e^{-c_3|x'|^2}<\frac{\sqrt{2}c_1}{c_2}(t-1)^{\frac12}\), then let us denote 
\begin{align*}
x_{nn1}^*:=\frac{c_2}{c_1}|x'|^{n-2}e^{-c_3|x'|^2},\quad x_{nn2}^*:=\frac{d_2}{d_1}|x'|^{n-2}e^{-d_3|x'|^2},
\end{align*}
so that  \(x_{nn1}^*\leq x_{nn2}^*\)  and $ (x^*_{nn1}, x^*_{nn2}) \in S^{+-}(w_n(x',\cdot,t); \sqrt{2(t-1)}, \frac12)$. 

If \(|x'|^{n-2}e^{-c_3|x'|^2}>\frac{\sqrt{2}c_1}{c_2}(t-1)^{\frac12}\), then \(w_n(x,t)>0\) for all \(x_n\leq \sqrt{2(t-1)}\). 

{\bf \(\bullet\) (Case 2: \(1<t<\frac98 \), \(\sqrt{2(t-1)}\leq x_n<\frac12\))}\quad 

 From \eqref{est-wn},   Lemma \ref{elementarylemma} and   Lemma \ref{elementarylemma2},   we have
\begin{align}\label{0726-2}
\begin{split}
w_n(x,t)&\approx -\frac{1}{|x'|^n}\left(\frac{(t-1)^{a+\frac12}}{a+1}\mathbf{1}_{a\leq-\frac34}+\frac{x_n^{2a+1}-(t-1)^{a+\frac12}}{a+\frac12}\mathbf{1}_{-\frac 34<a\leq0, a\neq -\frac12}+\ln\left(\frac{x_n^2}{t-1}\right)\mathbf{1}_{a=-\frac12}\right.\\
&\quad + \left.\frac{2^{\frac12-a}-x_n^{2a-1}}{a-\frac12}x_n^2\mathbf{1}_{0<a\leq\frac34,a\neq  \frac12}+x_n^2|\ln(2x_n^2)|\mathbf{1}_{a=\frac12}+ \frac{x_n^2}{a}\mathbf{1}_{a>\frac34}\right)\\
& + x_n\left(e^{-|x'|^2} + \frac{(t-1)^{a-\frac{n}{2}}}{a+1}e^{-\frac{|x'|^2}{t-1}}\right).
\end{split}
\end{align}

{\bf  \underline{Subcase 2-1}: \(-1 < a  \leq -\frac34 \).} \quad  From \eqref{0726-2}, we have for   large \(|x'|\),
\begin{align*}
w_n(x,t)&\lesssim -\frac1{|x'|^{n}}\frac{(t-1)^{a+\frac12}}{a+1}+x_ne^{-|x'|^2} +  \frac{x_n(t-1)^{a-\frac{n}{2}}}{a+1}e^{-\frac{|x'|^2}{t-1}}\\
&\leq -\frac{1}{|x'|^n}\frac{(t-1)^{a+\frac12}}{a+1}+ e^{-|x'|^2}<0
\end{align*}
for all \(\sqrt{2(t-1)}\leq x_n<\frac12\).
 
{\bf  \underline{Subcase 2-2}: \(-\frac34 < a  < -\frac12\).} \quad  From \eqref{0726-2}, we have
\begin{align*}
w_n(x,t)&\approx  -\frac1{|x'|^{n}}\frac{x_n^{2a+1}-(t-1)^{a+\frac12}}{a+\frac12}+\frac{x_n}{|x'|^{2}}e^{-|x'|^2}.
\end{align*}

1) If $ x_n\leq e^{-\frac{1}{2a+1}}(t-1)^{\frac12}$, then  by \eqref{lemma0630-2}, $  \frac{x_n^{2a+1}-(t-1)^{a+\frac12}}{a+\frac12} \approx x_n^{2a +1} \ln \frac{x_n^2}{t-1} $. Thus we have for   large \(|x'|\),
\begin{align*}
\begin{split}
w_n(x,t) & \leq  \frac{x_n}{  |x'|^{n}} \Big( -    d_1x_n^{2a} \ln \frac{x_n^2}{t-1}   +  d_2|x'|^{n-2}  e^{- d_3|x'|^2 } \Big) \leq   \frac{x_n}{  |x'|^{n}} \Big( - 2d_1\ln 2   + d_2 |x'|^{n-2}  e^{-d_3 |x'|^2 } \Big) < 0
\end{split}
\end{align*}
 for all \(\sqrt{2(t-1)}\leq x_n<\frac12\)
 
2) If $  x_n\geq e^{-\frac{1}{2a+1}}(t-1)^{\frac12}$, then by \eqref{lemma0630-2}, $  \frac{x_n^{2a+1}-(t-1)^{a+\frac12}}{a+\frac12} \approx -\frac{(t-1)^{a +\frac12}}{a+\frac12}  $. Thus we have  for  large \(|x'|\),
\begin{align*}
\begin{split}
w_n(x,t) & \leq   d_1|x'|^{-n}   \frac{ (t-1)^{a+\frac12}}{a+\frac12}  +  d_2x_n |x'|^{-2}  e^{-d_3 |x'|^2 }\leq -2^{\frac32-a}d_1|x'|^{-n}+\frac{d_2}{2}|x'|^{-2}e^{-d_3|x'|^2}< 0
\end{split}
\end{align*}
 for all \(\sqrt{2(t-1)}\leq x_n<\frac12\).

{\bf  \underline{Subcase 2-3}: \(a=-\frac12\).} \quad From \eqref{0726-2}, we have for large \(|x'|\),
\begin{align*}
\begin{split}
w_n(x,t) & \leq  - d_1|x'|^{-n}  \ln  \frac{x_n^2}{t-1}  + d_2 x_n |x'|^{-2}  e^{-d_3 |x'|^2 }\leq   - d_1\ln2|x'|^{-n}    +  \frac{d_2}{2} |x'|^{-2}  e^{- d_3|x'|^2 }<0
\end{split}
\end{align*}
for all \(\sqrt{2(t-1)}\leq x_n<\frac12\).

{\bf  \underline{Subcase 2-4}: \(-\frac12<a\leq -\frac14 \).} \quad From \eqref{0726-2}, we have 
\begin{align*} 
\begin{split}
w_n(x,t)\approx & -\frac1{|x'|^n}\left( \frac{   x_n^{2a +1} - ( t -1)^{a +\frac12 }    }{a+\frac12}   \right)  + x_n |x'|^{-2} e^{-|x'|^2}.
\end{split}
\end{align*}

1) Suppose   \(x_n\leq e^{\frac{1}{2a+1}}(t-1)^{\frac12}\). Then by \eqref{lemma0630-2}, we have  $ \frac{   x_n^{2a +1} - ( t -1)^{a +\frac12 }    }{\frac12 +a} \approx (t -1)^{a +\frac12}\ln \frac{x_n^2}{t-1}$ and we have 
\begin{align*}
w_n (x,t) & \leq  |x'|^{-n} x_n  \Big( -d_1 (t -1)^{a +\frac12}  x_n^{-1}\ln \frac{x_n^2}{t-1}  +  d_2 |x'|^{n-2} e^{-d_3|x'|^2} \Big).
\end{align*}
If  $   e^{\frac1{2a +1}} (t -1)^\frac12  \leq \frac12$, then we have 
\begin{align*}
 (t -1)^{a +\frac12}  x_n^{-1}\ln \frac{x_n^2}{t-1} \geq (t -1)^a\frac{ e^{\frac{-1}{2a +1}}  }{2a +1} \geq 4^{-a} e^{\frac{-2a}{2a +1}}\frac{ e^{\frac{-1}{2a +1}}  }{2a +1}  \geq 4^{-a}  \frac{ e^{-1}  }{2a +1}.
\end{align*}
If  $   e^{\frac1{2a +1}}  (t -1)^\frac12 \geq \frac12 $, then we have 
\begin{align*}
 (t -1)^{a +\frac12}  x_n^{-1}\ln \frac{x_n^2}{t-1} \geq (t -1)^{a+\frac12}  \ln \frac1{4( t -1) }  \geq \frac{1}{2^{2a+1}e} \ln \frac1{4( t -1) }.
\end{align*}
Hence  $ w_n(x,t) < 0$ for $ \sqrt{2(t-1)} \leq x_n < \frac12$ for sufficiently large \(|x'|\).

2) Suppose that  \(x_n\geq e^{\frac{1}{2a+1}}(t-1)^{\frac12}\). Then by \eqref{lemma0630-2}, we have  $ \frac{   x_n^{2a +1} - ( t -1)^{a +\frac12 }    }{\frac12 +a} \approx\frac{x_n^{2a+1}}{a+\frac12}$ and 
\begin{align*}
w_n(x,t)\leq \frac{x_n}{|x'|^n}\left(-d_1\frac{x_n^{2a}}{a+\frac12}+d_2|x'|^{n-2}e^{-d_3|x'|^2}\right)\leq \frac{x_n}{|x'|^n}\left(-d_1\frac{e^{\frac{2a}{2a+1}}}{a+\frac12}(t-1)^a+d_2|x'|^{n-2}e^{-d_3|x'|^2}\right)<0
\end{align*}
for sufficiently large \(|x'|\).

{\bf  \underline{Subcase 2-5}: \(-\frac14 < a  < 0 \).} \quad  From \eqref{0726-2}, we have for all \(\sqrt{2(t-1)}<x_n<\frac12\),
\begin{align*}
w_n(x,t)\leq |x'|^{-n}x_n(-d_1x_n^{2a}+d_2|x'|^{n-2}e^{-d_3|x'|^2})\leq |x'|^{-n}x_n(-d_12^{-2a}+d_2|x'|^{n-2}e^{-d_3|x'|^2})<0
\end{align*}
for sufficiently large \(|x'|\).

{\bf  \underline{Subcase 2-6}: \(0<a\leq \frac14\).} \quad From \eqref{0726-2}, we have
\begin{align*}
-c_1|x'|^{-n}x_n^{2a+1}+c_2x_n |x'|^{-2}e^{-c_3|x'|^2}\leq w_n(x,t)\leq -d_1|x'|^{-n}x_n^{2a+1}+d_2x_n |x'|^{-2}e^{-d_3|x'|^2}.
\end{align*}
If  $\frac{2^ad_1}{d_2}(t-1)^a<|x'|^{n-2}e^{-d_3|x'|^2}$ and \(|x'|^{n-2}e^{-c_3|x'|^2}<\frac{2^{-2a}c_1}{c_2}\), then let us denote 
 \begin{align*}
x_{nn1}^*:=\left(\frac{c_2}{c_1}|x'|^{n-2}e^{-c_3|x'|^2}\right)^{\frac{1}{2a}},   \quad x_{nn2}^*:=\left(\frac{d_2}{d_1}|x'|^{n-2}e^{-d_3|x'|^2}\right)^{\frac{1}{2a}}.
\end{align*}
so that  \(x_{nn1}^*\leq x_{nn2}^*\)  and $ (x^*_{nn1}, x^*_{nn2}) \in S^{+-}(w_n(x',\cdot,t); \sqrt{2(t-1)}, \frac12)$.

On the other hand, if  $ \frac{2^ad_1}{d_2}(t -1)^a > |x'|^{n-2}e^{-d_3|x'|^2}$, then $ w_n(x,t) < 0$ for $	\sqrt{2(t-1)} < x_n < \frac12$.

{\bf  \underline{Subcase 2-7}: \(\frac14<a<\frac12 \).} From \eqref{0726-2}, we have
\begin{align}\label{0904}
w_n(x,t) \approx \frac{x_n}{|x'|^n}\left(- x_n\frac{2^{\frac12-a}-x_n^{2a-1}}{2a-1}+ |x'|^{n-2}e^{- |x'|^2}\right).
\end{align}

1. Assume \(x_n^2\geq \frac12 e^{\frac{2}{2a-1}}\). By  \eqref{lemma0630-2}, we have
\begin{align*}
 -\frac{x_n}{|x'|^n}\left( -c_1 x_n|\ln (2x_n^2)|+c_2|x'|^{n-2}e^{-c_3|x'|^2}\right)\leq w_n(x,t)\leq -\frac{x_n}{|x'|^n}\left( -d_1 x_n|\ln (2x_n^2)|+d_2|x'|^{n-2}e^{-d_3|x'|^2}\right).
\end{align*}
Recall Lemma \ref{lemma0630}. If \(|x'|^{n-2}e^{-c_3|x'|^2}\left(\ln \frac{2c_1(e+1)}{c_2e|x'|^{n-2}e^{-c_3|x'|^2}}\right)^{-1}  > \frac{2\sqrt{2}c_1(e+1)}{c_2 e}\max\left\{\frac{1}{\sqrt{2}}e^{\frac{1}{2a-1}}, \sqrt{2(t-1)}\right\} \) and \( |x'|^{n-2}e^{-d_3|x'|^2}\left(\ln \frac{2d_1}{d_2|x'|^{n-2}e^{-d_3|x'|^2}}\right)^{-1}<\frac{4\sqrt{2}d_1}{d_2}\), then let us denote
\begin{align*}
& x_{nn1}^*:=\frac{c_2e}{2\sqrt{2}c_1(e+1)}|x'|^{n-2}e^{-c_3|x'|^2}\left(\ln \frac{2(e+1)c_1}{ec_2|x'|^{n-2}e^{-c_3|x'|^2}}\right)^{-1},\\
& x_{nn2}^*:=\frac{d_2}{2\sqrt{2}d_1}|x'|^{n-2}e^{-d_3|x'|^2}\left(\ln \frac{2d_1}{d_2|x'|^{n-2}e^{-d_3|x'|^2}}\right)^{-1},
\end{align*}
so that \(x_{nn1}^*\leq x_{nn2}^*\) and \((x_{nn1}^*, x_{nn2}^*)\in S^{+-}\left(w_n(x',\cdot, t); \max\left\{\frac{1}{\sqrt{2}}e^{\frac{1}{2a-1}}, \sqrt{2(t-1)}\right\} , \frac12 \right)\). 

On the other hand, if \(|x'|^{n-2}e^{-d_3|x'|^2}\left(\ln \frac{2d_1}{d_2|x'|^{n-2}e^{-d_3|x'|^2}}\right)^{-1}\leq \frac{2d_1}{d_2}\max\left\{e^{\frac{1}{2a-1}}, 2(t-1)^{\frac12}\right\}\),\\
 then \(w_n(x,t)<0\) for \(\max\left\{\frac{1}{\sqrt{2}}e^{\frac{1}{2a-1}}, \sqrt{2(t-1)}\right\}<x_n<\frac12\).

2. Assume \(x_n^2<\frac12 e^{\frac{2}{2a-1}}\). By \eqref{lemma0630-2}, we have
\begin{align*}
-\frac{x_n}{|x'|^n}\left(-c_1\frac{x_n^{2a}}{\frac12-a} +c_2|x'|^{n-2}e^{-c_3|x'|^2}\right)\leq w_n(x,t)\leq -\frac{x_n}{|x'|^n}\left(-d_1\frac{x_n^{2a}}{\frac12-a} + d_2|x'|^{n-2}e^{-d_3|x'|^2}\right).
\end{align*}
Recall Lemma \ref{lemma0630}. If \(|x'|^{n-2}e^{-c_3|x'|^2}> \frac{2^a c_1}{c_2}\frac{(t-1)^a}{1-2a}\) and \(|x'|^{n-2}e^{-d_3|x'|^2}<\frac{{d}_1}{2^a d_2}\frac{e^{\frac{2a}{2a-1}}}{1-2a}\), then let us denote
 \begin{align*}
 x_{nn1}^*:=\left(\frac{c_2}{c_1}\left(1-2a\right)|x'|^{n-2}e^{-c_3|x'|^2}\right)^{\frac{1}{2a}},\quad  x_{nn2}^*:=\left(\frac{d_2}{{d}_1}\left(1-2a\right)|x'|^{n-2}e^{-d_3|x'|^2}\right)^{\frac{1}{2a}},
 \end{align*}
so that  \(x_{nn1}^*\leq x_{nn2}^*\)  and $ (x^*_{nn1}, x^*_{nn2}) \in S^{+-}\left(w_n(x',\cdot,t);\sqrt{2(t-1)}, \frac{1}{\sqrt{2}}e^{\frac{1}{2a-1}}\right)$.

On the other hand, if \(|x'|^{n-2}e^{-c_3|x'|^2}>\frac{2^ac_1}{c_2(1-2a)}(t-1)^a\), then \(w_n(x,t)>0\), while if \(|x'|^{n-2}e^{-d_3|x'|^2}<\frac{{d}_1}{2^a d_2}\frac{e^{\frac{2a}{2a-1}}}{1-2a}\), then \(w_n(x,t)<0\) for \(\sqrt{2(t-1)}\leq x_n<\frac{1}{\sqrt{2}}e^{\frac{1}{2a-1}}\).

{\bf  \underline{Subcase 2-8}: \(a=\frac12\).} \quad From \eqref{0726-2}, we have
\begin{align*}
\begin{split}
\frac{x_n}{|x'|^n}\left(-c_1 x_n|\ln x_n|+c_2|x'|^{n-2}e^{-c_3|x'|^2}\right)\leq w_n(x,t)\leq \frac{x_n}{|x'|^n}\left(-d_1 x_n|\ln x_n|+d_2|x'|^{n-2}e^{-d_3|x'|^2}\right).
\end{split}
\end{align*}

Let
\begin{align*}
x_{nn1}^*&=\frac{c_2}{c_1}|x'|^{n-2}e^{-c_3|x'|^2}\left(\ln \frac{c_1}{c_2}|x'|^{2-n}e^{c_3|x'|^2}\right)^{-1},\\
x_{nn2}^*&=\frac{e}{e+1}\frac{d_2}{d_1}|x'|^{n-2}e^{-d_3|x'|^2}\left(\ln \left(\frac{e+1}{e}\frac{d_1}{d_2}|x'|^{2-n}e^{d_3|x'|^2}\right)\right)^{-1},
\end{align*}
so that \(x_{nn1}^*\leq x_{nn2}^*\). Then for \((|x'|,t)\) such that \(\sqrt{2(t-1)}\leq x_{nn1}^*\leq x_{nn2}^* \leq \frac12\), we have  \((x_{nn1}^*, x_{nn2}^*)\in S^{+-}(w_n(x',\cdot,t); \sqrt{2(t-1)},\frac12)\)  by Lemma \ref{lemma0630}.

On the other hand, if \(|x'|^{n-2}e^{-d_3|x'|^2}<\frac{d_1}{d_2}\frac{e^{\frac{1}{2a-1}}}{1-2a}\), then \(w_n(x,t)<0\) for \(\sqrt{2(t-1)}\leq x_n<\frac12\).

{\bf  \underline{Subcase 2-9}: \(\frac12<a\leq\frac34\).} \quad From \eqref{0726-2}, we have
\begin{align*}
  w_n(x,t)\approx  \frac{x_n}{|x'|^n}\left(- x_n^{2a}\frac{2^{\frac12-a}x_n^{1-2a}-1}{2a-1}+ |x'|^{n-2}e^{- |x'|^2}\right).
\end{align*}

1. Assume \(x_n^2\geq \frac12 e^{\frac{2}{1-2a}}\). By \eqref{lemma0630-2}, we have 
\begin{align*}
-\frac{x_n}{|x'|^n}\left(c_1x_n^{2a}|\ln(2x_n^2)|-c_2|x'|^ne^{-c_3|x'|^2}\right)\leq w_n(x,t)\leq -\frac{x_n}{|x'|^n}\left(d_1x_n^{2a}|\ln(2x_n^2)|-d_2|x'|^ne^{-d_3|x'|^2}\right).
\end{align*}
Recall Lemma \ref{lemma0630}. If \(|x'|^{n-2}e^{-c_3|x'|^2}\left(\ln \frac{(e+1)c_1}{2^a a ec_2|x'|^{n-2}e^{-c_3|x'|^2}}\right)^{-1}>\frac{(e+1)c_1}{eac_2}\max\left\{2^{-a}e^{\frac{2a}{1-2a}}, 2^a(t-1)^a\right\}\) and \(|x'|^{n-2}e^{-d_3|x'|^2}\left(\ln \frac{d_1}{2^a a d_2|x'|^{n-2}e^{-c_3|x'|^2}}\right)^{-1}<\frac{d_1}{2^{2a}ad_2}\), then let us denote
\begin{align*}
& x_{nn1}^* :=\left(\frac{eac_2}{(e+1)c_1}|x'|^{n-2}e^{-c_3|x'|^2}\right)^{\frac{1}{2a}}\left(\ln \frac{(e+1)c_1}{2^a a e c_2|x'|^{n-2}e^{-c_3|x'|^2}}\right)^{-\frac{1}{2a}},\\
& x_{nn2}^* := \left(\frac{d_2a}{d_1}|x'|^{n-2}e^{-d_3|x'|^2}\right)^{\frac{1}{2a}}\left(\ln \frac{d_1}{2^a a d_2|x'|^{n-2}e^{-d_3|x'|^2}}\right)^{-\frac{1}{2a}},
\end{align*}
so that \(x_{nn1}^*\leq x_{nn2}^*\) and \((x_{nn1}^*, x_{nn2}^*)\in S^{+-}\left(w_n(x',\cdot, t); \max\left\{\frac{1}{\sqrt{2}}e^{\frac{1}{1-2a}},\sqrt{2(t-1)}\right\}, \frac12\right)\).

On the other hand, if \(|x'|^{n-2}e^{-c_3|x'|^2}\left(\ln \frac{(e+1)c_1}{2^a e c_2|x'|^{n-2}e^{-c_3|x'|^2}}\right)^{-1}>\frac{(e+1)c_1}{2^{2a}aec_2}\), then \(w_n(x,t)>0\), \\
while if \(|x'|^{n-2}e^{-d_3|x'|^2}\left(\ln \frac{d_1}{2^a a d_2 |x'|^{n-2}e^{-d_3|x'|^2}}\right)^{-1}<\frac{d_1}{ a d_2}\max\left\{2^{-a}e^{\frac{2a}{1-2a}}, 2^a (t-1)^a\right\}\), then \(w_n(x,t)<0\)\\
 for \(\max\left\{\frac{1}{\sqrt{2}}e^{\frac{1}{1-2a}}, \sqrt{2(t-1)}\right\}\leq x_n<\frac12\).

2. Assume \(x_n^2<\frac12 e^{\frac{2}{1-2a}}\). By \eqref{lemma0630-2}, we have 
\begin{align*}
-\frac{x_n}{|x'|^n}\left(c_1\frac{x_n}{a-\frac12}-c_2|x'|^{n-2}e^{-c_3|x'|^2}\right)\leq w_n(x,t)\leq -\frac{x_n}{|x'|^n}\left(d_1\frac{x_n}{a-\frac12}-d_2|x'|^{n-2}e^{-d_3|x'|^2}\right).
\end{align*}
Recall Lemma \ref{lemma0630}. If \(|x'|^{n-2}e^{-c_3|x'|^2}>\frac{\sqrt{2} c_1}{(a-\frac12)}(t-1)^{\frac12}\) and \(|x'|^{n-2}e^{-d_3|x'|^2}<\frac{d_1 e^{\frac{1}{1-2a}}}{\sqrt{2}d_2(a-\frac12)}\), then let us denote

\begin{align*}
\frac{x_n}{|x'|^n}\left(-\frac{c_1}{2a-1}x_n+c_2|x'|^{n-2}e^{-c_3|x'|^2}\right)
\leq w_n(x,t)\leq \frac{x_n}{|x'|^n}\left(-\frac{d_1}{2a-1}x_n+d_2|x'|^{n-2}e^{-d_3|x'|^2}\right).
\end{align*}

If \(\frac{d_1}{(2a-1)d_2}\sqrt{2(t-1)}<|x'|^{n-2}e^{-d_3|x'|^2}\) and \(|x'|^{n-2}e^{-c_3|x'|^2}<\frac{c_1}{2(2a-1)c_2}\), then let us denote 
\begin{align*}
x_{nn1}^*=\frac{(2a-1)c_2}{c_1}|x'|^{n-2}e^{-c_3|x'|^2},\quad x_{nn2}^*=\frac{(2a-1)d_2}{{d}_1}|x'|^{n-2}e^{-d_3|x'|^2},
\end{align*}
so that  \(x_{nn1}^*\leq x_{nn2}^*\)  and $ (x^*_{nn1}, x^*_{nn2}) \in S^{+-}\left(w_n(x',\cdot,t); \sqrt{2(t-1)}, \frac{1}{\sqrt{2}}e^{\frac{1}{1-2a}}\right)$. 

On the other hand, if \(|x'|^{n-2}e^{-c_3|x'|^2}>\frac{c_1}{\sqrt{2}(a-\frac12)c_2}e^{\frac{1}{1-2a}}\), then \(w_n(x,t)>0\), while if \(|x'|^{n-2}e^{-d_3|x'|^2}<\frac{\sqrt{2}d_1}{(a-\frac12)d_2}(t-1)^{\frac12}\), then \(w_n(x,t)<0\) for \(\sqrt{2(t-1)}<x_n<\frac{1}{\sqrt{2}}e^{\frac{1}{1-2a}}\).

{\bf \underline{Subcase 2-10}: \(\frac34<a\leq 1\).} \quad From \eqref{0726-2},  we have
\begin{align*}
-c_1|x'|^{-n}x_n^2+c_2x_n|x'|^{-2}e^{-c_3|x'|^2}\leq w_n(x,t)\leq -d_1|x'|^{-n}x_n^2 +d_2x_n|x'|^{-2}e^{-d_3|x'|^2}.
\end{align*}

If \(\frac{d_1}{d_2}\sqrt{2(t-1)}<|x'|^{n-2}e^{-d_3|x'|^2}\) and \(|x'|^{n-2}e^{-c_3|x'|^2}<\frac{c_1}{2c_2}\), then let us denote 
\begin{align*}
x_{nn1}^*:=\frac{c_2}{c_1}|x'|^{n-2}e^{-c_3|x'|^2},\quad x_{nn2}^*:=\frac{d_2}{d_1}|x'|^{n-2}e^{-d_3|x'|^2},
\end{align*}
so that  \(x_{nn1}^*\leq x_{nn2}^*\)  and $ (x^*_{nn1}, x^*_{nn2}) \in S^{+-}(w_n(x',\cdot,t); \sqrt{2(t-1)}, \frac12)$.

On the other hand, if \(\frac{d_1}{d_2}\sqrt{2(t-1)}>|x'|^{n-2}e^{-d_3|x'|^2}\), then \(w_n(x,t)<0\) for \(\sqrt{2(t-1)}<x_n<\frac12\).

\textbf{\(\bullet\) (Case 3: \( 1 < t< \frac98\), \(\frac12<x_n< \frac{c_0}{\sqrt{n-1}} |x'|\))}   \quad     
 We choose \(0<c_0<1\) such that for \(x_n<\frac{c_0}{\sqrt{n-1}}|x'|\), by Lemma \ref{w_nformula}, 
\begin{align*}
w_n(x,t)\approx -\left(\frac{1}{t^{\frac12}}\min\left\{1,\frac{x_n^2}{t}\right\}+\mathcal{G}_{a,\frac12}(x_n.t)\right)\frac{1}{|x'|^{n}}+ \frac{x_n }{t^{\frac{n+2}{2}}}e^{-\frac{c|x'|^2}{t}} + x_n\mathcal{H}_{a,\frac{n+2}{2}}(|x|,t).
\end{align*}
 
 From \eqref{est-wn},   Lemma \ref{elementarylemma} and   Lemma \ref{elementarylemma2}, we have 
\begin{align} 
\begin{split}
w_n(x,t)\leq& -d_1|x'|^{-n}\left(\frac{1}{a}\mathbf{1}_{a>-\frac14}+\frac{2^{-a-\frac12}-(t-1)^{a+\frac12}}{a+\frac12}\mathbf{1}_{-\frac34<a\leq-\frac14,a\neq -\frac12}+|\ln(t-1)|\mathbf{1}_{a=-\frac12}\right.\\
&\left.+\frac{(t-1)^{a+\frac12}}{a+1}\mathbf{1}_{-1<a\leq -\frac34}\right)  + d_2 x_n|x'|^{-2}e^{-d_3|x'|^2}\leq -d_1|x'|^{-n}+c_0d_2|x'|^{-1}e^{-d_3|x'|^2}<0
\end{split}
\end{align}
for sufficiently large \(|x'|\).
		
{\bf \(\bullet\) (Case 4: \(1<t<\frac98\), \(x_n>\frac{c_1}{\sqrt{n-1}}|x'|\))}\  \quad We choose \(c_1>1\) such that for \(x_n>\frac{c_1}{\sqrt{n-1}}|x'|\), by Lemma \ref{w_nformula},
\begin{align*}
w_n(x,t)\approx \left(\frac{1}{t^{\frac12}}\min\left\{1,\frac{x_n^2}{t}\right\}+\mathcal{G}_{a,\frac12}(x_n.t)\right)\frac{1}{x_n^{n}}+ \frac{x_n }{t^{\frac{n+2}{2}}}e^{-\frac{cx_n^2}{t}} + x_n\mathcal{H}_{a,\frac{n+2}{2}}(|x|,t).
\end{align*}
Then we find that \(w_n(x,t)>0\) since every term in the above equation is positive.  

{\bf \(\bullet\) (Case 5: \(t>\frac98\))} \quad From \eqref{est-wn}, Lemma \ref{elementarylemma} and Lemma \ref{elementarylemma2}, we have
\begin{align}\label{w_nt>1est}
\begin{split}
w_n(x,t)&\geq t^{-\frac{n+2}{2}}|x'|^{-n}x_n\left(-c_1x_n t^{\frac{n-1}{2}}+c_2|x'|^ne^{-\frac{c_3|x'|^2}{t}}\right),\\
w_n(x,t)&\leq t^{-\frac{n+2}{2}}|x'|^{-n}x_n\left(-d_1x_n t^{\frac{n-1}{2}}+d_2|x'|^ne^{-\frac{d_3|x'|^2}{t}}\right).
\end{split}
\end{align}

Let 
\begin{align*}
x_{nn1}^*:=\frac{c_2}{c_1}\left(\frac{|x'|^2}{t}\right)^{\frac{n}{2}}\sqrt{t}e^{-\frac{c_3|x'|^2}{t}}\quad x_{nn2}^*:=\frac{d_2}{d_1}|x'|^n t^{-\frac{n-1}{2}}e^{-\frac{d_3|x'|^2}{t}},
\end{align*}
so that \(0<x_{nn1}^*\leq x_{nn2}^*\). for large \(|x'|\) depending only on \(n\). 
We further note that \(x_{n2}\leq c_5\sqrt{t}\).

Then from \eqref{w_nt>1est}, we find that \(w_n(x,t)>0\) for \(x_n<x_{nn1}^*\) and \(w_n(x,t)<0\) for \(x_n>x_{nn2}^*\)
for small $x_n$.

On the other hand, from \eqref{est-wn}, for $  x_n > c_1 |x'|$, 
\(w_n(x,t)\geq c_1(t^{-\frac12}+\mathcal{G}_{a,\frac12}(x_n,t))x_n^{-n}>0\), and $ t^\frac12 < x_n < c_0 |x'|$, \(w_n(x,t)\leq |x'|^{-n} t^{-\frac12} \Big(d_1 - d_2\frac{x_n}{|x'|}  \Big) < 0\) for sufficiently large \(|x'|\). Thus we find that \((c_0|x'|, c_1|x'|)\in S^{-+}(w_n(x',\cdot, t); \sqrt{t}, \infty)\).
 
This completes the proof of the Proposition \ref{2ndmainprop}.
\end{proof}

\section{Signs of velocity components and asymptotics of reversal points for \texorpdfstring{$t<1$}{}}\label{sect4}

%\subsection{Signs of \texorpdfstring{\(w\)}{} for \texorpdfstring{\(t<1\)}{}}

In this section, we will derive some asymptotics of \(x_n^*\) of \(w_i(x,t)\) for \(t<1\). The main results are given in Proposition \ref{3rdmainprop} and Proposition \ref{4thmainprop}.

\subsection{Tangential components of the velocity field }
\subsubsection{{\bf Estimates of \texorpdfstring{\(w_i\)}{}}}
We recall from \eqref{1220-8} that
%\begin{align*}
$w_i(x,t)=w_{in}^{(L)}(x,t)+w_i^B(x,t)+w_i^N(x,t)$.
%\end{align*}
Firstly we shall provide the estimates for \(w_{in}^{(L)}\)(Lemma \ref{lemma1003-2}) and \(w_i^B+w_i^N\)(Lemma \ref{thoe1021-1}). 
We begin with auxiliary lemmas.

\begin{lemma}\label{lemma0928}
Let $ \frac78 < t < 1$. For $\alpha \in {\mathbb R}$, let us 
\begin{align*}
A(\alpha, \theta, t): =\int_{1-t}^{\frac12}  s^{\alpha}   e^{-\frac{\theta^2}{4s}} ds,  \quad B(\alpha, \theta, t) : =  \int_0^{1-t}  s^{\alpha }e^{-\frac{\theta^2}{4s}} ds.
\end{align*}
\begin{itemize}
\item[(i)]

If $ \theta^2 \leq2( 1-t)$, then 
\begin{align*}
A(\alpha, \theta, t)
&   \approx   
 \frac{2^{-\alpha-1}-(1-t)^{\alpha+1}}{\alpha+1}\mathbf{1}_{\al \neq -1} + |\ln  2 (1-t)|\mathbf{1}_{ \al =-1},
\\
B(\alpha, \theta, t)  &  \approx
  \left(\frac{(1-t)^{\alpha+1}-2^{-\alpha-1}\theta^{2\alpha+2}}{\alpha+1}+  \theta^{2\alpha +2} \right)\mathbf{1}_{\alpha\neq -1}+\ln\left(\frac{2(1-t)}{\theta^2}\right)\mathbf{1}_{\alpha=-1}.
\end{align*}

 \item[(ii)]
If $ 2(1-t) \leq  \theta^2 \leq \frac14$, then
\begin{align*}
A(\alpha, \theta, t)
 & \approx   
 \left(\frac{2^{-\alpha-1}-(4\theta^2/3)^{\alpha+1}}{\alpha+1}+1\right)\mathbf{1}_{\alpha\neq -1}+\ln\left(\frac{3}{8\theta^2}\right)\mathbf{1}_{\alpha=-1}, 
%\theta^{2\alpha +2} \Big( \frac1{-\alpha -1} \Big( \left(\frac34\right)^{-\alpha -1} -(2 \theta^2)^{-\alpha -1}   \Big)\mathbf{1}_{\alpha \neq -1}  +\ln\left(\frac{3}{8\theta^2}\right)\mathbf{1}_{\alpha=-1} +  1 \Big), 
\quad B(\alpha, \theta, t) 
  \approx   \theta^{-2 }  ( 1-t  )^{2 +\alpha} e^{-\frac{\theta^2}{1-t}}.
\end{align*}

\item[(iii)]
If $\theta\geq \frac12 $, then
\begin{align*}
A(\alpha, \theta, t)
 & \approx    
 \theta^{-2}   e^{-  \theta^2  },\quad 
B(\alpha, \theta, t)
  \approx    
 \theta^{-2}  (1-t)^{2 +\alpha}   e^{- \frac{\theta^2}{1-t}  }.
\end{align*}

\end{itemize}
\end{lemma}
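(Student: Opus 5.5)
The plan is to estimate the two one-variable integrals $A$ and $B$ directly in each of the three regimes. Throughout, $\frac78<t<1$, so $0<1-t<\frac18$. The only tools are a case split according to whether the exponential factor $e^{-\theta^2/(4s)}$ is comparable to $1$ (when $s\gtrsim\theta^2$) or exponentially small (when $s\lesssim\theta^2$), and, in the latter range, the substitution $u=\theta^2/(4s)$.

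\emph{Regime (i).} For $A$, every $s\in(1-t,\frac12)$ satisfies $s\ge 1-t\ge \theta^2/2$, so $e^{-\theta^2/(4s)}\in[e^{-1/2},1]$. Hence $A\approx\int_{1-t}^{1/2}s^\alpha\,ds$, which gives the stated formula, including the logarithmic case $\alpha=-1$. For $B$, split $(0,1-t)$ at $s=\theta^2$.
\begin{itemize}
\item On $(\theta^2/2,\,1-t)$ (or on $(\theta^2,1-t)$), the exponential is comparable to $1$, so this piece is $\approx\int s^\alpha\,ds$. This yields the term $\frac{(1-t)^{\alpha+1}-2^{-\alpha-1}\theta^{2\alpha+2}}{\alpha+1}$, or the logarithm when $\alpha=-1$.
\item On $(0,\theta^2/2)$, substitute $u=\theta^2/(4s)$. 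This gives $\theta^{2\alpha+2}4^{-\alpha-1}\int_{1/2}^\infty u^{-\alpha-2}e^{-u}\,du\approx\theta^{2\alpha+2}$, because the $u$-integral converges for every real $\alpha$.
\end{itemize}
Adding the two pieces gives the formula for $B$.

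\emph{Regime (ii), where $2(1-t)\le\theta^2\le\frac14$.} For $B$, every $s<1-t\le\theta^2/2$ lies in the exponentially small range. After the substitution, $B=\theta^{2\alpha+2}4^{-\alpha-1}\int_{\theta^2/(4(1-t))}^\infty u^{-\alpha-2}e^{-u}\,du$. Since the lower limit is at least $\frac12$, this upper incomplete gamma integral is $\approx L^{-\alpha-2}e^{-L}$ with $L=\theta^2/(4(1-t))$. That gives $\theta^{-2}(1-t)^{\alpha+2}e^{-c\theta^2/(1-t)}$, with the constant $c$ in the exponent absorbed into the paper's $\approx$ convention.

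For $A$, split at $s=\frac43\theta^2$ (note $\frac43\theta^2\le\frac13<\frac12$).
\begin{itemize}
\item On $(\frac43\theta^2,\frac12)$, the exponential is comparable to $1$, producing $\frac{2^{-\alpha-1}-(4\theta^2/3)^{\alpha+1}}{\alpha+1}$, or $\ln\frac{3}{8\theta^2}$ when $\alpha=-1$.
\item On $(1-t,\frac43\theta^2)$, the substitution gives $\theta^{2\alpha+2}\int_{3/16}^{\theta^2/(4(1-t))}u^{-\alpha-2}e^{-u}\,du\approx\theta^{2\alpha+2}$.
\end{itemize}
The stated formula writes this second contribution as ``$+1$''. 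I would justify that by comparison: $\theta^{2\alpha+2}\lesssim$ the first piece, up to an additive constant, uniformly in $\theta\le\frac12$ and $\alpha$ in the relevant range. This is checked by comparing $\theta^{2\alpha+2}$ against $\frac{(4\theta^2/3)^{\alpha+1}-2^{-\alpha-1}}{-\alpha-1}$ using \eqref{lemma0630-2}. I expect this bookkeeping to be the main obstacle, since the uniformity in $\alpha$ near $-1$ needs care.

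\emph{Regime (iii), where $\theta\ge\frac12$.} Now $s\le\frac12\le 2\theta^2$ throughout, so both integrals lie in the exponentially small range, and the incomplete gamma asymptotics $\int_L^\infty u^{\beta}e^{-u}\,du\approx L^{\beta}e^{-L}$ for $L\gtrsim1$ apply.
\begin{itemize}
\item For $B$, this gives $\theta^{-2}(1-t)^{\alpha+2}e^{-c\theta^2/(1-t)}$.
\item For $A$, the integral is dominated by its upper endpoint $s=\frac12$. It equals $\theta^{2\alpha+2}\int_{\theta^2/2}^{\theta^2/(4(1-t))}u^{-\alpha-2}e^{-u}\,du$. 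The upper limit is at least twice the lower limit, so the integral is comparable to its tail from $\theta^2/2$, namely $\approx\theta^{-2}e^{-c\theta^2}$.
\end{itemize}
The exponent constants differ between the upper and lower bounds, which the paper's $\approx$ convention allows.
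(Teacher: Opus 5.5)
Your route is essentially the paper's. The paper substitutes $u=\theta^2/s$ once (dropping the harmless factor $\frac14$ in the exponent), writing $A$ and $B$ as $\theta^{2\alpha+2}\int u^{-\alpha-2}e^{-u}\,du$ over $[2\theta^2,\theta^2/(1-t)]$ and $[\theta^2/(1-t),\infty)$. It then splits the $u$-range where $e^{-u}\approx 1$; splitting in $s$ first, as you do, is the same computation. You actually go further than the written proof. That proof treats only $\theta^2\le 1-t$ and $1-t\le\theta^2\le\frac14$, ends with $+\theta^{2\alpha+2}$ rather than $+1$ in (ii), and never writes out (iii).

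\emph{One step fails.} In (i) with $\alpha=-1$, adding your two pieces gives $B\approx\ln\frac{2(1-t)}{\theta^2}+1$, and the constant cannot be absorbed near the top of the regime. Indeed $B(-1,\theta,t)=\int_{\theta^2/(4(1-t))}^\infty u^{-1}e^{-u}\,du$. At $\theta^2=2(1-t)$ this equals $\int_{1/2}^\infty u^{-1}e^{-u}\,du>0$, whereas $\ln\frac{2(1-t)}{\theta^2}=0$. So the $\alpha=-1$ formula for $B$ is false as written on the whole range $\theta^2\le 2(1-t)$, and your claim that adding the pieces gives the stated formula does not hold there. There are two fixes:
\begin{itemize}
\item keep the additive constant, as the paper's proof does by placing $+\theta^{2\alpha+2}$ outside the indicator; or
\item restrict to $\theta^2\le 1-t$, which is the hypothesis the paper's proof actually uses. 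There $\ln\frac{2(1-t)}{\theta^2}\ge\ln 2$ absorbs the constant.
\end{itemize}
This is harmless for the rest of the paper, since $B$ is only ever used with $\alpha=-\frac12$ and $\alpha=-\frac32$.

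The step you call the main obstacle in (ii) is easy. Since $\alpha$ is fixed, no uniformity near $\alpha=-1$ is required. You do, however, need both directions: $\theta^{2\alpha+2}\lesssim F+1$ and $1\lesssim F+\theta^{2\alpha+2}$, where $F=\frac{2^{-\alpha-1}-(4\theta^2/3)^{\alpha+1}}{\alpha+1}$ is your first piece. You only argued the first. Both are immediate:
\begin{itemize}
\item For $\alpha>-1$: $\theta^{2\alpha+2}\le 1$, and $F\ge\frac{2^{-\alpha-1}-3^{-\alpha-1}}{\alpha+1}>0$ because $\frac43\theta^2\le\frac13$.
\item For $\alpha<-1$: $F\approx\theta^{2\alpha+2}\ge 1$.
\item For $\alpha=-1$: $F=\ln\frac{3}{8\theta^2}\ge\ln\frac32$, while the second piece is $\approx 1$.
\end{itemize}
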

The proof is given in Appendix \ref{proofoflemma0928}
\begin{remark}
(iii) of Lemma \ref{lemma0928} also holds when the upper limit of integration of \(A(\alpha,\theta,t)\) is any number strictly bigger than \(\frac18\).
\end{remark}

Next lemma is prepared for the asymptotics of \(w_{in}^{(L)}\). 

%{\it \underline{\(\bullet\) Estimates of \(w_{in}^{(L)}\)}}. 
 \begin{lemma}[{{\bf  Estimates of $w^{(L)}_{in}$}}]\label{lemma1003-2}
 Let $ i \neq n$, $\frac78 \leq t \leq 1$ and $ |x'| \leq c x_i$ for some \(c>1\). Let \(w_{in}^{(L)}\) be given as \(\eqref{wis}_3\). Then the following holds:
 \begin{align}
 \begin{split}
 w_{in}^{(L)}(x,t)\approx\frac{x_ix_n}{|x|^{n+2}}\left\{\begin{array}{ll}
x_n|x'|^{n+2}  e^{-|x'|^2}+  x_n(1-t)^a,\\
 \vspace{-8mm} &\textrm{ if } x_n<(2(1-t))^{\frac12},\\\\
\displaystyle+x_n |\ln(1-t)|\mathbf{1}_{a=\frac{1}{2}}+x_n\frac{1-(1-t)^{a-\frac12}}{a-\frac12}\mathbf{1}_{a\neq \frac12},\\
 \vspace{-3mm}\\
\displaystyle x_n^2\frac{1-x_n^{2a-1}}{a-\frac12}\mathbf{1}_{a\neq \frac12}   +  x_n^2 | \ln x_n|\mathbf{1}_{a =\frac12} \\
   \vspace{-6mm} &\textrm{ if } (2(1-t))^{\frac12}<x_n<(t-\frac12)^{\frac12},\\
  \displaystyle +  \frac{x_n^{2a+1}-(1-t)^{a+\frac12}}{a+\frac12}\mathbf{1}_{a\neq -\frac12}+  \left|\ln\frac{x_n^2}{1-t}\right|\mathbf{1}_{a=-\frac12},\\
  \vspace{-3mm}\\
 \displaystyle \frac{1-(1-t)^{a+\frac12}}{a+\frac12}\mathbf{1}_{a\neq -\frac12}  +  |\ln(1-t)|\mathbf{1}_{a=-\frac12},  &\text{ if } x_n>(t-\frac12)^{\frac12}.
 \end{array}\right.
 \end{split}
 \end{align}
 \end{lemma}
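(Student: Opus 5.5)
The starting point is the kernel estimate \eqref{L^Lestimate} of Lemma \ref{estimateofkernelt<1}, applied to $\widetilde L_{ni}(x-y',t-s)$; by Proposition \ref{lemwidetildeL}, $\widetilde L_{in}=\widetilde L_{ni}$ up to the $B_{in}$ term, which is already separated out in $w_i^B$. Since $\psi\in C_c^\infty(B'_{1/2})$ and $|x'|\le c x_i$ with $|x'|$ large, we have $x-y'\approx x$ and $x_i-y_i\approx x_i$ uniformly in $y'\in\operatorname{supp}\psi$. Hence the $y'$ integral only contributes a factor $\|\psi\|_{L^1}$, and
\[
w_{in}^{(L)}(x,t)\approx x_i\int_0^t\Big(\frac{x_n}{|x|^{n+2}}+\frac{e^{-\frac{c|x|^2}{t-s}}}{(t-s)^{\frac{n+1}{2}}}\Big)\frac{\phi(s)}{(t-s)^{\frac12}}\min\Big\{1,\frac{x_n^2}{t-s}\Big\}ds .
\]
Next split $\int_0^t=\int_0^{1/2}+\int_{1/2}^t$. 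On $(0,\tfrac12)$ we have $t-s\in(\tfrac38,1)$, so this piece is $\approx M x_i x_n|x|^{-n-2}\min\{1,x_n^2\}$ plus $Mx_i e^{-c|x|^2}\min\{1,x_n^2\}$. Both are absorbed into the constant-order terms of each regime; in the first regime the Gaussian piece gives the $x_n|x'|^{n+2}e^{-|x'|^2}$ contribution after factoring out $x_ix_n|x|^{-n-2}$.

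On $(\tfrac12,t)$, substitute $u=t-s\in(0,t-\tfrac12)$. Then $\phi(s)=(1-t+u)^a\approx (1-t)^a$ for $u<1-t$ and $\approx u^a$ for $u>1-t$, and the main term becomes
\[
\frac{x_ix_n}{|x|^{n+2}}\int_0^{t-\frac12}(1-t+u)^a u^{-\frac12}\min\{1,x_n^2/u\}\,du .
\]
Evaluate this by cutting at the two scales $u=1-t$ and $u=x_n^2$, in the same way as Lemma \ref{elementarylemma}. The pieces are as follows.

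\begin{itemize}
\item If $x_n^2<2(1-t)$, the range $u<x_n^2$ gives $x_n(1-t)^a$ (this also covers $x_n^2<u<1-t$). The range $u>1-t$ gives $x_n^2\int_{1-t}^{1/2}u^{a-3/2}du$, which yields the $\frac{1-(1-t)^{a-1/2}}{a-1/2}$ and $|\ln(1-t)|$ terms (with one $x_n$ factored out, as written).
\item If $2(1-t)<x_n^2<t-\tfrac12$, the range $u<1-t$ gives $(1-t)^{a+1/2}$. The range $1-t<u<x_n^2$ gives $\int u^{a-1/2}$, hence $\frac{x_n^{2a+1}-(1-t)^{a+1/2}}{a+1/2}$ or the logarithm. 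The range $u>x_n^2$ gives $x_n^2\frac{1-x_n^{2a-1}}{a-1/2}$ or $x_n^2|\ln x_n|$.
\item If $x_n^2>t-\tfrac12$, the minimum is $1$ throughout, and we obtain $\frac{1-(1-t)^{a+1/2}}{a+1/2}$ or $|\ln(1-t)|$.
\end{itemize}

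The elementary inequality \eqref{lemma0630-2} confirms that these expressions are uniformly comparable to the integrals, with constants independent of $a$.

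The Gaussian part on $(\tfrac12,t)$ is $x_i\int_0^{t-1/2}(1-t+u)^a u^{-(n+2)/2}e^{-c|x|^2/u}\min\{1,x_n^2/u\}du$. Using Lemma \ref{lemma0928} with $\theta\approx|x|$ large (case (iii)), this is bounded by $x_i|x|^{-2}e^{-c|x|^2}\min\{1,x_n^2\}$. That is dominated by $x_ix_n|x|^{-n-2}$ times the main bracket, except in the regime $x_n<\sqrt{2(1-t)}$, where it is kept as the term $x_n|x'|^{n+2}e^{-|x'|^2}$.

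The main obstacle is the uniform two-sided bookkeeping of the $u$-integral across the three scales: one must handle $a$ near $\pm\tfrac12$ without losing constants, and verify that the lower bound in \eqref{L^Lestimate} survives integration against $\psi$, since the sign of $x_i-y_i$ is controlled by $|x'|\le c x_i$ and $|x'|\ge N$.
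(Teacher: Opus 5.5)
Your route is the paper's proof, written in the variable $u=t-s$. The paper also starts from \eqref{L^Lestimate}, integrates out $y'$ via $|x'-y'|\approx|x'|$, and splits the time integral at $s=\frac12$, at $t-s\approx 1-t$ (its cut points $3t-2$, $\frac52t-\frac32$, $2t-1$) and at $t-s=x_n^2$. A small aside: $\widetilde L_{in}=\widetilde L_{ni}$ is immediate from \eqref{tilde-Lij}; Proposition \ref{lemwidetildeL} is what splits $L_{in}$ into $\widetilde L_{in}$ and $B_{in}$.

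Two points in your bookkeeping need correcting.

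\textbf{The first regime.} For $x_n<(2(1-t))^{1/2}$, your range $u>1-t$ produces $x_n^2\frac{1-(1-t)^{a-1/2}}{a-1/2}$ (resp.\ $x_n^2|\ln(1-t)|$) inside the bracket, i.e.\ a contribution of order $\frac{x_ix_n^3}{|x|^{n+2}}(\cdots)$. This is exactly what the paper's proof obtains at the end of its Case~1. It is also what Proposition~\ref{3rdmainprop} later uses, through the factor $\frac{x_n^2}{|x'|^2}$. So the printed $x_n\frac{1-(1-t)^{a-1/2}}{a-1/2}$ is a misprint for $x_n^2\frac{1-(1-t)^{a-1/2}}{a-1/2}$. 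Your remark ``with one $x_n$ factored out, as written'' does not reconcile the two, because they differ by the factor $x_n$, which is small in this regime. For fixed $a<\frac12$ and $t\to1^-$, the printed term is of order $x_n(1-t)^{a-\frac12}$, far above every term the integral actually produces. You should state the corrected formula rather than claim agreement.

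\textbf{The middle regime.} Your claim that the Gaussian pieces are absorbed when $2(1-t)<x_n^2<t-\frac12$ fails for $a>0$. There the bracket is only of size about $x_n^{\min\{2a+1,2\}}$, up to a logarithmic factor when $a$ is near $\frac12$. Meanwhile the $s\in(0,\frac12)$ Gaussian piece contributes about $M x_n|x|^{n+2}e^{-c|x|^2}$ to the bracket. For fixed $|x'|$ this dominates once $x_n$ is small enough, which is permitted in this regime as $t\to1^-$. The paper drops this term in the same way; this is harmless for its later sign arguments. Strictly, however, for $a>0$ the term $x_n|x'|^{n+2}e^{-|x'|^2}$ must be retained in the middle regime as well.
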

\begin{proof} We have from \(\eqref{wis}_2\) and \eqref{L^Lestimate} that
\begin{align}\label{0527-3}
\begin{split}
w^{(L)}_{i n} &= 4\int_0^t \int_{\R^{n-1}}\widetilde{L}_{in}(x-y',t-s)\psi(y')\phi(s)dy'ds \\
&\approx \norm{\psi}_{L^1(\R^{n-1})}\int_0^t \left(\frac{x_n}{|x|^{n+2}}+\frac{e^{-\frac{|x|^2}{t-s}}}{(t-s)^{\frac{n+1}{2}}}\right)\frac{x_i}{(t-s)^{\frac12}}\min\left\{1,\frac{x_n^2}{t-s}\right\} ds,
\end{split}
\end{align}
where we have used that \(|x'-y'|\approx |x'|\) for \(|x'|\geq 2\) and \(y'\in \text{supp}(\psi)\).

{\bf\(\bullet\) (Case 1: \(x_n^2 < 2(1-t)\))}\quad  In this case we split the last integral in \eqref{0527-3} as follows.
\begin{align*}
\left(\int_0^\frac12  +  \int_\frac12^{3t-2}  + \int_{3t-2}^t\right) \cdots ds =:  I_1 +  I_2 + I_3.
\end{align*}
For \(I_1\), we use \(t-s\approx 1\) for \(\frac78<t<1\) and \(0<s<\frac12\) and thus \(\min\left\{1,\frac{x_n^2}{t-s}\right\}\approx x_n^2\) to get
\begin{align}\label{I0}
 \begin{split}
I_1 & \approx \frac{ x_i x_n^3}{  |x'|^{n+2} }     \int_0^\frac12     (t-s)^{-\frac32} \phi(s)ds  +    x_i x_n^2  \int_0^{\frac12} (t-s)^{-\frac{n+4}2}  e^{-\frac{|x'|^2}{t-s}}  \phi(s)ds \approx   \frac{ x_i x_n^3}{  |x'|^{n+2} }    +  x_i  x_n^2  e^{- |x'|^2 }.
\end{split}
\end{align}
For \(I_2\), we use \(t-s\approx 1-s\) for \(\frac12<s<3t-2\) and \(\min\left\{1,\frac{x_n^2}{t-s}\right\}\approx \frac{x_n^2}{t-s}\) with (iii) of Lemma \ref{lemma0928} to get
\begin{align*}
\begin{split}
I_2 
 %& \approx   \int_{\frac12}^{2t -1} \big(  x_n^3   |x'|^{-n-1}       (t-s)^{-\frac32}   +   x_n^2  (t-s)^{-\frac{n+4}2} |x'| e^{-\frac{|x'|^2}{t-s}} \big)    ( t-s)^a  ds\\
  & \approx    \frac{ x_i x_n^3}{  |x'|^{n+2} }    \int^{t-\frac12}_{2(1-t)}      s^{a-\frac32 } ds  +   x_i   x_n^2  \int^{t-\frac12}_{2(1-t)}   s^{-\frac{n+4}2 +a }  e^{-\frac{|x'|^2}{s}}     ds\\
   & \approx    \frac{ x_i x_n^3}{  |x'|^{n+2} } \Big( \frac{1-(1-t)^{a-\frac12}}{a-\frac12}\mathbf{1}_{a\neq \frac12} +|\ln (1-t)| \mathbf{1}_{a = \frac12}  \Big)   
                         +   x_i   x_n^2  |x'|^{-2} e^{-|x'|^2}.\\
%    & \approx   x_n^3   |x'|^{-n-1}     c(-\frac32 +a, 1-t)    +   x_n^2   |x'|   A(-\frac{n+4}2 +a, |x'|,t).
\end{split}
\end{align*}

For \(I_3\), we use \(1-s\approx 1-t\) for \(3t-2<s<t\) to get
\begin{align*}
\begin{split}
I_3 & \approx      \frac{ x_i x_n^3}{  |x'|^{n+2} }    ( 1-t)^a       \int_{3t-2}^{t-x_n^2}      (t-s)^{-\frac32}  ds  +  x_i   x_n^2    ( 1-t)^a \int_{3t-2 }^{t-x_n^2}   (t-s)^{-\frac{n+4}2} e^{-\frac{|x'|^2}{t-s}}    ds\\
&  \quad +      \frac{x_i x_n}{ |x'|^{n+2}} (1-t)^a \int_{t -x_n^2}^t    ( t-s)^{-\frac12}   ds \approx     \frac{x_i x^2_n}{ |x'|^{n+2}} (1-t)^a.
\end{split}
\end{align*}

Hence,  for   $ x_n <   (2(1-t))^{\frac12}$, we have 
\begin{align*}
w^{(L)}_{in} (x,t)
&\approx  x_i x_n^2 e^{-|x'|^2}+\frac{x_i x^2_n}{ |x'|^{n+2}} (1-t)^a+\frac{x_i x_n^3}{ |x'|^{n+2}} \left(  \frac{1-(1-t)^{a-\frac12}}{a-\frac12}\mathbf{1}_{a\neq \frac12}  +  |\ln(1-t)|\mathbf{1}_{a=\frac{1}{2}}\right).
\end{align*}

{\bf\(\bullet\) (Case 2: \(2(1-t) < x_n^2 \leq t -\frac12\))}\quad In this case we split the last integral in \eqref{0527-3} as follows.
\begin{align*}
\left(\int_0^\frac12  +  \int_\frac12^{\frac52 t-\frac32}  + \int_{\frac52 t-\frac32}^t\right) \cdots ds =:  I_1 +  I_2 + I_3.
\end{align*}
The estimate of $I_1$ is same with $\eqref{I0}_1$. 

For \(I_2\), we find that \(t-s\approx 1-s\) for \(\frac12<s<\frac52 t-\frac32\) and \(2(1-t)<x_n^2\leq t-\frac12\) implies that \(\frac12\leq t-x_n^2 <\frac52 t-\frac32\). Thus we further split \(I_2\) as follows
\begin{align*}
\begin{split}
I_2 
&\approx   \frac{x_i x_n^3}{  |x'|^{n+2}} \int_{x_n^2}^{t -\frac12}         s^{a -\frac32} ds    +  x_i  x_n^2  \int_{x_n^2}^{t -\frac12}      s^{-\frac{n+4}2 +a }    e^{-\frac{|x'|^2}{s}}    ds  + \frac{x_i x_n}{    |x'|^{n+2}} \int_{\frac32( 1-t)}^{x_n^2}    s^{-\frac12 +a}   ds \\
&\approx   \frac{x_i x_n^3}{  |x'|^{n+2}} \left( \frac{1-x_n^{2a-1}}{a-\frac12}\mathbf{1}_{a\neq \frac12} + |\ln x_n|\mathbf{1}_{a =\frac12}  \right)     +  x_i  x_n^2  |x'|^{-2} e^{-|x'|^2}  \\
& \qquad    + \frac{x_i x_n}{    |x'|^{n+2}}  \left(\frac{x_n^{2a+1}-(1-t)^{a+\frac12}}{a+\frac12}\mathbf{1}_{a \neq -\frac12 } + \ln\frac{x_n^2}{1-t} \mathbf{1}_{a =-\frac12}  \right).
\end{split}
\end{align*}
For \(I_3\), we find that \(1-s\approx 1-t\) for \(\frac52 t-\frac32<s<t\) and \(\min\left\{1,\frac{x_n^2}{t-s}\right\}= 1\).  Thus we find that
\begin{align*}
I_3 &\approx \frac{ x_i  x_n}{ |x'|^{n+2}}  (1-t)^a  \int^t_{2t -1}   (t-s)^{-\frac12}  ds   \approx  \frac{x_i x_n}{ |x'|^{n+2}}  (1-t)^{a+\frac12}.
\end{align*}

Combining all the estimates, we get
\begin{align*}
w^{(L)}_{in}(x,t) 
& \approx   \frac{x_i x_n}{|x'|^{n+2}}\left(x_n^2\frac{1-x_n^{2a-1}}{a-\frac12}\mathbf{1}_{a\neq \frac12}   +  x_n^2 | \ln x_n|\mathbf{1}_{a =\frac12}  +  \frac{x_n^{2a+1}-(1-t)^{a+\frac12}}{a+\frac12}\mathbf{1}_{a\neq -\frac12}\right.\\
&\quad \left.  +  \left|\ln\frac{x_n^2}{1-t}\right|\mathbf{1}_{a=-\frac12}  + (1-t)^{a+\frac12}   + x_n |x'|^{n}  e^{-|x'|^2}   \right)\\
&\approx \frac{x_ix_n}{|x'|^{n+2}}\left(x_n^2\frac{1-x_n^{2a-1}}{a-\frac12}\mathbf{1}_{a\neq \frac12}   +  x_n^2 | \ln x_n|\mathbf{1}_{a =\frac12}  +  \frac{x_n^{2a+1}-(1-t)^{a+\frac12}}{a+\frac12}\mathbf{1}_{a\neq -\frac12}+  \ln\frac{x_n^2}{1-t}\mathbf{1}_{a=-\frac12} \right).
\end{align*}

{\bf\(\bullet\) (Case 3: \(x_n^2\geq t-\frac12\))}\quad In this case we split as in Case 1.
For \(I_1\) we further split the integral as follows.
\begin{align*}
\begin{split}
I_1  
&  \approx \left(  \frac{x_i x_n }{  |x|^{n+2}}      +     x_i  e^{-|x|^2} \right) \int_0^{t -x_n^2} \frac{x_n^2}{t-s}\phi(s) ds +   \frac{x_i}{  |x|^{n+2}} \int_{t -x_n^2}^\frac12     \phi(s)  ds  \approx   \frac{x_i x_n }{|x'|^{n+2} }.
\end{split}
\end{align*}

For \(I_2\) and \(I_3\) using \(t-s\approx 1-s\) and \(1-s\approx 1-t\) for \(\frac12<s<2t-1\) and \(2t-1<s<t\) respectively, we have
\begin{align*}
I_2 &\approx       \frac{x_i  x_n}{    |x|^{n+2}}       \int_\frac12^{2t -1 }  ({\color{red}t}-s)^{-\frac12 +a } ds \approx \frac{x_i x_n }{|x'|^{n+2}}\left(\frac{1-(1-t)^{a+\frac12}}{a+\frac12}\mathbf{1}_{a\neq-\frac12}+|\ln(1-t)|\mathbf{1}_{a=-\frac12}\right),\\
I_3 &\approx \frac{x_i x_n }{|x|^{n+2}}  (1-t)^a  \int_{2t-1}^t   ( t-s)^{-\frac12}      ds \approx  \frac{x_i x_n }{|x'|^{n+2}}  (1-t)^{a+\frac12}. 
\end{align*}

Hence, for  $  t -\frac12 \leq x_n^2 \leq t$, with \(t\) close to \(1\),we have 
\begin{align*}
w^{(L)}_{in}(x,t) 
&\approx  \frac{x_i x_n }{|x'|^{n+2} }\left( \frac{1-(1-t)^{a+\frac12}}{a+\frac12}\mathbf{1}_{a\neq -\frac12}  +  |\ln(1-t)|\mathbf{1}_{a=-\frac12}  +  (1-t)^{a+\frac12} + 1\right)\\
&\approx  \frac{x_i x_n }{|x'|^{n+2} }\left( \frac{1-(1-t)^{a+\frac12}}{a+\frac12}\mathbf{1}_{a\neq -\frac12}  +  |\ln(1-t)|\mathbf{1}_{a=-\frac12}\right).
\end{align*}
This completes the proof.
\end{proof}

In the next lemma, we present the asymptotics of \(w_i^B(x,t)+w_i^N(x,t)\).

\begin{lemma}\label{thoe1021-1}
Let \(|x'|\) be sufficiently large and $ |x'|\leq c x_i$ for some constant \(c>1\). Then for \(x_n<\sqrt{t}\), the following holds:
\begin{itemize}
\item[(i)] If \(x_n\geq \sqrt{t}\), we have
\begin{align*}
w_i^B(x,t)+w_i^N(x,t) \approx -\frac{x_ix_n}{|x'|^n}e^{-x_n^2}+\frac{x_i}{|x|^n}(1-t)^a.
\end{align*}
\end{itemize}
\item[(ii)] If \((2(1-t))^{\frac12}<x_n<\sqrt{t}\), we have
\begin{align*}
w_i^B(x,t)+w_i^N(x,t)\approx -\frac{1}{|x'|^{n-1}}\left(x_n\frac{1-x_n^{2a-1}}{a-\frac12}\mathbf{1}_{a>0, a\neq \frac12}+x_n|\ln x_n|\mathbf{1}_{a=\frac12}-(1-t)^a\mathbf{1}_{-1<a<0}\right).
\end{align*}
\item[(iii)] If \(x_n<(2(1-t))^{\frac12}\), we have
\begin{align*}
w_i^B(x,t)+w_i^N(x,t)\approx -\frac{x_n}{|x'|^{n-1}}\left(\frac{1-(1-t)^{a-\frac12}}{a-\frac12}\mathbf{1}_{a>0, a\neq \frac12}+|\ln(1-t)|\mathbf{1}_{a=\frac12}-(1-t)^{a-\frac12}\mathbf{1}_{-1<a<0}\right).
\end{align*}
\end{lemma}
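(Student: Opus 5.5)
The plan is to estimate the two pieces through the one-dimensional heat kernel in $x_n$. The key point is that $w_i^B$ and $w_i^N$ cancel at leading order near the boundary, which is what enforces $w_i=0$ at $x_n=0$. So the estimate cannot be obtained by bounding each term separately. For $w_i^N$ I use $|x'-y'|\approx|x'|$ on $\mathrm{supp}\,\psi$ and $x_i\gtrsim |x'|$ to get $w_i^N(x,t)\approx \phi(t)\,x_i|x|^{-n}=(1-t)^a x_i|x|^{-n}$ for $\frac78<t<1$. For $w_i^B$ I replace the $\approx$ in \eqref{0513-1} by the finer statement behind it. Combining (iii) of Lemma \ref{lemma0709-1} with \eqref{B-tensor}, the main part of $B_{in}(x,t)$ equals $\partial_{x_i}N(x',0)\,\partial_{x_n}\Gamma_1(x_n,t)$ up to an error $O(t|x'|^{-n-1})\partial_{x_n}\Gamma_1$, where $\Gamma_1$ is the one-dimensional heat kernel. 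Hence
\[
w_i^B(x,t)=c\,\frac{x_i}{|x'|^{n}}\int_0^t \partial_{x_n}\Gamma_1(x_n,t-s)\phi(s)\,ds\,(1+O(|x'|^{-1})).
\]

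The central step is to write $\phi(s)=\phi(t)+(\phi(s)-\phi(t))$. For the frozen part I use the exact identity $-2\int_0^\infty\partial_{x_n}\Gamma_1(x_n,\tau)\,d\tau=1$. This gives $\phi(t)$ times $(1-\text{tail})$, with tail $\int_t^\infty x_n\tau^{-3/2}e^{-x_n^2/4\tau}d\tau\approx x_n$ for $x_n<\sqrt t$. The constant $1$ then cancels the boundary value of $w_i^N$, because $\partial_{x_i}N(x)-\partial_{x_i}N(x',0)=O(x_n^2|x'|^{-n-1})$, and these normalizations are exactly the ones forcing $w_i(x',0,t)=0$. What remains is
\[
-\frac{x_i}{|x'|^n}\Big[\,x_n\phi(t)\,\cdot O(1)\;+\;x_n\int_0^t (t-s)^{-\frac32}e^{-\frac{x_n^2}{4(t-s)}}\big(\phi(s)-\phi(t)\big)\,ds\Big],
\]
together with errors of relative size $|x'|^{-1}$ or $x_n^2|x'|^{-2}$, which are absorbed for $|x'|$ large.

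The difference integral has a definite sign. For $a>0$ we have $\phi(s)-\phi(t)>0$ near $s=t$, giving a negative contribution. For $-1<a<0$ the difference is negative, and the $(1-t)^a$ term then survives with a positive sign, as in (ii)--(iii). To evaluate it I split $s\in(0,\frac12)$, which contributes $\approx M x_n e^{-x_n^2}$, and $s\in(\frac12,t)$. On the second range I substitute $u=t-s$ and split again at $u=1-t$. For $u<1-t$ I use $|\phi(s)-\phi(t)|\approx (1-t)^{a-1}u$. For $u>1-t$ I use $(1-s)^a-(1-t)^a\approx u^a-(1-t)^a$, dominated by $u^a$ for $a>0$ and by $-(1-t)^a$ for $a<0$. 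The resulting integrals are $A(\alpha,x_n,t)$ and $B(\alpha,x_n,t)$ of Lemma \ref{lemma0928} with $\alpha\in\{a-\frac32,-\frac32,a-\frac12\}$, evaluated in regimes (i), (ii), (iii) according to whether $x_n^2<2(1-t)$, $2(1-t)<x_n^2<\frac14$, or $x_n\geq\frac12$.

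Collecting terms should produce the quantities $\frac{1-(1-t)^{a-\frac12}}{a-\frac12}$, $\frac{1-x_n^{2a-1}}{a-\frac12}$ and the logarithms at $a=\frac12$, via \eqref{lemma0630-2}. In case (i), $x_n\geq\sqrt t$, no cancellation is needed: $w_i^B$ is directly $\approx -x_ix_n|x'|^{-n}e^{-x_n^2}$ by Lemma \ref{lemma0928}(iii), and $w_i^N$ is kept as it stands. I expect the main obstacle to be making the cancellation rigorous at the level of two-sided bounds. The $\approx$ in \eqref{0513-1} is too coarse, so I must keep track of exact constants in the leading terms of Lemma \ref{lemma0709-1}(iii) and of $\partial_{x_i}N$, and only pass to $\approx$ after the cancellation has been carried out.
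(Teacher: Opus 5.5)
Your plan is essentially the paper's proof. There too one uses $w_i^B(x',0,t)=-w_i^N(x',0,t)=-2R_i'\psi(x')\phi(t)$ and $2\int_{-\infty}^t\!\int_{\Rn}\partial_{x_n}\Gamma\,dy'\,ds=-1$ to split $w_i^B+2R_i'\psi(x')\phi(t)$ into three pieces:
\begin{itemize}
\item a spatial-difference term;
\item a time-difference term in $\phi(s)-\phi(t)$ over $(0,\frac12)$, $(\frac12,2t-1)$ and $(2t-1,t)$, evaluated through $A(a-\frac32,x_n,t)$, $A(-\frac32,x_n,t)$ and $B(-\frac12,x_n,t)$ of Lemma~\ref{lemma0928};
\item the tail from $s<0$.
\end{itemize}
In addition, $w_i^N-w_i^N(x',0,t)=O(\phi(t)x_n^2|x'|^{-n-1})$, and case (i) is done without cancellation, exactly as you propose.

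The obstacle you flag is resolved as in the paper. Do not factor out $1+O(|x'|^{-1})$; instead keep $R_i'\psi(x')$ exact in both terms, so that the cancellation at $x_n=0$ is exact. Then treat $\Gamma'(\cdot,t-s)*R_i'\psi-R_i'\psi=O((t-s)|x'|^{-n-1})$ (your $J_3$ contribution) as an additive error. After integration against $\partial_{x_n}\Gamma_1$ it is $\lesssim |x'|^{-n-1}x_n\bigl(e^{-x_n^2}+A(a-\frac12,x_n,t)+(1-t)^aB(-\frac12,x_n,t)\bigr)$, so it vanishes at the boundary and is absorbed for $|x'|$ large.
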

\begin{remark}
We may replace the condition \(x_n\geq \sqrt{t}\) in (i) by \(x_n\geq 1/2\) as for \(1/2\leq x_n\leq \sqrt{t}\), the estimates in (i) and (ii) are comparable.
\end{remark}
\begin{proof}
We first illustrate the main idea of the proof. The proof of (i) is done by estimating \(w_i^B\) and \(w_i^N\) separately. On the other hand, the proof of (ii) and (iii) is done by decomposing \(w_i^B + w_i^N\) as follows: using the key identity \(w_i^B(x',0,t)=-w_i^N(x',0,t) = -2R_i'\psi(x')\phi(t)\), where \(R_i'\) denotes the \(i\)th Riesz transform in \(n-1\) dimensions, we write
\begin{align*}
w_i^B(x,t)+w_i^N(x,t)=(w_i^B(x,t)-w_i^B(x', 0, t)) + (w_i^N(x,t)-w_i^N(x',0,t)),
\end{align*}
which is responsible for the factor \(x_n\) appearing in the estimate in (iii).

{\bf\(\bullet\) (Case 1: \(x_n>\sqrt{t}\))}\quad   Recall from (3) of Lemma \ref{lemma0709-1} that $|J_3 (x, t) |  \leq  c  t|x'|^{-n-1}$. Since $ \partial_{x_i}N(x', 0) \approx \frac{x_i}{|x'|^{n }}$   for  large  $|x'|$,   we have  $\partial_{x_i}N(x', 0) + J_3(x',t-s)   \approx       x_i|x'|^{-n}$ for \(7/8<t<1\), \(0<s<t\) and \(|x'|\approx x_i\).   Hence, we have 
\begin{align*}
 \begin{split}
    w_i^B(x,t) 
    &     \approx    - \frac{x_i x_n }{|x'|^{n }}\int_0^t  (t-s)^{-\frac32} e^{-\frac{x_n^2}{t-s}}  \phi(s)ds=  - \frac{x_i x_n }{|x'|^{n }}\left(\int_{0}^{\frac12}+\int_{\frac12}^{2t-1}+\int_{2t-1}^t\right)\cdots ds\\
    & =: - \frac{x_i x_n }{|x'|^{n }}\left(I_{1}+I_{2}+I_{3}\right).
   \end{split}
\end{align*}
For \(I_1\), since \(t-s\approx 1\) for \(0<s<1/2\) and \(7/8<t<1\), we have 
\begin{align*}
   I_1 &  \approx        e^{-x_n^2}   \int_0^\frac12  \phi(s)ds    \approx
     e^{-x_n^2}.
\end{align*}
For \(I_2\), since \(t-s\approx 1-s\) for \(1/2<s<2t-1\), we have 
\begin{align*}
 I_{2} & \approx 
  \int_{1-t}^{\frac12}   s^{-\frac32 +a } e^{-\frac{x_n^2}{s}} ds     = A(a -3/2, x_n, t).
  \end{align*}
  For \(I_3\), since \(1-s\approx 1-t\) for \(2t-1<s<t\), we have 
  \begin{align*}
  I_3
& \approx     (1-t)^a \int_0^{1-t}s^{-\frac32} e^{-\frac{x_n^2}{s}}ds =
%& \approx    (1-t)^a \int_0^{1-t}  s^{-\frac32} e^{-\frac{x_n^2}s}ds\\
    (1-t)^a B(-3/2, x_n, t).
\end{align*} 
Collecting all the estimates with Lemma \ref{lemma0928}, we have for \(x_n\geq \sqrt{t}\),
\begin{align}\label{0527-1}
\begin{split}
w^B_i (x,t) 
&\approx   
 -  \frac{x_i x_n }{|x'|^{n }}  \Big(  e^{-  x_n^2  }  + A(a -3/2, x_n, t)   + (1-t)^a B(-3/2, x_n, t) \Big)\\
 &\approx -  \frac{x_i x_n }{|x'|^{n }}  \left(e^{-x_n^2}+x_n^{-2}e^{-x_n^2} + (1-t)^{a+\frac12} x_n^{-2}e^{-\frac{x_n^2}{1-t}}\right)\approx  -  \frac{x_i x_n }{|x'|^{n }} e^{-x_n^2}.
\end{split}
\end{align}
On the other hand, it is easy to see that for \(3/4<t<1\),
\begin{align}\label{wN}
w^N_i(x,t) & \approx   (1 -t)^a x_i  |x|^{-n}, \quad 1 \leq i \leq n.
\end{align} 
{\bf\(\bullet\) (Case 2: \(x_n<\sqrt{t}\))}\quad    Let $  x_n\leq  |x'| \leq c x_i$ with \(c>1\). Note that   $\partial_{x_n}  \partial_{x_i} N(x' -y', x_n) \approx - \frac{x_ix_n}{ |x|^{n+2}}$ for $ |y'| < 1/2$.  Hence, we have
\begin{align*}
\begin{split}
\partial_{x_n} w^N_i(x,t) 
&  =  2\phi(t)\int_{|y'|  <  \frac12} \partial_{x_n}  \partial_{x_i} N(x' -y', x_n)  \psi(y') dy'   \approx   - \phi(t)  \frac{x_ix_n}{ |x|^{n+2}}\|   \psi\|_{L^1(\Rn)}.
\end{split}
\end{align*}

Since $   w^{N}_i (x', 0,t)  = 2 R'_i \psi (x') \phi(t)$,     for $|x'|\geq 1$, we have 
\begin{align*}
\begin{split}
  w^{N}_i(x,t) - 2 R'_i\psi (x') \phi(t) &  = 2\int_0^{x_n} \partial_{y_n} w^{N}_i  (x', y_n,t) dy_n   
%& \approx - \int_0^{x_n}    2 \phi(t) \frac{ y_n}{(|x'| + y_n)^{n +1} }\|   \psi\|_{L^\infty(\Rn)} dy_n\\
  \approx  
  -  \frac{x_i x_n^2}{|x|^{n+2}} \phi(t).
  \end{split}
  \end{align*}

Since $2\int_{-\infty}^t   \int_{\Rn} \partial_{x_n} \Ga(x' -y', x_n, t-s)    dy' ds= -1  $ for all $ (x,t)$ and $\phi(s) =0$ for $s < 0$, we have 
  \begin{align}\label{0106-1}
  \begin{split}
w_i^B + 2 R'_i\psi(x') \phi(t) %&  =     4\int_0^t   \int_{\Rn} \partial_{x_n} \Ga(x' -y', x_n, t-s) R'_1 \psi(y') \phi(s)  dy' ds +   R'_1\psi(x') \phi(t)\\
& =  4\int_{-\infty}^t   \int_{\Rn} \partial_{x_n} \Ga(x' -y', x_n, t-s)\big(  R'_i\psi(y') \phi(s)  -   R'_i\psi(x') \phi(t) \big) dy' ds\\
& = I_1 + I_2 + I_3,
\end{split}
\end{align}
where
\begin{align*}
\begin{split}
I_1&:=    4\int_0^t  \phi(s)  \int_{\Rn} \partial_{x_n} \Ga(x' -y', x_n, t-s) \big( R'_i \psi(y')   - R'_i \psi(x')    \big)  dy' ds,\\
I_2& :=    4  R'_i \psi(x') \int_0^t  \big( \phi(s) -   \phi(t)  \big) \int_{\Rn} \partial_{x_n} \Ga(x' -y', x_n, t-s)   dy' ds,\\
I_3&  :=    -4   R'_i \psi(x') \phi(t)\int_{-\infty}^0\int_{\mathbb{R}^{n-1}}\partial_{x_n} \Gamma(x'-y',x_n,t-s)dy'ds.
\end{split}
\end{align*}

{\it \underline{1. Estimate of \(I_1\)}.} Since $|  R'_i \psi(x')| \lesssim \| \psi\|_{C^1(\Rn)} $ for $ x' \in \Rn$, we have
\begin{align*}
\begin{split}
\left|\int_{|x' - y'| \geq \frac12|x'|} \Ga' (x' -y',t-s)\big( R'_i \psi(y')   - R'_i \psi(x')    \big)  dy'\right| & \leq c \int_{|y'| \geq \frac{|x'|}{2\sqrt{t-s}}}  \Ga' (y',1) dy'  \leq c e^{-\frac{c|x'|^2}{t-s}}.
\end{split}
\end{align*}
 Note that  $ |\na^2_{x'} R'_i \psi(x')| \leq c \| \psi\|_{L^1(\mathbb{R}^{n-1})}   |x'|^{-n-1} $ for $  |x'|\geq 1$. Since $\int_{|y'| \leq \frac12 |x'| }  y_k \Ga'(y', t-s)   dy' =0 $ for all $ 1 \leq k \leq n-1$ and $t -s> 0$,
 using mean-value theorem twice, we have
\begin{align*}
\begin{split}
\left|\int_{|x' - y'| \leq \frac12 |x'|} \Ga' (x' -y',t-s)\big( R'_i \psi(y')   - R'_i \psi(x')    \big)     dy' \right|
%& = \int_{|y'| \leq \frac12 |x'|} \Ga' (y',t-s) \big( R'_i \psi(x' - y')   - R'_i \psi(x')    \big)    dy'\\
%&  =\int_{|y'| \leq \frac12 |x'|} \Ga' (y',t-s) y' \cdot \na'R'_i\psi (x' -\xi')    dy'\\
&  = \left|\int_{|y'| \leq\frac12 |x'|} \Ga' (y',t-s) y' \cdot \na'^2R'_i\psi  (x' -\eta') \xi'   dy'\right|\\
%& \leq  c |x'|^{-n-1}\int_{|y'| \leq \frac12 |x'|} |y'|^2\Ga' (y',t-s)  dy'\\
& \leq c |x'|^{-n-1} (t-s).
\end{split}
\end{align*}
As the same calculation to \eqref{0527-1}, we have
\begin{align}\label{0106-2}
\begin{split}
|I_1|  % & \leq c     |x'|^{-n-1} \int_0^t  \frac{x_n}{(t -s)^\frac32}e^{-\frac{x_n^2}{4(t-s)}}  (t -s)   \phi(s) ds\\
%&  \leq c   |x'|^{-n-1}x_n  \int_0^t   (t -s)^{-\frac12}  e^{-\frac{x_n^2}{4(t-s)}}  \phi( s)  ds\\
& \leq    c  |x'|^{-n-1} x_n \Big(e^{-x_n^2} + A(a-1/2, x_n,t) +  (1-t)^{a}  B(-1/2, x_n, t)  \Big).
\end{split}
\end{align}

{\it \underline{2. Estimate of \(I_2\)}.} Since $ \int_{\Rn} \Ga'(x' -y',t) dy' =1$,  we have 
\begin{align*}
\begin{split}
I_2& =    4 R'_i \psi(x') \int_0^t  \big( \phi(s) -   \phi(t)  \big) \int_{\Rn} \partial_{x_n} \Ga(x' -y', x_n, t-s)   dy' ds\\
& \approx -  |x'|^{-n+1}\int_0^t  \big( \phi(s) -   \phi(t)  \big) \frac{x_n}{(t -s)^{\frac32}} e^{-\frac{x_n^2}{4(t-s)}} ds\\
&\approx -  |x'|^{-n+1}\left(\int_0^\frac12+\int_{\frac12}^{2t-1}+\int_{2t-1}^t\right)\cdots ds=:-  |x'|^{-n+1}(I_{21}+I_{22}+I_{23}).
\end{split}
\end{align*}

Since \(t-s\approx 1\) for \(0<s<\frac12\) and \(\frac78<t<1\), we have 
\begin{align}\label{0211-1}
\begin{split}
I_{21}
\approx  \Big( 
      \mathbf{1}_{a  >  0} 
 -(1-t)^a   \mathbf{1}_{a < 0} \Big) x_n  e^{-x_n^2}.
    \end{split}
\end{align}

For \(I_{23}\), since $\phi (s) -\phi(t) = -\phi'(\eta) (t-s)  \approx -\phi'(t) (t-s)$ for $ 2t -1 <s <t$, we have
\begin{align}\label{0211-2}
\begin{split}
I_{23}%&  \approx - \phi'(t)   x_n \int_{2t -1}^t  (t -s)^{-\frac12} e^{-\frac{x_n^2}{4(t-s)}} ds \\
&  \approx - \phi'(t)  x_n  \int_0^{1-t}    s^{-\frac12} e^{-\frac{x_n^2}{s}} ds    \approx a(1-t)^{a-1}x_n  B(-1/2, x_n, t).
\end{split}
\end{align}

For \(I_{22}\), we first consider the case $ a > 0$. In this case, we see that \(1/2<s<2t-1\) implies $ \phi(s) - \phi(t) \approx \phi(s) = (1-s)^a \approx (t -s)^a$. Hence, we have  
\begin{align}\label{0211-3}
\begin{split}
I_{22}
&  \approx   \int_{1-t}^{t-\frac12} x_n s^{-\frac32 +a}    e^{-\frac{x_n^2}{4s}} ds 
 = x_n A(a-3/2, x_n, t).
\end{split}
\end{align}
On the other hand, if $ a < 0$, then $ \phi(s) - \phi(t) \approx -\phi(t)$ for $1/2 <  s < 2t -1$. Hence,  we have  
\begin{align}\label{0211-4}
\begin{split}
I_{22}
&  \approx   - \phi(t) \int_{1-t}^{t-\frac12}  x_n s^{-\frac32 }    e^{-\frac{x_n^2}{4s}} ds   =- \phi(t) x_n  A(-3/2, x_n, t).
\end{split}
\end{align}

Since $ R'_i\psi(x') \approx |x'|^{-n+1}$ and $ \phi(t) = (1-t)^a$,  from \eqref{0211-1}-\eqref{0211-4}, we have
\begin{align}\label{0106-3}
I_2 \approx \frac{x_n}{|x'|^{n-1}}\left\{ \begin{array}{ll} \vspace{2mm}
   - \left(  1     +  A(a-3/2, x_n, t)   +a(1-t)^{a-1}B(-1/2,x_n,t)  \right), &\quad a > 0,\\
     (1-t)^{a-1}\left( (1-t)  +   (1-t)  A(-3/2, x_n, t)   -a B(-1/2, x_n, t) \right), &\quad a < 0.
    \end{array}
    \right.
\end{align}

{\it \underline{3. Estimate of \(I_3\)}.} Since $ x_n^2 \leq t$, we have
\begin{align}\label{0106-4}
I_{3} &\approx    |x'|^{-n+1}\phi(t) x_n \int_t^\infty \frac{1}{s^\frac{3}{2}}e^{-\frac{x_n^2}{4s}}ds
\approx  |x'|^{-n+1}\phi(t)\min (1, x_n).
\end{align}

Combining all the estimates \eqref{0106-1}, \eqref{0106-2}, \eqref{0106-3} and \eqref{0106-4}, gives that for  $ a > 0$, and \(x_n\leq\frac12\),
  \begin{align}\label{0911-1}
  \begin{split}
w_i^B(x,t)&+w_i^N(x,t) = \left(w_i^B(x,t) +2 R'_i \psi(x')  \phi(t)\right)+\left(w_i^N(x,t) -2 R'_i \psi(x')  \phi(t)\right) \\
&    \approx     
   -  \frac{x_n}{|x'|^{n-1}}\Big( 1   + A(a-3/2, x_n, t)  + a(1-t)^a   B(-1/2, x_n, t)  + \frac{x_n}{|x'|^2}(1-t)^a-(1-t)^a\Big)\\
   &\approx -\frac{x_n}{|x'|^{n-1}}\left(1+A(a-3/2,x_n,t)+a(1-t)^aB(-1/2,x_n,t)\right)
\end{split}
\end{align}
and for \(a<0\),
\begin{align}\label{0911-2}
\begin{split}
w_i^B(x,t)+w_i^N(x,t) &= \left(w_i^B(x,t) + 2R'_i \psi(x')  \phi(t)\right)+\left(w_i^N(x,t) - 2R'_i \psi(x')  \phi(t)\right) \\
 &    \approx  \frac{x_n}{|x'|^{n-1}}\Big(1   +  A(-3/2, x_n, t) -\frac{a}{1-t} B(-1/2, x_n, t) - \frac{x_n}{|x'|^2}  \Big) (1-t)^a\\
 &\approx \frac{x_n}{|x'|^{n-1}}\left(1+A(-3/2,x_n,t)-\frac{a}{1-t}B(-1/2,x_n,t)\right)(1-t)^a.
 \end{split}
 \end{align}

 {\bf  \underline{Subcase 1-1}: \(x_n<(2(1-t))^{\frac12}\).}  \quad
By Lemma \ref{lemma0928}, we have for \(a>0\),
\begin{align*}
1+A(a-3/2,x_n,t)&+a(1-t)^aB(-1/2,x_n,t)\\
&\approx 1+ \frac{2^{-a+\frac12}-(1-t)^{a-\frac12}}{a-\frac12}\mathbf{1}_{a\neq \frac12}+|\ln(1-t)|\mathbf{1}_{a=\frac12} + a(1-t)^a\left(\frac{(1-t)^{\frac12}-\frac{x_n}{2}}{\frac12}+x_n\right)\\
&\approx \frac{2^{-a+\frac12}-(1-t)^{a-\frac12}}{a-\frac12}\mathbf{1}_{a\neq \frac12}+|\ln(1-t)|\mathbf{1}_{a=\frac12} .
\end{align*}
On the other hand for \(a<0\),
\begin{align*}
1+A(-3/2,&x_n,t)-\frac{a}{1-t}B(-1/2,x_n,t)\\
&\approx 1+\frac{\sqrt{2}-(1-t)^{-\frac12}}{-\frac12}-\frac{a}{1-t}\left(\frac{(1-t)^{\frac12}-\frac{x_n}{2}}{\frac12}+x_n\right)\approx (1-t)^{-\frac12}.
\end{align*}
 
 {\bf  \underline{Subcase 1-2}: \((2(1-t))^{\frac12}\leq x_n<\sqrt{t}\).}  \quad By Lemma \ref{lemma0928}, we have for \(a>0\),
\begin{align*}
1&+A(a-3/2,x_n,t)+a(1-t)^aB(-1/2,x_n,t)\\
&\approx 1+ \frac{2^{-a+\frac12}-\left(4x_n^2/3\right)^{a-\frac12}}{a-\frac12}\mathbf{1}_{a\neq \frac12}+|\ln x_n|
\mathbf{1}_{a=\frac12} + \frac{a(1-t)^{a+\frac32}}{x_n^2}e^{-\frac{x_n^2}{1-t}}\\
&\approx \frac{1-\left(8x_n^2/3\right)^{a-\frac12}}{a-\frac12}\mathbf{1}_{a\neq \frac12}+|\ln x_n|\mathbf{1}_{a=\frac12}.
\end{align*}
On the other hand for \(a<0\),
\begin{align*}
1+A(-3/2,x_n,t)&-\frac{a}{1-t}B(-1/2,x_n,t)\approx 1+\frac{\left(4x_n^2/3\right)^{-\frac12}-2^{\frac12}}{\frac12}-\frac{a(1-t)^{\frac12}}{x_n^2}e^{-\frac{x_n^2}{1-t}}\approx \frac{1}{x_n}.
\end{align*}

Combining these bounds with the estimates \eqref{0911-1} and \eqref{0911-2} gives the desired result. This completes the proof.
 \end{proof}  
 
\subsubsection{{\bf Asymptotics of \texorpdfstring{\(x_{ni}^*\)}{} and sign of \texorpdfstring{\(w_i\)}{} }}

We first introduce the following sets: for \(\theta_0, \theta_1\) to be determined later,
\begin{align*}
C_a^1:=\{(x',t) \mid (1-t)^a\lesssim(\ln|x'|)^{-\frac12}|x'|^{-\theta_0}\},\quad  C_a^2:=\{(x',t) \mid |\ln(1-t)|\lesssim|x'|^2(1-t)^{\theta_1}\}.
\end{align*}
Now we are ready to prove the following proposition, which gives signs of $w_i$ when $t<1$.

%\subsubsection{Signs of \texorpdfstring{\(w_i\)}{}, \texorpdfstring{\(i\neq n\)}{}}
%In this section, we prove the following proposition.

 \begin{proposition}\label{3rdmainprop}
Let \(w\) be the solution of the Stokes system \eqref{StokesRn+} defined by \eqref{rep-bvp-stokes-w} with the boundary data \(g=g_n{\bf e}_n\) given by \eqref{0502-6}-\eqref{boundarydataspecific}, and let \(i=1,2,\cdots, n-1\). There are $c_*>0$  depending on \(n\) and \(M\) and \(\epsilon_0>0\) such that if $ |x'|>c_*$ and \(1-\epsilon_0<t<1\) the following holds: There exist \(c_0, c_1>0\), \(0<\theta_0<2\), \(0<\theta_1<a\) so that there exists interval $(x_{ni1}^*, x_{ni2}^*)\in S^{+-}(w_i(x', \cdot, t);0,\infty)$ satisfying the following: For \(k=1,2\),
 \begin{itemize}
 \item[(i)]
 If $ -1 < a < 0$, then, \(w_i(x,t)>0\).
 \item[(ii)] 
 If $ a>0 $, then \(x_{nik}^*\approx (\ln |x'|)^{\frac12}\mathbf{1}_{C_a^1}+|\ln(1-t)|^{\frac12}\mathbf{1}_{C_a^2}\).
% then
% \begin{align}\label{0107-3}
% \begin{split}
%  w_i (x, t)   < 0 \quad \mbox{ for} \quad x_n < \sqrt{t},\qquad  w_i (x, t)  > 0 \quad \mbox{ for} \quad  x_n > |x'|.
%   \end{split}
%   \end{align}
\end{itemize}
 \end{proposition}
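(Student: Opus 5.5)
The proof follows the scheme already used in Proposition \ref{1stmainprop}. We decompose $w_i=w_{in}^{(L)}+(w_i^B+w_i^N)$ as in \eqref{1220-8}, insert the two-sided bounds of Lemma \ref{lemma1003-2} and Lemma \ref{thoe1021-1}, and reduce the sign of $w_i$ on each range of $x_n$ to the sign of explicit comparison functions $g_1\le w_i\le g_2$. These differ only in their multiplicative constants. We fix $\frac78\le t<1$ and $x_i\ge \frac12|x'|$, and we split into the regions $x_n<(2(1-t))^{1/2}$, $(2(1-t))^{1/2}\le x_n<\sqrt t$ (equivalently $<\frac12$, by the remark after Lemma \ref{thoe1021-1}), $\frac12\le x_n\le |x'|$, and $x_n>|x'|$.

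\emph{Case $-1<a<0$.} Here Lemma \ref{thoe1021-1}(ii),(iii) shows that $w_i^B+w_i^N$ is positive, of size $(1-t)^a|x'|^{1-n}$ times $x_n(1-t)^{-1/2}$, respectively times $1$. The term $w_{in}^{(L)}$ from Lemma \ref{lemma1003-2} is nonnegative in every region, so $w_i>0$ for $x_n<\frac12$. For $x_n\ge\frac12$ we use Lemma \ref{thoe1021-1}(i). The negative part $-\frac{x_ix_n}{|x'|^n}e^{-x_n^2}$ must be beaten by $\frac{x_i}{|x|^n}(1-t)^a$ plus the positive $w_{in}^{(L)}\approx \frac{x_ix_n}{|x'|^{n+2}}\frac{1-(1-t)^{a+1/2}}{a+1/2}$. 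Since $(1-t)^a\ge 1$ for $a<0$, we get $x_i|x|^{-n}(1-t)^a\gtrsim x_i|x'|^{-n}\ge x_ix_ne^{-x_n^2}|x'|^{-n}\cdot c$ once $x_n\ge\frac12$. A little care is needed with the constants: one checks $x_ne^{-x_n^2}\le e^{-1/2}/\sqrt2$, and the comparison constants can be absorbed by enlarging $|x'|$ only if the ratio of the constants is favourable. If that fails, I would instead use $w_{in}^{(L)}$ together with $|x|\le\sqrt2|x'|$. For $x_n>|x'|$ every term except the exponentially small $B$-part is positive, so positivity follows for $|x'|>c_*$.

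\emph{Case $a>0$.} For $x_n<\frac12$ the bracket in Lemma \ref{thoe1021-1}(ii),(iii) is positive, so $w_i^B+w_i^N\approx -x_n|x'|^{1-n}F_a(x_n,t)$ with $F_a\gtrsim 1$. Lemma \ref{lemma1003-2} gives $w_{in}^{(L)}\lesssim x_ix_n|x'|^{-n-2}\bigl(x_n+x_n|x'|^{n+2}e^{-|x'|^2}+\cdots\bigr)$, which is smaller by a factor $|x'|^{-2}$. Hence $w_i<0$ there for large $|x'|$. For $x_n>|x'|$, $w_i>0$ exactly as in Case 4 of Proposition \ref{1stmainprop}. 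In the middle range $\frac12\le x_n\le|x'|$ we obtain
\[
\frac{x_ix_n}{|x'|^{n+2}}\Bigl(c_1 D_a(t)+c_1'|x'|^2(1-t)^a-c_2|x'|^2e^{-c_3x_n^2}\Bigr)\le w_i\le \frac{x_ix_n}{|x'|^{n+2}}\Bigl(d_1 D_a(t)+d_1'|x'|^2(1-t)^a-d_2|x'|^2e^{-d_3x_n^2}\Bigr),
\]
where $D_a(t)=\frac{1-(1-t)^{a+1/2}}{a+1/2}\approx 1$, and the term $|x'|^2(1-t)^a$ comes from $w_i^N\approx x_i|x|^{-n}(1-t)^a$ with $|x|\approx|x'|$. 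Setting the two brackets to zero yields $x_{nik}^*$ with $e^{-x_n^2}\approx |x'|^{-2}+(1-t)^a$.

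If $(1-t)^a\lesssim |x'|^{-2}$, the constant term dominates and $x^*_{nik}\approx(\ln|x'|)^{1/2}$. The threshold is written as the set $C_a^1$ with a slightly weaker power $|x'|^{-\theta_0}$, $\theta_0<2$, together with the factor $(\ln|x'|)^{-1/2}$. These are needed so that the lower comparison function still has its zero in $(\frac12,|x'|)$ and so that the factor $x_n\approx(\ln|x'|)^{1/2}$ does not spoil the comparison. If instead $(1-t)^a$ dominates, then $x_{nik}^*\approx |\ln(1-t)|^{1/2}$. Requiring this root to be $\le|x'|$ while keeping the bracket's positive part controlled gives the set $C_a^2$, where $|\ln(1-t)|\lesssim |x'|^2(1-t)^{\theta_1}$ with $\theta_1<a$ absorbing the constant-ratio slack.

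\emph{Main obstacle.} The main difficulty is choosing $\theta_0,\theta_1$ and checking that in each of $C_a^1$ and $C_a^2$ the roots of the upper and lower comparison functions both lie in $(\frac12,|x'|)$ with the correct ordering $x_{ni1}^*\le x_{ni2}^*$, so that Remark \ref{1125} applies. I will handle this with the elementary inequalities \eqref{lemma0630-2}, solving $\ln$-type equations as in Subcase 3-5 of Proposition \ref{1stmainprop}.
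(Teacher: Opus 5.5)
Your route is the same as the paper's. You use the decomposition $w_i=w_{in}^{(L)}+(w_i^B+w_i^N)$, Lemmas \ref{lemma1003-2} and \ref{thoe1021-1}, the same ranges of $x_n$, and the same two regimes in the middle range. The paper locates the sign change there by testing $w_i$ at $(\alpha_k\ln|x'|)^{1/2}$ and $(\beta_k|\ln(1-t)|)^{1/2}$ with $d_3\alpha_1<2<c_3\alpha_2$ and $d_3\beta_1<a<c_3\beta_2$; this is where $\theta_0<2$ and $\theta_1<a$ come from. However, two steps do not hold as you wrote them.

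\emph{First}, your middle-range comparison functions are wrong in the $(1-t)^a$ term. Lemma \ref{thoe1021-1}(i) gives $w_i^N\approx x_i|x|^{-n}(1-t)^a\approx|x'|^{1-n}(1-t)^a$ for $x_n\le|x'|$, with no factor $x_n$. Inside your bracket with prefactor $x_ix_n|x'|^{-n-2}$ it must therefore appear as $|x'|^2(1-t)^a/x_n$, not $|x'|^2(1-t)^a$. Since $x_n$ ranges over $[\frac12,|x'|]$, this cannot be absorbed into constants. Your upper bound stays true because $x_n\ge\frac12$. Your lower bound, however, overstates the positive term by the factor $x_n$ and is false for large $x_n$. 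So positivity beyond the root is unproved, and in $C_a^2$ that positivity is driven exactly by this term. The correct form, as in \eqref{0107-2}, is
\[
w_i\approx |x'|^{1-n}\Bigl(\frac{D_a(t)\,x_n}{|x'|^{2}}+(1-t)^a-x_n e^{-c x_n^2}\Bigr).
\]
The root condition is then $x_ne^{-x_n^2}\approx x_n|x'|^{-2}+(1-t)^a$, not $e^{-x_n^2}\approx|x'|^{-2}+(1-t)^a$. The extra factor $(\ln|x'|)^{1/2}$, resp.\ $|\ln(1-t)|^{1/2}$, that this introduces is absorbed by the strict inequalities $\theta_0<2$ and $\theta_1<a$. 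Your stated asymptotics therefore survive once the bound is corrected.

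\emph{Second}, for $-1<a<0$ and $\frac12\le x_n\le|x'|$ you leave open whether $x_i|x|^{-n}(1-t)^a$ beats $x_ix_n|x'|^{-n}e^{-x_n^2}$. Neither remedy you propose works. Both terms are of order $|x'|^{1-n}$, so enlarging $|x'|$ changes nothing. Also, $w_{in}^{(L)}\approx x_ix_n|x'|^{-n-2}\frac{1-(1-t)^{a+1/2}}{a+1/2}$ is smaller than the negative term at $x_n\sim1$ by a factor $|x'|^{-2}\frac{1-(1-t)^{a+1/2}}{a+1/2}$. That factor is not small only under a condition tying $1-t$ to $|x'|$, so it gives nothing uniform.

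The missing observation is that $a<0$ makes $(1-t)^a\ge\epsilon_0^{a}$ arbitrarily large. Use $|x|\le\sqrt2|x'|$ and $x_ne^{-cx_n^2}\le(2ec)^{-1/2}$. Then choosing $\epsilon_0$ small, depending on $a$ and the comparison constants, forces the $N$-term to dominate uniformly in $x_n\le|x'|$ and in $|x'|$. This is what the freedom in $\epsilon_0$ in the statement is for. The paper asserts this positivity from \eqref{0914} without comment.
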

 
\begin{proof}
From $\eqref{1220-8}_1$ and \eqref{1006-3}, $w_i$ is represented by 
\begin{align}
w_i(x,t) &  = w^{(L)}_{in}(x,t) + w^B_i(x,t) + w^N_i(x,t).
\end{align}

{\bf\(\bullet\) (Case 1: \( x_n<(2(1-t))^{\frac12}\))}\quad
By  Lemma \ref{lemma1003-2} and (i) of  Lemma \ref{thoe1021-1},  we have for sufficiently large \(|x'|\) depending only on \(n\),
\begin{align*}
w_i(x,t)  
%&\approx \frac{x_n^2}{|x'|^{n+1}}\left(|x'|^{n+2}e^{-|x'|^2}+(1-t)^a + x_n|\ln(1-t)|\mathbf{1}_{a=\frac12}  +x_n\frac{1-(1-t)^{a-\frac12}}{a-\frac12}\mathbf{1}_{a\neq \frac12}  \right)\\
%&\quad -\frac{x_n}{|x'|^{n-1}}\left(\frac{1-(1-t)^{a-\frac12}}{a-\frac12}\mathbf{1}_{a>0, a\neq \frac12}+|\ln(1-t)|\mathbf{1}_{a=\frac12}-(1-t)^{a-\frac12}\mathbf{1}_{-1<a<0}\right)\\
&\approx \frac{x_n}{|x'|^{n-1}} \left[\left(e^{-|x'|^2}+|x'|^{-2}(1-t)^a\right)x_n +\left(\frac{x_n^2}{|x'|^2}-1\right)\right.\\
&\left.\times\left(|\ln(1-t)|\mathbf{1}_{a = \frac12} + \frac{1-(1-t)^{a-\frac12}}{a-\frac12}\mathbf{1}_{a>0,a\neq \frac12}\right)+\left(\frac{x_n^2}{|x'|^2}\frac{1-(1-t)^{a-\frac12}}{a-\frac12}+(1-t)^{a-\frac12}\right)\mathbf{1}_{-1<a<0}     \right].
\end{align*}
By taking \(|x'|\) sufficiently large, we find that \(w_i(x,t)>0\) if \(-1<a<0\) while \(w_i(x,t)<0\) if \(a>0\).

{\bf\(\bullet\) (Case 2: \((2(1-t))^{\frac12}< x_n<\frac12\))}\quad By  Lemma \ref{lemma1003-2} and    (ii) of  Lemma \ref{thoe1021-1},  we have for sufficiently large \(|x'|\),
\begin{align*}
w_i(x,t)&\approx \frac{x_n}{|x'|^{n-1}}\left(\frac{x_n^2}{|x'|^2}-1\right)\left(\frac{1-x_n^{2a-1}}{a-\frac12}\mathbf{1}_{a>0,a\neq \frac12}+|\ln x_n|\mathbf{1}_{a=\frac12}\right)\\
&+\frac{1}{|x'|^{n-1}}\left(\frac{x_n}{|x'|^2}\frac{x_n^{2a+1}-(1-t)^{a+\frac12}}{a+\frac12}\mathbf{1}_{a\neq -\frac12}+x_n\left|\ln \frac{x_n^2}{1-t}\right|\mathbf{1}_{a=-\frac12}+(1-t)^{a}\mathbf{1}_{-1<a<0}\right).
\end{align*}
 By taking \(|x'|\) sufficiently large, we find that \(w_i(x,t)>0\) if \(-1<a<0\) while \(w_i(x,t)<0\) if \(a>0\).

{\bf\(\bullet\) (Case 3: \(\sqrt{ t} < x_n < |x'|\))}\quad  From  Lemma \ref{lemma1003-2} and (i) of Lemma \ref{thoe1021-1}, we have
\begin{align}\label{0914}
w_i(x,t)\approx \frac{x_n}{|x'|^{n+1}}\left(\frac{1-(1-t)^{a+\frac12}}{a+\frac12}\mathbf{1}_{a\neq -\frac12}+|\ln(1-t)|\mathbf{1}_{a=-\frac12}\right)-\frac{x_n}{|x'|^{n-1}}e^{-x_n^2}+\frac{1}{|x'|^{n-1}}(1-t)^a.
\end{align}
If \(a<0\) then \eqref{0914} implies that \(w_i(x,t)>0\). We now assume \(a>0\).
By \eqref{0914}, with \(\frac{1-(1-t)^{a+\frac12}}{a+\frac12}\approx \frac{1}{a+\frac12}\), we see that there are $ 0 < c_i, \, d_i, 1 \leq i \leq 4 $ such that 
\begin{align}\label{0107-2}
\begin{split}
&  w_i (x, t) \leq  |x'|^{-n +1} \left(   \frac{d_1}{a+\frac12}  \frac{x_n}{|x'|^2}    -  d_2  x_n       e^{- d_3 x_n^2   } +  d_4  (1 -t)^a\right),\\
 & w_i (x , t) \geq    |x'|^{-n +1} \left(   \frac{c_1}{a+\frac12}  \frac{x_n}{|x'|^2}    -  c_2       x_n e^{- c_3 x_n^2  } +  c_4  (1 -t)^a\right).
 \end{split}
\end{align}
We first have if \(\frac{c_1}{a+\frac12}\frac{\sqrt{t}}{|x'|^2}+c_4(1-t)^a>\frac{c_2e^{-\frac12}}{\sqrt{2c_3}}\), then \(w_i(x,t)>0\) for all \(\sqrt{t}\leq x_n\leq |x'|\).

Moreover,    let us  $ x_{ni}^*= (\al_i \ln |x'|)^\frac12, \, i = 1,2$ such that $ d_3 \al_1 <2$ and $ c_3 \al_2 > 2$.  

We take $\de_2> 1$ satisfying $ \frac{d_1 }{a+\frac12}|x'|^{-2}  <   \frac{d_2}2  |x'|^{  -d_3 \al_1}    $ for $|x'|>\de_2 $.  If $d_4 (1 -t)^a   <  \frac{d_2}2 (\al_1 \ln |x'|)^{\frac12}  |x'|^{  -d_3 \al_1}  $, then from $\eqref{0107-2}_1$,  we have 
\begin{align*}
w_i (x', x_{n1}^* ,t) & \leq  |x'|^{-n +1} \Big(     - \frac{d_2}2 (\al_1\ln |x'|)^{\frac12}  |x'|^{  -d_3 \al_1}     +   d_4 (1 -t)^a  \Big) < 0.
\end{align*}

We take $\de_1> 3$ satisfying $     c_2   |x'|^{  -c_3 \al_2}    <\frac{c_1}{2(a+\frac12)} |x'|^{-2} $ for $ |x'|>\de_3$.  Then,  from $\eqref{0107-2}_2$, we have 
\begin{align*}
w_i (x', x_{n2}^*,t)\geq |x'|^{-n +1} \left( \frac{c_1}{2(a+\frac12)} (\al_2 \ln |x'|)^\frac12 |x'|^{-2}      +   c_4 (1 -t)^a    \right)  >0.
\end{align*}

Let $ y_{ni }^*= (-\be_i \ln  (1-t))^\frac12, \, i = 1,2$ such that $ d_3 \be_1 <a$ and $ c_3 \be_2 > a$.  

We take $\ep_1> 0$ satisfying $      d_4 (1 -t)^a  < \frac{d_2}2   (-\be_1 \ln  (1-t))^{-\frac12}    (1-t)^{d_3\be_1}   $ for $ 1-t < \ep_1$. If $\frac{d_1}{a+\frac12}  |x'|^{-2}  <     \frac{d_2}2   (1-t)^{d_3\be_1}   $, then  from $\eqref{0107-2}_1$, we have
\begin{align*}
\begin{split}
w_i (x', y_{n1}^* ,t) & \leq  |x'|^{-n +1} \left(    \frac{d_1}{a+\frac12}  (-\be_1 \ln  (1-t))^\frac12 |x'|^{-2}     -\frac{d_2}2   (-\be_1 \ln  (1-t))^{\frac12}    (1-t)^{d_3\be_1}  \right) < 0.
\end{split}
\end{align*}

We take $\ep_1> 0$ satisfying $     c_2 (-\be_2 \ln  (1-t))^{\frac12}  (1-t)^{  c_3 \be_2}    < \frac{c_4}2 (1-t)^a $ for $1-t < \ep_1$.  Then, from $\eqref{0107-2}_2$ we have 
\begin{align*}
w_i (x', y_{n2}^*,t)\geq|x'|^{-n +1} \Big( c_1(-\be_2 \ln (1-t)^\frac12  |x'|^{-2}    +  \frac{ c_4}2 (1 -t)^a    \Big)  >0 . 
\end{align*}

{\bf\(\bullet\) (Case 4: \(x_n>|x'|\))}\quad
By  Lemma \ref{lemma1003-2} and (i) of Lemma \ref{thoe1021-1} we have 
\begin{align*}
w_i(x,t)&\approx \frac{|x'|}{x_n^{n+1}}\left(\frac{1-(1-t)^{a+\frac12}}{a+\frac12}\mathbf{1}_{a\neq -\frac12}+|\ln(1-t)|\mathbf{1}_{a=-\frac12}\right)-\frac{x_n}{|x'|^{n-1}}e^{-x_n^2}+\frac{|x'|}{x_n^n}(1-t)^a\\
&\approx \frac{|x'|}{x_n^{n+1}}\left(\frac{1-(1-t)^{a+\frac12}}{a+\frac12}\mathbf{1}_{a\neq -\frac12}+|\ln(1-t)|\mathbf{1}_{a=-\frac12}\right)  +  \frac{|x'|}{x_n^n}(1-t)^a>0.
\end{align*}

We complete the proof of (i) and (ii) of Proposition \ref{3rdmainprop}.
\end{proof}

Now we shall prove Theorem \ref{thmasymptotic}, the existence of the separation point of \(w\).
\begin{proof}[{\bf Proof of Theorem \ref{thmseparationpoint}}]
From Case 1 and 2 of the proof of Proposition \ref{3rdmainprop}, we can choose \(\delta=1/8\) and \(\alpha_i(t)=1/2\) to satisfy the definition of \(\alpha_i(t)\) given in Definition \ref{defsp}. 

For \(-1<a\leq -3/4\), we see that by Proposition \ref{1stmainprop}, there must exist \(\delta_1>0\),\\
 \(\delta_2\approx \min\{|a+1||x'|^2, (|a+1||x'|^2)^{\frac{2}{2a+1}}\}\) and \(\beta_{ik}(t)\approx (a+1)^{\frac{1}{1-2a}}|x'|^{\frac{2}{1-2a}}(t-1)^{\frac{a+\frac12}{2a-1}}\) (\(k=1,2\)), such that \(\displaystyle \lim_{t\rightarrow 1^+}\beta_{i2}(t)=0\) and \(w_i(x',x_n,t)<0\) for all \(0<x_n<\beta_{i1}(t)\) while \(w_i(x',x_n,t)>0\) for all \(\beta_{i2}(t)<x_n<\delta_1\) and \((x',t)\in A_a^2\) where \(A_a^2\) is given in \eqref{0804-2}.
 
For \(-3/4<a\leq -1/2\), we see that by Proposition \ref{1stmainprop}, there must exist \(\delta_1>0\), \\
 \(\delta_2\approx \min\{e^{\frac{2}{1+2a}}|a+\frac12||x'|^2, |a+\frac12|^{\frac{2}{1+2a}}|x'|^{\frac{4}{1+2a}}\}\) and \(\beta_{ik}(t)\approx (a+1)^{\frac{1}{1-2a}}|x'|^{\frac{2}{1-2a}}(t-1)^{\frac{a+\frac12}{2a-1}}\) (\(k=1,2\)), such that
 \(\displaystyle \lim_{t\rightarrow 1^+}\beta_2(t)=0\) and \(w_i(x',x_n,t)<0\) for all \(0<x_n<\beta_{i1}(t)\) while \(w_i(x',x_n,t)>0\) for all \(\beta_{i2}(t)<x_n<\delta_1\) and \((x',t)\in A_a^2\) where \(A_a^2\) is given in \eqref{0804-2}.

For \(a=-1/2\), we see that by Proposition \ref{1stmainprop}, there must exist \(\delta_1>0\), \(\delta_2\) such that \(1<t\leq \delta_2\) satisfies the condition \((x',t)\in A_a^3\) where \(A_a^3\) is given in \eqref{0804-3} and \(\beta_{ik}(t)\approx |x'|\left(\ln \frac{|x'|^2}{t-1}\right)^{-\frac12}\) (\(k=1,2\)), such that \(\displaystyle \lim_{t\rightarrow 1^+}\beta_{i2}(t)=0\) and \(w_i(x',x_n,t)<0\) for all \(0<x_n<\beta_{i1}(t)\) while \(w_i(x',x_n,t)<0\) for all \(\beta_{i2}(t)<x_n<\delta_1\) and \((x',t)\in A_a^3\).
\end{proof}

\subsection{Normal component of the velocity field}
\subsubsection{{\bf Estimates of \texorpdfstring{\(w_n\)}{}}}

We recall from \eqref{1220-8w_n} that
\begin{align}\label{0805}
w_n(x,t)&=-\sum_{i=1}^{n-1}w_{ii}^{(L)}(x,t)+w_n^N(x,t)+\frac{n-1}{n}w^G(x,t)=:-w_n^{(L)}(x,t)+w_n^N(x,t)+\frac{n-1}{n}w^G(x,t).
\end{align}

In this subsection, we shall provide the estimates for \(w^G\) and \(w_{n}^{(L)}\) (Lemma \ref{wngnneq0}), and for \(w_n^N-w_n^{(L)}\) (Lemma \ref{0716}).

%{\it \underline{\(\bullet\) Estimate of \(w^G\) and \(w_n^{(L)}\)}.} 
\begin{lemma}[{{\bf  Estimates of \(w^G\) and \(w_n^{(L)}\)}}]\label{wngnneq0}
 Let $ g_n \neq 0$ and $ 7/8 < t < 1$. 
 Let \(w^G\) and \(w_n^{(L)}\) be given in \(\eqref{wis}_5\) and \eqref{0805} respectively. Then
 \begin{align}\label{1022-2}
 w^G(x,t)\approx x_ne^{-|x|^2}
 \end{align}
 and for \(x_n\geq 1/e\),
 \begin{align}\label{1210-4}
\begin{split}
w^{(L)}_{n} (x,t)  
\approx   -  \frac{|x'|^2-(n-1)x_n^2}{|x|^{n+2}}   \left( 1 +\frac{1-(1-t)^{a+\frac12}}{a + \frac12} +|\ln (1-t)| \mathbf{1}_{a =-\frac12}    \right).
    \end{split}
\end{align}
 \end{lemma}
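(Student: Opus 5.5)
Both estimates come from the definitions in \eqref{wis} and \eqref{0805}, splitting the time integral at $s=\tfrac12$ and $s=2t-1$ as in Lemma \ref{lemma1003-2}. We use that $|x'-y'|\approx|x'|$ on $\mathrm{supp}\,\psi$ for $|x'|$ large, which is implicit throughout this section.

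\emph{Estimate \eqref{1022-2}.} Since $\partial_{x_n}\Gamma(x-y',t-s)=-\frac{x_n}{2(t-s)}\Gamma(x-y',t-s)$, we get
\begin{align*}
w^G(x,t)\approx x_n\int_0^t (t-s)^{-\frac{n+2}{2}}e^{-\frac{c|x|^2}{t-s}}\phi(s)\,ds .
\end{align*}
On $(0,\tfrac12)$ we have $t-s\approx1$, so this piece is $\approx M x_n e^{-|x|^2}$.

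On $(\tfrac12,2t-1)$ we have $t-s\approx 1-s$. After changing variables, this piece is $x_n A(a-\frac{n+2}{2},|x|,t)$. By (iii) of Lemma \ref{lemma0928} (with $|x|\ge|x'|\ge\frac12$), it is $\approx x_n|x|^{-2}e^{-|x|^2}$, which is dominated by the first piece.

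On $(2t-1,t)$ we have $\phi\approx(1-t)^a$. This piece is $(1-t)^a x_n B(-\frac{n+2}{2},|x|,t)\approx x_n|x|^{-2}(1-t)^{a+1-\frac n2}e^{-|x|^2/(1-t)}$. Since $1-t<\tfrac18$, the exponent satisfies $|x|^2/(1-t)\ge 8|x|^2$, so this term is exponentially smaller than $e^{-|x|^2}$. The only point needing care is that the comparison constants should not depend on $t$ as $t\to1$; this holds because $\sup_{0<\tau<1}\tau^{a+1-\frac n2}e^{-7|x|^2/\tau}$ is bounded by a multiple of $e^{-|x|^2}$ once $|x|$ is large. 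Summing the three pieces gives \eqref{1022-2}, with the exponents' constants absorbed into the convention for $\approx$ with $e^{-t}$-type terms.

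\emph{Estimate \eqref{1210-4}.} By \eqref{240707},
\begin{align*}
w_n^{(L)}\approx \frac{|x'|^2-(n-1)x_n^2}{|x|^{n+2}}\int_0^t (t-s)^{-\frac12}\min\Big\{1,\frac{x_n^2}{t-s}\Big\}\phi(s)\,ds .
\end{align*}
We need to justify pulling the sign factor out. The hypothesis $\sqrt{t-s}<\epsilon_0|x'|$ holds since $|x'|$ is large. The factor is independent of $s$, so the estimate holds whenever $|x'|^2-(n-1)x_n^2$ has a definite sign. Near the zero set of that factor, both sides vanish simultaneously by the structure of Lemma \ref{estimateofkernelt<1}.

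For $x_n\ge 1/e$ and $t-s\le 1$, the minimum equals $\approx1$. The remaining integral then splits as follows:
\begin{itemize}
\item over $(0,\tfrac12)$ it gives $\approx M$;
\item over $(\tfrac12,2t-1)$ it gives $\int_{1-t}^{1/2}u^{a-\frac12}\,du\approx \frac{1-(1-t)^{a+\frac12}}{a+\frac12}$, or $|\ln(1-t)|$ when $a=-\frac12$;
\item over $(2t-1,t)$ it gives $(1-t)^{a+\frac12}$, which is absorbed into the previous term for $a>-1$.
\end{itemize}
This yields the bracket in \eqref{1210-4}. The overall minus sign is a bookkeeping matter that follows the sign convention of \eqref{0805}.

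The main obstacle is the uniformity of the $\approx$ constants in $t$ and $a$ near $t=1$, in particular the absorption step for $a<-\frac12$, where $(1-t)^{a+\frac12}$ blows up as $t\to1$. We handle it by using \eqref{lemma0630-2} to compare $(1-t)^{a+\frac12}$ with $\frac{1-(1-t)^{a+\frac12}}{a+\frac12}$, just as in Subcase 2-2 of Proposition \ref{1stmainprop}.
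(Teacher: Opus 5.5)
Your proposal follows the paper's proof essentially step for step. For $w^G$ you use the same split of the time integral at $s=\tfrac12$ and $s=2t-1$; you cite (iii) of Lemma \ref{lemma0928} where the paper substitutes directly into an incomplete Gamma integral, and you add an explicit check, which the paper leaves implicit, that the $(2t-1,t)$ piece is dominated uniformly as $t\to1$. For $w_n^{(L)}$ you extract the spatial factor from \eqref{240707} with $\min\{1,x_n^2/(t-s)\}\approx 1$ for $x_n\ge 1/e$, as the paper does. However, two of your justifications do not hold as written.

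\textbf{The sign.} From \eqref{240707} and the definition $w_n^{(L)}=\sum_{i<n}w^{(L)}_{ii}$ in \eqref{0805}, your computation actually shows
\begin{align*}
w_n^{(L)}(x,t)\approx +\frac{|x'|^2-(n-1)x_n^2}{|x|^{n+2}}\Big(1+\frac{1-(1-t)^{a+\frac12}}{a+\frac12}+|\ln(1-t)|\mathbf{1}_{a=-\frac12}\Big).
\end{align*}
This is also the sign the paper uses for the $t>1$ analogue in the proof of Lemma \ref{w_nformula}. Nothing in \eqref{0805} turns this into the minus sign of \eqref{1210-4}. The two right-hand sides are negatives of each other wherever the spatial factor is nonzero, so calling the discrepancy ``bookkeeping'' does not close the step.

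The minus sign in \eqref{1210-4} is a slip in the statement. The paper's proof introduces the same minus, without justification, in the first line of its estimate of $w_n^{(L)}$. The estimate that is true is \eqref{1210-4} with $w_n^{(L)}$ replaced by $-w_n^{(L)}$, i.e.\ the contribution of $w_n^{(L)}$ to $w_n$. This is also what Proposition \ref{4thmainprop} uses: in its Case 2, $-w_n^{(L)}$ must supply the negative term $-|x'|^{-n}$ for $x_n<|x'|/n$. You should say this explicitly rather than claim the literal statement.

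\textbf{The zero set.} Your claim that near $|x'|^2=(n-1)x_n^2$ ``both sides vanish simultaneously'' is not correct. Inside the $y'$-integral the factor is
\begin{align*}
|x'-y'|^2-(n-1)x_n^2=|x'|^2-(n-1)x_n^2+O(|x'|),
\end{align*}
which changes sign over $\mathrm{supp}\,\psi$ near that set. So neither the extraction of the factor nor \eqref{240707} as a two-sided bound is valid there. Restrict instead to $\bigl||x'|^2-(n-1)x_n^2\bigr|\ge c|x|^2$ with $|x'|$ large, where the $O(|x'|)$ error is negligible. This region contains $x_n<|x'|/n$, which is where the lemma is applied.
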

 \begin{remark}
The constant \(1/e\) is irrelevant to the estimate. The essential requirement we want to impose for \eqref{1210-4} is that \(x_n \geq c_0\), where \(c_0 > 0\) is a universal constant. This condition is needed for the proof of Proposition~\ref{4thmainprop} (especially in Subcase~2-3).

 \end{remark}
 
 \begin{proof}
\underline{\it 1. Estimate of \(w^G\)}. 
For $   |x'|\geq 2$, we have
 \begin{align*}
 w^G(x,t) &  = c_n \int_0^t \int_{\Rn} \frac{x_n}{  (t-s)^{\frac{n+2}2}} e^{-\frac{x_n^2 + |x' -z'|^2}{4(t-s)}}\psi(z')\phi(s) dz'ds   \approx  \int_0^t  \frac{x_n}{  (t-s)^{\frac{n+2}2}} e^{-\frac{  c|x|^2}{t-s}} \phi(s) ds\\
 &=\left(\int_0^\frac12+\int_\frac12^{2t-1}+\int_{2t-1}^t\right)\frac{x_n}{  (t-s)^{\frac{n+2}2}} e^{-\frac{  c|x|^2}{t-s}} \phi(s) ds=:I_1+I_2+I_3.
 \end{align*}
We recall that $ 7/8 < t< 1$ and thus each \(I_i\) is estimated as follows: 
 \begin{align*}
 \begin{split}
I_1
 & \approx    x_n   e^{-   |x|^2 }   \int_0^\frac12   \phi(s) ds  
 \approx    x_n   e^{- |x|^2  },\quad I_2
 \approx        \frac{x_n}{|x|^{n -2a} }\int_{\frac{|x|^2}2}^{\frac{|x|^2}{1-t}} s^{\frac{n-2}2 -a} e^{-s}  ds   \approx     \frac{x_n}{|x|^{2} }   e^{-|x|^2},\\
I_3 
 & \approx  \frac{ x_n}{|x'|^n}  (1-t)^{a }  \int_{\frac{|x|^2}{1-t}}^\infty   s^{\frac{n-2}2  } e^{-s}   ds  \approx  \frac{ x_n}{|x|^{2}  (1-t)^{\frac{n-2}2  -a} } e^{-\frac{  |x|^2}{1-t}}.
 \end{split}
 \end{align*}
Hence,   we have  \eqref{1022-2}.
 
\underline{\it 2. Estimate of \(w_n^{(L)}\)}. We recall the estimate \eqref{240707} and split the integral similarly as the proof of Lemma \ref{lemma1003-2} to obtain
\begin{align*}
w_n^{(L)}(x,t)&\approx -\int_0^t\int_{\mathbb{R}^{n-1}}\frac{|x'-y'|^2-(n-1)x_n^2}{(t-s)^{\frac12}|x-y'|^{n+2}}\min\left\{1,\frac{x_n^2}{t-s}\right\}\phi(s)\psi(y')dy'ds\\
&\approx -\frac{|x'|^2-(n-1)x_n^2}{|x|^{n+2}}\int_0^t\frac{1}{(t-s)^{\frac12}}\min\left\{1,\frac{x_n^2}{t-s}\right\}\phi(s)ds\\
&\approx -\frac{|x'|^2-(n-1)x_n^2}{|x|^{n+2}}\left(\int_0^\frac12+\int_\frac12^{2t-1}+\int_{2t-1}^t\right)\frac{1}{(t-s)^{\frac12}}\min\left\{1,\frac{x_n^2}{t-s}\right\}\phi(s)ds.
\end{align*}
%Note that 
%\begin{align}\label{0101-111}
%\begin{split}
%  t-s < 1-s < 2(t-s), \quad  \mbox{for} \quad  0 < s <  2t -1,\\
%  1-t  < 1-s < 2(1-t), \quad  \mbox{for} \quad  2t -1 < s < t.
%  \end{split}
%  \end{align}
Since \(\min\left\{1, \frac{x_n^2}{t-s}\right\}\approx1\) for \(0<s<t\) and \(x_n>1/e\) we have 
\begin{align*}
 w^{(L)}_{n}(x,t)  
& \approx  -\frac{|x'|^2-(n-1)x_n^2}{|x|^{n+2}}   \left(\int_0^\frac12     (t-s)^{-\frac12} \phi(s) ds  -       \int_{1-t}^{t -\frac12} s^{a -\frac12}  ds -  (1-t)^a    \int_0^{1-t}s^{-\frac12}  ds \right)\\
& \approx -\frac{|x'|^2-(n-1)x_n^2}{|x|^{n+2}}\left( 1 + \frac{1-(1-t)^{a+\frac12}}{a+\frac12} +|\ln (1-t)| \mathbf{1}_{a =-\frac12}    \right).
\end{align*}
where in the last estimate we have used that \(t-s\approx 1-s\) for \(0<s<2t-1\) and \(1-s\approx 1-t\) for \(2t-1<s<t\). This gives \eqref{1210-4} for \(x_n>1/e\).
\end{proof}

If \(x_n\) is small, then \(w_n^N\) and \(w^G\) are of linear order in \(x_n\) with positive coefficients(see \eqref{wN} and Lemma \ref{wngnneq0}) and one can directly check that \(w_n^{(L)}\) is also of linear order, but with negative coefficient. Thus to obtain sharp estimates of \(w_n\) near the boundary, one should not consider the terms separately, and it turns out that it is sufficient to consider \(w_n^N\) and \(w_n^{(L)}\) together. The next lemma indicates that the difference \(w_n^N-w_n^{(L)}\) decays quadratically as \(x_n\rightarrow 0\) and hence \(w^G\) becomes dominant, giving the positivity of \(w_n\) near the boundary. To state the estimate, we define the following function.
\begin{align*}
\sigma(t):=\left(\frac{2^{\frac12-a}-(m(1-t))^{a-\frac12}}{a-\frac12}\mathbf{1}_{a\neq\frac12}  +  |\ln(2e^2(1-t))|\mathbf{1}_{a=\frac12}      \right).
\end{align*}
where \(m:=\max\{2,e^{\frac1a}\}\). 
 \begin{lemma}[{{\bf  Estimates of \(w_n^N-w_n^{(L)}\)}}]\label{0716}
Let \(7/8<t<1\) and \(m:=\max\{2,e^{\frac1a}\}\). There exist constants \(N>1\) and \(c_0>1\) depending only on \(n\) such that for \(|x'|\geq N\),
 \begin{itemize}
\item[(i)] Let $ -1 < a<0$, then we have \(w_n^N(x,t)-w_n^{(L)}(x,t)>0\).
%\begin{align}\label{0718-1} 
%\begin{split}
% w^N_n - w^{(L)}_n  
%      \approx& \frac{1}{|x|^{n}} (1-t)^a  x_n\min\left\{1,x_n\right\}-|x'|^{-n}(1-t)^a\left((ax_n^2(1-t)^{-\frac12}-x_n^2)\mathbf{1}_{x_n<\sqrt{2(1-t)}}\right.\\
%      &\left.+\left(ax_n^2(1-t)^{-\frac12}\mathbf{1}_{x_n<\min\left\{1, e^{-\frac{1}{2a}}\sqrt{1-t}\right\}}-x_n\right)\mathbf{1}_{\sqrt{2(1-t)}<x_n<\frac12}-\mathbf{1}_{x_n>\frac12}\right)
%\end{split}
%\end{align}

\item[(ii)] Let \(a\geq (\ln 2)^{-1}\), then
\begin{align*}
 w_n^N(x,t)-w_n^{(L)}(x,t)&\approx -\frac{x_n^2}{|x'|^n}\left[M  +     a(1-t)^{a-\frac12}   +   \frac{2^{\frac12-a}}{a-\frac12}\right].
\end{align*}
 
\item[(iii)] Let  $(\ln \frac{10}{3})^{-1}\leq a<(\ln 2)^{-1}$,   then we have the following.
\begin{enumerate}
\item If \(x_n^2\leq 2(1-t)\), then 
 \begin{align*}
 \begin{split}
  w_n^N(x,t)-w_n^{(L)}(x,t) & \approx   -\frac{x_n^2}{|x'|^n} \left[ M  +   (1-t)^{a-\frac12}     +   \sigma(t)\right].
\end{split}
 \end{align*}
 
\item If \(2(1-t)<x_n^2\leq e^{\frac{1}{a}}(1-t)\), then
 \begin{align*}
 \begin{split}
  w_n^N(x,t)-w_n^{(L)}(x,t)   &\approx -\frac{1}{|x'|^n}\left[ Mx_n ^2  +  (1-t)^{a+\frac12} +    x_n^2\sigma(t)\right].
 \end{split}
 \end{align*}
 
\item If \(e^{\frac1a}(1-t)<x_n^2\leq\frac{1}{e^2}\), then
\begin{align*}
\begin{split}
 w_n^N(x,t)-w_n^{(L)}(x,t)    &\approx -\frac{1}{|x'|^n}\left[ Mx_n^2   +(1-t)^{a+\frac12}   +  \frac{x_n^{2a+1}-(e^{\frac1a}(1-t))^{a+\frac12}}{a+\frac12}+x_n^2\frac{2^{\frac12-a}-x_n^{2a-1}}{a-\frac12}\right]. 
  \end{split}
\end{align*}

%\begin{align}\label{0530-1}
%\begin{split}
%w_n^N-w_n^{(L)}\approx &\frac{ x_n\min\left\{1, x_n\right\}}{|x|^{n}}  (1-t)^a  -\frac{x_n^2}{|x'|^n}\left(  M+a  (1-t)^{a -\frac12} \mathbf{1}_{e^{\frac1a} > 2  }    +    \right.\\
%& + \left.\left( e^{-\frac1a}   |\ln   (1-t)| \mathbf{1}_{a =\frac12}   \frac{(\frac12)^{a -\frac12} 
% -  (e^{\frac1a} (1-t))^{a-\frac12} }{a -\frac12} \mathbf{1}_{a \neq  \frac12} \right) \mathbf{1}_{ e^{\frac1a} (1-t)  < \frac12 } 
%        \right)\mathbf{1}_{x_n\leq \sqrt{2(1-t)}}\\
%       &-\frac{x_n^2}{|x'|^n}\left(  M+    a x_n^2 (1-t)^{a -\frac12} \mathbf{1}_{e^{\frac1a} > 4  } +    a  x_n^{-2}  (1-t)^{a +\frac12} \mathbf{1}_{2 \leq e^{\frac1a} \leq  4  } \right. + \\
% &\left.\left( |\ln x_n| \mathbf{1}_{a =\frac12} +  \frac{ (\frac12)^{a-\frac12} -    x_n^{2a-1}}{a -\frac12  } \mathbf{1}_{e^{\frac1a} (1-t)  < \frac12  }\mathbf{1}_{a\neq \frac12}  \right)\right)\mathbf{1}_{\sqrt{2(1-t)}<x_n<\frac12}\\
%         &-\frac{1}{|x'|^n}\left( a (1-t)^{a +\frac12} \eta_0 \mathbf{1}_{ 4 \leq e^{\frac1a} (1-t)   } +  a (1-t)^{a +\frac12} \mathbf{1}_{ e^{\frac1a} (1-t)  < \frac12 } +\mathbf{1}_{e^{\frac1a} (1-t)  < \frac12  }+ M\right)\mathbf{1}_{x_n>\frac12}
%         \end{split}
%\end{align}

\end{enumerate}

\item[(iv)] Let  $|\ln(2(1-t))|\leq a\leq (\ln \frac{10}{3})^{-1}$,   then we have the following.
\begin{enumerate}
\item If \(x_n^2\leq \min\left\{\frac{1}{e^2},\frac52(1-t)\right\}\), then 
 \begin{align*}
 \begin{split}
  w_n^N(x,t)-w_n^{(L)}(x,t) & \approx   -\frac{x_n^2}{|x'|^n} \left[ M  +   a(1-t)^{a-\frac12}     +   \sigma(t)\right].
\end{split}
 \end{align*}
 
\item If \(\frac52(1-t)<x_n^2\leq \min\left\{\frac{1}{e^2},\frac34 e^{\frac1a}(1-t)\right\}\), then
 \begin{align*}
 \begin{split}
  w_n^N(x,t)-w_n^{(L)}(x,t)   &\approx -\frac{x_n}{|x'|^n}\left[ Mx_n   +  a(1-t)^a\ln \frac{x_n^2}{1-t} +    x_n\sigma(t)\right].
 \end{split}
 \end{align*}
 
\item If \(\frac34 e^{\frac1a}(1-t)<x_n^2\leq \min\left\{ \frac{1}{e^2}, e^{\frac1a}(1-t)\right\}\), then
\begin{align*}
\begin{split}
 w_n^N(x,t)-w_n^{(L)}(x,t)    &\approx -\frac{1}{|x'|^n}\left[ Mx_n^2   +  \left( a +    e^{\frac{1}{2a}} \right)(1-t)^{a+\frac12}   +  x_n^2\sigma(t)\right]. 
  \end{split}
\end{align*}

\item If \(e^{\frac1a}(1-t)<x_n^2\leq \frac{1}{e^2}\), then
\begin{align*}
\begin{split}
 w_n^N(x,t)-w_n^{(L)}(x,t)    &\approx  -  \frac{1}{|x'|^n}\left[ Mx_n^2  +  \left(a  +  e^{\frac{1}{2a}} \right)(1-t)^{a+\frac12}\right. \\
&\quad\left. + \frac{x_n^{2a+1}-(e^{\frac1a}(1-t))^{a+\frac12}}{a+\frac12}+x_n^2\left(\frac{2^{\frac12-a}  -  x_n^{2a-1}}{a-\frac12}\mathbf{1}_{a\neq \frac12}  +  |\ln(2x_n^2)|\mathbf{1}_{a=\frac12}\right)\right].
\end{split}
\end{align*}
%\begin{align}\label{0530-1}
%\begin{split}
%w_n^N-w_n^{(L)}\approx &\frac{ x_n\min\left\{1, x_n\right\}}{|x|^{n}}  (1-t)^a  -\frac{x_n^2}{|x'|^n}\left(  M+a  (1-t)^{a -\frac12} \mathbf{1}_{e^{\frac1a} > 2  }    +    \right.\\
%& + \left.\left( e^{-\frac1a}   |\ln   (1-t)| \mathbf{1}_{a =\frac12}   \frac{(\frac12)^{a -\frac12} 
% -  (e^{\frac1a} (1-t))^{a-\frac12} }{a -\frac12} \mathbf{1}_{a \neq  \frac12} \right) \mathbf{1}_{ e^{\frac1a} (1-t)  < \frac12 } 
%        \right)\mathbf{1}_{x_n\leq \sqrt{2(1-t)}}\\
%       &-\frac{x_n^2}{|x'|^n}\left(  M+    a x_n^2 (1-t)^{a -\frac12} \mathbf{1}_{e^{\frac1a} > 4  } +    a  x_n^{-2}  (1-t)^{a +\frac12} \mathbf{1}_{2 \leq e^{\frac1a} \leq  4  } \right. + \\
% &\left.\left( |\ln x_n| \mathbf{1}_{a =\frac12} +  \frac{ (\frac12)^{a-\frac12} -    x_n^{2a-1}}{a -\frac12  } \mathbf{1}_{e^{\frac1a} (1-t)  < \frac12  }\mathbf{1}_{a\neq \frac12}  \right)\right)\mathbf{1}_{\sqrt{2(1-t)}<x_n<\frac12}\\
%         &-\frac{1}{|x'|^n}\left( a (1-t)^{a +\frac12} \eta_0 \mathbf{1}_{ 4 \leq e^{\frac1a} (1-t)   } +  a (1-t)^{a +\frac12} \mathbf{1}_{ e^{\frac1a} (1-t)  < \frac12 } +\mathbf{1}_{e^{\frac1a} (1-t)  < \frac12  }+ M\right)\mathbf{1}_{x_n>\frac12}
%         \end{split}
%\end{align}

\end{enumerate}

\item[(v)] Let \(0<a< |\ln(2(1-t))|\), then we have the following.
\begin{enumerate}
\item If \(x_n^2\leq \frac52(1-t)\), then
\begin{align*}
w_n^N(x,t)-w_n^{(L)}(x,t)\approx -\frac{x_n^2}{|x'|^n}\left[M   +   a(1-t)^{a-\frac12} \right].
\end{align*} 

\item If \(\frac52(1-t)<x_n^2\leq \frac{1}{e^2}\), then
\begin{align*}
w_n^N(x,t)-w_n^{(L)}(x,t)\approx -\frac{x_n}{|x'|^n}\left[Mx_n  +  a(1-t)^a\ln \frac{x_n^2}{1-t}  \right].
\end{align*}

\end{enumerate}
 \end{itemize}
 \end{lemma}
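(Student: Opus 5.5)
The plan is to write $w_n^N - w_n^{(L)}$ as a single time integral in which the terms linear in $x_n$ cancel exactly, leaving a quadratic remainder. Its sign and size will then be read off by splitting the $s$-integral into the usual three pieces: $0<s<\frac12$, $\frac12<s<2t-1$, and $2t-1<s<t$.

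The first step is to freeze the tangential kernel. From \eqref{240707}, and since $|x'-y'|\approx|x'|$ on $\mathrm{supp}\,\psi$, we have
\begin{align*}
w_n^{(L)}(x,t)\approx\frac{|x'|^2-(n-1)x_n^2}{|x|^{n+2}}\int_0^t(t-s)^{-\frac12}\min\Big\{1,\frac{x_n^2}{t-s}\Big\}\phi(s)\,ds .
\end{align*}
The normal part is $w_n^N(x,t)=2\phi(t)\int\partial_{x_n}N(x-y')\psi(y')dy'\approx \phi(t)x_n|x|^{-n}$. These two estimates alone cannot be compared, since both are linear in $x_n$ near the boundary. To get the cancellation, I would instead go back to the exact representation. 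The identity $\sum_i L_{ii}=\frac12\partial_{x_n}\Gamma$, together with the structure of $L_{ii}$ through the heat convolution of $\partial_{z_i}^2N$ (Lemma \ref{lemma0709-1}(ii)), gives $\sum_{i<n}\widetilde L_{ii}=-\partial_{x_n}^2$ applied to a heat-smoothed Newtonian potential. Integrating in time then shows that
\begin{align*}
w_n^N-w_n^{(L)}=c\int_{-\infty}^t\!\!\int \partial_{x_n}\Gamma(\cdot,t-s)\ast\big(\partial_{x_n}N\psi\big)\,(\phi(s)-\phi(t))\,dy'ds+(\text{error}),
\end{align*}
exactly as in \eqref{0106-1}. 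Since $\int_{-\infty}^t\!\int\partial_{x_n}\Gamma=-\frac12$, the difference $\phi(s)-\phi(t)$ produces the required extra factor of $x_n$, mirroring the proof of Lemma \ref{thoe1021-1}. The error terms come from $J_2$ in Lemma \ref{lemma0709-1}. They are of size $t|x|^{-n-2}$, and the factor $\min\{1,x_n^2/(t-s)\}$ makes them $O(x_n^2|x'|^{-n})$, which is absorbed into the $Mx_n^2$ term.

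Next, each time piece is estimated with Lemma \ref{lemma0928}.
\begin{itemize}
\item[(a)] The piece $0<s<\frac12$ gives $-Mx_n^2|x'|^{-n}$.
\item[(b)] The middle piece, where $\phi(s)-\phi(t)\approx(1-s)^a>0$ for $a>0$, gives $-x_n^2|x'|^{-n}A(a-\frac32,\cdot)$. This is $\sigma(t)$, or its $x_n$-dependent analogue $\frac{2^{\frac12-a}-x_n^{2a-1}}{a-\frac12}$ together with the term $\frac{x_n^{2a+1}-\cdots}{a+\frac12}$.
\item[(c)] The last piece uses $\phi(s)-\phi(t)\approx a(1-t)^{a-1}(t-s)$ and yields $a(1-t)^{a-1}B(-\frac12,x_n,t)$. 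This is where $a(1-t)^{a-\frac12}x_n^2$ arises for $x_n^2\lesssim 1-t$, and where $a(1-t)^a x_n\ln\frac{x_n^2}{1-t}$ arises for larger $x_n$.
\end{itemize}
The threshold $m(1-t)=e^{1/a}(1-t)$ marks where the mean-value approximation in (c) stops being comparable to the middle piece. It is also the reason the cases are split by $a$ against $(\ln2)^{-1}$, $(\ln\frac{10}3)^{-1}$ and $|\ln(2(1-t))|$: these compare $e^{1/a}$ with $2$, with $\frac{10}{3}$, and with $(1-t)^{-1}$. For $a<0$, $\phi(s)-\phi(t)<0$ on the middle and last pieces. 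Both contributions then become positive, and they dominate the $M$ term because $(1-t)^{a-\frac12}\gg1$; this gives (i).

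The main obstacle is the first step: making the linear cancellation exact rather than only an $\approx$ statement. Two-sided estimates cannot be subtracted, so the argument has to be carried out at the level of the kernels. I would try to write $\widetilde L_{ii}$ summed over $i$ as $\partial_{x_n}$ of the heat convolution of $\partial_{x_n}N$, using $\Delta N=\delta$, and check that the boundary value at $x_n=0$ matches $w_n^N(x',0,t)$. After that, the rest is case bookkeeping using Lemma \ref{lemma0928} and \eqref{lemma0630-2}.
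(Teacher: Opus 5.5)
Your overall architecture matches the paper's: subtract $\phi(t)$ using $\int_{-\infty}^t\partial_{x_n}\Gamma_1\,ds=-\frac12$, split time into three pieces, and absorb the $J$-errors for large $|x'|$. The step that fails is (b).

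\textbf{The middle piece.} For $a>0$ you take $\phi(s)-\phi(t)\approx(1-s)^a$ on all of $\frac12<s<2t-1$. Write $\phi(s)-\phi(t)=(1-t)^a\big((\frac{1-s}{1-t})^a-1\big)$. Then \eqref{lemma0630-2} gives your approximation only for $1-s>e^{\frac1a}(1-t)$. For $2(1-t)<1-s<e^{\frac1a}(1-t)$ one has instead $\phi(s)-\phi(t)\approx a(1-t)^a\ln\frac{1-s}{1-t}$, which is smaller than $(1-s)^a$ by a factor as small as about $a\ln2$.

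The paper splits the middle integral exactly at $1-s=e^{\frac1a}(1-t)$. The logarithmic part is
$I_3=a(1-t)^{a+\frac12}\int_2^{p(t)}\tau^{-\frac12}\ln\tau\,\min\{1,\frac{x_n^2}{(1-t)\tau}\}\,d\tau$, with $p(t)=\min\{e^{\frac1a},\frac1{2(1-t)}\}$. The power part $I_4$ runs over $\tau\in(m,\frac1{2(1-t)})$.
\begin{itemize}
\item $I_3$ produces $a(1-t)^ax_n\ln\frac{x_n^2}{1-t}$ and $e^{\frac1{2a}}(1-t)^{a+\frac12}$. For $x_n^2\lesssim1-t$ it also produces a term $ax_n^2(1-t)^{a-\frac12}$ of the same size as the last piece.
\item $I_4$ produces $\sigma(t)$ with its cutoff $m(1-t)$, and the two difference quotients in (iii)(3) and (iv)(4).
\end{itemize}

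The last piece cannot supply the logarithm. With $\phi(s)-\phi(t)\approx a(1-t)^{a-1}(t-s)$ it gives only $a(1-t)^{a+\frac12}\min\{1,\frac{x_n^2}{1-t}\}$, and your $B(-\frac12,x_n,t)$ is exponentially small once $x_n^2\gg1-t$. So $e^{\frac1a}(1-t)$ is not where the mean-value step breaks down. Comparing $e^{\frac1a}$ with $2$ and with $\frac1{2(1-t)}$ records whether the logarithmic or the power regime is empty.

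This changes the answer. When $e^{\frac1a}>\frac1{2(1-t)}$ (a regime inside case (v)), the whole middle interval is logarithmic. Your approximation would then give middle contributions $x_n^2(1-t)^{a-\frac12}$ and $x_n^{2a+1}$ with no factor $a$, contradicting (v)(1)--(2). As $a\to0^+$ the true middle contribution tends to $0$, while yours does not.

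\textbf{Step 1 and the terms from $s<\frac12$.} As stated, Step 1 is not the right mechanism. For $|x'|$ large both $w_n^N$ and $w_n^{(L)}$ vanish at $x_n=0$, so there is no boundary value to match; what must cancel is the coefficient of $x_n$. Your displayed formula also lacks the integration over $y_n\in(0,x_n)$, and it pairs $\partial_{x_n}\Gamma$ with $\partial_{x_n}N\psi$ instead of $\partial_{y_n}^2N(\cdot,y_n)*\psi$. The paper proceeds as follows.
\begin{itemize}
\item Lemma \ref{lemma0709-1}(ii) gives $\widetilde L_{ii}(x,t)=\int_0^{x_n}\partial_{x_n}\Gamma_1(x_n-z_n,t)\big(\partial_{x_i}^2N(x',z_n)+J\big)\,dz_n$.
\item Harmonicity of $N$ off the origin replaces $\sum_{i<n}\partial_{x_i}^2N$ by $-\partial_{x_n}^2N$.
\item The identity $\partial_{x_n}N(x)=\int_0^{x_n}\partial_{y_n}^2N(x',y_n)\,dy_n$ writes $w_n^N$ as the same $y_n$-integral against $\partial_{y_n}^2N\approx|x'|^{-n}$.
\end{itemize}
Only then does $\int_{-\infty}^t\partial_{x_n}\Gamma_1(x_n-y_n,t-s)\,ds=-\frac12$ turn $2\phi(t)+4\int_0^t\partial_{x_n}\Gamma_1(x_n-y_n,t-s)\phi(s)\,ds$ into an integral of $\phi(s)-\phi(t)$.

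This subtraction also creates positive terms of size $(1-t)^ax_n^2|x'|^{-n}$: one from $s<0$ (the paper's $A_2$), and one from the $-\phi(t)$ part on $0<s<\frac12$. They are of the same order as $-Mx_n^2|x'|^{-n}$ and of opposite sign. Your three-piece split drops the first, and your (a) ignores the second. They can only be absorbed under a largeness condition $M\ge c_0 8^{-a}$, which is the role of the constant $c_0$ in the statement.
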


 \begin{proof}
  From \eqref{0805} and \eqref{wis}, we find that
 \begin{align}\label{1020-1}
 \begin{split}
 w_n^N(x,t)-w_n^{(L)}(x,t)&=w_n^N(x,t)-\sum_{i=1}^{n-1}w_{ii}^{(L)}(x,t)\\
 &=2\phi(t)\int_{\R^{n-1}}\partial_n N(x-y')\psi(y')dy'4\sum_{i=1}^{n-1}\int_0^{t}\int_{\R^{n-1}}\widetilde{L}_{ii}(x-y',t-s)\phi(s)dy'ds.
\end{split}
 \end{align}
 From (2) of Lemma \ref{lemma0709-1}, we have for \(1\leq i\leq n-1\),
 \begin{align}\label{1020-2}
 \widetilde{L}_{ii}(x,t)=\int_{0}^{x_n}\partial_n\Gamma_1(x_n-z_n,t)\left(\partial_{x_i}^2N(x',z_n)+J(x',z_n,t)\right)dz_n, 
 \end{align}
 where \(J\) satisfies the same estimate as that of \(J_2\) given in (2) of Lemma \ref{lemma0709-1}. We then find from \eqref{1020-1} and \eqref{1020-2} that
\begin{align*}
 w_n^N(x,t)-w_n^{(L)}(x,t)&=2\phi(t)\int_0^{x_n}\int_{\R^{n-1}}\partial_n^2 N(x'-y',z_n)dz_n\psi(y')dy'\\
 &\quad + 4\int_0^{x_n}\int_{\R^{n-1}}\partial_{n}^2N(x'-y',z_n)\psi(y')\int_0^t\partial_n\Gamma_1(x_n-z_n,t-s)\psi(s)dsdy'dz_n\\
 &\quad+\int_0^{x_n}\int_{\R^{n-1}}\psi(y')\int_0^t\partial_n\Gamma_1(x_n-z_n,t-s)J(x'-y',z_n,t-s)\phi(s)dsdy'dz_n,
\end{align*}
where we have used the identities \(\Delta N(x)=0\) for \(x\neq 0\) and \(\int_0^{x_n}\partial_{n}^2N(x',y_n)dy_n=\partial_n N(x)\).

 Since $ \int_{-\infty}^t   \pa_{x_n} \Ga_1 (x_n -y_n ,t-s)ds =-1/2$, we have
\begin{align*}
\phi(t) +& 2 \int_0^t  \pa_{x_n} \Ga_1 (x_n -y_n ,t-s)  \phi(s) ds \\
& =  2 \int_0^t  \pa_{x_n} \Ga_1 (x_n -y_n ,t-s) \big(\phi(s) - \phi(t) \big) ds - 2\int_{-\infty}^0  \pa_{x_n} \Ga_1 (x_n -y_n ,t-s) \phi(t)    ds.
\end{align*}

Then, we have
\begin{align*}
w^N_n (x,t)  -  w^{(L)}_{n} (x,t) = A_1 (x,t) + A_2(x,t) + A_3(x,t),
\end{align*}
where
\begin{align*}
A_1(x,t) & := 4\int_0^{x_n} \int_{\Rn} \pa_{n}^2 N(x'-y', y_n)  \psi(y') dy'   \int_0^t  \pa_{n} \Ga_1 (x_n -y_n ,t-s) \big(\phi(s) - \phi(t) \big) ds    dy_n,\\
A_2(x,t) &:=  - 4\int_0^{x_n} \int_{\Rn} \pa_{n}^2 N(x'-y', y_n)  \psi(y') dy'   \int_{-\infty}^0  \pa_{n} \Ga_1 (x_n -y_n ,t-s) \phi(t)    ds    dy_n,\\
A_3(x,t) &:=-4\int_0^{x_n}\int_{\R^{n-1}}\int_0^tJ(x'-y',y_n,t-s)\psi(y')\partial_n\Gamma_1(x_n-y_n,t-s)\phi(s)dsdy'dy_n.
\end{align*}

%%%%%%%%%%%%%%%%%%%%%%%%%%%%%%%%%%%%%%%%%%%%%%%%%%%%%%%%%%%%%%%%%%%%%%%%%%%%%%%%%%%%%%%%%%%%%%%%%%%%%%%%%%%%%%%%%%%%%%%%%%%%%%%%%%%%%%%%%

{\it \underline{1. Estimate of \(A_1\)}.}  \quad Note that $\pa_{y_n} \pa_{y_n} N(x' -y', y_n) \approx |x'|^{-n}$ for $ |y'| < \frac12$, $ |x'| > 1$ and   $ y^2_n < \frac1n |x'|^2$.  Hence,   we have 
\begin{align*}
\begin{split}
 \int_0^{x_n} \pa_{y_n} \pa_{y_n} N(x'-y', y_n)   \pa_{x_n} \Ga_1 (x_n -y_n ,t-s)dy_n 
% & \approx -|x'|^{-n}   \int_0^{\frac{x_n^2}{t-s}} \frac{y_n}{(t-s)^\frac12} e^{-y_n^2} dy_n\\
 &  \approx -|x'|^{-n}  \frac1{(t-s)^\frac12} \min \left\{1, \frac{x_n^2}{t-s} \right\}.
 \end{split}
\end{align*}
Hence, we have
\begin{align*}
\begin{split}
A_{1}(x,t) 
& \approx - |x'|^{-n}      \int_0^t \frac1{(t-s)^\frac12} \min \left\{1, \frac{x_n^2}{t-s} \right\} \big(\phi(s) - \phi(t) \big) ds.
\end{split}
\end{align*}
Here, we write
\begin{align}\label{0601-2}
\begin{split}    
 \int_0^t \frac1{(t-s)^\frac12} \min \left\{1, \frac{x_n^2}{t-s} \right\}  \big(\phi(s) - \phi(t) \big) ds  
   &= \int_0^\frac12 \cdots ds    +  \int_\frac12^{2t-1}    \cdots  ds    + \int_{2t -1}^t \cdots ds\\
   &=:A_{11}+A_{12}+A_{13}.
\end{split}
\end{align}

1. For \(A_{11}\), using \(x_n^2<\frac12\) and \(t-s\approx 1\) for \(t>7/8\) and \(0<s<1/2\), we have
\begin{align}\label{A11estimate}
\begin{split} 
A_{11}
& \approx   x_n^2\int_0^{\frac12 }    \big(\phi(s) - \phi(t) \big) ds \approx x_n^2\left(M\mathbf{1}_{a>0}-(1-t)^a\mathbf{1}_{a<0}\right),
%& =   \min (1,  x_n^2 ) \Big(  \int_0^{\frac12 }   \phi(s)ds   -  \frac12\phi(t)  \Big)\\
%&  =  \min (1,  x_n^2 ) \Big(   -   \frac12 (1-t)^a     +  M   \Big)
%\approx\left\{\begin{array}{ll}\vspace{2mm}
%  Mx_n^2 ,\quad   &  \textrm{ if }  a> 0,\\
%  -(1-t)^a  \min (1,  x_n^2 ),\quad & \textrm{ if }    a< 0,
%         \end{array}
%         \right.
\end{split}
\end{align}
where $M := \int_0^\frac12 \phi(s) ds$.

2. For \(A_{13}\), using $ \phi(s) -\phi(t) = -\phi'(\eta) (t-s) \approx a(1-t)^{a-1} (t-s)$ for $ 2t -1 < s <t$, we have 
\begin{align}\label{A13estimate}
\begin{split}
A_{13} &   \approx     
                      a(1-t)^{a-1} \int_{2t-1}^t   (t-s)^{\frac12}  \min\left\{1, \frac{x_n^2}{t-s}\right\}  ds  
%& \approx                         a (1-t)^{a -1} \int_0^{1-t} s^{\frac12}  \min\left\{1, \frac{x_n^2}{ s}\right\}  ds\\
%& \approx    a (1-t)^{a-1}  \Big(  x_n^2 (1-t)^\frac12 \mathbf{1}_{x_n^2 <2( 1-t)} +(1-t)^\frac32 \mathbf{1}_{(1-t) < x_n^2  }  \Big).\\
\approx a(1-t)^{a+\frac12}\min\left\{1,\frac{x_n^2}{1-t}\right\}.
\end{split}
\end{align}

3. For \(A_{12}\), we consider the cases \(-1<a<0\) and \(a>0\) separately.

%For \(A_{12}\), we will treat some integrals involving the product of monomials and logarithms. We present here a general result.
%
%\begin{lemma}\label{1027lemma} Let \(-1<a\leq 0\) and \(1\leq x\leq y\). Then the following holds.
%\begin{itemize}
%\item[(i)] If \(x\leq \alpha<\beta\leq y\) for some \(\alpha,\beta>1\), then 
%\begin{align*}
%1-c_1(a,\alpha,\beta)\leq\frac{\displaystyle\int_x^y s^{a-\frac12}\ln sds}{\displaystyle \frac{1}{a+\frac12}\left(y^{a+\frac12}\ln y-\frac{y^{a+\frac12}-1}{a+\frac12}\right)\mathbf{1}_{a\neq -\frac12}+\frac12\left(\ln y\right)^2\mathbf{1}_{a=-\frac12}}\leq 1
%\end{align*}
%where \(\displaystyle  0<c_1(a,\alpha,\beta):=\frac{(a+\frac12)\alpha^{a+\frac12}\ln \alpha-\alpha^{a+\frac12}+1}{(a+\frac12)\beta^{a+\frac12}\ln \beta-\beta^{a+\frac12}+1}<\frac{\alpha^{\frac12}-1-\frac{\ln\alpha}{2}}{\beta^{\frac12}-1-\frac{\ln\beta}{2}}<1\).
%\item[(ii)] If \(\alpha\leq x\leq \gamma y\) for some \(\alpha>1\) and \(0<\gamma <1\), then
%\begin{align*}
%1-c_2(a,\alpha,\gamma)\leq \frac{\displaystyle \int_x^y s^{a-\frac32}\ln s ds}{\displaystyle \frac{x^{a-\frac12}}{\frac12-a}\left(\ln x+\frac{1}{\frac12-a}\right)}\leq 1
%\end{align*} 
%where \(\displaystyle  0< c_2(a,\alpha,\gamma):=\gamma^{\frac12-a}\left(1-\frac{\ln \gamma}{\ln \alpha+\frac{1}{\frac12-a}}\right)\leq \gamma^{\frac12}\left(1-\frac{\ln \gamma}{2}\right)<1\).
%\end{itemize}
%
%\end{lemma}
%The proof is postponed to Appendix \ref{proofoflemma1027}.

{\bf\(\bullet\) (Case 1: \(-1<a<0\))}\quad Using \eqref{lemma0630-2}, we have
\begin{align*}
\phi(s) -\phi(t) = -  (1-s)^a \Big( \big(\frac{1-t}{1-s}\big)^a -1 \Big)\approx a(1-s)^a\ln \frac{1-s}{1-t}\mathbf{1}_{\frac{1-s}{1-t}\leq e^{-\frac1a}}-(1-t)^{a}\mathbf{1}_{\frac{1-s}{1-t}\geq e^{-\frac{1}{a}}}
\end{align*}
and thus
\begin{align*}
  A_{12} 
   &   \approx              \int_\frac12^{2t-1}  \left(a(1-s)^{a}\ln \frac{1-s}{1-t}\mathbf{1}_{\frac{1-s}{1-t}\leq e^{-\frac1a}}-(1-t)^{a}\mathbf{1}_{\frac{1-s}{1-t}\geq e^{-\frac{1}{a}}}\right)(1-s)^{-\frac12}\min\left\{1, \frac{x_n^2}{1-s}\right\}    ds  =: I_1 + I_2.
\end{align*}

 From now on, we denote \(p(t):=\min\left\{\frac{1}{2(1-t)}, e^{\frac{1}{|a|}}\right\}\). Note that \(p(t)\geq e\) for \(-1<a<0\).
 
We now estimate \(I_1\). Note that
\begin{align*}
I_1 &= a (1-t)^{a+\frac12} \int_{2}^{p(t)} s^{a-\frac12} \ln s \; \min\Big\{1, \frac{x_n^2}{(1-t)s}\Big\} \, ds.
\end{align*}

i) If $x_n^2 < \frac52(1-t)$, then \(\min\left\{1,\frac{x_n^2}{(1-t)s}\right\}=\frac{x_n^2}{(1-t)s}\) and thus
\begin{align}\label{1027-1}
I_1 \approx a x_n^2 (1-t)^{a -\frac12} \int_{2}^{p(t)} s^{a-\frac{3}{2}} \ln s \, ds
\approx a x_n^2 (1-t)^{a -\frac12}.
\end{align}

%Here we have used that
%\begin{align*}
%\frac{\sqrt{2}}{9} + \frac{\sqrt{2}\ln 2}{6} - \frac{10}{9 e \sqrt{e}} 
%&= \int_2^e s^{-\frac{5}{2}} \ln s \, ds
%\le \int_{2}^{p(t)} s^{a-\frac{3}{2}} \ln s \, ds \\
%&\le \int_2^\infty s^{a-\frac{3}{2}} \ln s \, ds
%\le \int_2^\infty s^{-\frac{3}{2}} \ln s \, ds
%= 2\sqrt{2}\left(1+\frac{\ln2}{2}\right).
%\end{align*}

ii) If $ \frac52(1-t) < x_n^2 < \frac{21}{8e}p(t)(1-t) $, then \(p(t)>\frac{x_n^2}{1-t}\) and thus
\begin{align*} 
I_1 
% &   \approx           a \int_{2-2t}^{\min \big( \frac12, e^{-\frac1a} (1-t) \big) }   s^{a-\frac12}\ln \frac{s}{1-t}   \min(1, \frac{x_n^2}{s})    ds\\
%&   \approx           a \int_{2-2t}^{x_n^2}   s^{a-\frac12}\ln \frac{s}{1-t}      ds  +  a x_n^2 \int_{x_n^2 }^{\min \big( \frac12, e^{-\frac1a} (1-t) \big) }   s^{a-\frac32}\ln \frac{s}{1-t}    ds \\
&   \approx           a (1-t)^{a -\frac12} \left((1-t)\int_{2 }^{\frac{x_n^2}{1-t}}   s^{a-\frac12}\ln s      ds +   x_n^2 
\int_{\frac{x_n^2}{1-t} }^{p(t) }   s^{a-\frac32}\ln s    ds\right).
\end{align*}
%{\color{blue}From the inequalities
%\begin{align*}
%&c_1(a) \le 
%\frac{\displaystyle \int_2^{\frac{x_n^2}{1-t}} s^{a-\frac12} \ln s \, ds}
%     {\displaystyle \frac{1}{a+\frac12} \left(\frac{x_n^2}{1-t}\right)^{a+\frac12} 
%      \Bigl(\ln \frac{x_n^2}{1-t} - \frac{1}{a+\frac12}\Bigr) + \frac{1}{(a+\frac12)^2}}
%\le 1, \quad \text{for } a\neq -\frac12,\\[2mm]
%&1 - \left(\frac{\ln 2}{\ln \frac{5}{2}}\right)^2
%\le \frac{\displaystyle \int_2^{\frac{x_n^2}{1-t}} s^{-1} \ln s \, ds}
%       {\displaystyle \frac12 \left( \ln \frac{x_n^2}{1-t} \right)^2} 
%\le 1, \quad \text{for } a = -\frac12,\\[1mm]
%&1 - \left(\frac{21}{8e}\right)^{\frac12} 
%       \left(1 - \frac{\ln \frac{21}{8e}}{\ln \frac{5}{2} + 2} \right)
%\le \frac{\displaystyle \int_{\frac{x_n^2}{1-t}}^{p(t)} s^{a-\frac32} \ln s \, ds}
%       {\displaystyle \frac{1}{\frac12 - a} \left(\frac{x_n^2}{1-t}\right)^{a-\frac12} 
%        \left(\ln \frac{x_n^2}{1-t} + \frac{1}{\frac12 - a} \right)}
%\le 1.
%\end{align*}
%where \(\displaystyle c_1(a):=1-\frac{2^{a+1/2}((a+1/2)\ln2-1)+1}{(5/2)^{a+1/2}((a+1/2)\ln(5/2)-1)+1}\geq 1-\left(\frac{\ln2}{\ln(5/2)}\right)^2\)  (note that all the constants appearing in the LHS of the above inequalities are strictly less than \(1\)),...(to be hidden by \%)} From Lemma \ref{1027lemma}, we have for \(a\neq -\frac12\),
Hence we have for \(a\neq -\frac12\),
\begin{align}\label{1027-2}
\begin{split}
I_1 
%&\approx a (1-t)^{a-\frac12} \Biggl[
%  (1-t) \Biggl(
%    \frac{1}{a+\frac12} \left(\frac{x_n^2}{1-t}\right)^{a+\frac12} 
%      \Bigl(\ln \frac{x_n^2}{1-t} - \frac{1}{a+\frac12}\Bigr) 
%    + \frac{1}{(a+\frac12)^2} 
%  \Biggr)\\
%  &\quad  + x_n^2 \Biggl(
%    \frac{1}{\frac{1}{2} - a} \left(\frac{x_n^2}{1-t}\right)^{a-\frac12} 
%      \Bigl(\ln \frac{x_n^2}{1-t} + \frac{1}{\frac{1}{2} - a}\Bigr)
%  \Biggr)
%\Biggr]\\
%&\approx  a\left[\frac{1}{a+\frac12}\left(x_n^{2a+1}\ln\frac{x_n^2}{1-t}-\frac{x_n^{2a+1}}{a+\frac12}+\frac{(1-t)^{a+\frac12}}{a+\frac12}\right)+\frac{x_n^{2a+1}}{\frac12-a}\ln\frac{x_n^2}{1-t}+\frac{x_n^{2a+1}}{(\frac12-a)^2}\right]\\
&\approx \frac{a}{a+\frac12}\left(x_n^{2a+1}\ln\frac{x_n^2}{1-t}-\frac{x_n^{2a+1}-(1-t)^{a+\frac12}}{a+\frac12}\right) + \frac{a}{\frac12-a}x_n^{2a+1}\left(\ln\frac{x_n^2}{1-t}+\frac{1}{\frac12-a}\right)\\
&\approx \frac{a}{a+\frac12}\left(x_n^{2a+1}\ln\frac{x_n^2}{1-t}-\frac{x_n^{2a+1}-(1-t)^{a+\frac12}}{a+\frac12}\right)
\end{split}
\end{align}
%{\color{blue} Actually the following inequality to control the error of the above line holds: if we write \(I_1\approx a\left(I_{11}+I_{12}\right)\), then
%\begin{align*}
%0<\frac{I_{12}}{I_{11}}<\left(\frac{a+\frac12}{\frac12-a}\right)^2\frac{(1/2-a)\ln(5/2)+1}{(1/2+a)\ln(5/2)-1+(2/5)^{a+1/2}}\leq \frac{1/2\ln(5/2)+1}{1/2\ln(5/2)-1+\sqrt{2/5}}.
%\end{align*}
%(to be hidden by \%)}

while for \(a=-\frac12\),
\begin{align}\label{1027-3}
\begin{split}
I_1&\approx a(1-t)^{a-\frac12}\left((1-t)\left(\ln\frac{x_n^2}{1-t}\right)^2+x_n^2\left(\frac{x_n^2}{1-t}\right)^{-1}\left(\ln\frac{x_n^2}{1-t}+1\right)\right)\\
%&\approx a\left(\ln^2 \frac{x_n^2}{1-t}+\ln\frac{x_n^2}{1-t}+1\right)\\
&\approx a\ln^2\frac{x_n^2}{1-t}.
\end{split}
\end{align}

iii) If $x_n^2>\frac{21}{8e}p(t)(1-t)  $, then \(\frac{x_n^2}{(1-t)s}\geq \frac{1/2}{p(t)(1-t)}=\max\left\{1,\frac{e^{1/a}}{2(1-t)}\right\}\geq 1\) and thus
\begin{align}\label{1027-4}
\begin{split}
I_1  &   \approx   a (1-t)^{a +\frac12} \int_{2 }^{p(t)}s^{a -\frac12}  \ln s      ds   \\
&\approx  \frac{a}{a+\frac12} (1-t)^{a +\frac12}   \left(p(t)^{a+\frac12}\ln p(t)-\frac{p(t)^{a+\frac12}-1}{a+\frac12}\right)\mathbf{1}_{a\neq -\frac12}-\frac14\left(\ln p(t)\right)^2\mathbf{1}_{a=-\frac12}.
\end{split}
\end{align}
%{\color{blue} We actually have for \(a\neq -\frac12\),
%\begin{align*}
%c(a)\leq\frac{\displaystyle \int_2^{p(t)}s^{a-\frac12}\ln s ds}{\displaystyle p(t)^{a+\frac12}\left(\frac{\ln p(t)}{a+\frac12}-\frac{1}{(a+\frac12)^2}\right)+\frac{1}{(a+\frac12)^2}}\leq 1,
%\end{align*}
%with \(\displaystyle c(a):=1-\frac{2^{a+1/2}((a+1/2)\ln 2-1)+1}{e^{a+1/2}(a-1/2)+1}\) (To be hidden by \%).}

We now estimate \(I_2\). Since $e^{-1} < e^{-\frac{1}a}$, we have 
\begin{align*}
I_2  %& \approx   -    \int_\frac12^{2t-1}   (1-s)^{-\frac12} (1-t)^a   \chi_{ \frac{1-s}{1-t} \geq e^{-\frac1a}}  \min\left\{1, \frac{x_n^2}{1-s})    ds\\
&\approx  -     (1-t)^a       \int_{e^{-\frac1a}(1-t)}^{ \frac12}  s^{-\frac12}\min\left\{1, \frac{x_n^2}{ s}\right\}    ds.
\end{align*}

If $e^{-\frac1a} (1-t)  > \frac12 $, then $I_2 =0$. Thus we may assume that  $e^{-\frac1a} (1-t)  < \frac12 $.

i) If $ x_n^2 < e^{-\frac1a}(1-t)$, then \(\min\left\{1,\frac{x_n^2}{s}\right\}\approx \frac{x_n^2}{s}\) for \(e^{-\frac{1}{a}}(1-t)<s<\frac12\) and thus \[
I_2  
\approx  -     (1-t)^a     x_n^2   \int_{ e^{-\frac1a} (1-t)  }^{ \frac12}  s^{-\frac32}   ds = -2(1-t)^ax_n^2\left((e^{-\frac1a}(1-t))^{-\frac12}-\sqrt{2}\right).\]

ii) If $e^{-\frac1a}(1-t) < x_n^2 < 1/2$, then \[
I_2  
\approx  -     (1-t)^a       \int_{ e^{-\frac1a} (1-t) }^{x_n^2}  s^{-\frac12}   ds  
-     (1-t)^a     x_n^2   \int_{x_n^2 }^{ \frac12}  s^{-\frac32}   ds=-2(1-t)^a\left(2x_n-\sqrt{2}x_n^2-(e^{-\frac1a}(1-t))^{\frac12}\right).\]

%iii) If $x_n^2>1/2$, then \(\min\left\{1,\frac{x_n^2}{s}\right\}= 1\) for \(e^{-\frac{1}{a}}(1-t)<s<\frac12\) and thus \[
%I_2  
%\approx  -     (1-t)^a       \int_{ e^{-\frac1a} (1-t)  }^{ \frac12}  s^{-\frac12}   ds  =-2(1-t)^a\left(\sqrt{1/2}-(e^{-\frac1a}(1-t))^{\frac12}\right).\]

Collecting all the previous estimates gives that for $ -1 < a < 0$, if \(7/8<t<1-e^{\frac{1}{a}}/2\), then \(A_{12}=I_1\) and thus with \eqref{A11estimate} and \eqref{A13estimate}, we have

i) If \(x_n^2<\frac52(1-t)\), then
\begin{align}\label{1028-1}
A_{1}\approx -\frac{1}{|x'|^n}\left(Mx_n^2  +  ax_n^2(1-t)^{a-\frac12}\right).
\end{align}

ii) If \(\frac52(1-t)<x_n^2<\frac{21}{8e}p(t)(1-t)\), then
\begin{align}\label{1028-2}
\begin{split}
A_1&\approx -\frac{1}{|x'|^n}\left[  Mx_n^2  +  a(1-t)^{a+\frac12} \right.\\
&\quad \left.+ \frac{a}{a+\frac12}\left(x_n^{2a+1}\ln\frac{x_n^2}{1-t}-\frac{x_n^{2a+1}-(1-t)^{a+\frac12}}{a+\frac12}\right)\mathbf{1}_{a\neq -\frac12}   +   a\left(\ln\frac{x_n^2}{1-t}\right)^2\mathbf{1}_{a=-\frac12}\right].
\end{split}
\end{align}

iii) If \(\frac{21}{8e}p(t)(1-t)<x_n^2<\frac12\), then
\begin{align}\label{1028-3}
\begin{split}
A_1&\approx -\frac{1}{|x'|^n}\left[  Mx_n^2   +   a(1-t)^{a+\frac12}\right.\\
&\quad \left. +\frac{a}{a+\frac12} (1-t)^{a +\frac12}   \left(p(t)^{a+\frac12}\ln p(t)-\frac{p(t)^{a+\frac12}-1}{a+\frac12}\right)\mathbf{1}_{a\neq -\frac12}-\frac18\left(\ln p(t)\right)^2\mathbf{1}_{a=-\frac12}\right].
\end{split}
\end{align}
%
%and the estimates are given in \eqref{1027-1}-\eqref{1027-4}. 

On the other hand, if \(1-e^{\frac1a}/2<t<1\), then \(A_{12}=I_1+I_2\) and thus with \eqref{A11estimate} and \eqref{A13estimate}, we have

i) If \(x_n^2<\frac52(1-t)\), then 
\begin{align}\label{1028-4}
A_{1}
%&\approx -\frac{1}{|x'|^n}\left[ Mx_n^2  +  a(1-t)^{a+\frac12}    +    (a-2e^{\frac{1}{2a}})x_n^2(1-t)^{a-\frac12}+2\sqrt{2}x_n^2(1-t)^a\right]\\
&\approx -\frac{1}{|x'|^n}\left[ Mx_n^2  +  a(1-t)^{a+\frac12}   +    ax_n^2(1-t)^{a-\frac12}\right].
\end{align}
%{\color{blue}Actually the following estimates hold
%\begin{align*}
%(1+\frac4e)ax_n^2(1-t)^{a-\frac12}\leq A_{12}\leq ax_n^2(1-t)^{a-\frac12}
%\end{align*}
%(To be hidden by \%)
%}

ii) If \(\frac52 (1-t)<x_n^2<e^{-\frac{1}{a}}(1-t)\), then
\begin{align}\label{1028-5}
\begin{split}
A_{12}
%&\approx -\frac{1}{|x'|^n}\left[ Mx_n^2  +  a(1-t)^{a+\frac12}  +    \frac{a}{a+\frac12}\left(x_n^{2a+1}\ln\frac{x_n^2}{1-t}-\frac{x_n^{2a+1}-(1-t)^{a+\frac12}}{a+\frac12}\right)\mathbf{1}_{a\neq -\frac12}-\frac12\left(\ln\frac{x_n^2}{1-t}\right)^2\mathbf{1}_{a=-\frac12}\right.\\
%&\quad\left.-2(1-t)^ax_n^2\left((e^{-\frac{1}{a}}(1-t))^{-\frac12}-\sqrt{2}\right)\right]\\
&\approx -\frac{1}{|x'|^n}\left[ Mx_n^2  +  a(1-t)^{a+\frac12}  \right.\\
&\quad+\left.    \frac{a}{a+\frac12}\left(x_n^{2a+1}\ln\frac{x_n^2}{1-t}-\frac{x_n^{2a+1}-(1-t)^{a+\frac12}}{a+\frac12}\right)\mathbf{1}_{a\neq -\frac12}-\frac12\left(\ln\frac{x_n^2}{1-t}\right)^2\mathbf{1}_{a=-\frac12}\right].
\end{split}
\end{align}
%{\color{blue}Actually the constant in the lower bound is replaced to \(\displaystyle \frac{4e^{\frac{1}{2a}}}{5|a|}\frac{|a+\frac12|}{\left|(5/2)^{a+1/2}\ln(5/2)-\frac{(5/2)^{a+1/2}-1}{a+1/2}\right|}\) (To be hidden by \%).
%}

iii) If \(e^{-\frac{1}{a}}(1-t)<x_n^2<\frac12\), then (recall that \(p(t)=e^{-\frac1a}\) for \(1-e^{\frac1a}/2<t<1\)).
\begin{align}\label{1028-6}
\begin{split}
A_{12}
%&\approx \frac{a}{a+\frac12}(1-t)^{a+\frac12}\left((e^{-\frac1a})^{a+\frac12}\ln (e^{-\frac1a})-\frac{(e^{-\frac1a})^{a+\frac12}-1}{a+\frac12}\right)-2(1-t)^a\left(2x_n-\sqrt{2}x_n^2-(e^{-\frac{1}{a}}(1-t))^{\frac12}\right)\\
&\approx -\frac{1}{|x'|^n}\left[ Mx_n^2  +  a(1-t)^{a+\frac12} -2(1-t)^a\left(2x_n-\sqrt{2}x_n^2-(e^{-\frac{1}{a}}(1-t))^{\frac12}\right)\right.\\
&\quad\left. +   \frac{a}{a+\frac12}(1-t)^{a+\frac12}\left(-\frac{e^{-1-\frac{1}{2a}}}{a}-\frac{e^{-1-\frac{1}{2a}}-1}{a+\frac12}\right)\mathbf{1}_{a\neq -\frac12}-\mathbf{1}_{a=-\frac12}\right].
\end{split}
\end{align}

%iv) If \(x_n^2>1/2\), then
%\begin{align*}
%A_{12}&\approx \frac{a}{a+\frac12}(1-t)^{a+\frac12}\left(-\frac{e^{-1-\frac{1}{2a}}}{a}-\frac{e^{-1-\frac{1}{2a}}-1}{a+\frac12}\right)\mathbf{1}_{a\neq -\frac12}-\mathbf{1}_{a=-\frac12}\\
%&\quad-2(1-t)^a\left(\sqrt{1/2}-(e^{-\frac1a}(1-t))^{\frac12}\right).
%\end{align*}
%{\color{blue}Do we need to consider the case \(x_n^2\geq \frac12\)? Already done in the previous theorem?}

%{\color{blue} For the second case \(\frac52(1-t)<x_n^2<\frac{21}{8e}e^{-\frac1a}(1-t)\), \(A_{13}\) is controlled by \(A_{12}\):
%\begin{align*}
%& \frac{a}{a+\frac12}\left(x_n^{2a+1}\ln\frac{x_n^2}{1-t}-\frac{x_n^{2a+1}-(1-t)^{a+\frac12}}{a+\frac12}\right)\mathbf{1}_{a\neq -\frac12}-\frac12\left(\ln\frac{x_n^2}{1-t}\right)^2\mathbf{1}_{a=-\frac12}\\
%&\leq\frac{a}{a+\frac12}\left(\left(\frac{5}{2}\right)^{a+\frac12}\ln\frac{5}{2}-\frac{\left(\frac52\right)^{a+\frac12}-1}{a+\frac12}\right)(1-t)^{a+\frac12}\mathbf{1}_{a\neq -\frac12}-\frac12\left(\ln\frac52\right)^2\mathbf{1}_{a=-\frac12}\\
% &\leq \left(4\left(1-\left(\frac52\right)^{-\frac12}\right)-2\left(\frac52\right)^{-\frac12}\ln\frac52\right)a(1-t)^{a+\frac12}\mathbf{1}_{a\neq -\frac12}-\frac12\left(\ln\frac52\right)^{2}\mathbf{1}_{a=-\frac12.}
%\end{align*} (To be hidden by \%)

{\bf\(\bullet\) (Case 2: \(a>0\))}\quad
Similarly as before, we have
\begin{align*}
\begin{split}
  A_{12}
  & \approx             \int_\frac12^{2t-1}\left(  a (1-t)^{a }\ln \frac{1-s}{1-t} \mathbf{1}_{ \frac{1-s}{1-t} \leq e^{\frac1a}}  +    (1-s)^{a}  \mathbf{1}_{ \frac{1-s}{1-t} \geq e^{\frac1a}} \right) (1-s)^{-\frac12} \min\left\{1, \frac{x_n^2}{1-s}\right\}    ds\\
& =: I_3 + I_4.
\end{split}
\end{align*}

We first estimate \(I_3\). Note that \[\displaystyle
I_3 
%& =       a  ( 1-t)^a \int_\frac12^{2t-1}   (1-s)^{ -\frac12}\ln \frac{1-s}{1-t} \chi_{ \frac{1-s}{1-t} \leq e^{\frac1a}}  \min(1, \frac{x_n^2}{1-s})    ds\\
%& =       a  ( 1-t)^a\int_{2-2t}^{\frac12}   s^{ -\frac12}\ln \frac{s}{1-t} \chi_{ \frac{s}{1-t} \leq e^{\frac1a}}  \min\left\{1, \frac{x_n^2}{s}\right\}    ds\\
%& =       a ( 1-t)^a \int_{2-2t}^{\min \left\{\frac12, e^{\frac1a} (1-t) \right\}}   s^{ -\frac12}\ln \frac{s}{1-t}   \min\left\{1, \frac{x_n^2}{s}\right\}    ds.\\
=a(1-t)^{a+\frac12}\int_2^{p(t)}s^{-\frac12}\ln s\min\left\{1,\frac{x_n^2}{(1-t)s}\right\}ds,\] where \(p(t):=\min\left\{\frac{1}{2(1-t)},e^{\frac{1}{a}}\right\}\) as defined in Case 1. Unlike Case 1, where we had the universal lower bound for \(p(t)\), if \(a>0\) \(p(t)\) can be made arbitrarily close to \(1\) by taking \(a\) sufficiently large. In particular, \(I_3\) vanishes if \(e^{\frac{1}{a}}<2\). Thus for now on when we estimate \(I_3\), we assume that \(e^{\frac1a}\geq 2\).

First assume that \(2\geq e^{\frac1a}\geq 10/3\). Then we have \(p(t)=e^{\frac1a}\).

i) If $ x_n^2 \geq  2(1-t)$ then \(\min\left\{1,\frac{x_n^2}{(1-t)s}\right\}=\frac{x_n^2}{(1-t)s}\) for \(s>2\) and thus
\begin{align*}
I_3 \approx ax_n^2(1-t)^{a-\frac12}\int_{2}^{e^{\frac1a}}s^{-\frac32}\ln s ds\approx ax_n^2(1-t)^{a-\frac12}(e^{\frac1a}-2).
\end{align*}

ii) If $ 2(1-t)<x_n^2\geq e^{\frac1a}(1-t) $, then
\begin{align*} 
\begin{split}
I_3 
% &   \approx          a   ( 1-t)^a \int_{2-2t}^{\min \big(\frac12, e^{\frac1a} (1-t) \big)}   s^{ -\frac12}\ln \frac{s}{1-t}   \min(1, \frac{x_n^2}{s})    ds\\
%&   \approx           a  ( 1-t)^a \int_{2-2t}^{x_n^2}   s^{ -\frac12}\ln \frac{s}{1-t}      ds +   a x_n^2  ( 1-t)^a \int_{x_n^2 }^{\min \big(\frac12, e^{\frac1a} (1-t) \big)}   s^{ -\frac32}\ln \frac{s}{1-t}    ds \\
&   \approx          a (1-t)^{a +\frac12} \int_{2 }^{\frac{x_n^2}{1-t}}   s^{-\frac12}\ln s      ds +  a x_n^2 (1-t)^{a -\frac12}  \int_{\frac{x_n^2}{1-t} }^{p(t)}   s^{-\frac32}\ln s    ds\approx   a(1-t)^{a+\frac12}(e^{\frac1a}-2).
\end{split}
\end{align*}

iii) If $x_n^2>e^{\frac1a}(1-t)$, then
\begin{align*} 
I_3 
&   \approx           a (1-t)^{a +\frac12} \int_{2 }^{e^{\frac1a}}s^{ -\frac12}  \ln s      ds\approx a(1-t)^{a+\frac12}(e^{\frac1a}-2).
\end{align*}

Now assume that \(e^{\frac1a}\geq 10/3\) so that \(5/2\geq 3/4 p(t)\) holds.

i) If $ x_n^2 \geq \frac52(1-t)$, then \(\min\left\{1,\frac{x_n^2}{(1-t)s}\right\}\approx \frac{x_n^2}{(1-t)s}\) for \(s>2\) and thus  \[
\displaystyle I_3 =ax_n^2(1-t)^{a-\frac12}\int_2^{e^{\frac{1}{a}}}s^{-\frac32}\ln s ds\approx ax_n^2(1-t)^{a-\frac12}.\]

ii)-1 If $ \frac52(1-t) < x_n^2 \geq  \frac34 p(t)(1-t)$ and \(10/3<e^{\frac1a}<4\), then \(\min\left\{1,\frac{x_n^2}{(1-t)s}\right\}\approx 1\) and thus \[\displaystyle
I_3 
% &   \approx          a   ( 1-t)^a \int_{2-2t}^{  e^{\frac1a} (1-t) }   s^{ -\frac12}\ln \frac{s}{1-t}   \min(1, \frac{x_n^2}{s})    ds\\
    \approx          a   ( 1-t)^{a+\frac12} \int_{2}^{  e^{\frac1a}  }      s^{-\frac12}\ln s  ds
    \approx          a   ( 1-t)^{a+\frac12}\approx ax_n(1-t)^a \ln \frac{x_n^2}{1-t},\]
where we have used that \(10/3\leq e^{\frac1a}\leq 4\) implies \(\frac52(1-t)\leq x_n^2\leq 3(1-t)\).

ii)-2  If $ \frac52(1-t) < x_n^2 \geq \frac34 p(t)(1-t) $ and \(e^{\frac1a}\geq 4\), then
\begin{align*} 
\begin{split}
I_3 
% &   \approx          a   ( 1-t)^a \int_{2-2t}^{\min \big(\frac12, e^{\frac1a} (1-t) \big)}   s^{ -\frac12}\ln \frac{s}{1-t}   \min(1, \frac{x_n^2}{s})    ds\\
%&   \approx           a  ( 1-t)^a \int_{2-2t}^{x_n^2}   s^{ -\frac12}\ln \frac{s}{1-t}      ds +   a x_n^2  ( 1-t)^a \int_{x_n^2 }^{\min \big(\frac12, e^{\frac1a} (1-t) \big)}   s^{ -\frac32}\ln \frac{s}{1-t}    ds \\
&   \approx          a (1-t)^{a +\frac12} \int_{2 }^{\frac{x_n^2}{1-t}}   s^{-\frac12}\ln s      ds +  a x_n^2 (1-t)^{a -\frac12}  \int_{\frac{x_n^2}{1-t} }^{p(t)}   s^{-\frac32}\ln s    ds\\
&   \approx       a x_n (1-t)^{a  } \ln \frac{x_n^2}{1-t}  +   a x_n^2 (1-t)^{a -\frac12}\frac{(1-t)^{\frac12}}{x_n}\ln\frac{x_n^2}{1-t}  \approx     ax_n(1-t)^a\ln\frac{x_n^2}{1-t}.
\end{split}
\end{align*}

iii) If $x_n^2>\frac34 p(t)(1-t) $, then \(p(t)=e^{\frac1a}\) and thus
\begin{align*}
I_3  &   \approx  a(1-t)^{a+\frac12}\int_2^{e^{\frac1a}}s^{-\frac12}\ln s ds\approx a(1-t)^{a+\frac12}(e^{\frac1a}-2)\approx  e^{\frac{1}{2a}}(1-t)^{a+\frac12}.      
\end{align*}

Next we estimate \(I_4\). Denoting \(m=\max\left\{2,e^{\frac1a}\right\}\), we have 
\begin{align*}
I_4  \approx (1-t)^{a+\frac12}\int_{m}^{\frac{1}{2(1-t)}}s^{a-\frac12}\min\left\{1,\frac{x_n^2}{(1-t)s}\right\}ds.
\end{align*}

Note that $e^{\frac1a} (1-t)  > 1/2 $, then \(I_4 =0\). Thus from now on when we estimate \(I_4\), we assume that  $e^{\frac1a} (1-t)  \geq  1/2 $.

First consider the case \(1<e^{\frac1a}\leq 2\). In this case, we have \(m=2\).

i) If \(x_n^2\leq 2(1-t)\), then \(\min\left\{1,\frac{x_n^2}{(1-t)s}\right\}=\frac{x_n^2}{(1-t)s}\) and thus
\begin{align*}
I_4=x_n^2\frac{2^{\frac12-a}-(2(1-t))^{a-\frac12}}{a-\frac12}\approx \frac{2^{\frac12-a}x_n^2}{a-\frac12}.
\end{align*}

ii) If \(2(1-t)<x_n^2\leq e^{-2}\) then
\begin{align*}
I_4&=(1-t)^{a+\frac12}\left(\frac{\frac{x_n^2}{1-t}(2(1-t))^{\frac12-a}}{a-\frac12}-\frac{\left(\frac{x_n^2}{1-t}\right)^{a+\frac12}}{(a+\frac12)(a-\frac12)}-\frac{2^{a+\frac12}}{a+\frac12}\right)\approx \frac{2^{\frac12-a}x_n^2}{a-\frac12}.
\end{align*}

Now consider the case \(2<e^{\frac1a}\leq \frac{1}{2(1-t)}\). In this case, we have \(m=e^{\frac1a}\).

i) If $ x_n^2 \leq e^{\frac1a}(1-t)$, then \(\min\left\{1,\frac{x_n^2}{(1-t)s}\right\}=\frac{x_n^2}{(1-t)s}\) and thus
\begin{align*}
I_4  &\approx x_n^2(1-t)^{a-\frac12}\int_{e^{\frac1a
}}^{\frac{1}{2(1-t)}}s^{a-\frac32}ds \approx x_n^2\left[\frac{2^{\frac12-a}-(e^{\frac1a}(1-t))^{a-\frac12}}{a-\frac12}\mathbf{1}_{a\neq \frac12}+|\ln(2e^2(1-t)|\mathbf{1}_{a=\frac12}\right].
\end{align*}

ii) If $ e^{\frac1a}(1-t)<x_n^2\leq \frac{1}{e^2}$, then
\begin{align*}
I_4 
&\approx    (1-t)^{a+\frac12}\left[\int_{e^{\frac1a}}^{\frac{x_n^2}{1-t}}s^{a-\frac12}ds+\frac{x_n^2}{1-t}\int_{\frac{x_n^2}{1-t}}^{\frac{1}{2(1-t)}}s^{a-\frac32}ds\right]\\
%&=(1-t)^{a+\frac12}\left[\frac{1}{a+\frac12}\left(\left(\frac{x_n^2}{1-t}\right)^{a+\frac12}-m^{a+\frac12}\right)\right.\\
%&\quad\left.+\frac{x_n^2}{1-t}\left(\frac{1}{\frac12-a}\left(\left(\frac{x_n^2}{1-t}\right)^{a-\frac12}-(2(1-t))^{\frac12-a}\right)\mathbf{1}_{a\neq \frac12}+|\ln(2x_n^2)|\mathbf{1}_{a=\frac12}\right)\right]\\
&=  \frac{x_n^{2a+1}-(e^{\frac1a}(1-t))^{a+\frac12}}{a+\frac12}+x_n^2\left(\frac{2^{\frac12-a}  -  x_n^{2a-1}}{a-\frac12}\mathbf{1}_{a\neq \frac12}  +  |\ln(2x_n^2)|\mathbf{1}_{a=\frac12}\right).
\end{align*}

{\it \underline{2. Estimate of \(A_2\)}.} \quad Since \(x_n<\frac1e\) and $\int_{-\infty}^0  \pa_{x_n} \Ga_1 (x_n -y_n ,t-s)     ds  \approx  -\min ( 1,  x_n -y_n ) $, we have  
\begin{align}\label{A2final}
\begin{split}
A_2(x,t) &=  - \int_0^{x_n} \int_{\Rn} \pa_{y_n} \pa_{y_n} N(x'-y', y_n)  \psi(y') dy'   \int_{-\infty}^0  \pa_{x_n} \Ga_1 (x_n -y_n ,t-s) \phi(t)    ds    dy_n\\
 %&\approx      \frac{1}{|x|^{n}} \| \psi\|_{L^1 (\Rn)}  (1-t)^a   \int_0^{x_n}     \min\left\{1,  y_n   \right\}     dy_n\\
&\approx      \frac{x_n\min\left\{1, x_n\right\}}{|x|^{n}} (1-t)^a  \approx \frac{x_n^2}{|x'|^n}(1-t)^a.
\end{split}
\end{align}

Note that by taking \(M=c_0(1/8)^a\) with \(c_0>1\) sufficiently large, \(A_2\) can be absorbed into the term \(\displaystyle -\frac{1}{|x'|^n}Mx_n^2\) appearing in \(A_1\).

 {\it \underline{3. Estimate of \(A_3\)}.} \quad Since 
\(\displaystyle  J_2(x,t) \leq c \Big(    \frac{  (t-s) }{  |x|^{n+2} }  +\frac{   x_n^2}{  |x|^{n+2} }   e^{-\frac{c|x'|^2}{t-s} }\Big) 
\leq c    \frac{  t-s }{  |x|^{n+2} }
 \)for $ x_n \leq \frac1n |x'|$ from (2) of Lemma \ref{lemma0709-1}, we have 

\begin{align*}
|A_3(x,t)|  
%& \leq c |x|^{-n-2} \| \psi\|_{L^1 (\Rn)} \int_0^t \int_0^{x_n}\frac{y_n}{( t-s)^\frac12 } e^{-\frac{y_n^2}{t-s}}          d y_n    \phi(s) ds \\
%& \leq c |x|^{-n-1} \| \psi\|_{L^1 (\Rn)} \int_0^t \int_0^{\frac{x_n}{\sqrt{t-s}} }y_n e^{- y_n^2 }          d y_n    \phi(s) ds \\
& \lesssim |x|^{-n-2} \int_0^t   (t -s)^\frac12\min\left\{1, \frac{x_n^2}{t-s}\right\}   \phi(s) ds\\
& =|x|^{-n-2}  \left(\int_0^\frac12+\int_{\frac12}^{2t-1}+\int_{2t-1}^t\right) (t -s)^\frac12\min\left\{1, \frac{x_n^2}{t-s}\right\}   \phi(s) ds\\
&=   |x|^{-n-2} (I_5+I_6+I_7).
\end{align*}

Using \(t-s\approx 1\) for \(0<s<1/2\) and \(7/8<t<1\), \(I_5\) is easily estimated as \(\displaystyle
I_5 
 \approx   M    \min\left\{1,  x_n^2 \right\}.  \)

Using \(t-s\approx 1-s\) for \(1-t<s<1/2\) and \(7/8<t<1\), for \(I_6\),  we have the following estimates
\begin{align*}
I_6 & 
%\approx    \int_{1-t}^{ \frac12} s^{a+\frac12}  \min\left\{1, \frac{x_n^2}{s}\right\}    ds
 \approx \left\{\begin{array}{ll}
x_n^2 \Big( 1+\frac{2^{-a-\frac12} - (1-t)^{a+\frac12}}{a +\frac12}\mathbf{1}_{a \neq -\frac12} +  |\ln (1-t)|\mathbf{1}_{a =-\frac12} \Big),  &    \textrm{ if } x_n^2 < 2( 1-t),\\
\vspace{2mm}
x_n^2\Big( 1  + \frac{2^{-a-\frac12} - x_n^{2a+1}}{a +\frac12}\mathbf{1}_{a \neq -\frac12} +  |\ln x_n|\mathbf{1}_{a =  -\frac12} \Big),  &    \textrm{ if } 2(1-t)< x_n^2<  \frac12.
%\frac{2^{-a-\frac32}-(1-t)^{a+\frac32}}{a+\frac32},  &     \textrm{ if } \frac12 < x_n^2.
\end{array}
\right.
\end{align*}

Finally, using \(1-s\approx 1-t\) for \(2t-1<s<t\), \(I_7\) is estimated as
\begin{align*}
I_7
& \approx   (1-t)^a \int_0^{1-t}  s^\frac12 \min\left\{1, \frac{x_n^2}{s}\right\}    ds  \approx    \left\{\begin{array}{ll}\vspace{2mm}
x_n^2 (1-t)^{a+\frac12}, \quad  &    \textrm{ if } x_n^2 < 2( 1-t),\\
(1-t)^{a +\frac32},  \quad &     \textrm{ if }  2(1-t)<x_n^2<\frac12.
\end{array}
\right.
\end{align*}

Collecting the previous estimates gives
%\begin{align*}
%\int_0^t (t-s)^{\frac12}\min\left\{1, \frac{x_n^2}{t-s}\right\}   \phi(s) ds 
%&  \approx\left\{\begin{array}{ll}\vspace{2mm}
%x_n^2 \Big( 1 + \frac{2^{-a-\frac12}- (1-t)^{a+\frac12}}{a +\frac12}\mathbf{1}_{a \neq -\frac12} +  |\ln (1-t)|\mathbf{1}_{a =-\frac12} \Big),  &   \textrm{ if }   x_n^2 < 2( 1-t),\\
%\vspace{2mm}
%x_n^2\Big( 1  + \frac{2^{-a-\frac12} - x_n^{2a+1}}{a +\frac12}\mathbf{1}_{a \neq -\frac12} +  |\ln x_n|\mathbf{1}_{a =\frac12} \Big),  \ &   \textrm{ if } 2(1-t)< x_n^2< \frac12.
%%1,   &    \textrm{ if }   t-\frac12 < x_n^2.
%\end{array}
%\right.
%\end{align*}
%
%
% 
%
%Hence,  we have 
\begin{align}\label{wn2Lfinal}
|A_3(x,t)|   & \lesssim     \frac{x_n^2}{|x'|^{n+2}} \left\{\begin{array}{ll}\vspace{2mm}
 \Big( M + \frac{2^{-a-\frac12} - (1-t)^{a+\frac12}}{a +\frac12}\mathbf{1}_{a \neq -\frac12} +  |\ln (1-t)|\mathbf{1}_{a =-\frac12} \Big),  &   \textrm{ if }   x_n^2 < 2( 1-t),\\
\vspace{2mm}
\Big( M  + \frac{2^{-a-\frac12} - x_n^{2a+1}}{a +\frac12}\mathbf{1}_{a \neq -\frac12} +  |\ln x_n|\mathbf{1}_{a =  -\frac12} \Big),   &   \textrm{ if } 2(1-t)< x_n^2< \frac12.
%1,   &    \textrm{ if }   \frac12 < x_n^2.
\end{array}
\right.
\end{align}
By taking \(|x'|\) sufficiently large, we see that \(A_3\) is absorbed into \(A_1\).
Since \(w_n^{N}(x,t)-w_n^{(L)}(x,t)=A_1+A_2+A_3\approx A_1\), combining all the estimates for \(I_1\) to \(I_4\) gives the result. 
\end{proof}

%%%%%%%%%%%%%%%%%%%%%%%%%%%%%%%%%%%%%%%%%%%%%%%%%%%%%%%%%%%%%%%%%%%%%%%%%%%%%%%%%%%%%%%%%%%%%%%%%%%%%%%%%%%%%%%%%%%%%%%%%%%%%%%%%%%%%%%%%%%%%%%%%%%%%%%%%%%%%%%%%%%%%%%%%%%%%

\subsubsection{{\bf Asymptotics of \texorpdfstring{\(x_{nn}^*\)}{} and sign of \texorpdfstring{\(w_n\)}{}}}
To state the asymptotics of \(x_n^*\) for \(w_n\) when \(t<1\), we recall the notations and their conventions introduced in the beginning of subsection 3.2. We also define the following set: \(S_n(\alpha):=S_n(\alpha,\alpha)\). 
We now define the following functions: for \(\mu(t):=M+\sigma(t)\mathbf{1}_{e^{\frac1a}<\frac{1}{2(1-t)}}\), we define
\begin{align*}
&g_{1a}(t):=(\mu(t)+a(1-t)^{a-\frac12})\left(\min\left\{e^{-1}, \left(\frac52(1-t)\right)^{\frac12}\right\}\mathbf{1}_{e^{\frac1a}>\frac{10}{3}} + \min\left\{e^{-1},(2(1-t))^{\frac12}\right\}\mathbf{1}_{2<e^{\frac1a}<\frac{10}{3}}\right)\\
&\quad\quad\quad\quad + \left(M+a(1-t)^{a-\frac12}  +  \frac{2^{\frac12-a}}{a-\frac12}\right)\mathbf{1}_{e^{\frac1a}\leq 2},\\
& g_{2a}(t):=a(1-t)^a\mathbf{1}_{e^{\frac1a}>\frac{10}{3}},\\
& g_{3a}(t):=a(1-t)^a \min\left\{\ln \frac{2a(1-t)^{a-\frac12}}{\mu(t)}, \left|\ln\left((1-t)^{-1}\min\left\{e^{-2},\frac34 e^{\frac1a}(1-t)\right\}\right)\right|\right\}\mathbf{1}_{e^{\frac1a}>\frac{10}{3}},\\
& g_{4a}(t):=  \left(\mu(t) (1-t)^\frac12+a(1-t)^a\ln \frac{2a(1-t)^{a-\frac12}}{\mu(t)} \right)\mathbf{1}_{e^{\frac1a}>\frac{10}{3}},\\
& g_{5a}(t):= \left(\mu(t)\min\left\{e^{-1}, \left(\frac34 e^{\frac1a}(1-t)\right)^{\frac12}\right\} + a(1-t)^a \ln \frac{2a(1-t)^{a-\frac12}}{\mu(t)}\right)\mathbf{1}_{e^{\frac1a}>\frac{10}{3}},\\
& g_{6a}(t):= e^{\frac{1}{2a}}(1-t)^{\frac12}(1+\sigma(t))\mathbf{1}_{2<e^{\frac{1}{a}}<\frac{1}{2(1-t)}},\\
& g_{7a}(t):=\frac{(1-t)^a}{\frac12-a}\mathbf{1}_{0<a<\frac12}  +  \frac{e^{\frac{1}{2a}}}{a-\frac12}(1-t)^{\frac12}\mathbf{1}_{2<e^{\frac1a}<e^2},\\
& g_{8a}(t):=\frac{e^{\frac{2a}{2a-1}}}{\frac12-a}\mathbf{1}_{0<a<\frac12}  +  \frac{e^{\frac{1}{1-2a}}}{a-\frac12}\mathbf{1}_{2<e^{\frac1a}<e^{2}},\\
& g_{9a}(t):=\max\left\{e^{\frac{1}{2a}}(1-t)^{\frac12},e^{\frac{1}{2a-1}}\right\}\mathbf{1}_{e^2<e^{\frac1a}<\frac{1}{2(1-t)}}  +  e(1-t)^{\frac12}\mathbf{1}_{a=\frac12}  +  \max\left\{e(1-t)^a, 2^{-a}e^{\frac{2a}{1-2a}}\right\}\mathbf{1}_{2<e^{\frac1a}<e^2}.
\end{align*}
We then define the following sets:
\begin{align*}
&D_a^1:=S_n(0,g_{1a}(t)),\quad D_a^2:= S_n(g_{2a}(t), g_{3a}(t)),\quad D_a^3:=S_n(g_{4a}(t), g_{5a}(t)),\\
&D_a^4:= S_n(g_{6a}(t)), \quad D_a^5:=S_n(g_{7a}(t), g_{8a}(t)),\quad D_a^6:=V_n(g_{9a}(t), 1).
\end{align*}
 \begin{proposition}\label{4thmainprop}
 Let \(w\) be the solution of the Stokes system \eqref{StokesRn+} defined by \eqref{rep-bvp-stokes-w} with the boundary data \(g=g_n{\bf e}_n\) given by \eqref{0502-6}-\eqref{boundarydataspecific}. There exists \(c_*>0\) depending on \(n\) and \(M\) such that if \(|x'|>c_*\) and \(7/8<t<1\), then there are \((x_{nn1}^*, x_{nn2}^*)\in S^{+-}(w_n(x',\cdot, t); 0, e^{-1})\) satisfying the following: for \(k=1,2\),
\begin{itemize}
\item[(i)] If \(-1<a<0\), then \(w_n(x,t)>0\).
\item[(ii)] Let \(0<a<(\ln\frac{10}{3})^{-1}\).

\begin{itemize}
\item[(a)] If \(\frac78<t<1-\frac12 e^{-\frac1a}\), then
\begin{align*}
x_{nnk}^*&\approx (M+a(1-t)^{a-\frac12})^{-1}e^{-|x'|^2}\mathbf{1}_{D_a^1} + (1-t)^{\frac12}\exp{\frac{e^{-|x'|^2}}{2a(1-t)^a}}\mathbf{1}_{D_a^2} \\
&\quad +    M^{-1}\left(e^{-|x'|^2}  +  a(1-t)^a \ln \frac{M}{2a(1-t)^{a-\frac12}}\right)\mathbf{1}_{D_a^3}.
\end{align*}

\item[(b)] If \(1-\frac12 e^{-\frac1a}\leq t<1\), then
\begin{align}\label{1118-1}
\begin{split}
x_{nnk}^*&\approx (M+a(1-t)^{a-\frac12}+\sigma(t))^{-1}e^{-|x'|^2}\mathbf{1}_{D_a^1}  +   (1-t)^{\frac12}\exp\frac{e^{-|x'|^2}}{2a(1-t)^a}\mathbf{1}_{D_a^2}  \\
&\quad + (M+\sigma(t))^{-1}\left(e^{-|x'|^2}  +  a(1-t)^a \ln \frac{M+\sigma(t)}{2a(1-t)^{a-\frac12}}\right)\mathbf{1}_{D_a^3}\\
&\quad  + (M+\sigma(t))^{-1}e^{-|x'|^2}\mathbf{1}_{D_a^4}\\
&\quad   +  \left(\left((\frac12-a)e^{-|x'|^2}\right)^{\frac{1}{2a}}\mathbf{1}_{a<\frac12}   +   (a-\frac12)e^{-|x'|^2}\mathbf{1}_{a>\frac12}\right)\mathbf{1}_{D_a^5}\\
&\quad   +   \left(  \frac{e^{-|x'|^2}}{\ln (|x'|^{-n}e^{|x'|^2})}\mathbf{1}_{a\leq \frac12}   + \left(\frac{e^{-|x'|^2}}{\ln (|x'|^{-n}e^{|x'|^2})}\right)^{\frac{1}{2a}}\mathbf{1}_{a>\frac12}     \right)\mathbf{1}_{D_a^6}.
\end{split}
\end{align}
\end{itemize}
\item[(iii)] Let \((\ln \frac{10}{3})^{-1}\leq a<(\ln 2)^{-1}\), then
\begin{align*}
x_{nnk}^*&\approx  (M+(1-t)^{a-\frac12}+\sigma(t))^{-1}e^{-|x'|^2}\mathbf{1}_{D_a^1}  +  (M+\sigma(t))^{-1}e^{-|x'|^2}\mathbf{1}_{D_a^4}\\
&\quad  +(a-\frac12)e^{-|x'|^2}\mathbf{1}_{D_a^5}  +    \left(\frac{e^{-|x'|^2}}{\ln (|x'|^{-n}e^{|x'|^2})}\right)^{\frac{1}{2a}}\mathbf{1}_{D_a^6}.
\end{align*}
\item[(iv)]Let \(a\geq (\ln 2)^{-1}\), then \(x_{nnk}^*\approx \left(M+a(1-t)^{a-\frac12}+\frac{2^{\frac12-a}}{a-\frac12}\right)^{-1}e^{-|x'|^2}\mathbf{1}_{D_a^1}\).
\end{itemize}

 Moreover, if \(a>0\), then there are \((x_{nn1}^*, x_{nn2}^*)\in S^{-+}(w_n(x',\cdot, t); e^{-1}, \infty)\) satisfying \\
 \(x_{nn1}^*, x_{nn2}^*\approx (e^{-|x'|^2}+(1-t)^a)^{-1}\) for \(|x'|^{-1}\approx e^{-|x'|^2}  +(1-t)^a\lesssim 1\).  
 \end{proposition}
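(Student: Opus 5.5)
The plan follows the scheme of Propositions \ref{1stmainprop} and \ref{2ndmainprop}. We start from the decomposition \eqref{0805}, $w_n=(w_n^N-w_n^{(L)})+\frac{n-1}{n}w^G$. Lemma \ref{wngnneq0} gives $w^G\approx x_ne^{-|x|^2}$, which is positive and linear in $x_n$. Lemma \ref{0716} gives $w_n^N-w_n^{(L)}$, which is quadratic in $x_n$ near the boundary and carries a sign determined by $a$. We then split into the same ranges of $x_n$ as in Lemma \ref{0716}: $x_n^2\le 2(1-t)$ (or $\frac52(1-t)$), the intermediate layers up to $e^{\frac1a}(1-t)$, then up to $e^{-2}$, and finally $x_n>e^{-1}$. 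On each range we write two-sided bounds
\[
\frac{x_n}{|x'|^n}\bigl(c_2|x'|^{n}e^{-c_3|x'|^2}-c_1F(x_n,t)\bigr)\le w_n\le \frac{x_n}{|x'|^n}\bigl(d_2|x'|^{n}e^{-d_3|x'|^2}-d_1F(x_n,t)\bigr),
\]
where $F$ is read off from the bracket in Lemma \ref{0716}. The zeros $x_{nn1}^*\le x_{nn2}^*$ of the two comparison functions are solved explicitly, using Lemma \ref{lemma0630} and \eqref{lemma0630-2} whenever $F$ involves $x_n^{2a}$, a logarithm, or $x_n^{2a+1}$. This gives $(x_{nn1}^*,x_{nn2}^*)\in S^{+-}$ on the corresponding layer. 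The sets $D_a^l$ are defined precisely as the conditions on $|x'|,t$ under which these zeros fall inside the layer.

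For (i) with $-1<a<0$, Lemma \ref{0716}(i) gives $w_n^N-w_n^{(L)}>0$ for $x_n\le e^{-1}$, and $w^G>0$, so $w_n>0$ there. For $x_n>e^{-1}$ we use \eqref{1210-4} together with \eqref{wN}. Here $w_n^{(L)}$ has the sign of $-(|x'|^2-(n-1)x_n^2)$ times a positive factor. Combined with $w_n^N\approx(1-t)^ax_n|x|^{-n}>0$ and the fact that for $a<0$ the factor $(1-t)^a$ dominates $1+\frac{1-(1-t)^{a+1/2}}{a+1/2}$ as $t\to1$, this gives positivity; we should check this once $\epsilon_0$ is small.

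For (ii)–(iv) we take $a>0$. Case (iv) uses Lemma \ref{0716}(ii), where $F=x_n(M+a(1-t)^{a-1/2}+\frac{2^{1/2-a}}{a-1/2})$ is linear, so the zero is $\approx e^{-|x'|^2}/(\cdots)$, giving $D_a^1$. Case (iii) uses Lemma \ref{0716}(iii)(1)–(3), producing $D_a^1,D_a^4,D_a^5,D_a^6$. Case (ii) uses (iv)/(v) of Lemma \ref{0716}, with the split at $t=1-\frac12e^{-1/a}$ deciding whether $\sigma(t)$ appears. The layer $\frac52(1-t)<x_n^2\le\frac34e^{1/a}(1-t)$ produces the equation $Mx_n+a(1-t)^a\ln\frac{x_n^2}{1-t}=e^{-|x'|^2}$, whose solution gives the $D_a^2,D_a^3$ asymptotics (the exponential form when the log term dominates). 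For $x_n>e^{-1}$ with $a>0$, we use \eqref{1210-4} and \eqref{wN}. Positive terms of size $e^{-|x'|^2}+(1-t)^a$ times $x_n|x|^{-n}$ compete with $-|x'|^2|x|^{-n-2}$ for $x_n\lesssim|x'|$, while for $x_n\gtrsim|x'|$ all terms are positive. The crossing at $x_n\approx(e^{-|x'|^2}+(1-t)^a)^{-1}$ then gives the $S^{-+}$ claim in the stated regime.

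The main obstacle is bookkeeping rather than any single estimate. We must ensure that in each layer the lower bound uses constants $c_i$ and the upper bound $d_i$ in the right order, so that $x_{nn1}^*\le x_{nn2}^*$. We must also verify, via the inequalities \eqref{lemma0630-2} and Lemma \ref{lemma0630}, that the explicit zeros actually land inside the layer under the conditions defining $D_a^l$. The logarithmic layer (sets $D_a^2,D_a^3$) is the most delicate, and there we would apply Lemma \ref{lemma0630}(ii) after the substitution $\theta=x_n^2/(1-t)$.
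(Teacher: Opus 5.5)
Your overall architecture matches the paper's. You use the splitting \eqref{0805}, the bound $w^G\approx x_ne^{-|x|^2}$ from Lemma \ref{wngnneq0}, the layers of Lemma \ref{0716} for $x_n<e^{-1}$, and \eqref{1210-4} with \eqref{wN} for $x_n>e^{-1}$ (the paper's Cases 2 and 3). The step you single out as the key tool, however, would not work. In the layer $\frac52(1-t)<x_n^2\le\frac34e^{\frac1a}(1-t)$, the comparison equation $Mx_n+a(1-t)^a\ln\frac{x_n^2}{1-t}=e^{-|x'|^2}$ becomes, after $\theta=x_n^2/(1-t)$, the equation $M(1-t)^{\frac12}\theta^{\frac12}+a(1-t)^a\ln\theta=e^{-|x'|^2}$. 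This is a \emph{sum} of a power and a logarithm. Lemma \ref{lemma0630} only inverts the \emph{product} $\theta^a\ln\theta=\mathrm{const}$ and returns roots of the shape $M^{\frac1a}|\ln M|^{-\frac1a}$. In the paper's proof of this proposition it is used only in the outer layer $e^{\frac1a}(1-t)<x_n^2\le e^{-2}$, for $x_n|\ln(2x_n^2)|=c$ and $x_n^{2a}|\ln(2x_n^2)|=c$, which is where $D_a^6$ comes from. It cannot produce $(1-t)^{\frac12}\exp\frac{e^{-|x'|^2}}{2a(1-t)^a}$, nor the additive correction $a(1-t)^a\ln\frac{M}{2a(1-t)^{a-\frac12}}$ in the $D_a^3$ term. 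What is missing is the exact inversion
\begin{align*}
\widetilde{x}_n=\frac{2a(1-t)^a}{M}\,W\Bigl(\frac{M(1-t)^{\frac12-a}}{2a}\exp\frac{e^{-|x'|^2}}{2a(1-t)^a}\Bigr),
\end{align*}
with $W$ the Lambert function, combined with \eqref{lambertw}. The threshold $z\approx e$ in $W(z)\approx z\mathbf{1}_{z\le e}+\ln z\,\mathbf{1}_{z\ge e}$ is exactly what separates $D_a^2$ (log term dominant, exponential form) from $D_a^3$ (linear term dominant). The same computation with $M$ replaced by $M+\sigma(t)$ gives the corresponding lines in (ii)(b). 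An elementary substitute is to use that $u\mapsto Mu+2a(1-t)^a\ln u$ is increasing and test which term dominates at each candidate root; Lemma \ref{lemma0630} is not that argument.

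For (i) with $x_n>e^{-1}$, your argument only gives positivity for $t$ close to $1$. You introduce an $\epsilon_0$ that is not in the statement, which asserts $w_n>0$ on the whole range $\frac78<t<1$. By \eqref{1210-4} and \eqref{wN}, for $e^{-1}<x_n\ll|x'|$ one has $w_n\approx|x'|^{-n}\bigl((1-t)^ax_n-\Phi(t)\bigr)$, up to the exponentially small contribution of $w^G$. Here $\Phi(t)=1+\frac{1-(1-t)^{a+\frac12}}{a+\frac12}$, with $|\ln(1-t)|$ in place of the fraction when $a=-\frac12$. You therefore need $(1-t)^ae^{-1}$ to beat $\Phi(t)$ uniformly in $t$, and the limit $t\to1$ does not give this. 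For $a$ near $0^-$ and $1-t\approx\frac18$, both quantities are of order one, so the unspecified constants in $\approx$ decide the sign. The paper simply reads (i) off Lemma \ref{wngnneq0} and \eqref{wN} without making this comparison. To close (i) as stated you would need a sharper computation near $x_n\approx e^{-1}$; otherwise your proof only yields (i) for $1-\epsilon_0(a)<t<1$.
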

 \begin{remark}
  All the \(x_{nnk}^*\)'s above satisfy \(x_{nnk}^*\leq e^{-1}\) except for the last line of the Proposition. Indeed, it is clear from the proof that the following bounds hold: \(x_{nnk}^*\leq \min\left\{(\frac52(1-t))^{\frac12},e^{-1}\right\}\) if \((x',t)\in D_a^1\) (except when \(e^{\frac1a}\leq 2\), but in this case it is clear by taking \(|x'|\) sufficiently large depending only on \(n\)), \((\frac52(1-t))^{\frac12}\leq x_{nnk}^*\leq e^{-1}\) if \((x',t)\in D_a^2\cup D_a^3\), \((2(1-t))^{\frac12}\leq x_{nnk}^* \leq \min\left\{ e^{-1}, (e^{\frac1a}(1-t))^{\frac12}\right\}\) if \((x'.t)\in D_a^4\), and \((e^{\frac1a}(1-t))^{\frac12}\leq x_{nnk}^*\leq e^{-1}\) if \((x',t)\in D_a^5\cup D_a^6\).
 \end{remark}
% \begin{remark}
% The bounds \eqref{1118-1} may initially mislead the reader to think that \(x_{nnk}^*\rightarrow 0\) as \(a\rightarrow \frac12\), since the fourth line of the RHS of \eqref{1118-1} contains terms that vanish in this limit. However, the reader must take care since although the set \(D_a^5\) vanishes as \(a\rightarrow \frac12\), the set \(D_a^6\) remains nonempty in this limit. Thus the fifth line of the RHS of \eqref{1118-1} provide the asymptotics for \(x_{nnk}^*\) near \(a=\frac12\) for "most" values of \(t\). Note also that the equality case \(a=\frac12\) is satisfied by the fifth line, confirming the continuity of \(x_{nnk}^*\) in \(a\) at \(a=\frac12\). {\color{blue}What happens when \(a\rightarrow 0+\)?}
%\end{remark}
\begin{remark}
Note that (iii)-(4), (iv)-(3) and (v) are the cases where we can take the limit \(t\rightarrow 1^{-}\). Then the corresponding asymptotics of \(x_{nnk}^*\) matches with those as \(t\rightarrow 1^{+}\). See the asymptotics of \(x_{nnk}^*\) for \((x',t)\in B_a^2 \cup B_a^3\) in Proposition \ref{2ndmainprop}.
\end{remark}

\begin{proof}

If \(-1<a<0\), then by  Lemma \ref{wngnneq0} and \eqref{wN},          we have \(w_n (x,t)  = w^N_n - w_n^L + w^G> 0.\)

{\bf\(\bullet\) (Case 1: \(x_n<e^{-1}\))}\quad

Throughout the proof, we will omit the constants appearing in each term of the bounds of \(w_n\) which will be presented below. But the reader must keep in mind that whenever we use the notation \(\lesssim\), \(\gtrsim\) or \(\approx\), we mean that there exist such constant in accordance with the convention we introduced in the beginning of Section \ref{sect3}.

{\bf \underline{Subcase 1-1:} \(0<a<(\ln\frac{10}{3})^{-1}\)}.\quad
Assume first that \(\frac78<t<1-\frac12 e^{-\frac1a}\).

1. \(x_n^2<\min\left\{e^{-2},\frac52(1-t)\right\} \).\quad From (v)-(1) of Lemma \ref{0716}, we have
\begin{align*}
w_n(x,t)\approx -\frac{x_n^2}{|x'|^n}\left(M+a(1-t)^{a-\frac12}\right)+x_ne^{-|x'|^2}.
\end{align*}
Thus if \(e^{-|x'|^2}\lesssim (M+a(1-t)^{a-\frac12})\min\left\{e^{-1}, (\frac52(1-t))^{\frac12}\right\}\), then there exists \(x_{nn1}^*, x_{nn2}^*\) such that
\begin{align*}
x_{nn1}^*, x_{nn2}^*\approx \frac{e^{-|x'|^2}}{M+a(1-t)^{a-\frac12}},
\end{align*}
and \((x_{nn1}^*, x_{nn2}^*)\in S^{+-}\left(w_n(x', \cdot, t);0, \min\left\{ e^{-1},  (\frac52(1-t))^{\frac12}\right\}\right)\).

On the other hand, if \(e^{-|x'|^2}\gtrsim  (M+a(1-t)^{a-\frac12})\min\left\{e^{-1}, (\frac52(1-t))^{\frac12}\right\}\), then \(w_n(x,t)>0\).

2. \(\min\left\{ e^{-2}, \frac52(1-t)\right\}<x_n^2<e^{-2} \). \quad
From Lemma \ref{wngnneq0} and (iv)-(2) of Lemma \ref{0716}, we have 
\begin{align*}
w_n(x,t)\approx -\frac{x_n}{|x'|^n}\left(Mx_n+a(1-t)^{a}\ln \frac{x_n^2}{1-t}\right)+x_ne^{-|x'|^2}.
\end{align*}
Consider the equation \(Mx_n+a(1-t)^a\ln\frac{x_n^2}{1-t}=e^{-|x'|^2}\). This equation has the solution 
\begin{align*}
\widetilde{x}_n=\frac{2a(1-t)^a}{M}W\left(\frac{M(1-t)^{\frac12-a}}{2a}\exp{\frac{e^{-|x'|^2}}{2a(1-t)^a}}\right),
\end{align*}
where \(x=W(z)\) (\(z>0\)) denotes the unique solution to the equation \(xe^x=z\). Using the well-known bound:
\begin{align}\label{lambertw}
W(z)\approx z\mathbf{1}_{0\leq z \leq e} + \ln z\mathbf{1}_{z\geq e},
\end{align}
we find that
\begin{align*}
\widetilde{x}_n&\approx (1-t)^{\frac12}\exp{\frac{e^{-|x'|^2}}{2a(1-t)^a}}\mathbf{1}_{|x'|^ne^{-|x'|}\lesssim 2a(1-t)^a\ln \frac{2a(1-t)^{a-\frac12}}{M}}  \\
& +   \frac{2a(1-t)^a}{M}\left(e^{-|x'|^2}  +  \ln \frac{M(1-t)^{\frac12-a}}{2a}\right)\mathbf{1}_{|x'|^ne^{-|x'|}\gtrsim 2a(1-t)^a\ln \frac{2a(1-t)^{a-\frac12}}{M}}.
\end{align*}

Thus we have the following conclusions.

i) Let \(e^{-|x'|^2}\lesssim 2a(1-t)^a \ln \frac{2a(1-t)^{a-\frac12}}{M}\). If \(e^{-|x'|^2}\gtrsim a(1-t)^a\) and \(e^{-|x'|^2}\lesssim a(1-t)^a |\ln(1-t)|\) then there exist \(x_{nn1}^*\), \(x_{nn2}^*\) such that 
\begin{align*}
x_{nn1}^*, x_{nn2}^*\approx (1-t)^{\frac12} \exp{\frac{e^{-|x'|^2}}{2a(1-t)^a}}
\end{align*}
and \((x_{nn1}^*, x_{nn2}^*)\in S^{+-}\left(w_n(x', \cdot, t);\min\left\{e^{-1},(\frac52(1-t))^{\frac12}\right\},e^{-1}\right)\).

On the other hand, if \(e^{-|x'|^2}\gtrsim a(1-t)^a|\ln(1-t)|\), then \(w_n(x,t)>0\), while if \(e^{-|x'|^2}\lesssim a(1-t)^a|\ln(1-t)|\), then \(w_n(x,t)<0\).

ii) Let \(e^{-|x'|^2}\gtrsim 2a(1-t)^a \ln \frac{2a(1-t)^{a-\frac12}}{M}\). Then if \(e^{-|x'|^2}  +  2a(1-t)^a \ln \frac{M(1-t)^{\frac12-a}}{2a}\gtrsim M(1-t)^{\frac12}\) and \(e^{-|x'|^2}  +  2a(1-t)^a \ln \frac{M(1-t)^{\frac12-a}}{2a}\lesssim M\) then there exist \(x_{nn1}^*\), \(x_{nn2}^*\) such that 
\begin{align*}
x_{nn1}^*, x_{nn2}^*\approx \frac{1}{ M}\left(e^{-|x'|^2}+2a(1-t)^a \ln \frac{M(1-t)^{\frac12-a}}{a}\right)
\end{align*}
and \((x_{nn1}^*, x_{nn2}^*)\in S^{+-}\left(w_n(x', \cdot, t);\min\left\{e^{-1},(\frac52(1-t))^{\frac12}\right\},e^{-1}\right)\).

On the other hand, if \(e^{-|x'|^2}  +  2a(1-t)^a \ln \frac{M(1-t)^{\frac12-a}}{2a}\lesssim <M(1-t)^{\frac12}\), then \(w_n(x,t)<0\).
\\
Assume now that \(1-\frac12 e^{-\frac1a}<t<1\).

1. \(x_n^2\leq \min\left\{ e^{-2},\frac52(1-t)\right\}\). \quad By (iv)-(1) of Lemma \ref{0716}, we have 
\begin{align*}
w_n(x,t)\approx -\frac{x_n^2}{|x'|^n}\left[M+a(1-t)^{a-\frac12}  +  \sigma(t)\right]+x_ne^{-|x'|^2}.
\end{align*}
Thus if \(e^{-|x'|^2}\lesssim (M+a(1-t)^{a-\frac12}+\sigma(t))\min\left\{e^{-2},\frac52(1-t)\right\}\), then there exist \(x_{nn1}^*, x_{nn2}^*\) such that 
\begin{align*}
x_{nn1}^*, x_{nn2}^* \approx \frac{e^{-|x'|^2}}{M+a(1-t)^{a-\frac12}+\sigma(t)}
\end{align*}
and \((x_{nn1}^*, x_{nn2}^*)\in S^{+-}\left(w_n(x', \cdot, t);0,\min\left\{e^{-1},(\frac52(1-t))^{\frac12}\right\}\right)\).

On the other hand, if \(e^{-|x'|^2}\gtrsim  (M+a(1-t)^{a-\frac12}+\sigma(t))\min\left\{e^{-2},\frac52(1-t)\right\}\), then \(w_n(x,t)>0\).

2. \(\frac52(1-t)<x_n^2\leq \min\left\{e^{-2},\frac34 e^{\frac1a}(1-t)\right\}\). \quad From (iv)-(2) of Lemma \ref{0716}, we have 
\begin{align*}
w_n (x,t)\approx -\frac{x_n}{|x'|^n}\left((M+\sigma(t))x_n+a(1-t)^a\ln \frac{x_n^2}{1-t} \right)+x_ne^{-|x'|^2}.
\end{align*}

Consider the equation \((M+\sigma(t))x_n+a(1-t)^a\ln \frac{x_n^2}{1-t}=e^{-|x'|^2}\). This equation has the solution
\begin{align*}
\widetilde{x}_n=\frac{2a(1-t)^a}{M+\sigma(t)}W\left(\frac{(M+\sigma(t))(1-t)^{\frac12-a}}{2a}e^{\frac{e^{-|x'|^2}}{2a(1-t)^a}}\right).
\end{align*}
Using the bound \eqref{lambertw}, we have 
\begin{align*}
\widetilde{x}_n  &  \approx (1-t)^{\frac12}\exp{\frac{e^{-|x'|^2}}{2a(1-t)^a}}\mathbf{1}_{e^{-|x'|^2}\lesssim 2a(1-t)^a \ln \frac{2a(1-t)^{a-\frac12}}{M+\sigma(t)}}    \\
&\quad +   \frac{1}{M+\sigma(t)}\left(e^{-|x'|^2}  + 2a(1-t)^a \ln \frac{M+\sigma(t)}{2a(1-t)^{a-\frac12}}\right)\mathbf{1}_{|x'|^ne^{-|x'|}\gtrsim 2a(1-t)^a\ln \frac{2a(1-t)^{a-\frac12}}{M+\sigma(t)}}.  
\end{align*}

Thus we have the following conclusions.

 i) Let \(e^{-|x'|^2}\lesssim 2a(1-t)^a \ln \frac{2a(1-t)^{a-\frac12}}{M+\sigma(t)}\). If \(e^{-|x'|^2}\gtrsim a(1-t)^a\)\\
 and \(e^{-|x'|^2}\lesssim a(1-t)^a\left|\ln \left((1-t)^{-\frac12}\min\left\{e^{-1},\left(\frac34 e^{\frac1a}(1-t)\right)^{\frac12}\right\}\right)\right|\), then there exist \(x_{nn1}^*\), \(x_{nn2}^*\) such that 
\begin{align*}
x_{nn1}^*, x_{nn2}^*\approx (1-t)^\frac12 \exp{\frac{e^{-|x'|^2}}{2a(1-t)^a}}
\end{align*}
and \((x_{nn1}^*, x_{nn2}^*)\in S^{+-}\left( w_n(x', \cdot,t); (\frac52(1-t))^{\frac12}, \min\left\{e^{-1},\left(\frac34 e^{\frac1a}(1-t)\right)^{\frac12}\right\}\right)\).

On the other hand, if \(e^{-|x'|^2}\gtrsim a(1-t)^a\left|\ln\left((1-t)^{-\frac12}\min\left\{e^{-1},(\frac34 e^{\frac1a}(1-t))^{\frac12}\right\}\right)\right|\), then \(w_n(x,t)>0\), while if \(e^{-|x'|^2}\lesssim a(1-t)^a\), then \(w_n(x,t)<0\).

ii) Let \(|x'|^ne^{-|x'|^2}\gtrsim 2a(1-t)^a \ln \frac{2a(1-t)^{a-\frac12}}{M+\sigma(t)}\). Then if \(|x'|^ne^{-|x'|^2}  +  2a(1-t)^a \ln \frac{M+\sigma(t)}{2a(1-t)^{a-\frac12}}\gtrsim (M+\sigma(t))(1-t)^{\frac12}\) and \(|x'|^ne^{-|x'|^2}  +  2a(1-t)^a \ln \frac{M+\sigma(t)}{2a(1-t)^{a-\frac12}}\lesssim (M+\sigma(t))\min\left\{e^{-1},(\frac34e^{\frac1a}(1-t))^{\frac12}\right\}\), then there exist \(x_{nn1}^*\), \(x_{nn2}^*\) such that 
\begin{align*}
x_{nn1}^*, x_{nn2}^*\approx \frac{1}{ M+\sigma(t)}\left(|x'|^ne^{-|x'|^2}+2a(1-t)^a \ln \frac{M+\sigma(t)}{a(1-t)^{a-\frac12}}\right)
\end{align*}
and \((x_{nn1}^*, x_{nn2}^*)\in S^{+-}\left( w_n(x',\cdot, t); (\frac52(1-t))^{\frac12}. \min\{e^{-1},(\frac34 e^{\frac1a}(1-t))^{\frac12\}}\right)\).

On the other hand, if \(|x'|^ne^{-|x'|^2}  +  2a(1-t)^a \ln \frac{M+\sigma(t)}{2a(1-t)^{a-\frac12}}\lesssim (M+\sigma(t))(1-t)^{\frac12}\), then \(w_n(x,t)<0\).

3. \(\frac34 e^{\frac1a}(1-t)<x_n^2\leq \min\left\{e^{-2}, e^{\frac1a}(1-t)\right\}\).\quad From (iv)-(3) of Lemma \ref{0716}, we have
\begin{align*}
w_n(x,t)\approx -\frac{1}{|x'|^n}\left((M+\sigma(t))x_n^2+(a+e^{\frac{1}{2a}})(1-t)^{a+\frac12}\right)+x_ne^{-|x'|^2}.
\end{align*}
We now consider the case \(a=\frac12\). Then \(\sigma(t)=|\ln(2e^2(1-t))|\) and thus it is immediate that \((a+e^{\frac{1}{2a}})(1-t)^{a+\frac12}\approx (1-t)\lesssim |\ln(2e^2(1-t))|(1-t)\lesssim \sigma(t)x_n^2\). 

We now consider the case \(a\neq \frac12\). Note from \eqref{lemma0630-2} that if \(0<a<\frac12\), then
\begin{align}\label{sigma(t)a<1/2}
\sigma(t)\approx 2^{\frac12-a}|\ln(2e^{\frac1a}(1-t))|\mathbf{1}_{1-t>\frac12 e^{\frac{1}{a(2a-1)}}} + \frac{(e^{\frac1a}(1-t))^{a-\frac12}}{\frac12-a}\mathbf{1}_{1-t<\frac12 e^{\frac{1}{a(2a-1)}}},
\end{align}
and if \(a>\frac12\), then
\begin{align}\label{sigma(t)a>1/2}
\sigma(t)\approx (e^{\frac1a}(1-t))^{a-\frac12}|\ln(2e^{\frac1a}(1-t))|\mathbf{1}_{1-t>\frac12 e^{\frac{4a-1}{a(1-2a)}}}  +   \frac{2^{\frac12-a}}{a-\frac12}\mathbf{1}_{1-t<\frac12 e^{\frac{4a-1}{a(1-2a)}}}.
\end{align}

We wish to bound the term \((a+e^{\frac{1}{2a}})(1-t)^{a+\frac12}\) by \((M+\sigma(t))x_n^2\). To do this, applying the above estimates for \(\sigma(t)\) seems crucial.

We consider the last case \(a<\frac12\), then we have \(\frac12 e^{\frac{1}{a(2a-1)}}<e^{-\frac{2a+1}{a}}<\frac43 e^{-\frac{2a+1}{a}}\). If \(1-t<\frac12 e^{\frac{1}{a(2a-1)}}\), then we have \(e^{\frac{1}{2a}}(1-t)^{a+\frac12}\approx (e^{\frac{1}{a}}(1-t))^{a-\frac12}x_n^2\lesssim \sigma(t) x_n^2\), and if \(1-t>\frac12 e^{\frac{1}{a(2a-1)}}\), then \(e^{\frac{1}{2a}}(1-t)^{a+\frac12}\lesssim (e^{\frac1a}(1-t))^{a-\frac12} x_n^2\lesssim |\ln(2e^{\frac1a}(1-t))|x_n^2\approx \sigma(t) x_n^2\), where we have used the inequality \(x^{a-\frac12}\lesssim |\ln(2x)|\) for \(\frac12 e^{\frac{2}{2a-1}}<x<\frac{4}{3e^2}\). Also we note that \(e^{\frac{1}{2a}}\gtrsim a\). Hence we obtain our desired inequality \((a+e^{\frac{1}{2a}})(1-t)^{a+\frac12}\lesssim \sigma(t)x_n^2\).

Now consider the case \(a>\frac12\), then we have \(\frac12 e^{\frac{4a-1}{a(1-2a)}}<e^{-\frac{2a+1}{a}}<\frac43 e^{-\frac{2a+1}{a}}\). If \(1-t<\frac12 e^{\frac{4a-1}{a(1-2a)}}\) then \(e^{\frac{1}{a}}(1-t)^{a+\frac12}\lesssim e^{\frac{4a^2-4a+1/2}{1-2a}}\lesssim \sigma(t)\) since \(\frac{4a^2-a+1/2}{1-2a}<0\) holds for \(\frac12 < a< (\ln \frac{10}{3})^{-1}\). On the other hand, if \(1-t>\frac12 e^{\frac{4a-1}{a(1-2a)}}\), then \(e^{\frac{1}{a}}(1-t)^{a+\frac12}\lesssim \sigma(t)\) is equivalent to \(e^{\frac{1-2a}{2a}}e^{\frac1a}(1-t)\lesssim |\ln(2e^{\frac1a}(1-t))|\), which holds since \(e^{\frac{1-2a}{2a}}\leq 1\) and since \(\theta\lesssim |\ln \theta|\) holds for \(e^{\frac{2}{1-2a}}<\theta<\frac{8}{3e^2}\). Thus we obtain the same desired inequality. Combining all these estimates then gives that
\begin{align}\label{1106-1}
w_n(x,t)\approx -\frac{x_n}{|x'|^n}\left((\sigma(t)+M)x_n-|x'|^ne^{-|x'|^2}\right).
\end{align}

Thus if \(e^{\frac{1}{2a}}(1-t)^{\frac12}(M+\sigma(t))\lesssim |x'|^ne^{-|x'|^2}\lesssim \min\left\{e^{-1}, e^{\frac{1}{2a}}(1-t)^{\frac12}\right\}(M+\sigma(t))\), then there exist \(x_{nn1}^*, x_{nn2}^*\) such that
\begin{align*}
x_{nn1}^*, x_{nn2}^*\approx \frac{|x'|^ne^{-|x'|^2}}{M+\sigma(t)},
\end{align*} 
and \((x_{nn1}^*, x_{nn2}^*)\in S^{+-}\left(w_n(x', \cdot, t);(\frac34 e^{\frac1a}(1-t))^{\frac12}, \min\left\{e^{-1},e^{\frac{1}{2a}}(1-t)^{\frac12}\right\} \right)\).

On the other hand, if \(|x'|^ne^{-|x'|^2}\lesssim \min\left\{e^{-1},e^{\frac{1}{2a}}(1-t)^{\frac12}\right\} (M+\sigma(t))\), then \(w_n(x,t)<0\), while if \(|x'|^ne^{-|x'|^2}\gtrsim \min\left\{e^{-1},e^{\frac{1}{2a}}(1-t)^{\frac12}\right\}(M +  \sigma(t))\), then \(w_n(x,t)>0\).

4. \(e^{\frac1a}(1-t)<x_n^2\leq e^{-2}\). \quad From (iv)-(4) of Lemma \ref{0716}, we have 
\begin{align*}
w_n(x,t)&\approx -\frac{1}{|x'|^n}\left(Mx_n^2  +  \left(a  +  e^{\frac{1}{2a}} \right)(1-t)^{a+\frac12}  + \frac{x_n^{2a+1}-(e^{\frac1a}(1-t))^{a+\frac12}}{a+\frac12} \right. \\
&\quad\left.  +  x_n^2\left(\frac{2^{\frac12-a}  -  x_n^{2a-1}}{a-\frac12}\mathbf{1}_{a\neq \frac12}  +  |\ln(2x_n^2)|\mathbf{1}_{a=\frac12}\right)\right)+x_ne^{-|x'|^2}.
\end{align*}

We first note that from \eqref{lemma0630-2},
\begin{align}\label{a+1/2monomial}
\frac{x_n^{2a+1}-(e^{\frac1a}(1-t))^{a+\frac12}}{a+\frac12}\approx (e^{\frac1a}(1-t))^{a+\frac12}\ln \frac{x_n^2}{e^{\frac1a}(1-t)}\mathbf{1}_{x_n^2<e^{\frac{2a+\frac12}{a(a+\frac12)}}(1-t)}  +   \frac{x_n^{2a+1}}{a+\frac12}\mathbf{1}_{x_n^2>e^{\frac{2a+\frac12}{a(a+\frac12)}}(1-t)},
\end{align}
and for \(0<a<\frac12\),
\begin{align}\label{a-1/2monomiala<1/2}
\frac{2^{\frac12-a}-x_n^{2a-1}}{a-\frac12}\approx   2^{\frac12-a}|\ln (2x_n^2)|\mathbf{1}_{x_n^2>\frac12 e^{\frac{2}{2a-1}}}    +    \frac{x_n^{2a-1}}{a-\frac12}\mathbf{1}_{x_n^2<\frac12 e^{\frac{2}{2a-1}}},
\end{align}
while for \(a>\frac12\),
\begin{align}\label{a-1/2monomiala>1/2}
\frac{2^{\frac12-a}-x_n^{2a-1}}{a-\frac12}\approx  x_n^{2a-1}|\ln(2x_n^2)|\mathbf{1}_{x_n^2>\frac12 e^{\frac{2}{1-2a}}}   +   \frac{2^{\frac12-a}}{a-\frac12}\mathbf{1}_{x_n^2<\frac12 e^{\frac{2}{1-2a}}}.
\end{align}
Note that for \(0<a<(\ln \frac{10}{3})^{-1}\), we have \(\frac12 e^{\frac{2}{1-2a}}<e^{-2}\), which will be useful in determining the regions to be described below.

4-1. First consider the case \(0<a<\frac12\). Recall our useful bounds \eqref{a+1/2monomial} and \eqref{a-1/2monomiala<1/2}.

i) If \(x_n^2<\frac12 e^{\frac{2}{2a-1}}\) and \(x_n^2<e^{\frac{2a+\frac12}{a(a+\frac12)}}(1-t)\), then we have the following inequalities
\begin{align*}
(e^{\frac1a}(1-t))^{a+\frac12}\ln \frac{x_n^2}{e^{\frac1a}(1-t)}\lesssim \frac{x_n^{2a+1}}{\frac12-a},\quad e^{\frac{1}{2a}}(1-t)^{a+\frac12}<\frac{x_n^{2a+1}}{\frac12-a},\quad a\lesssim e^{\frac{1}{2a}},\quad Mx_n\lesssim \frac{x_n^{2a}}{\frac12-a},
\end{align*}
which give 
\begin{align}\label{1106-2}
w_n(x,t)\approx \frac{x_n}{|x'|^n}\left(-\frac{x_n^{2a}}{\frac12-a}+|x'|^ne^{-|x'|^2}\right).
\end{align}

ii) If \(x_n^2<\frac12 e^{\frac{2}{2a-1}}\) and \(x_n^2>e^{\frac{2a+\frac12}{a(a+\frac12)}}(1-t)\), then we have the following inequalities
\begin{align*}
(a+e^{\frac{1}{2a}})(1-t)^{a+\frac12}\lesssim \frac{x_n^{2a+1}}{\frac12-a},\quad Mx_n^2\lesssim \frac{x_n^{2a+1}}{\frac12-a},
\end{align*}
which lead to the same bound \eqref{1106-2}. 

iii) If \(\max\left\{ e^{\frac{2a+\frac12}{a(a+\frac12)}}(1-t), \frac12 e^{\frac{2}{2a
-1}}\right\}<x_n^2<e^{-2}\), then from the inequality \(x_n^2|\ln(2x_n^2)|\gtrsim \frac{x_n^{2a+1}}{a+\frac12}\) (note that this only requires \(\frac12 e^{\frac{2}{2a-1}}<x_n^2<e^{-2}\), which is clearly satisfied by our condition), we find that
\begin{align*}
w_n(x,t)\approx -\frac{1}{|x'|^n}\left((a+e^{\frac{1}{2a}})(1-t)^{a+\frac12}+x_n^2|\ln (2x_n^2)|\right) + x_ne^{-|x'|^2}.
\end{align*}

We then further divide the case into \(1-e^{-2-\frac{2a+\frac12}{a(a+\frac12)}}<t<1-\frac12 e^{\frac{2}{2a-1}-\frac{2a+\frac12}{a(a+\frac12)}}\) and \(1-\frac12 e^{\frac{2}{2a-1}-\frac{2a+\frac12}{a(a+\frac12)}}<t<1\). 

For the first case, we have \(e^{\frac{2a+\frac12}{a(a+\frac12)}}(1-t)<x_n^2<e^{-2}\) and thus \(e^{\frac{1}{2a}}(1-t)^{a+\frac12}\lesssim x_n^{2a+1}\lesssim x_n^2|\ln (2x_n^2)|\). For the  second case, we have \(\frac12 e^{\frac{2}{2a-1}}<x_n^2<e^{-2}\) and thus \(e^{\frac{1}{2a}}(1-t)^{a+\frac12}\lesssim e^{\frac{3-2a}{2a-1}}\lesssim x_n^{3-2a}\lesssim x_n^2|\ln(2x_n^2)|\), where we have used the fact that \(x_n^{2a-1}|\ln(2x_n^2)|\) is decreasing on \(x_n\geq \frac12 e^{\frac{2}{2a-1}}\). We then have 
\begin{align}\label{1106-3}
w_n (x,t)\approx \frac{x_n}{|x'|^n}\left(-x_n|\ln(2x_n^2)|+|x'|^ne^{-|x'|^2}\right).
\end{align}

iv) If \(\frac12 e^{\frac{2}{2a-1}}<x_n^2<e^{-2}\) and \(x_n^2<e^{\frac{2a+\frac12}{a(a+\frac12)}}(1-t)\), then using the inequalities \(0<\ln \frac{x_n^2}{e^{\frac1a}(1-t)}<\frac{1}{a+\frac12}\), we have 
\begin{align*}
w_n(x,t)\approx -\frac{1}{|x'|^n}\left(Mx_n^2+  (a+e^{\frac{1}{2a}})(1-t)^{a+\frac12}+x_n^2|\ln (2x_n^2)|\right)  +  x_ne^{-|x'|^2}.
\end{align*}
We wish to bound \((a+e^{\frac{1}{2a}})(1-t)^{a+\frac12}\) by \(x_n^2|\ln (2x_n^2)|)\). To do this, we have to consider the following cases: let \(a^*\) be the unique positive solution to the equation \(4x^2+\frac{8}{2-\ln 2}x-1=0\), then if \(0<a<a^*\), we have \(e^{-2-\frac{2a+\frac12}{a(a+\frac12)}}<\frac12 e^{\frac{2}{2a-1}-\frac{1}{a}}\), while if \( a>a^*\), the reverse inequality holds.

We first consider the case \(0<a<a^*\). Notice that our domain can be written as \(\max\left\{\frac12 e^{\frac{2}{2a-1}}, e^{\frac1a}(1-t)\right\}<x_n^2<\min\left\{e^{-2},e^{\frac{2a+\frac12}{a(a+\frac12)}}(1-t)\right\}\). Then we can decompose this domain into the following three pieces:
\begin{enumerate}
\item \(A_1\): \(1-e^{-2-\frac1a}<t<1-\frac12 e^{\frac{2}{2a-1}-\frac1a}\), \quad \(e^{\frac1a}(1-t)<x_n^2<e^{-2}\).
\item \(A_2\): \(1-\frac12 e^{\frac{2}{2a-1}-\frac1a}<t<1-e^{-2-\frac{2a+\frac12}{a(a+\frac12)}}\),\quad  \(\frac12 e^{\frac{2}{2a-1}}<x_n^2<e^{-2}\).
\item \(A_3\): \(1-e^{-2-\frac{2a+\frac12}{a(a+\frac12)}}<t<1\), \quad \(\frac12 e^{\frac{2}{2a-1}}<x_n^2<e^{\frac{2a+\frac12}{a(a+\frac12)}}(1-t)\).
\end{enumerate}
For \(A_1\), we have the inequality \(e^{\frac{1}{2a}}(1-t)^{a+\frac12}\lesssim x_n^{2a+1}\lesssim x_n^2|\ln (2x_n^2)|\). For \(A_2\), we have \(
e^{\frac{1}{2a}}(1-t)^{a+\frac12}\lesssim e^{\frac{2}{2a-1}}\lesssim x_n^2\). Finally for \(A_3\), we have \(e^{\frac{1}{2a}}(1-t)^{a+\frac12}\lesssim e^{\frac{4a^2+4a-1}{1-2a}}x_n^2\leq x_n^2\) since \(\frac{4a^2+4a-1}{1-2a}<0\) for \(0<a<a^*\). 

We now consider the case \(a>a^*\). Then we can decompose the same domain into the following three pieces:
\begin{enumerate}
\item \(B_1\): \(1-e^{-2-\frac1a}<t<1-e^{-2-\frac{2a+\frac12}{a(a+\frac12)}}\), \quad \(e^{\frac1a}(1-t)<x_n^2<e^{-2}\).
\item \(B_2\): \(1-e^{-2-\frac{2a+\frac12}{a(a+\frac12)}}<t<1-\frac12 e^{\frac{2}{2a-1}-\frac1a}\), \quad \(e^{\frac1a}(1-t)<x_n^2<e^{\frac{2a+\frac12}{a(a+\frac12)}}(1-t)\).
\item \(B_3\): \(1-\frac12 e^{\frac{2}{2a-1}-\frac1a}<t<1\), \quad \(\frac12 e^{\frac{2}{2a-1}}<x_n^2<e^{\frac{2a+\frac12}{a(a+\frac12)}}(1-t)\).
\end{enumerate}
For \(B_1\) and \(B_2\), we have the inequality \(e^{\frac1a
}(1-t)^{a+\frac12}\lesssim x_n^{2a+1}\lesssim x_n^2|\ln(2x_n^2)|\). For \(B_3\), we have the inequality \(e^{\frac{1}{2a}}(1-t)^{a+\frac12}\lesssim e^{\frac{2}{2a-1}}\lesssim x_n^2\).

Thus for all \(A_i\) and \(B_i\), we have the desired inequality \(e^{\frac{1}{2a}}(1-t)^{a+\frac12}\lesssim x_n^2|\ln (2x_n^2)|\) and clearly we have \(Mx_n^2\lesssim x_n^2|\ln(2x_n^2)|\) since \(M\lesssim 1\) for \(0<a<\frac12\). Hence we obtain the same bound \eqref{1106-3}.

Consider the equation \(x_n|\ln(2x_n^2)|=c\) for \(x_n<e^{-1}\), where \(c>0\) is small enough so that the solution, say \(\widetilde{x}_n\) exists. Then by Lemma \ref{lemma0630}, we find that \(\widetilde{x}_n\approx \frac{c}{|\ln c|}\).

Thus we have the following conclusions:

i) If \(\frac{1}{\frac12-a}(1-t)^{a}  \lesssim  |x'|^ne^{-|x'|^2}\lesssim   \frac{1}{\frac12-a}e^{\frac{2a}{2a-1}}\), then there exist \(x_{nn1}^*, x_{nn2}^*\) such that
\begin{align*}
x_{nn1}^*, x_{nn2}^*\approx \left(\left(\frac12-a\right)|x'|^ne^{-|x'|^2}\right)^{\frac{1}{2a}}
\end{align*}
and \((x_{nn1}^*, x_{nn2}^*)\in S^{+-}\left( w_n(x',\cdot, t); e^{\frac{1}{2a}}(1-t)^{\frac12}, 2^{-\frac12}e^{\frac{1}{2a-1}}  \right)\).

On the other hand, if \(|x'|^ne^{-|x'|^2}\lesssim \frac{1}{\frac12-a}(1-t)^a\), then \(w_n(x,t)<0\), while if \(|x'|^ne^{-|x'|^2}\gtrsim \frac{1}{\frac12-a}e^{\frac{2a}{2a-1}}\), then \(w_n(x,t)>0\).

ii) If \(\max\left\{e^{\frac{1}{2a}}(1-t), 2^{-\frac12}e^{\frac{1}{2a-1}}\right\} \lesssim \frac{|x'|^ne^{-|x'|^2}}{\ln(|x'|^{-n}e^{|x'|^2}}\lesssim 1\), then there exist \(x_{nn1}^*, x_{nn2}^*\) such that
\begin{align*}
x_{nn1}^*, x_{nn2}^*\approx \frac{|x'|^ne^{-|x'|^2}}{\ln (|x'|^{-n}e^{|x'|^2})}
\end{align*}
and \((x_{nn1}^*, x_{nn2}^*)\in S^{+-}\left(w_n(x',\cdot, t); \max\left\{e^{\frac{1}{2a}}(1-t)^{\frac12}, 2^{-\frac12}e^{\frac{1}{2a-1}}\right\}, e^{-1}\right)\).

On the other hand, if \(\frac{|x'|^ne^{-|x'|^2}}{\ln (|x'|^{-n}e^{|x'|^2})}\lesssim \max\left\{e^{\frac{1}{2a}}(1-t), 2^{-\frac12}e^{\frac{1}{2a-1}}\right\}\), then \(w_n(x,t)<0\), while if \\
\(\frac{|x'|^ne^{-|x'|^2}}{\ln (|x'|^{-n}e^{|x'|^2})}\gtrsim 1\), then \(w_n(x,t)>0\).

4-2. We consider the case \(a=\frac12\). Then the bound for \(w_n(x,t)\) becomes
\begin{align*}
w_n(x,t)&\approx -\frac{1}{|x'|^n}\left(Mx_n^2  +(1-t)+ (x_n^2- e^2(1-t))  + x_n^2|\ln(2x_n^2)|   \right)  +x_ne^{-|x'|^2}\\
&\approx  -\frac{x_n}{|x'|^n}\left(x_n|\ln(2x_n^2)|-|x'|^ne^{-|x'|^2}\right),
\end{align*}
which leads to the inequality \eqref{1106-3}. Thus if \( e^2(1-t)\lesssim \frac{|x'|^ne^{-|x'|^2}}{\ln (|x'|^{-n}e^{|x'|^2})}\lesssim 1\), then there exist \(x_{nn1}^*, x_{nn2}^*\) such that
\begin{align*}
x_{nn1}^*, x_{nn2}^*\approx \frac{|x'|^ne^{-|x'|^2}}{\ln (|x'|^{-n}e^{|x'|^2})}
\end{align*}
and \((x_{nn1}^*, x_{nn2}^*)\in S^{+-}\left(w_n(x',\cdot, t); e(1-t)^{\frac12}, e^{-1}\right)\).

On the other hand, if \(\frac{|x'|^ne^{-|x'|^2}}{\ln (|x'|^{-n}e^{|x'|^2}}\lesssim e^2(1-t)\), then \(w_n(x,t)<0\), while if \(\frac{|x'|^ne^{-|x'|^2}}{\ln (|x'|^{-n}e^{|x'|^2}}\gtrsim 1\), then \(w_n(x,t)>0\).

4-3. We now consider the case \(a>\frac12\). Recall our useful bounds \eqref{a+1/2monomial} and \eqref{a-1/2monomiala>1/2}.

i) If \(x_n^2<\frac12 e^{\frac{2}{1-2a}}\) and \(x_n^2<e^{\frac{2a+\frac12}{a(a+\frac12)}}(1-t)\), then we have following inequalities
\begin{align*}
(e^{\frac1a}(1-t))^{a+\frac12}\ln \frac{x_n^2}{e^{\frac1a}(1-t)}\lesssim x_n^2,\quad (e^{\frac{1}{2a}}+a)(1-t)^{a+\frac12}\lesssim (e^{\frac1a}(1-t))^{a+\frac12}\lesssim x_n^{2a+1}\lesssim \frac{x_n^2}{a-\frac12},
\end{align*}
which give
\begin{align}\label{1106-4}
w_n(x,t)\approx -\frac{x_n}{|x'|^n}\left(\frac{2^{\frac12-a}}{a-\frac12}x_n-|x'|^ne^{-|x'|^2}\right).
\end{align}

ii) If \(x_n^2<\frac12 e^{\frac{2}{1-2a}}\) and \(x_n^2>e^{\frac{2a+\frac12}{a(a+\frac12)}}(1-t)\), then we have the following inequalities
\begin{align*}
\frac{x_n^{2a+1}}{a+\frac12}\lesssim \frac{2^{\frac12-a}}{a-\frac12}x_n^2,\quad e^{\frac{1}{2a}}(1-t)^{a+\frac12}\lesssim x_n^{2a+1}\lesssim \frac{2^{\frac12-a}}{a-\frac12}x_n^2,
\end{align*}
which leads to the same bound \eqref{1106-4}.

iii) If \(\max\left\{\frac12 e^{\frac{2}{1-2a}}, e^{\frac{2a+\frac12}{a(a+\frac12)}}(1-t)\right\}<x_n^2<e^{-2}\), then from the inequality \(x_n^{2a+1}|\ln(2x_n^2)|\gtrsim x_n^2\) (note that this only requires \(\frac12 e^{\frac{2}{1-2a}}<x_n^2<e^{-2}\), which is clearly satisfied by our condition), we find that
\begin{align*}
w_n(x,t)\approx -\frac{1}{|x'|^n}\left(e^{\frac{1}{2a}}(1-t)^{a+\frac12}+x_n^{2a+1}|\ln(2x_n^2)|\right) + x_ne^{-|x'|^2}.
\end{align*}

We then further divide the case into \(1-e^{-2-\frac{2a+\frac12}{a(a+\frac12)}}<t<1-e^{\frac{2}{1-2a}-\frac{2a+\frac12}{a(a+\frac12)}}\) and \(1-e^{\frac{2}{1-2a}-\frac{2a+\frac12}{a(a+\frac12)}}<t<1\). For the first case, we have \(e^{\frac{2a+\frac12}{a(a+\frac12)}}(1-t)<x_n^2<e^{-2}\) and thus \(e^{\frac{1}{2a}}(1-t)^{a+\frac12}\lesssim x_n^{2a+1}\lesssim x_n^{2a+1}|\ln(2x_n^2)|\). For the second case, we have \(\frac12 e^{\frac{2}{1-2a}}<x_n^2<e^{-2}\) and thus \(e^{\frac{1}{2a}}(1-t)^{a+\frac12}\lesssim x_n^{4a}\lesssim x_n^{2a+1}|\ln(2x_n^2)|\), where we have used that \(x_n^{1-2a}|\ln(2x_n^2)|\) is decreasing on \(x_n \geq \frac12 e^{\frac{2}{1-2a}}\). We then have 
\begin{align}\label{1106-5}
w_n(x,t)\approx -\frac{x_n}{|x'|^n}\left( x_n^{2a}|\ln (2x_n^2)|-|x'|^ne^{-|x'|^2}\right).
\end{align}

iv) If \(\frac12 e^{\frac{2}{1-2a}}<x_n^2<e^{-2}\) and \(x_n^2<e^{\frac{2a+\frac12}{a(a+\frac12)}}(1-t)\), then using the inequality \((e^{\frac1a}(1-t))^{a+\frac12 }\ln\frac{x_n^2}{e^{\frac1a}(1-t)}\lesssim \frac{1}{a+\frac12}e^{\frac{1}{2a}}(1-t)^{a+\frac12}\), we have 
\begin{align*}
w_n(x,t)\approx -\frac{1}{|x'|^n}\left(Mx_n^2+e^{\frac{1}{2a}}(1-t)^{a+\frac12}+x_n^{2a+1}|\ln(2x_n^2)|\right) + x_ne^{-|x'|^2}.
\end{align*}
As before, we wish to bound \(e^{\frac{1}{2a}}(1-t)^{a+\frac12}\) by \(x_n^{2a+1}|\ln(2x_n^2)|\). It can be directly checked that \(\frac12<a<(\ln\frac{10}{3})^{-1}\) implies \(\frac12 e^{\frac{2}{1-2a}-\frac12}<e^{-2-\frac{2a+\frac12}{a(a+\frac12)}}\). Hence our domain can be decomposed into the following three pieces:
\begin{enumerate}
\item \(C_1\): \(1-e^{-2-\frac1a}<t<1-e^{-2-\frac{2a+\frac12}{a(a+\frac12)}}\), \quad \(e^{\frac1a}(1-t)<x_n^2<e^{-2}\).
\item \(C_2\): \(1-e^{-2-\frac{2a+\frac12}{a(a+\frac12)}}<t<1-\frac12 e^{\frac{2}{1-2a}-\frac1a}\), \quad \(e^{\frac1a}(1-t)<x_n^2<e^{\frac{2a+\frac12}{a(a+\frac12)}}(1-t)\).
\item \(C_3\): \(1-\frac12 e^{\frac{2}{1-2a}-\frac1a}<t<1-\frac12 e^{\frac{2}{1-2a}-\frac{2a+\frac12}{a(a+\frac12)}}\), \quad \(\frac12 e^{\frac{2}{1-2a}}<x_n^2<e^{\frac{2a+\frac12}{a(a+\frac12)}}(1-t)\).
\end{enumerate}

For \(C_1\) and \(C_2\), we have the inequalities \(e^{\frac{1}{2a}}(1-t)^{a+\frac12}\lesssim x_n^{2a+1}\lesssim x_n^{2a+1}|\ln(2x_n^2)|\).  For \(C_3\), we have the inequality \(e^{\frac{1}{2a}}(1-t)^{a+\frac12}\lesssim e^{\frac{4a}{1-2a}}\lesssim x_n^{4a}\lesssim x_n^{2a+1}|\ln (2x_n^2)|\). 

Clearly we have \(Mx_n^2\lesssim x_n^{2a+1}|\ln(2x_n^2)|\), since the function \(x_n^{2a-1}|\ln(2x_n^2)|\) is decreasing on the interval \(x_n^2>\frac12 e^{\frac{2}{1-2a}}\). Hence we obtain the same bound \eqref{1106-5}.

Consider the equation \(x_n^{2a}|\ln(2x_n^2)|=c\) for \(x_n<e^{-1}\), where \(c>0\) is small enough so that the solution, say \(\widetilde{x}_n\) exists. Then by Lemma \ref{lemma0630}, we find that \(\widetilde{x}_n\approx \left(\frac{c}{|\ln c|}\right)^{\frac{1}{2a}}\).

Thus we have the following conclusions:

i) If \(\frac{1}{a-\frac12}e^{\frac{1}{2a}}(1-t)^{\frac12}\lesssim |x'|^ne^{-|x'|^2}\lesssim \frac{1}{a-\frac12}e^{\frac{1}{1-2a}}\), then there exist \(x_{nn1}^*, x_{nn2}^*\) such that
\begin{align*}
x_{nn1}^*, x_{nn2}^*\approx (a-\frac12)|x'|^ne^{-|x'|^2}
\end{align*}
and \((x_{nn1}^*, x_{nn2}^*)\in S^{+-}\left(w_n(x',\cdot, t);e^{\frac{1}{2a}}(1-t)^{\frac12}, \frac{1}{\sqrt{2}}e^{\frac{1}{1-2a}}\right)\).

On the other hand, if \(|x'|^ne^{-|x'|^2}\lesssim \frac{1}{a-\frac12}e^{\frac{1}{2a}}(1-t)^{\frac12}\), then \(w_n(x,t)<0\), while if \(|x'|^ne^{-|x'|^2}\gtrsim \frac{1}{a-\frac12}e^{\frac{1}{1-2a}}\), then \(w_n(x,t)>0\).

ii) If \(\max\left\{e^2(1-t)^{2a}, 2^{-2a}e^{\frac{4a}{1-2a}}\right\}\lesssim \frac{|x'|^ne^{-|x'|^2}}{\ln(|x'|^{-n}e^{|x'|^2})}\lesssim 1\), then there exist \(x_{nn1}^*, x_{nn2}^*\) such that
\begin{align*}
x_{nn1}^*, x_{nn2}^*\approx \left(\frac{|x'|^ne^{-|x'|^2}}{\ln (|x'|^{-n}e^{|x'|^2})}\right)^{\frac{1}{2a}}
\end{align*}
and \((x_{nn1}^*, x_{nn2}^*)\in S^{+-}\left(w_n(x',\cdot, t);\max\left\{e^{\frac{1}{2a}}(1-t)^{\frac12}, 2^{-\frac12}e^{\frac{1}{1-2a}}\right\}, e^{-1}\right)\).

On the other hand, if \(\frac{|x'|^ne^{-|x'|^2}}{\ln (|x'|^{-n}e^{|x'|^2})}\lesssim \max\left\{e^2(1-t)^{2a}, 2^{-2a}e^{\frac{4a}{1-2a}}\right\}\), then \(w_n(x,t)<0\), while if \\
\(\frac{|x'|^ne^{-|x'|^2}}{\ln (|x'|^{-n}e^{|x'|^2})}\gtrsim 1\), then \(w_n(x,t)>0\).

{\bf \underline{Subcase 1-3:} \((\ln \frac{10}{3})^{-1}\leq a<(\ln 2)^{-1}\)}.\quad

1. \(x_n^2\leq \min\left\{ e^{-2},2(1-t)\right\}\).\quad From (iii)-(1) of Lemma \ref{0716}, we have 
\begin{align*}
w_n(x,t)\approx -\frac{x_n^2}{|x'|^n}\left(M+(1-t)^{a-\frac12}+\sigma(t)\right)  +  x_ne^{-|x'|^2}.
\end{align*}
Thus if \(|x'|^ne^{-|x'|^2}\lesssim (M+(1-t)^{a-\frac12}+\sigma(t))\min\left\{ e^{-1}, (2(1-t))^{\frac12}\right\}\), there exist \(x_{nn1}^*\) and \(x_{nn2}^*\) such that 
\begin{align*}
x_{nn1}^*,x_{nn2}^*\approx \frac{|x'|^ne^{-|x'|^2}}{M+(1-t)^{a-\frac12}+\sigma(t)}
\end{align*}
and \((x_{nn1}^*, x_{nn2}^*)\in S^{+-}\left(w_n(x',\cdot, t);0,\min\left\{e^{-1},(2(1-t))^{\frac12}\right\}\right)\).

On the other hand, if \(|x'|^ne^{-|x'|^2}\gtrsim M(1-t)^{\frac12}+(1-t)^a+\sigma(t)(1-t)^{\frac12}\), then \(w_n(x,t)>0\).

2. \(2(1-t)\leq x_n^2<\min\left\{ e^{-2}, e^{\frac1a}(1-t)\right\}\). \quad From (iii)-(2) of Lemma \ref{0716}, we have 
\begin{align*}
w_n(x,t)\approx -\frac{x_n^2}{|x'|^n}\left(M+\sigma(t) \right) + x_ne^{-|x'|^2},
\end{align*}
where we have used the inequality
\begin{align*}
(1-t)^{a+\frac12}\lesssim x_n^2(1-t)^{a-\frac12}\leq x_n^2(1-t)^{\frac12}\lesssim x_n^2,
\end{align*}
since \(a>(\ln\frac{10}{3})^{-1}>1\). Thus if \(|x'|^ne^{-|x'|^2}\approx (1-t)^{\frac12}(M+\sigma(t))\), then there exist \(x_{nn1}^*\) and \(x_{nn2}^*\) such that 
\begin{align*}
x_{nn1}^*,x_{nn2}^*\approx \frac{|x'|^ne^{-|x'|^2}}{M+\sigma(t)}
\end{align*}
and \((x_{nn1}^*,x_{nn2}^*)\in S^{+-}\left(w_n(x',\cdot, t); (2(1-t))^{\frac12},  \min\left\{ e^{-1},e^{\frac{1}{2a}}(1-t)^{\frac12}\right\}\right)\).

On the other hand, if \(|x'|^ne^{-|x'|^2}\gtrsim (1-t)^\frac12(M+\sigma(t))\), then \(w_n(x,t)>0\) and if \(|x'|^ne^{-|x'|^2}\lesssim (1-t)^{\frac12}(M+\sigma(t))\), then \(w_n(x,t)<0\).

3. \(e^{\frac1a}(1-t)<x_n^2\leq e^{-2}\).\quad From (iii)-(3) of Lemma \ref{0716}, we have 
\begin{align*}
w_n(x,t)&\approx -\frac{x_n}{|x'|^n}\left(Mx_n+\frac{2^{\frac12-a}-x_n^{2a-1}}{a-\frac12}x_n-\left(\frac{e^{1+\frac{1}{2a}}}{a+\frac12}-1\right)\frac{(1-t)^{a+\frac12}}{x_n}+\frac{x_n^{2a}}{a+\frac12}\right)+x_ne^{-|x'|^2}.
\end{align*}

Using a method similar to 4. of Subcase 1-2, we obtain the following conclusions.

i) If \(\frac{e^{\frac{1}{2a}}}{a-\frac12}(1-t)^{\frac12}\lesssim |x'|^ne^{-|x'|^2}\lesssim \frac{1}{a-\frac12}\min\left\{e^{-1},2^{-\frac12}e^{\frac{1}{1-2a}}\right\}\), then there exist \(x_{nn1}^*, x_{nn2}^*\) such that
\begin{align*}
x_{nn1}^*, x_{nn2}^* \approx \left(a-\frac12\right) |x'|^ne^{-|x'|^2}
\end{align*}
and \((x_{nn1}^*, x_{nn2}^*)\in S^{+-}\left(w_n(x',\cdot, t); e^{\frac{1}{2a}}(1-t)^{\frac12}, \min\left\{e^{-1},2^{-\frac12}e^{\frac{1}{1-2a}}\right\} \right)\).

On the other hand, if \(|x'|^ne^{-|x'|^2}\lesssim \frac{e^{\frac{1}{2a}}}{a-\frac12}(1-t)^{\frac12}\), then \(w_n(x,t)<0\), while if \\
\(|x'|^ne^{-|x'|^2}\gtrsim \frac{1}{a-\frac12}\min\left\{e^{-1},2^{-\frac12}e^{\frac{1}{1-2a}}\right\}\), then \(w_n(x,t)>0\) .

ii) If \(\max\left\{2^{-a}e^{\frac{2a}{1-2a}}, e(1-t)^a\right\}\lesssim   \frac{|x'|^ne^{-|x'|^2}}{\ln (|x'|^{-n}e^{|x'|^2})}\lesssim 1  \), then there exist \(x_{nn1}^*, x_{nn2}^*\) such that
\begin{align*}
x_{nn1}^*, x_{nn2}^* \approx \left(\frac{|x'|^ne^{-|x'|^2}}{\ln (|x'|^{-n}e^{|x'|^2})}\right)^{\frac{1}{2a}}
\end{align*}
and \( (x_{nn1}^*, x_{nn2}^*)\in S^{+-}\left( w_n(x', \cdot, t); \max\left\{\frac{1}{\sqrt{2}} e^{\frac{1}{1-2a}}, e^{\frac{1}{2a}}(1-t)^{\frac12}\right\}, e^{-1} \right)  \) (note that this case occurs only when \(a\) is such that \(\frac12 e^{\frac{2}{1-2a}}<e^{-2}\)).

On the other hand, if \( \frac{|x'|^ne^{-|x'|^2}}{\ln (|x'|^{-n}e^{|x'|^2})}\lesssim \max\left\{2^{-a}e^{\frac{2a}{1-2a}}, e(1-t)^a\right\}\), then \(w_n(x,t)<0\), while if \\
\(\frac{|x'|^ne^{-|x'|^2}}{\ln (|x'|^{-n}e^{|x'|^2})}\gtrsim 1\), then \(w_n(x,t)<0\).

{\bf \underline{Subcase 1-4:} \(a\geq (\ln 2)^{-1}\)}.\quad
From Lemma \ref{wngnneq0} and (ii) of Lemma \ref{0716}, we have  for all \(x_n\leq e^{-1}\),
\begin{align*}
w_n(x,t)\approx -\frac{x_n^2}{|x'|^n}\left(M+a(1-t)^{a-\frac12}+\frac{2^{\frac12-a}}{a-\frac12}\right)+x_ne^{-|x'|^2}.
\end{align*}
Thus if \(|x'|^ne^{-|x'|^2}\lesssim M+a(1-t)^{a-\frac12}+\frac{2^{\frac12-a}}{a-\frac12}\), there exist \(x_{nn1}^*\) and \(x_{nn2}^*\)  such that 
\begin{align*}
x_{nn1}^*,x_{nn2}^*\approx \frac{|x'|^ne^{-|x'|^2}}{M+a(1-t)^{a-\frac12}+\frac{2^{\frac12-a}}{a-\frac12}},
\end{align*}
and \((x_{nn1}^*,x_{nn2}^*)\in S^{+-}(w_n(x',\cdot, t);0,e^{-1})\).

On the other hand, if \(|x'|^ne^{-|x'|^2}\gtrsim M+a(1-t)^{a-\frac12}+\frac{2^{\frac12-a}}{a-\frac12}\), then \(w_n(x,t)>0\).

{\bf\(\bullet\) (Case 2: \( e^{-1} < x_n < \frac1n |x'|\))}\quad
From Lemma \ref{wngnneq0} and \eqref{wN},     we have 
\begin{align*}
w_n(x,t)\approx \frac{1}{|x'|^n}\left(\left(|x'|^ne^{-|x'|^2}+(1-t)^a\right)x_n-1\right).
\end{align*}
Thus if \(|x'|^{-1}\approx |x'|^ne^{-|x'|^2}+(1-t)^a\lesssim 1\), there exist \(x_{nn1}^*\) and \(x_{nn2}^*\) such that
\begin{align*}
x_{nn1}^*, x_{nn2}^* \approx \left(|x'|^ne^{-|x'|^2}+(1-t)^a\right)^{-1}
\end{align*}
and \((x_{nn1}^*, x_{nn2}^*)\in S^{-+}\left(w_n(x',\cdot, t); e^{-1}, \frac{|x'|}{n}\right)\).

On the other hand, if \(|x'|^ne^{-|x'|^2}+(1-t)^a\gtrsim 1\), then \(w_n(x,t)>0\), while if \(|x'|^ne^{-|x'|^2}+(1-t)^a\lesssim |x'|^{-1}\), then \(w_n(x,t)<0\).

 {\bf\(\bullet\) (Case 3: \( x_n>|x'|\))}\quad
From Lemma \ref{wngnneq0} and \eqref{wN},     we have 
\begin{align}\label{0214-91}
\begin{split}
w_n(x,t)&\approx   \frac{1}{x_n^n} \left( 1 +\frac{1-(1-t)^{a+\frac12}}{a + \frac12}    \right) + x_ne^{-x_n^2}   +  \frac{1}{x_n^{n-1}}(1-t)^a>0.
\end{split}
\end{align}

\end{proof}

We now prove Theorem \ref{rp-flow} the existence of the reversal points.
\begin{proof}
{\bf \(\bullet\) (Case 1: \(i<n\), \(t>1\))}
From (i) of Proposition \ref{1stmainprop}, we see that for \(t\geq\frac98\), \(w_i\) must have a sign change  for all \(a>-1\).

For \(1<t<\frac98\), we need more careful analysis from the proof of Proposition \ref{1stmainprop}. We will prove the following statement: for each \(a>-1\) and  \(1<t<\frac98\), there exist \(N>1\) depending only on \(n\) and \(a\) such that for \(|x'|=N\), \(w_i(x', x_n,t)\) changes its sign. This will be proved by showing that for such \(x'\), \((x',t)\) belongs to one of the sets written in the subscript of the indicators in the expressions of the \(x_{nik}^*\) given in the Proposition \ref{1stmainprop}.

{\bf ((i): \(-1<a\leq -\frac34 \))} \quad  
There exist constants \(c_1,\cdots, c_5>0\) such that
\begin{align*}
A_a^2&=\left\{(x',t)\mid c_1(t-1)\leq (a+1)|x'|^2\leq c_2(t-1)^{a+\frac12}\right\},\\
A_a^4&=\left\{(x',t)\mid c_3(t-1)^{a+\frac12}\leq (a+1)|x'|^2\leq c_4(t-1)^{a+\frac12}e^{c_5|x'|^2}\right\}.
\end{align*}
We first illustrate the idea on choosing suitable constants. For brevity, let us call the region given by the set \(A_a^i\) as \(A_a^i\) again without confusion. Note that for each \(t\), \(|x'|\) satisfying \((x',t)\in A_a^2\) is always less than that satisfying \((x',t)\in A_a^4\). Moreover, the region given by the set \(A_a^2\) is bounded above and \(A_a^4\) is bounded below both by some decreasing functions in \(t\) which diverge to infinity as \(t\rightarrow 1\) and remain bounded as \(t\rightarrow \frac98\). Then we can choose some constant \(N>0\) such that the line \(|x'|^2=N\) in the \((x',t)\) plane passes through the region  \(A_a^4\) for some time interval with \(t=\frac98\) as its right endpoint and the region \(A_a^2\) for some time interval with \(t=1\) as its left endpoint. It remains to fill the gap between these two intervals. To do this, we will show that there exists \(N'>N\) such that the line \(|x'|=N'\) in the \((x',t)\) plane passes through the region \(A_a^4\) for the previously uncovered time interval. Note that the choice of constants must be independent of \(t\).

Choose \(N=\max\left\{5c_*^2, \frac{2c_3 8^{-a-\frac12}}{a+1}\right\}\) where \(c_*>0\) is given in Proposition \ref{1stmainprop}. Then there exists \(t_1>1\) such that \((x',t)\in A_a^2\) for all \(1<t\leq t_1\) with \(x'\) such that \(|x'|=N\). Also there exists \(t_2>t_1\) such that \((x',t)\in A_a^4\) for all \(t_2<t<\frac98\) with \(x'\) such that \(|x'|=N\). It suffices to show that there exists \(x'\) with \(|x'|\) independent of \(t\) such that \((x',t)\in A_a^4\) for all \(t_1<t<t_2\).
Indeed, let \(N_1':=\frac{c_4(t_1-1)^{a+\frac12}}{a+1}\) and \(N_2'>0\) be such that \(xe^{-c_5x}\leq \frac{c_4}{c_3}N^2\) for all \(x\geq N_2'\). Now we define \(N':=\max\left\{N_1',N_2'\right\}\). Then for any \(t_1<t<t_2\), we first have \(c_4(t-1)^{a+\frac12}\leq |a+1|N'\) and \(c_4(t-1)^{a+\frac12}e^{c_5|x'|^2}\geq c_4(t_2-1)^{a+\frac12}e^{c_5N'^2}=\frac{c_4}{c_3}|a+1|N^2e^{c_5N'^2}\geq |a+1|N'^2\). Hence for any \(t_1<t<t_2\), \((x',t)\in A_a^4\) for \(x'\) such that \(|x'|=N'\).

{\bf ((ii): \(-\frac34<a<-\frac12 \))} \quad 
There exist constants \(c_1,\cdots, c_5>0\) such that
\begin{align*}
A_a^2&=\left\{(x',t)\mid c_1e^{-\frac{2}{1+2a}}(t-1)\leq |a+\frac12||x'|^2\leq c_2(t-1)^{a+\frac12}\right\},\\
A_a^4&=\left\{(x',t)\mid 1<t\leq 1+\frac14 e^{\frac{2}{2a+1}},\,  c_3\frac{(t-1)^{a+\frac12}}{|a+\frac12|}\leq |x'|^2, \, |x'|^2e^{-c_5|x'|^2}\leq c_4 \frac{(t-1)^{a+\frac12}}{|a+\frac12|}\right\}\\
&\quad\cup \left\{(x',t)\mid 1+\frac14 e^{\frac{2}{2a+1}}\leq t\leq \frac98, \, c_3|\ln(t-1)|\leq |x'|^2, \, |x'|^2e^{-c_5|x'|^2}\leq c_4|\ln(t-1)|\right\}.
\end{align*}

Let \(1+\frac14 e^{\frac{2}{2a+1}}<t\leq \frac98\). There exists \(N_0>1\) depending only on \(n\) such that for \(|x'|^2\geq N_0\) we have \(|x'|^2e^{-c_5|x'|^2}\leq c_4|\ln(t-1)|\). Let us denote \(N_1:=c_3\left|\ln\left(\frac14 e^{\frac{2}{2a+1}}\right)\right|\). Then we see that for any \(1+\frac14e^{\frac{2}{2a+1}}<t\leq \frac98\), \((x',t)\in A_a^4\) for \(x'\) such that \(|x'|^2=N_1\). Thus by taking \(N=\max\left\{N_0, N_1\right\}\), we have the desired result for \(x'\) such that \(|x'|=N\).

Now consider the case \(1<t\leq 1+\frac14 e^{\frac{2}{2a+1}}\). Similarly as before, there exists \(N_0>1\) depending only on \(n\) and \(a\) such that for \(|x'|^2\geq N_0\) we have \(|x'|^2e^{-c_5|x'|^2}\leq c_4\frac{(t-1)^{a+\frac12}}{|a+\frac12|}\). Now let us denote \(T:=\min\left\{1+\frac14 e^{\frac{2}{2a+1}}, 1+\left(\frac{c_1}{c_2}\right)^{\frac{2}{2a-1}}e^{\frac{4}{1-4a^2}}\right\}\) so that the set \(A_a^2\) is nonempty for \(1<t\leq T\). 

For \(\frac{1+T}{2}\leq t\leq 1+\frac14 e^{\frac{2}{2a+1}}\), we denote \(N_1:=c_3\frac{(T-1)^{a+\frac12}}{2^{a+\frac12}|a+\frac12|}\) then since for any \(\frac{1+T}{2}\leq t\leq 1+\frac14 e^{\frac{2}{2a+1}}\), there exists \(x'\) such that \(|x'|^2=N_1\) and \((x',t)\in A_a^4\). Also there exists \(1<t_1<\frac{1+T}{2}\) such that \(|a+\frac12|N_1=c_2(t_1-1)^{a+\frac12}\). Then for \(1<t\leq t_1\), there exists \(x'\) such that \(|x'|^2=\max\left\{N_0,N_1\right\}\) and \((x',t)\in A_a^2\). Now let us denote \(N_2:=\frac{c_3(t_1-1)^{a+\frac12}}{|a+\frac12|}\), then for \(t_1\leq t\leq \frac{1+T}{2}\), we have \(c_3(t-1)^{a+\frac12}\leq c_3(t_1-1)^{a+\frac12}=|a+\frac12|N_2\). Thus for \(t_1\leq t\leq \frac{1+T}{2}\), there exists \(x'\) such that \(|x'|^2=\max\left\{N_0,N_2\right\}\) and \((x',t)\in A_a^4\).

{\bf ((iii): \(a=-\frac12\))} \quad 
There exist constants \(c_1,\cdots, c_5>0\) such that
\begin{align*}
A_a^3=\left\{(x',t)\mid c_1\ln \frac{|x'|^2}{t-1}\leq \frac{|x'|^2}{t-1},\, |x'|^2e^{-c_2|x'|^2}\geq t-1\right\},\\
A_a^4=\left\{(x',t)\mid c_3|\ln(t-1)|\leq |x'|,\, |x'|^2e^{-c_5|x'|^2}\leq c_4|\ln(t-1)|\right\}.
\end{align*}
As in the previous cases, we choose \(N_0>1\) depending only on \(n\) such that \(|x'|^2\geq N_0\) implies the second condition given to \(A_a^4\). Also, from the proof of Proposition \ref{1stmainprop}, we see that \(c_1=\min\left\{\frac14, \frac{d_1}{d_2}\right\}\), with the constants \(d_1, d_2>0\) given in its proof. Thus the first condition given to \(A_a^3\) is trivial. Then similar argument as Case 1 gives the desired result.

{\bf ((iv): \(-\frac12<a\leq -\frac14\))} \quad 
There exist constants \(c_1,c_2, c_3>0\) such that
\begin{align*}
A_a^4&=\left\{(x',t)\mid 1<t\leq 1+\frac14 e^{-\frac{2}{2a+1}},\,  |x'|^2\geq \frac{c_1}{a+\frac12}, \, |x'|^2e^{-c_3|x'|^2}\leq \frac{c_2}{a+\frac12}\right\}\\
&\cup \left\{(x',t)\mid 1+\frac14 e^{-\frac{2}{2a+1}}\leq t\leq \frac98, \, |x'|^2\geq c_1 (t-1)^{a+\frac12}|\ln(t-1)|,\right.\\
&\quad \left. |x'|^2e^{-c_3|x'|^2}\leq c_2(t-1)^{a+\frac12}|\ln(t-1)|\right\}.
\end{align*}
If \(t\leq 1+\frac14e^{-\frac{2}{2a+1}}\), then the result is immediate by taking \(|x'|\) sufficiently large. 

If \(1+\frac14 e^{-\frac{2}{2a+1}}<t<\frac98\), we first note that the function \(t\mapsto (t-1)^{a+\frac12}|\ln(t-1)|\) has local minimum at \(t=\frac98\) and maximum \(\frac{1}{|a+\frac12|e}\) at \(t=1+e^{-\frac{2}{2a+1}}\). Then we may take \(|x'|\) such that \(|x'|^2\geq \frac{c_1}{|a+\frac12|e}\) and \(|x'|^2e^{-c_3|x'|^2}\leq \frac{c_2|\ln 8|}{8^{a+\frac12}}\) to obtain the desired result.

{\bf ((v): \(a\geq -\frac14\))} \quad The result is immediate as \(A_a^4\) is independent of \(t\).
\\\\
{\bf\(\bullet\) (Case 2: \(i=n\), \(t>1\))}
From (i) of Proposition \ref{2ndmainprop}, we see that for \(t>\frac98\), \(w_n\) changes its sign for all \(a>-1\) and from (ii) of Proposition \ref{1stmainprop}, and by a similar argument given in the proof for the case \(i<n\), \(t>1\), we see that \(w_n\) changes its sign on \((x',t)\in \bigcup_{i=1}^3 B_a^i\). \\\\
{\bf\(\bullet\) (Case 3: \(i<n\), \(t<1\))}
From (ii) of Proposition \ref{3rdmainprop}, we see that there exists \(\epsilon_0>0\) such that for all \(1-\epsilon_0<t<1\), \(w_i(x,t)\) changes its sign on \((x',t)\in C_a^1\cup C_a^2\).
\\\\
{\bf\(\bullet\) (Case 4: \(i=n\), \(t<1\))}
It suffices to show that for each \(t\in (7/8, 1)\), there exist \(|x'|\) sufficiently large depending only on \(n\) such that the function \(x_n\mapsto w_n(x', x_n, t)\) changes its sign. We will only consider the case \(a>1/2\) as \(a\leq1/2\) can be proved similarly.

 {\bf ((i): \(a\geq (\ln 2)^{-1}\))}\quad This case is simple because for each \(t\in (7/8,1)\), we can always choose \(|x'|\) sufficiently large so that \((x',t)\in D_a^1\).

 {\bf((ii): \(\frac{4-\ln2}{4-2\ln2}\leq a<(\ln 2)^{-1}\))}\quad  In this case, we recall that for the time interval \((1-e^{-2-\frac1a}, 1)\), there exists \(x_n^*\approx (a-\frac12)|x'|^ne^{-|x'|^2}\in (e^{\frac{1}{2a}}(1-t)^{\frac12}   ,e^{-1})\) for \((1-t)^{\frac12}\lesssim |x'|^ne^{-|x'|^2}\lesssim 1\). Thus there exists \(t_1\in (1-e^{-2-\frac1a},1)\) such that for each \(t\in (t_1, 1)\), there exists \(x'\) such that \(w_n(x',x_n^*, t)=0  \) for some \(x_n^*\in( e^{\frac{1}{2a}}(1-t)^{\frac12}, e^{-1})\). 

On the other hand, consider the time interval \((7/8, 1)\). Recall that there exists \\
\(x_n^*\approx (M+(1-t)^{a-\frac12})^{-1}|x'|^ne^{-|x'|^2}\in (0, \min\{e^{-1},(2(1-t))^{\frac12}\})\) (note that \(\sigma(t)\lesssim 1\lesssim M\) for \(\frac{4-\ln2 }{4-2\ln2}<a<(2\ln2)^{-1}\)) for \(|x'|^ne^{-|x'|^2}\lesssim (M+(1-t)^{a-\frac12})\min \{1, (1-t)^{\frac12}\}\). Thus there for \(t\in (7/8,  t_1)\), there exists \(x'\) such that \(w_n(x',x_n^*, t)=0\) for some \(x_n^*\in (0, (2(1-t))^{\frac12})\).

 {\bf((iii): \(1/2<a<\frac{4-\ln 2}{4-2\ln 2}\))}\quad  In this case we split into \(1/2<a<a_0\) and \(a_0\leq a<\frac{4-\ln 2}{4-2\ln2}\) where \(a_0<(2\ln2)^{-1}\)(this allows \(7/8<1-1/2e^{-\frac{1}{a}}\) for \(1/2<a<a_0\)) is chosen such that the inequalities \(e^{\frac{1}{1-2a}}\lesssim \frac{|x'|^ne^{-|x'|^2}}{\ln (|x'|^{-n}e^{|x'|^2})}\lesssim 1\) holds for \(2c_*<|x'|<10c_*\) where \(c_*\) is given in Proposition \ref{4thmainprop}. 

We first consider the case where \(a_0\leq a<\frac{4-\ln 2}{4-2\ln 2}\).  For the time interval \((1-\frac12 e^{\frac{4a-1}{a(1-2a)}},1)\), recall that there exists \(x_n^*\approx (a-\frac12)|x'|^ne^{-|x'|^2}\in (e^{\frac{1}{2a}}(1-t)^{\frac12},\frac{1}{\sqrt{2}}e^{\frac{1}{1-2a}})\) for \((a-\frac12)^{-1}e^{\frac{1}{2a}}(1-t)^{\frac12}\lesssim |x'|^ne^{-|x'|^2}\lesssim (a-\frac12)^{-1}e^{\frac{1}{1-2a}}\).

We first fix \(N_1>c_*\) such that \(N_1^n e^{-N_1^2}\lesssim (a-\frac12)^{-1}e^{\frac{1}{1-2a}}\). Then there exists \(t_1 \in (1-\frac12 e^{\frac{4a-1}{a(1-2a)}},1)\) such that \((a-\frac12)^{-1}e^{\frac{1}{2a}}(1-t)^\frac12\lesssim N_1^n e^{-N_1^2}\) holds for all \(t\in [t_1, 1)\). Then there exists \(x_n^*\) such that \(w_n(x', x_n^*, t)=0\) for \(x'\) such that \(|x'|=N_1\) and for all \(t\in [t_1, 1)\).

Now consider the time interval \((7/8, 1)\). We split the case into \(a_0\leq a<(2\ln 2)^{-1}\) and \((2\ln 2)^{-1}\leq a<\frac{4-\ln 2}{4-2\ln 2}\).

If \(a_0\leq a<(2\ln 2)^{-1}\), Then the existence of \(x_n^*\) for \(t\in (7/8, 1-\frac12 e^{-\frac1a})\) is immediate. For the time interval \((1-\frac12 e^{-\frac1a}, 1)\), we recall that there exists \(x_n^*\approx (M+(1-t)^{a-\frac12})^{-1}|x'|^ne^{-|x'|^2}\) for \(|x'|^ne^{-|x'|^2}\lesssim (M+a(1-t)^{a-\frac12})\min\{e^{-1}, (1-t)^{\frac12}\}\). We fix \(N_2>c_*\) such that \(N_2^ne^{-N_2^2}\lesssim \min\{ e^{-1}, (1-t_1)^{\frac12}\}\). Then for any \(t\in [1-\frac12 e^{-\frac{1}{a}}, t_1)\), there exists \(x_n^*\) such that \(w_n(x', x_n^*,t)=0\) for \(x'\) such that \(|x'|=N_2\).

If \((2\ln 2)^{-1}\leq a<\frac{4-\ln2 }{2-2\ln2}\), then we have \(1-\frac12 e^{-\frac{1}{a}}<7/8\) and thus we have same result as above except that the interval \((7/8, 1-\frac12 e^{-\frac{1}{a}})\) is now empty. This finishes the analysis for \(a_0\leq a<\frac{4-\ln 2 }{4-2\ln 2}\).

We now consider the case where \(1/2 <a<a_0\).   For the time interval \([1-\frac12 e^{\frac{4a-1}{a(1-2a)}}, 1)\), we recall that there exists \(x_n^*\approx \frac{|x'|^ne^{-|x'|^2}}{\ln(|x'|^{-n}e^{|x'|^2})}\) for \(e^{\frac{1}{1-2a}}\lesssim \frac{|x'|^ne^{-|x'|^2}}{\ln (|x'|^{-n}e^{|x'|^2})}\lesssim 1\). Choose \(N_3\in (2c_*, 10c_*)\) so that by the assumption on \(a_0\), we have the inequality \(e^{\frac{1}{1-2a}}\lesssim \frac{N_3^n e^{-N_3^2}}{\ln (N_3^{-n}e^{N_3^2})}\lesssim 1\). Then for any \(t\in [1-\frac12 e^{\frac{4a-1}{a(1-2a)}}, 1)\), there exists \(x_n^*\) such that \(w_n(x', x_n^*, t)=0\) for \(x'\) such that \(|x'|=N_3\).

Now consider the time interval \([1-\frac12 e^{-\frac{1}{a}}, 1-\frac12 e^{\frac{4a-1}{a(1-2a)}})\). Recall that there exists \(x_n^*\approx (M + (1-t)^{a-\frac12} +\sigma(t))^{-1}|x'|^ne^{-|x'|^2}\) for \(|x'|^ne^{-|x'|^2}\lesssim (M+(1-t)^{a-\frac12}+\sigma(t))\min\{ e^{-1}, (1-t)^{\frac12}\}\).  We fix \(N_4>c_*\) such that \(N_4^ne^{-N_4^2}\lesssim e^{\frac{4a-1}{2a(1-2a)}}\). Then for any \(t\in [1-\frac12 e^{-\frac1a}, 1-\frac12 e^{\frac{4a-1}{a(1-2a)}})\), there exists \(x_n^*\) such that \(w_n(x', x_n^*, t)=0\) for \(x'\) such that \(|x'|=N_4\).

Finally we consider the time interval \((7/8,  1-\frac12 e^{-\frac{1}{a}})\), then the existence of \(x_n^*\) can be proved similarly as above. This finishes the analysis for \(1/2 <a<a_0\).
\end{proof}

%%%%%%%%%%%%%%%%%%%%%%%%%%%%%%
%%%%%%%%%%%%%%%%%%%%%%%%%%%%%
%%%%%%%%%%%%%%%%%%%%%%%%%%%%%

\appendix
\numberwithin{equation}{section}

 \section{Proof of Lemmas}
 \subsection{Proof of Lemma \texorpdfstring{\ref{technicallemma}}{}}\label{proofoftechnicallemma}

We divide the integral into two parts as follows:
\begin{align*}
    I_{m,k}=\left(\int_{0}^{\frac{x_n}{2}}+\int_{\frac{x_n}{2}}^{x_n}\right)\frac{x_n-z_n}{t^{\frac{3}{2}}}e^{-\frac{(x_n-z_n)^2}{4t}}\frac{z_n^k}{(|x'|^2+z_n^2)^{\frac{m}{2}}}dz_n=:A+B.
\end{align*}
For \(B\), we have since \(\frac{x_n}{2}\leq z_n\leq x_n\),
\begin{align*}
B&\approx \frac{1}{t^{\frac{3}{2}}}\frac{x_n^k}{|x|^m}\int_{\frac{x_n}{2}}^{x_n}(x_n-z_n)e^{-\frac{(x_n-z_n)^2}{4t}}dz_n\approx \frac{1}{t^{\frac{3}{2}}}\frac{x_n^k}{|x|^m}t(1-e^{-\frac{cx_n^2}{t}}) 
\approx \frac{1}{t^{\frac{1}{2}}}\frac{x_n^k}{|x|^m}\min\left\{1,x_n^2t^{-1}\right\},
\end{align*}
where we have used the inequalities \(1-e^{-\theta}\approx \min\left\{1,\theta\right\}\) for \(\theta\geq 0\) and \(\min\left\{1,c\theta\right\}\approx \min\left\{1,\theta\right\}\) for \(c,\theta\geq0\).  
 If  \(2|x'|\geq x_n\), using the inequality \(\theta e^{-\theta}\lesssim \min\left\{1,\theta\right\}\) for \(\theta\geq 0\), we obtain
\begin{align*}
    A&\leq \frac{x_n}{t^{\frac{3}{2}}}e^{-\frac{x_n^2}{16t}}\frac{x_n^{k+1}}{|x'|^m}=\frac{x_n^k}{t^{\frac{1}{2}}}\frac{1}{|x'|^m}\frac{x_n^2}{t}e^{-\frac{x_n^2}{16t}}\lesssim \frac{x_n^k}{t^{\frac{1}{2}}|x'|^m}\min\left\{1,x_n^{2}t^{-1}\right\}.
\end{align*}
If \(2|x'|\leq x_n\),  using \(\sqrt{t}\leq |x'|\) with the inequalities \(\theta^m e^{-\theta}\lesssim e^{-\frac{\theta}{2}}\)   and non-negative integer \(m\), we have 
\begin{align*}
    A&\leq \frac{x_n}{t^{\frac{3}{2}}}e^{-\frac{x_n^2}{16t}}\frac{x_n^{k+1}}{|x'|^m}\leq \frac{x_n}{t^{\frac{3}{2}}}e^{-\frac{x_n^2}{16t}}\frac{x_n^{k+1}}{t^{\frac{m}{2}}}
    \lesssim \frac{x_n^k}{t^{\frac{1}{2}}x_n^m}\frac{x_n^2}{t}e^{-\frac{x_n^2}{32t}}\lesssim \frac{1}{x_n^{m-k}t^{\frac{1}{2}}}\min\left\{1,x_n^2t^{-1}\right\}.
\end{align*}
%Then, we obtain Lemma \ref{technicallemma}.
This completes the proof.

\subsection{Proof of Lemma \ref{lemma0709-1}}
\label{proofoflemma}
\setcounter{equation}{0}

%\begin{itemize}

%\item[(1)] 

$\bullet$\,\,{\underline{\bf Proof of (1)}}

For fixed $x' \in \Rn \setminus \{ 0\}$ and $ x_n > 0$,  we divide $\Rn$ by three   sets
$D_1, D_2$ and $D_3$   defined by
\[
D_1=\{z'\in\Rn: |x'-z'| \leq \frac1{10} |x'| \},
\]
\[
D_2=\{z'\in\Rn: |z'| \leq \frac1{10} |x'| \}, \qquad D_3=\Rn\setminus
(D_1\cup D_2).
\]
We  split the integral in  left-hand side of \eqref{0515-1} into three terms as follows:
\begin{align}\label{0730-21}
\begin{split}
\mbox{LHS of \eqref{0515-1}}    =
\int_{D_1}\cdots  dz'+ \int_{D_2} \cdots dz' + \int_{D_3}\cdots dz'  =: I_1 + I_2 +I_3.
\end{split}
\end{align}
Let $ 1 \leq i, \, j \leq n$. 
Using   $\int_{\Rn}  \Ga'(z',t)  dz' =1$ for all $t>0$, we decompose $I_1$ in the following way.
 \begin{align*}
 \begin{split}
   I_{1} % & =     \int_{ |x' - z'| \leq \frac1{10} |x'| }   \Ga'(x' -z',t)  \partial_{x_n } \partial_{z_i}   N(z',x_n) dz'  \\
  &
 =    \int_{\{| x' - z'| \leq    \frac1{10}  |x'| \}}   \Ga'(x' - z',t) \Big( \partial_{x_n}    \partial_{z_i} N(z',x_n) - \partial_{x_n}   \partial_{x_i} N(x) \Big)dz'\\
 &\quad + \partial_{x_n} \partial_{x_i} N(x)   - \partial_{x_n} \partial_{x_i} N(x) \int_{\{|z'| \geq  \frac1{10}\frac{ |x'|}{\sqrt{t}}\}}  \Ga'(z',1)  dz'\\
& =: I_{11} +\partial_{x_n} \partial_{x_i} N(x) + I_{12}.
\end{split}
\end{align*}
We note that 
\begin{align}\label{0105-1}
\begin{split}
&|\partial_{x'} \partial_{x_n} N (x)| \leq c \frac{ x_n |x'| }{|x|^{n+2}}, \quad 
 |\nabla^2_{x'} \partial_{x_n} N (x)| \leq c \frac{ x_n}{|x|^{n+2}},\quad  |\nabla^3_{x'} \partial_{x_n} N (x)| \leq c \frac{ x_n|x'|}{|x|^{n+4}}.
 \end{split}
\end{align}
From $\eqref{0105-1}_1$, we have 
 \begin{align}\label{estI12}
\begin{split}
|I_{12} (x,t)|&  \leq c\frac{x_n |x'| }{|x|^{n+2} }   \int_{\{|z'| \geq  \frac1{10}\frac{ |x'|}{\sqrt{t}}\}}    e^{-\frac18|z'|^2}   dz' \le c \frac{x_n |x'| }{|x|^{n+2} } e^{-\frac{|x'|^2}{80t}}\leq c\frac{tx_n}{|x|^{n+2}|x'|}.
\end{split}
\end{align}

Using the mean-value theorem, we note that there is $\xi' $ between $x'$ and $z'$ such that 
\begin{align*}
 \partial_{x_n}    \partial_{z_i} N(z',x_n) - \partial_{x_n}   \partial_{z_i} N(x)  =  \na' \partial_{x_n}   \partial_{z_i} N  ( \xi', x_n) \cdot (x' -z').
\end{align*}
Note that $ \int_{|x'-z'| \leq \frac1{10}|x'|} \Gamma'(x' -z',t) (x'-z') dz' =0$ for all $ x' \in {\mathbb R}^{n-1}$ and $t > 0$. Then, we have 
\begin{align*}
I_{11} (x,t)& =  \int_{|x'-z'| \leq \frac1{10}|x'|}  \Gamma'(x' -z', t)  \na' \partial_{x_n}   \partial_{z_i} N  ( \xi', x_n) \cdot (x' -z')   dz'\\
& =  \int_{|x'-z'| \leq \frac1{10}|x'|} \Gamma'(x' -z', t)   \Big( \na' \partial_{x_n}   \partial_{z_i} N  ( \xi', x_n) - \na' \partial_{x_n}   \partial_{z_i} N  ( x', x_n) \Big)   \cdot (x' -z') dz'\\
& =  \int_{|x'-z'| \leq \frac1{10}|x'|} \Gamma'(x' -z', t)    (\xi' - x') (\na')^2 \partial_{x_n}   \partial_{z_i} N  ( \eta', x_n)    (x' -z') dz',
\end{align*}
where $\eta'$ lies betwwen $x'$ and $x' -\xi'$.

Noting that $ \frac9{10} |x'| \leq |\xi'|,  \, |\eta'|  \leq \frac{11}{10} |x'|$ and $   |\xi' - x'| \leq   |x' -z'|$ for $ x' , \xi' \in D_1$,  it follows from $\eqref{0105-1}_3$ that
\begin{align*}
|  (\xi' - x') (\na')^2 \partial_{x_n}   \partial_{z_i} N  ( \eta', x_n)    (x' -z')   | \leq c\frac{x_n|\eta'| |x' -z'|^2}{(|\eta' |^2 + x_n^2)^{\frac{n+4}2}  }    \leq c\frac{x_n|x'||x' -z'|^2}{|x|^{n+4}  },
\end{align*}
which implies that
 \begin{align}\label{estI_11}
\begin{split}
|I_{11} (x,t)| &  \leq c\frac{x_n|x'| }{|x|^{n+4} }    t \int_{|z'| \leq  \frac{ |x'|}{10\sqrt{t}}}    |z'|^2  e^{-\frac1{8}|z'|^2}  dz'\le c\frac{x_n|x'|}{|x|^{n+4} }t \leq \frac{cx_nt}{|x|^{n+3}}.
\end{split}
\end{align}

On the other hand, for $ z' \in D_3$,  we observe  $ |\Ga'(x'-z',t)| \leq c t^{-\frac{n-1}2} e^{-\frac18 \frac{|z'|^2}{t}} $, and thus, from $\eqref{0105-1}_1$, we obtain
\begin{align}\label{est-J3}
\begin{split}
 |I_3 (x,t)| &   \leq   c \frac{x_n |x'| }{|x|^{n+2} }     \int_{|z'| \geq \frac{|x'|}{10\sqrt{t}} }  e^{-\frac1{8}|z'|^2} dz'  \leq c \frac{x_n |x'| }{|x|^{n+2}  }   e^{-\frac{|x'|^2}{80t}}\leq \frac{cx_nt}{|x'||x|^{n+2}}.
 \end{split}
\end{align}

It remains to estimate $I_2$. Firstly,   we consider the case   $x_n\geq 2 |x'| $. We note that \(
| \partial_{z_i} \partial_{x_n}  N( z',x_n) | \leq c |z'|x_n^{-n-1}\) for \(2 |z'| \leq x_n\) and \(\Ga'(x'-z',t) \leq  c t^{-\frac{n-1}2} e^{-\frac{|x'|^2}{80 t}}\) for \(|z'| \leq \frac1{10} |x'|\),
which implies that 
\begin{align}\label{estI_2first}
\begin{split}
 | I_2 (x,t)  |   & \leq       x_n^{-n-1}   t^{-\frac{n-1}2} e^{-\frac{|x'|^2}{80 t}}   \int_{|z'|\leq \frac1{10} |x'| } |z'| dz'   \\
 &\leq c    |x'|^n x_n^{-n-1}   t^{-\frac{n-1}2} e^{-\frac{|x'|^2}{80 t}} \leq \frac{ct}{x_n^{n+1}|x'|}\leq \frac{cx_n t}{|x|^{n+2}|x'|},
 \end{split}
\end{align}
where we used \(|x'|\geq 1\) and $ e^{-A} \leq c_m A^{-m}$ for $  A>0$ and $ m \in {\mathbb N} \cup \{ 0 \}$.
 
Next, we consider the case  $ x_n < 2 |x'|  $. Using the integration by parts, we have 
\begin{align*}
\begin{split}
  I_2 (x,t) & = \int_{|z'|\leq \frac1{10} |x'| }   \partial_{x_n}  N( z',x_n) \partial_{x_i}  \Ga'(x'-z',t)   dz'\\
   &  \quad +   \int_{|z'| =  \frac1{10} |x'| }   \partial_{x_n}  N( z',x_n)   \Ga'(x'-z',t) {\bf n}_i  (z')  d\si (z')\\
   & =: I_{21} (x,t) + I_{22} (x,t).
 \end{split}
\end{align*}
Observing that \(
\Ga'(x'-z',t) \leq c  t^{-\frac{n-1}2} e^{-\frac{|x'|^2}{8t} }\) and \(|\partial_{x_n}  N( z',x_n)| \leq c  \frac{x_n }{|x|^n}\) for \(|z'| =  \frac1{10} |x'|\), we obtain
\begin{align}\label{estI_22}
|I_{22} (x,t) | & \leq c    t^{-\frac{n-1}2} e^{-\frac{|x'|^2}{8t}}  \frac{x_n }{ |x|^n} \int_{|z'| =  \frac1{10} |x'| }      d\si (z')     \leq  c    t^{-\frac{n-1}2} e^{-\frac{|x'|^2}{8t}}  \frac{x_n }{ |x|^n}|x'|^{n-2}\leq \frac{cx_n t}{|x'|^{n+3}}.
\end{align}
Due to the fact that  $\int_{\Rn}    \partial_{x_n}  N( z',x_n)    dz' =1$ for all $ x_n>0$,  it follows that
\begin{align*}
I_{21} (x,t)& =  \int_{|z'|\leq \frac1{10} |x'| }   \partial_{x_n}  N( z',x_n) \big(  \partial_{x_i}  \Ga'(x'-z',t)  -  \partial_{x_i}  \Ga'(x',t) \big)  dz'\\
 &  \quad  +      \partial_{x_i}  \Ga'(x',t)  +  \partial_{x_i}  \Ga'(x',t)\int_{|z'|\geq \frac1{10} |x'| }    \partial_{x_n}  N( z',x_n)    dz'\\
 & =:  I_{211}  (x,t) +  \partial_{x_i}  \Ga'(x',t)   + I_{212} (x,t).
\end{align*}
Using $\int_{|z'|\leq \frac1{10} |x'| }   z'  dz' =0  $ and the mean value theorem, we have 
\begin{align}
I_{211} (x,t)   
 =    \int_{|z'|\leq \frac1{10} |x'| }   \partial_{x_n}  N( z',x_n)  \na' \partial_{x_i}  \Ga'(x'-\xi',t) \cdot z'  dz',
 \end{align}  
which implies, again by the mean value theorem, the following.
\begin{align}\label{estI_211}
\begin{split}
|I_{211} (x,t) | & =    \left|\int_{|z'|\leq \frac1{10} |x'| }   \partial_{x_n}  N( z',x_n)  \Big( \na' \partial_{x_i}  \Ga'(x'-\xi',t) - \na' \partial_{x_i}  \Ga'(x,t) \Big) \cdot z'  dz'  \right|\\
& =   \left| \int_{|z'|\leq \frac1{10} |x'| }   \partial_{x_n}  N( z',x_n)  \xi' \na'^2 \partial_{x_i}  \Ga'(x'-\eta',t)\cdot z'  dz'\right|\\
& \leq c     \int_{|z'|\leq \frac1{10} |x'| }  \frac{x_n |z'|^2}{ ( | z'|^2 + x_n^2)^{\frac{n}2}}  |x'| t^{-\frac{n+3}{2}} e^{-\frac{|x'|^2}{8t}} dz'\\
& \leq c t^{-\frac{n+3}{2}}x_n|x'|^2e^{-\frac{|x'|^2}{t}}\leq \frac{cx_n t}{|x'|^{n+3}}.
\end{split}
\end{align}
 It follows from the fact that $\int_{|z'|\geq \frac{|x'|}{10 x_n} } \partial_{x_n} N(z', 1)           dz' \leq 1$ that
\begin{align}\label{estI212}
|I_{212} (x,t) |& \leq c t^{-\frac{n}2} e^{-\frac{|x'|^2}{8t}} \int_{|z'|\geq \frac1{10} \frac{|x'|}{x_n} }    (|z'|^2 +1) ^{-\frac{n}2}   dz' 
\leq c t^{-\frac{n}2} e^{-\frac{|x'|^2}{8t}} x_n|x'|^{-1}\leq \frac{cx_n t}{|x'|^{n+3}}.
\end{align} 
Hence, for $ x_n \leq 2 |x'|$,   we have from \eqref{estI_22}, \eqref{estI_211}, \eqref{estI212} that
\begin{align}\label{0101-31}
I_2 (x,t)  &  =    \partial_{x_i}  \Ga'(x',t)    \pm   \frac{t x_n}{|x'|^{n+3}}.
\end{align}
Setting $J_1= I_2 + I_3 + I_{11} + I_{12} $ and adding up the estimates \eqref{estI12}, \eqref{estI_11}, \eqref{est-J3}, \eqref{estI_2first}, \eqref{0101-31}  we obtain   (1) of  Lemma \ref{lemma0709-1}.

$\bullet$\,\,{{\underline{\bf Proof of (2) of Lemma \ref{lemma0709-1} }}}
As the proof of (1) of  Lemma \ref{lemma0709-1}, we  split the integral in  left-hand side of \eqref{0306-1} into three terms as follows:
\begin{align}\label{0730-2}
\begin{split}
\mbox{LHS of \eqref{0306-1}}    =
\int_{D_1}\cdots  dz'+ \int_{D_2} \cdots dz' + \int_{D_3}\cdots dz'  := J_1 + J_2 +J_3.
\end{split}
\end{align}
As the proof of (1) of  Lemma \ref{lemma0709-1}, we decompose $J_1$ in the following way.
 \begin{align*}
 \begin{split}
   I_{1} 
& = I_{11} +\partial_{x_i} \partial_{x_j} N(x) + I_{12}.
\end{split}
\end{align*}
For \(I_{12}\), noting that 
\begin{align}\label{0105-1-1}
|\partial_{x_i} \partial_{x_j} N (x)| \leq c \frac{|x'|^2 + \de_{ij} x_n^2 }{|x|^{n+2}}, \quad 
 |\na' \partial_{x_i} \partial_{x_j} N (x)| \leq c \frac{ |x'|}{|x|^{n+2}},
\end{align}
it follows from $\eqref{0105-1-1}_1$ that
 \begin{align}
\begin{split}
|I_{12} (x',t)|&  \leq c\frac{ |x'|^2 + \de_{ij} x_n^2 }{|x|^{n+2}  }   \int_{\{|z'| \geq  \frac1{10}\frac{ |x'|}{\sqrt{t}}\}}    e^{-\frac18|z'|^2}   dz' \le c \frac{|x'|^2 + \de_{ij} x_n^2 }{|x|^{n+2}  } e^{-\frac{|x'|^2}{80t}}.
\end{split}
\end{align}
For \(I_{11}\), by the mean value theorem, there exists $\xi' $ in the line segment joining $x'$ and $z'$ such that 
\begin{align*}
\partial_{z_i}    \partial_{z_j} N(z',x_n) - \partial_{x_i}   \partial_{x_j} N(x)  =  \na' \partial_{\xi_i}   \partial_{\xi_j} N  ( \xi', x_n) \cdot (z' -x'),
\end{align*}
so that
\begin{align*}
I_{11}&=\int_{D_1}\Gamma'(x'-z',t)\na' \partial_{\xi_i}   \partial_{\xi_j} N  ( \xi', x_n) \cdot (z' -x')dz'\\
&=\int_{D_1}\Gamma'(x'-z',t)\left(\na' \partial_{\xi_i}   \partial_{\xi_j} N  ( \xi', x_n)-\na' \partial_{x_i}   \partial_{x_j} N  (x) \right)\cdot (z' -x')dz',
\end{align*}
where we have used that \(\int_{D_1}\Gamma'(x'-z',t)(x_j-z_j)dz'=0\) for \(1\leq j\leq n-1\). Then by the mean value theorem again, there exists \(\eta'\) in the line segment joining \(x'\) and \(\xi'\) such that
\begin{align*}
\na' \partial_{\xi_i}   \partial_{\xi_j} N  ( \xi', x_n)-\na' \partial_{x_i}   \partial_{x_j} N  (x) =\nabla'^2\partial_{\eta_i}\partial_{\eta_j}N(\eta',x_n)\cdot(\xi'-x').
\end{align*}
Then we get that
\begin{align*}
I_{11}=\int_{D_1}\Gamma'(x'-z',t)\nabla'^2\partial_{\eta_i}\partial_{\eta_j}N(\eta',x_n):(\xi'-x')\otimes(z'-x')dz'.
\end{align*}
Noting that  \(|\nabla'^2\partial_{x_i}\partial_{x_j}N(x)|\leq c|x|^{-n-2}\), we have 
 \begin{align}
\begin{split}
|I_{11} (x',t)| &  \leq c\frac{t}{|x|^{n+2}  }  \int_{\{|z'| \leq  \frac1{10}\frac{ |x'|}{\sqrt{t}}\}}    |z'|  e^{-c|z'|^2}  dz'\le c\frac{t}{|x|^{n+2}  }.
\end{split}
\end{align}
We now estimate \(I_3\). Recalling that $ |\Ga'(x'-z',t)| \leq c t^{-\frac{n-1}2} e^{- \frac{|z'|^2}{8t}} $  for $ z' \in D_3$,  and thus it follows from $\eqref{0105-1-1}_1$ that
\begin{align}
\begin{split}
 |I_3 (x,t)| &   \leq   c \frac{|x'|^2 + \de_{ij} x_n^2}{|x|^{n+2} }     \int_{\{|z'| \geq \frac{|x'|}{10\sqrt{t}} \}}  e^{-\frac1{8}|z'|^2} dz'  \leq c \frac{|x'|^2 + \de_{ij} x_n^2 }{|x|^{n+2}  }   e^{-\frac{|x'|^2}{80t}}.
 \end{split}
\end{align}

It remains to estimate $I_2$. As similarly as before,  we consider first the case   $   x_n\geq 2|x'|  $.  Noting that $| \partial_{z_i} \partial_{x_n}  N( z',x_n) | \leq c |z'|x_n^{-n-1}$ for $   x_n\geq 2 |z'|$ and $ \Ga'(x'-z',t) \leq  c t^{-\frac{n-1}2} e^{-\frac{|x'|^2}{8 t}}  $  for $ |z'| \leq \frac1{10} |x'|$, we get
 \begin{align}
\begin{split}
  I_2 (x,t)     & \leq    x_n^{-n-2}     t^{-\frac{n-1}2} e^{-\frac{|x'|^2}{10 t}}   \int_{|z'|\leq \frac1{10} |x'| }(|z'|^2 + \de_{ij} x_n^2 )    dz'\\
   & \leq    x_n^{-n-2}     t^{-\frac{n-1}2} e^{-\frac{|x'|^2}{10 t}}   (|x'|^{n+1} + \de_{ij} x_n^2 |x'|^{n-1}  )\\
   & \leq    x_n^{-n-2}    e^{-\frac{|x'|^2}{10 t}}   (|x'|^2 + \de_{ij} x_n^2    ).  
 \end{split}
\end{align}
On the other hand, if $ x_n < 2 |x'|  $, using integration by parts, we have 
\begin{align*}
\begin{split}
  I_2 (x,t) & = \int_{|z'|\leq \frac1{10} |x'| }   \partial_{z_j}  N( z',x_n) \partial_{x_i}  \Ga'(x'-z',t)   dz'\\
   &  \quad +   \int_{|z'| =  \frac1{10} |x'| }   \partial_{z_j}  N( z',x_n)   \Ga'(x'-z',t) {\bf n}_i  (z')  d\si (z')\\
   & = I_{21} (x,t) + I_{22} (x,t).
 \end{split}
\end{align*}
For \(I_{22}\), recalling that  $\Ga'(x'-z',t) \leq c  t^{-\frac{n-1}2} e^{-\frac{|x'|^2}{8t} }$ and $ |\partial_{x_j}  N( z',x_n)| \leq c  \frac{|x'| }{|x|^{n}}  $  for $|z'| =  \frac1{10} |x'| $, we obtain 
\begin{align}\label{1216-1}
I_{22} (x,t)  & \leq c    t^{-\frac{n-1}2} e^{-\frac{|x'|^2}{8t}}  \frac{|x'| }{ |x|^{n}} \int_{|z'| =  \frac1{10} |x'| }      d\si (z')   \leq c  \frac{t}{|x|^{n+2}}.
\end{align}\label{1216-2}
For $I_{21}$, using $\int_{|z'|\leq \frac1{10} |x'|}    \partial_{x_j}  N( z',x_n)    dz' =0$ for all $  x_n>0$ and by the mean value theorem,  we have 
\begin{align}
\begin{split}
I_{21} (x,t)   
& =  \int_{|z'|\leq \frac1{10} |x'| }   \partial_{z_j}  N( z',x_n) \big(  \partial_{x_i}  \Ga'(x'-z',t)  -  \partial_{x_i}  \Ga'(x',t) \big)  dz'\\
& =    \int_{|z'|\leq \frac1{10} |x'| }   \partial_{z_j}  N( z',x_n)  \na' \partial_{x_i}  \Ga'(x'-\xi',t) \cdot z'  dz'\\
& \leq c     t^{-\frac{n+1}2 } e^{-\frac{|x'|^2}{8t}}  \int_{|z'|\leq \frac1{10} |x'| }  \frac{  |z'|^2}{ ( | z'|^2 + x_n^2)^{\frac{n}2}}  dz'\leq c\frac{t}{|x'|^{n+2}}.
\end{split}
\end{align}
Therefore, for $ x_n \leq 2 |x'|$,   we obtain from \eqref{1216-1} and \eqref{1216-2} that
\begin{align}\label{0101-3}
|I_2 (x,t)|  & \leq c  \frac{t}{|x'|^{n+2}}.
\end{align}
Setting $J_2= I_2 + I_3 + I_{11} + I_{12} $ and adding up \eqref{0730-2}-\eqref{0101-3}, we obtain   (2) of  Lemma \ref{lemma0709-1}.

$\bullet$\,\, {\underline{\bf Proof of (3)}}
Since the proof  is similar, we omit the proof  of (3) of Lemma \ref{lemma0709-1}.
\qed

%\end{itemize}
 
\subsection{ Proof of Lemma \ref{estimateofkernelt<1}}
\label{proofofkernel}
\setcounter{equation}{0}

$\bullet$ \,\,{\underline{\bf{Proof of \eqref{240707}}}}\,\,
It follows from (2) of Lemma \ref{lemma0709-1} that
\begin{align*}
\begin{split}
\sum_{k=1}^{n-1}  \widetilde{L}_{kk} (x,t) &  =    4  \int_0^{x_n} \partial_{x_n} \Ga_1 (x_n - z_n,t) \Big( \sum_{k=1}^{n-1} \partial_{x_k}\partial_{x_k}  N(x', z_n)    + J_2(x', z_n,t)    \Big)    dz_n\\
&\approx -(n-1)I_{n+2,2} + |x'|^2 I_{n+2,0},
\end{split}
\end{align*}
where we used that \(\sqrt{t}\leq \epsilon_0|x'|\) for sufficiently small \(\epsilon_0>0\).
We obtain \eqref{240707} via Lemma \ref{technicallemma}.

$\bullet$ \,\,{\underline{\bf{Proof of \eqref{L^Lestimate}}}}\,\,
Since $ \Ga(x,t) = \Ga_1 (x_n, t) \Ga'(x',t)$, it follows from Lemma \ref{lemma0709-1} that
\begin{align}
\begin{split}
\widetilde{L}_{ni}  &  =  \int_0^{x_n}     \partial_{x_n}
\Ga_1(x_n -z_n,t) \Big(   \partial_{z_n}\partial_{x_i}   N(x', z_n )  + \partial_{x_i} \Ga'(x',t)  \mathbf{1}_{z_n \leq 2|x'|}     + J_1 (x', z_n,t) \Big) dz_n\\
%& =   \int_0^{x_n} \frac{x_n -z_n}{t^{\frac32}} e^{-\frac{(x_n -z_n)^2 }{4t}}  \Big( \frac{ x_1 z_n }{(|x'| + z_n)^{n+2} }   + x_1 t^{-\frac{n+1}2} e^{-\frac{|x'|^2}{4t}} \mathbf{1}_{\left\{x_n \leq |x'|\right\}}     + J (x', z_n,t)   \Big)dz_n.\\
%& \approx  \int_0^{x_n} \frac{x_n -z_n}{t^{\frac32}} e^{-\frac{(x_n -z_n)^2 }{4t}}  \Big( \frac{ x_1 z_n }{(|x'|^2 + z_n^2)^{\frac{n +2}2} }   + \frac{x_1}{t^{\frac{n}2}} e^{-\frac{|x'|^2}t}     \pm       e^{-\frac{|x'|^2}{2t}} \frac{1}{(|x'| + z_n )^n}   \Big)dz_n.
& \approx :\widetilde{L}_{ni,1}  +\widetilde{L}_{ni,2}   + \widetilde{L}_{ni,3}.
\end{split}
\end{align}
We first estimate \(\widetilde{L}_{ni,1}(x,t)\) directly as follows:
 \begin{align}\label{1231-2-1}
\begin{split}
\widetilde{L}_{ni,1}(x,t) &  \approx    \int_0^{x_n}  t^{-\frac32}  (x_n -z_n) e^{-\frac{(x_n -z_n)^2}t} \frac{  x_i z_n }{(|x'|^2 + z_n^2)^{\frac{n+2}2}} dz_n 
=  x_i I_{n+2,1}\approx \frac{x_ix_n}{t^{\frac12}|x|^{n+2}}\min\left\{1, \frac{x_n^2}{t}\right\}.
 \end{split}
\end{align}
For \(\widetilde{L}_{ni,2}\), if $ x_n \leq 2|x'|$, direct calculations show that
 \begin{align} 
\begin{split}
\widetilde{L}_{ni,2} (x,t)
& \approx   x_i t^{-\frac{n+4}2} e^{-\frac{|x'|^2}{4t}}  \int_0^{x_n}z_n  e^{-\frac{z_n^2 }{4t}}  dz_n\approx   x_i t^{-\frac{n+2}2} e^{-\frac{|x'|^2}{4t}}
\min\left\{1,\frac{x_n^2}{t}\right\}.
\end{split}
\end{align}
On the other hand, if $x_n\geq 2 |x'|$, we have 
 \begin{align} 
\begin{split}
\widetilde{L}_{ni,2} (x,t)
& \approx   x_i t^{-\frac{n+4}2} e^{-\frac{|x'|^2}{4t}}    \int_0^{2|x'|}(x_n -z_n)  e^{-\frac{(x_n -z_n)^2 }{4t}}  dz_n \\
&=x_it^{-\frac{n+4}{2}}e^{-\frac{|x'|^2}{4t}}\int_{x_n-2|x'|} ^{x_n}ue^{-\frac{u^2}{4t}}du  =   x_it^{-\frac{n+4}{2}}e^{-\frac{|x'|^2}{4t}}2t(e^{-\frac{(x_n-2|x'|)^2}{4t}}-e^{-\frac{x_n^2}{4t}}).
\end{split}
\end{align}
If \(x_n\geq 4|x'|\), then the above parenthesis is bounded above by \(e^{-\frac{cx_n^2}{t}}\). On the other hand, if \(2|x'|\leq x_n\leq 4|x'|\), then the above parenthesis is bounded above by \(1\). In the both cases, we obtain
\begin{align*}
\widetilde{L}_{ni,2}(x,t)\lesssim  x_i t^{-\frac{n+4}{2}}e^{-\frac{|x'|^2}{4t}}t
\end{align*}
so that for \(x_n\geq 2|x'|\),
\begin{align*}
\widetilde{L}_{ni,1}(x,t)+\widetilde{L}_{ni,2}(x,t)\approx \frac{x_nx_i}{t^{\frac{1}{2}}|x|^{n+2}}.
\end{align*}
Thus, we conclude that
\begin{align*}
\widetilde{L}_{ni,1}(x,t)+\widetilde{L}_{ni,2}(x,t)\approx \left(\frac{x_nx_i}{t^{\frac{1}{2}}|x|^{n+2}} +x_nt^{-\frac{n+2}{2}}e^{-\frac{c|x|^2}{t}}\right)\min\left\{1,\frac{x_n^2}{t}\right\}.
\end{align*}
 For \(\widetilde{L}_{ni,3}\),  we have using \(|J_1(x,t)|\lesssim \frac{tx_n}{|x|^{n+2}|x'|}\) that
\begin{align*}
|\widetilde{L}_{ni,3}|\lesssim \int_0^{x_n}\frac{x_n-z_n}{t^{\frac32}}e^{-\frac{(x_n-z_n)^2}{4t}}\frac{tz_n}{|x'|(|x'|^2+z_n^2)^{\frac{n+2}{2}}}dz_n\approx \frac{t}{|x'|} I_{n+2,1}.
\end{align*}
Summing all the estimates given above, we can see that 
\begin{align*}
\widetilde{L}_{ni}&\approx x_i I_{n+2, 1}+x_i t^{-\frac{n+2}{2}}e^{-\frac{c|x|^2}{t}}\min\left\{1,\frac{x_n^2}{t}\right\} \pm \frac{t}{|x'|} I_{n+2, 1}\approx x_i\left( \frac{x_n}{t^{\frac12}|x'|^{n+2}}  +   t^{-\frac{n+2}{2}}e^{-\frac{c|x|^2}{t}}\right)\min\left\{1,\frac{x_n^2}{t}\right\},
\end{align*}
 since \(\sqrt{t}\leq \epsilon_0|x'|\) for sufficiently small \(\epsilon_0>0\) and we assumed that \(|x'|\leq c_0 x_i\) for some \(c_0>1\).

\subsection{Proof of Lemma \ref{elementarylemma}}\label{proofofelementarylemma}

$\bullet$ {\bf (Case \(t>\frac98\))}\, 
Noting that $ t -s \approx t-1  $ for $t \geq \frac98$ and $ \frac12 < s < 1$, we obtain
\begin{align*}
    \mathcal{G}_{a,k}(x_n,t)&\approx\int_\frac{1}{2}^1    (1-s)^a  (t-1)^{-k} \min\left\{1,\frac{x_n^2}{t-1}\right\} ds \approx \frac{1}{a+1}(t-1)^{-k} \min\left\{1,\frac{x_n^2}{t-1}\right\}.
\end{align*}

$\bullet$ {\bf (Case \(1<t<\frac98\))}\, 

i) If \(x_n^2\leq 2(t-1)\), then
\begin{align*}
\mathcal{G}_{a,k}(x_n,t)& = \int_{\frac12}^{2-t}\frac{(1-s)^a}{(t-s)^k}\min\left\{1,\frac{x_n^2}{t-s}\right\} ds  +  \int_{2-t}^{1}\frac{(1-s)^a}{(t-s)^k}\min\left\{1,\frac{x_n^2}{t-s}\right\} ds\\
&\leq \int_{\frac12}^{2-t}\frac{(1-s)^a}{(1-s)^k}\min\left\{1,\frac{x_n^2}{1-s}\right\} ds + \frac{1}{(t-1)^{k}}\min\left\{1, \frac{x_n^2}{t-1}\right\}\int_{2-t}^1 (1-s)^a ds\\
&\leq \int_{t-1}^{\frac12}u^{a-k}\min\left\{2, \frac{x_n^2}{u}\right\} du + \frac{(t-1)^{a+1}}{(a+1)(t-1)^k}\min\left\{2, \frac{x_n^2}{t-1}\right\}\\
&=x_n^2\int_{t-1}^{\frac12}u^{a-k-1}du  + \frac{x_n^2(t-1)^{a-k}}{a+1}\\
&\leq x_n^2\left(\frac{2^{k-a}-(t-1)^{a-k}}{a-k}\mathbf{1}_{a\neq k}  +|\ln(2(t-1))|\mathbf{1}_{a=k}\right)  +  \frac{x_n^2(t-1)^{a-k}}{a+1},
\end{align*}
where we have used that \(1-s<t-s\) and \(t-1<t-s\) hold for \(s<1<t\) and \(\min\{1, a\}\leq \min\{1,b\}\) for \(0\leq a\leq b\). Similarly, we have 
\begin{align*}
\mathcal{G}_{a,k}(x_n,t)&\geq \int_{\frac12}^{2-t}\frac{(1-s)^a}{2^k(1-s)^k}\min\left\{1, \frac{x_n^2}{2(1-s)}\right\} ds +  \frac{1}{2^k(t-1)^k}\min\left\{1, \frac{x_n^2}{2(t-1)}\right\}\int_{2-t}^1 (1-s)^a ds\\
&= \frac{1}{2^{k+1}}\int_{t-1}^{\frac12}u^{a-k}\min\left\{2, \frac{x_n^2}{u}\right\} du+ \frac{1}{2^{k+1}(t-1)^k}\min\left\{2, \frac{x_n^2}{t-1}\right\} \frac{(t-1)^{a+1}}{a+1}\\
&= \frac{x_n^2}{2^{k+1}}\int_{t-1}^{\frac12}u^{a-k-1}du + \frac{x_n^2}{2^{k+1}(t-1)^{k+1}}\frac{(t-1)^{a+1}}{a+1}\\
&=\frac{x_n^2}{2^{k+1}}\left(\frac{2^{k-a}-(t-1)^{a-k}}{a-k}\mathbf{1}_{a\neq k}  + |\ln (2(t-1))|\mathbf{1}_{a=k}\right) +\frac{x_n^2(t-1)^{a-k}}{2^{k+1}(a+1)},
\end{align*}
where we have used that \(t-s<2(1-s)\) and \(t-s<2(t-1)\) hold for \(\frac12<s<2-t\) and \(2-t<s<1\) respectively.

ii) If \(x_n\geq \frac12\), then
\begin{align*}
\mathcal{G}_{a,k}(x_n,t) & = \int_{\frac12}^{2-t}\frac{(1-s)^a}{(t-s)^k}\min\left\{1, \frac{x_n^2}{t-s}\right\} ds +\int_{2-t}^1 \frac{(1-s)^a}{(t-s)^k}\min\left\{ 1, \frac{x_n^2}{t-s}\right\} ds \\
&\leq \int_{\frac12}^{2-t}(1-s)^{a-k}\min\left\{ 1, \frac{x_n^2}{t-s}\right\} ds + \int_{2-t}^1 \frac{(1-s)^a}{(t-1)^k}\min\left\{1, \frac{x_n^2}{t-1}\right\} ds \\
& = \int_{\frac12}^{2-t}(1-s)^{a-k} ds + \int_{2-t}^1 \frac{(1-s)^a}{(t-1)^k} ds \\
& = \frac{2^{k-a-1}-(t-1)^{a-k+1}}{a-k+1}\mathbf{1}_{a\neq k-1}  +  |\ln(2(t-1))|\mathbf{1}_{a=k-1}+ \frac{(t-1)^{a+1-k}}{a+1}.
\end{align*}

Similarly,
\begin{align*}
\mathcal{G}_{a,k}(x_n,t)& \geq \int_{\frac12}^{2-t}\frac{(1-s)^a}{2^k(1-s)^k}\min\left\{1, \frac{x_n^2}{2(1-s)}\right\} ds + \int_{2-t}^1 \frac{(1-s)^a}{2^k(t-1)^k}\min\left\{1, \frac{x_n^2}{2(t-1)}\right\} ds \\
& \geq \int_{\frac12}^{2-t} \frac{(1-s)^a-k}{2^{k+1}}\min\left\{1, \frac{x_n^2}{1-s}\right\} ds + \int_{2-t}^1 \frac{(1-s)^a}{2^{k+1}(t-1)^k}\min\left\{1, \frac{x_n^2}{t-1}\right\} ds \\
& = \frac{1}{2^{k+1}}\int_{t-1}^{\frac12}u^{a-k}du + \frac{1}{2^{k+1}(t-1)^{k}}\int_0^{t-1}u^a du\\
& = \frac{1}{2^{k+1}}\left( \frac{2^{k-a-1}-(t-1)^{a-k+1}}{a-k+1}\mathbf{1}_{a\neq k-1}  +  |\ln(2(t-1))|\mathbf{1}_{a=k-1}  + \frac{(t-1)^{a-k+1}}{a+1}  \right).
\end{align*}

iii) If \(2(t-1)\leq x_n^2\leq \frac14\), then \(\frac12 \leq t-x_n^2\leq 1\) and thus
\begin{align}\label{1215}
\begin{split}
\mathcal{G}_{a,k}(x_n.t) & = \int_{\frac12}^{t-x_n^2}\frac{(1-s)^a}{(t-s)^k}\frac{x_n^2}{t-s}ds + \int_{t-x_n^2}^1 \frac{(1-s)^a}{(t-s)^k}ds\\
&\leq \int_{\frac12}^{t-x_n^2}(1-s)^{a-k-1}x_n^2 ds + \int_{t-x_n^2}^{2-t}(1-s)^{a-k}ds + \int_{2-t}^1 \frac{(1-s)^a}{(t-1)^k}ds\\
& = x_n^2 \int_{x_n^2-t+1}^{\frac12} u^{a-k-1}du + \int_{t-1}^{x_n^2-t+1} u^{a-k}du  +  \int_0^{t-1}\frac{u^a}{(t-1)^k}du \\
&\leq x_n^2\int_{\frac{x_n^2}{2}}^{\frac12}u^{a-k-1}du + \int_{t-1}^{x_n^2}u^{a-k}du  +  \int_0^{t-1}\frac{u^a}{(t-1)^k}du\\
& = x_n^2\left(2^{k-a}\frac{1-x_n^{2(a-k)}}{a-k}\mathbf{1}_{a\neq k}  + |\ln x_n^2|\mathbf{1}_{a=k}\right) \\
&\quad  + \frac{x_n^{2(a-k+1)}-(t-1)^{a-k+1}}{a-k+1}\mathbf{1}_{a\neq k-1}  +  \ln \frac{x_n^2}{t-1}\mathbf{1}_{a= k-1}+ \frac{(t-1)^{a-k+1}}{a+1}.
\end{split}
\end{align}

Similarly,
\begin{align}\label{1215-1}
\begin{split}
\mathcal{G}_{a,k}(x_n,t)& \geq \int_{\frac12}^{t-x_n^2}\frac{x_n^2}{2^{k+1}}(1-s)^{a-k-1}ds + \int_{t-x_n^2}^{2-t}\frac{(1-s)^{a-k}}{2^k}ds +\int_{2-t}^1 \frac{(1-s)^a}{2^k(t-1)^k}ds \\
& = \int_{x_n^2-t+1}^{\frac12} \frac{x_n^2}{2^{k+1}}u^{a-k-1}du + \int_{t-1}^{x_n^2-t+1}\frac{u^{a-k}}{2^{k}}du + \int_0^{t-1}\frac{u^a}{2^k(t-1)^k}du\\
& \geq \int_{x_n^2}^{\frac12}\frac{x_n^2}{2^{k+1}}u^{a-k-1}du + \int_{t-1}^{\frac{x_n^2}{2}}\frac{u^{a-k}}{2^{k}}du + \frac{(t-1)^{a+1-k}}{2^k(a+1)}\\
& = \frac{x_n^2}{2^{k+1}}\left(\frac{2^{k-a}-x_n^{2(a-k)}}{a-k}\mathbf{1}_{a\neq k}  +  |\ln(2x_n^2)|\mathbf{1}_{a=k}  \right)\\
&\quad + \frac{1}{2^k}\left(2^{k-a-1}\frac{x_n^{2(a-k+1)}-(2(t-1))^{a-k+1}}{a-k+1}\mathbf{1}_{a\neq k-1}+ \ln \frac{x_n^2}{2(t-1)}\mathbf{1}_{a=k-1}\right) +\frac{(t-1)^{a+1-k}}{2^{k}(a+1)}.
\end{split}
\end{align}

The result then follows by direct computations.

\subsection{Proof of Lemma \ref{elementarylemma2}}\label{proofofelementarylemma2}

Before proving Lemma \ref{elementarylemma2} and Lemma \ref{elementarylemma3}, we need an estimate of the following integral:
\begin{equation}\label{integralI}
 I(\alpha;x,y):=\int_{x}^y e^{-u}u^{\alpha}du.
\end{equation}

\begin{lemma}\label{twosidedgammaestimate}
    Let \(0 <  x <  y\). Then the integral \(I(\alpha;x,y)\) satisfies the following bounds: for universal constants \(0<M<N\),
    \begin{itemize}
        \item[(i)] If \(y\leq N\), then
        \begin{align*}
       e^{-N}\left(\frac{y^{\alpha+1}-x^{\alpha+1}}{\alpha+1}\mathbf{1}_{\alpha\neq -1}+ \ln \frac{y}{x}\mathbf{1}_{\alpha=-1}\right)  \leq  I(\alpha;x,y)\leq \frac{y^{\alpha+1}-x^{\alpha+1}}{\alpha+1}\mathbf{1}_{\alpha\neq -1}+ \ln \frac{y}{x}\mathbf{1}_{\alpha=-1}.
        \end{align*}
        \item[(ii)] If \(y\geq N\), \(x\leq M\), then
        \begin{align*}
       & e^{-\frac{M+N}{2}}\left(\frac{\left(\frac{M+N}{2}\right)^{\alpha+1}-x^{\alpha+1}}{\alpha+1}\mathbf{1}_{\alpha\neq -1}  +  \ln \frac{M+N}{2x}\mathbf{1}_{\alpha=-1}\right)  +  \int_{\frac{M+N}{2}}^N e^{-u}u^{\alpha}du\\
        &\leq I(\alpha;x,y)\leq \frac{\left(\frac{M+N}{2}\right)^{\alpha+1}-x^{\alpha+1}}{\alpha+1}\mathbf{1}_{\alpha\neq -1}  +  \ln \frac{M+N}{2x}\mathbf{1}_{\alpha=-1} +\int_{\frac{M+N}{2}}^{\infty}e^{-u}u^{\alpha}du.
        \end{align*}
        \item[(iii)] If \(y\geq N\), \(x\geq M\), \(y\geq \frac{N}{M}x\), then 
        \begin{align*}
        \min\left\{1, \left(\frac{N}{M}\right)^{\alpha}\right\} (1-e^{M-N})e^{-x}x^\alpha \leq I(\alpha;x,y)\leq \max\left\{1, \int_0^{\infty}e^{-u}\left(1+\frac{u}{M}\right)^{\alpha}du\right\}e^{-x}x^{\alpha}.
        \end{align*}
        \item[(iv)] If \(y\geq N\), \(x\geq M\), \(y\leq \frac{N}{M} x\), then
        \begin{align*}
      \left(\mathbf{1} + \left(\frac{N}{M}\right)^{\alpha}\mathbf{1}_{\alpha<0}  \right)x^{\alpha}(y-x)e^{-\frac{Nx}{M}}   \leq I(\alpha;x,y)\leq  \left(\left(\frac{N}{M}\right)^{\alpha}\mathbf{1}_{\alpha\geq 0}+\mathbf{1}_{\alpha<0}\right)  x^\alpha (y-x)e^{-x}.
        \end{align*}
    \end{itemize}
\end{lemma}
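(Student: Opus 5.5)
The plan is to compare the integrand $e^{-u}u^{\alpha}$ with simpler functions on each integration range, using only that $e^{-u}$ is monotone and $u^{\alpha}$ is monotone (increasing for $\alpha\ge0$, decreasing for $\alpha<0$). Throughout, $M<N$ are fixed universal constants (for instance $M=1$, $N=2$). The only thing I need is that they are independent of $x$, $y$ and $\alpha$.

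\textbf{Case (i), $y\le N$.} On $[x,y]$ we have $e^{-N}\le e^{-u}\le 1$. Integrating $u^{\alpha}$ exactly gives $\frac{y^{\alpha+1}-x^{\alpha+1}}{\alpha+1}$, or $\ln\frac yx$ when $\alpha=-1$. This yields both bounds at once.

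\textbf{Case (ii), $y\ge N$, $x\le M$.} Split at the midpoint $m=\frac{M+N}{2}$, so that $x<m<N\le y$.
\begin{itemize}
\item On $[x,m]$, apply the argument of (i) with $e^{-m}\le e^{-u}\le 1$.
\item On $[m,y]$, for the lower bound drop the part beyond $N$ (the integrand is positive), leaving $\int_m^N$.
\item On $[m,y]$, for the upper bound extend the range to $\infty$.
\end{itemize}

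\textbf{Case (iii), $x\ge M$, $y\ge \frac NM x$.} For the upper bound, substitute $u=x+v$. This gives
\[
I\le e^{-x}x^{\alpha}\int_0^\infty e^{-v}\Bigl(1+\tfrac vx\Bigr)^{\alpha}dv .
\]
I then bound $(1+v/x)^{\alpha}$ according to the sign of $\alpha$.
\begin{itemize}
\item For $\alpha\ge0$, use $x\ge M$ to get $(1+v/x)^{\alpha}\le(1+v/M)^{\alpha}$.
\item For $\alpha<0$, use $(1+v/x)^{\alpha}\le1$.
\end{itemize}
This gives the stated max.

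For the lower bound, restrict to $u\in[x,x+(N-M)]$. This interval lies inside $[x,y]$, since $y-x\ge(\frac NM-1)x\ge N-M$. On it, $u\le x+N-M\le \frac NM x$, so $u^{\alpha}\ge\min\{1,(N/M)^{\alpha}\}x^{\alpha}$. Also $\int_x^{x+N-M}e^{-u}du=e^{-x}(1-e^{M-N})$, which completes the bound.

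\textbf{Case (iv), $x\ge M$, $y\le\frac NM x$.} Here $u\in[x,\frac NM x]$. Bound $e^{-u}$ between $e^{-Nx/M}$ and $e^{-x}$, and bound $u^{\alpha}$ between $x^{\alpha}$ and $(N/M)^{\alpha}x^{\alpha}$, choosing which is the upper and which the lower according to the sign of $\alpha$. Multiplying by the length $y-x$ gives the two-sided bound. The indicator prefactors in the statement should be read as the appropriate $\min/\max$ of $1$ and $(N/M)^{\alpha}$; I would double-check the orientation of these indicators as written.

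\textbf{Main obstacle.} The only delicate point is the lower bound in (iii): I must make sure the window $[x,x+N-M]$ fits inside $[x,y]$, which uses $y\ge\frac NM x$ together with $x\ge M$. I also need to track that every constant is uniform in $\alpha$, except for the explicit $\alpha$-dependent factors kept in the statement.
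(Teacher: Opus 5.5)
Your proof is correct and follows the paper's argument essentially step for step: the same pointwise bounds in (i), the same split at $\frac{M+N}{2}$ in (ii), the same substitution $u=x+v$ for the upper bound in (iii), and the same monotone bounds on $e^{-u}$ and $u^{\alpha}$ over $[x,\frac{N}{M}x]$ in (iv). The only variation is the lower-bound window in (iii), where the paper uses $[x,\frac{N}{M}x]$ instead of your $[x,x+N-M]$; both give the constant $\min\{1,(N/M)^{\alpha}\}(1-e^{M-N})$. Your suspicion about (iv) is also right: the paper's proof yields the lower prefactor $\mathbf{1}_{\alpha\geq 0}+(N/M)^{\alpha}\mathbf{1}_{\alpha<0}=\min\{1,(N/M)^{\alpha}\}$, so the bare $\mathbf{1}$ in the statement is a typo for $\mathbf{1}_{\alpha\geq 0}$.
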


\begin{proof}
{\bf (i). (\(y\leq N\)):}\quad We have \(e^{-N}\leq e^{-u}\leq 1\). Thus \(e^{-N}\int_x^{y}u^{\alpha}du\leq I(\alpha;x,y)\leq \int_x^y u^{\alpha}du\).

    {\bf (ii). (\(y\geq N\), \(x\leq M\)):} \quad We split the integral as \(
        I(\alpha;x,y)=\left(\int_x^{\frac{M+N }{2}}+\int_{\frac{M+N}{2}}^{y}\right)e^{-u}u^{\alpha}du\).
        
        The first integral is treated similarly as (i), while the second integral satisfies 
        \begin{align*}
        \int_{\frac{M+N}{2}}^{N} e^{-u}u^{\alpha}du\leq \int_{\frac{M+N}{2}}^y e^{-u}u^{\alpha}du\leq \int_{\frac{M+N}{2}}^{\infty}e^{-u}u^{\alpha}du.
        \end{align*}
    
  {\bf (iii). (\(y\geq N\), \(x\geq M\), \(y\geq \frac{N}{M}x \)):}\quad   For the upper bound, we see that
  \begin{align*}
  I(\alpha;x,y)\leq \int_x^{\infty}e^{-u}u^{\alpha}du\leq C_1 e^{-x}x^{\alpha},
  \end{align*}
  where 
  \begin{align*}
  C_1:= \max_{x\geq M}\frac{\displaystyle  \int_{x}^{\infty}e^{-u}u^{\alpha}du}{e^{-x}x^{\alpha}} = \max_{x\geq M} \int_0^{\infty}e^{-u}\left(1+\frac{u}{x}\right)^{\alpha}du.
  \end{align*}
  Then the result follows using \((1+\frac{u}{x})^{\alpha}\leq (1+\frac{u}{M})^{\alpha}\) for \(\alpha\geq 0\), and \((1+\frac{u}{x})^{\alpha}\leq 1\) for \(\alpha<0\).
  
  For the lower bound, we see that
  \begin{align*}
  I(\alpha;x,y)\geq \int_{x}^{\frac{Nx}{M}}e^{-u}u^{\alpha}du\geq C_2e^{-x}x^\alpha,
  \end{align*}
  where
  \begin{align*}
  C_2:=\min_{x\geq M}\frac{\displaystyle\int_x^{\frac{Nx}{M}}e^{-u}u^{\alpha}du}{e^{-x}x^{\alpha}} = \min_{x\geq M}\int_0^{\frac{N-M}{M}x}e^{-u}\left(1+\frac{u}{x}\right)^{\alpha}du.
  \end{align*}
  The result follows from the similar inequalities given in the proof for the upper bound.

  {\bf (iv). (\(y\geq N\), \(x\geq M\), \(y\leq \frac{N}{M} x\)):} \quad For \(\alpha \geq 0\), using \(x\leq u\leq y\) and the mean value theorem,
 \begin{align*}
 &I(\alpha;x,y)\leq y^\alpha (e^{-x}-e^{-y})=y^{\alpha}(y-x)e^{-x}\leq \left(\frac{N}{M}\right)^{\alpha}x^{\alpha}e^{-x}(y-x),\\
 &I(\alpha;x,y)\geq x^{\alpha}(e^{-x}-e^{-y})\geq x^{\alpha}(y-x)e^{-y}\geq x^{\alpha}e^{-\frac{Nx}{M}}(y-x).
 \end{align*}
  For \(\alpha<0\), we similarly have 
  \begin{align*}
  &I(\alpha;x,y)\leq x^{\alpha}(e^{-x}-e^{-y})\leq x^{\alpha}e^{-x}(y-x),\\
  &I(\alpha;x,y)\geq y^{\alpha}(e^{-x}-e^{-y})\geq \left(\frac{N}{M}\right)^{\alpha}x^{\alpha}e^{-y}(y-x)\geq \left(\frac{N}{M}\right)^{\alpha}x^{\alpha}e^{-\frac{Nx}{M}}(y-x).
  \end{align*}

\end{proof}
We now prove Lemma \ref{elementarylemma2}.

{\bf \(\bullet\) (Case \(t>\frac98\))}. Using \(t-1<t-s<2(t-1)\) for \(t>\frac98\) and \(\frac12<s<1\), we have 
\begin{align*}
    \mathcal{H}_{a,k}(r,t)\approx\int_{\frac{1}{2}}^1\frac{(1-s)^a}{(t-1)^k}e^{-\frac{cr^2}{t-1}}ds=\frac{1}{a+1}\frac{e^{-\frac{cr^2}{t-1}}}{(t-1)^k}.
\end{align*}

{\bf (\(\bullet\) Case \(1<t<\frac98\))}. In this case we have
\begin{align*}
\mathcal{H}_{a,k}(r,t)
&\approx \int_{\frac{1}{2}}^{2-t}(1-s)^{a-k}
e^{-\frac{cr^2}{1-s}}\,ds
+ (t-1)^{-k}e^{-\frac{r^2}{t-1}}
\int_{2-t}^1 (1-s)^{a}\,ds \\
&\approx r^{2(a-k+1)}
I\!\left(k-a-2; 2cr^2,\frac{cr^2}{t-1}\right)
+ \frac{1}{a+1}(t-1)^{a+1-k}
e^{-\frac{cr^2}{t-1}}.
\end{align*}

From now on, we choose \(x=2cr^2\), \(y= \frac{cr^2}{t-1}\), \(M=\frac{c}{2}\) and \(N=2c\). Note that \(N>M>0\) is satisfied.

 i) If \(r^2 \leq 2(t-1)\), then \(y\leq N\) and thus from (i) of Lemma \ref{twosidedgammaestimate},
\begin{align*}
\mathcal{H}_{a,k}(r,t)&\approx r^{2(a-k+1)}\left((cr^2)^{k-a-1}\frac{(t-1)^{a+1-k}-2^{k-a-1}}{k-a-1}\mathbf{1}_{a\neq k-1}  +  |\ln(2(t-1))|\mathbf{1}_{a= k-1}  \right) \\
&\quad + \frac{1}{a+1}(t-1)^{a+1-k}e^{-\frac{cr^2}{t-1}}\\
&= c^{k-a-1}\frac{(t-1)^{a+1-k}-2^{k-a-1}}{k-a-1}\mathbf{1}_{a\neq k-1}  +  |\ln(2(t-1))|\mathbf{1}_{a=k-1}  +  \frac{1}{a+1}(t-1)^{a+1-k}e^{-\frac{cr^2}{t-1}}.
\end{align*}  

ii) If \(2(t-1)\leq r^2\leq \frac14\), then \(y\geq N\) and \(x\leq M\) and thus from (ii) of Lemma \ref{twosidedgammaestimate},
\begin{align*}
\mathcal{H}_{a,k}(r,t)&\approx r^{2(a-k+1)} \left(\left(\frac{5c}{4}\right)^{k-a-1}\frac{1-\left(\frac{8r^2}{5}\right)^{k-a-1}}{k-a-1}\mathbf{1}_{a\neq k-1}  +  \ln \frac{5}{8r^2}\mathbf{1}_{a=k-1} \right)  +  \frac{1}{a+1}(t-1)^{a+1-k}e^{-\frac{cr^2}{t-1}}\\
&\approx \frac{1-r^{2(a-k+1)}}{a-k+1}\mathbf{1}_{a\neq k-1}  + |\ln r|\mathbf{1}_{a=k-1} + \frac{1}{a+1}(t-1)^{a+1-k}e^{-\frac{cr^2}{t-1}}.
\end{align*}

iii) If \(r\geq \frac12\), then \(x\geq M\), \(y\geq N\) and \(y\geq \frac{N}{M}x\) (the last condition is equivalent to \(1<t<\frac{9}{8}\)) and thus from (iii) of Lemma \ref{twosidedgammaestimate},
\begin{align*}
\mathcal{H}_{a,k}(r,t)\approx r^{2(a-k+1)}e^{-2cr^2}r^{2(k-a-2)} + \frac{1}{a+1}(t-1)^{a+1-k}e^{-\frac{cr^2}{t-1}}\approx r^{-2}e^{-2cr^2} + \frac{1}{a+1}(t-1)^{a+1-k}e^{-\frac{cr^2}{t-1}}.
\end{align*}

\subsection{Proof of Lemma \ref{elementarylemma3}}\label{proofofelementarylemma3}

We now prove Lemma \ref{elementarylemma3}.

$\bullet$ {\bf (Case \(t>\frac98\))}\, 
Noting that $ t -s \approx t-1  $ for $t \geq \frac98$ and $ \frac12 < s < 1$, we obtain
\begin{align*}
    \mathcal{K}_{a,k}(x_n,t)&\approx\int_{\frac{1}{2}}^1     \frac{(1-s)^a}{(t-1)^{k}}e^{-\frac{c|x|^2}{t-1}}   \min\left\{ 1,\frac{x_n^2}{t-1} \right\}    ds \approx   \frac{1}{(a+1)(t-1)^{k}}e^{-\frac{c|x|^2}{t-1}}   \min\left\{1,\frac{x_n^2}{t-1} \right\}.
\end{align*}
 
$\bullet$ {\bf (Case \(1<t<\frac98\))}\,
Since \(t-s\approx 1-s\) for \(\frac12<s<2-t\) and \(t-s\approx t-1\) for \(2-t<s<1\), it follows that
\begin{align*}
    \mathcal{K}_{a,k}(x,t) & \approx\int_\frac{1}{2}^{ 2-t }  (1-s)^{a-k}  e^{-\frac{c|x|^2}{1-s}}   \min \left\{1,\frac{x_n^2}{1-s}\right\} ds +  (t-1)^{-k} e^{-\frac{c|x|^2}{t-1}} \min \left\{1,\frac{x_n^2}{t-1}\right\}\int_{2-t}^1  (1-s)^a ds\\
    &  = \int^\frac{1}{2}_{t-1}  s^{a-k}  e^{-\frac{c|x|^2}{s}}  \min \left\{1,\frac{x_n^2}{s}\right\} ds + \frac1{1 +a} (t-1)^{-k+a+1} e^{-\frac{c|x|^2}{t-1}}  \min \left\{1,\frac{x_n^2}{t-1}\right\}.
%    &  = x_n^{2a -2k +2} \int_{2x_n^2}^{\frac{x_n^2}{t-1}}    s^{k-a -2}   \min \left\{1,s\right\} ds +\frac1{1 +a} (t-1)^{-\frac{n+2}2+a+1} e^{-\frac{c|x|^2}{t-1}}  \min \left\{1,\frac{x_n^2}{t-1}\right\}.
\end{align*}

i) If $x_n^2< 2(t-1)$, then  we have \(\min\left\{1, \frac{x_n^2}{s}\right\}\approx \frac{x_n^2}{s}\) and thus by a change of variable and (iii) of  Lemma \ref{twosidedgammaestimate},
\begin{align*}
\int^\frac{1}{2}_{t-1}  s^{a-k}  e^{-\frac{c|x|^2}{s}}  \min \left\{1,\frac{x_n^2}{s}\right\} ds 
&\approx x_n^2|x|^{2a-2k}I\left(k-a-1;2c|x|^2, \frac{c|x|^2}{t-1}\right)\\
&\approx x_n^2|x|^{2a-2k} |x|^{2k-2-2a}e^{-2c|x|^2}=x_n^2|x|^{-2}e^{-2c|x|^2}.
%& \approx  x_n^2 |x'|^{2a -n}  (\frac{|x'|^2}{t -1})^{-a +\frac{n-2}2} e^{-\frac{|x'|^2}{t -1}}\\
%& \approx x_n^2 |x'|^{-2}  (t-1)^{a -\frac{n-2}2} e^{-\frac{|x'|^2}{t -1}}.  
\end{align*}

ii) If $2(t-1)\leq x_n^2< \frac14$, then we have \(\min\left\{1,\frac{x_n^2}{s}\right\}=\frac{x_n^2}{s}\) for \(x_n^2\leq s\leq \frac12\) and \(\min\left\{1,\frac{x_n^2}{s}\right\}=1\) for \(t-1\leq s\leq x_n^2\) and thus by a change of variable and (iii) of Lemma \ref{twosidedgammaestimate},
\begin{align*}
\int^\frac{1}{2}_{t-1}  s^{a-k}  e^{-\frac{c|x|^2}{s}}  \min \left\{1,\frac{x_n^2}{s}\right\} ds
&\approx |x|^{2a-2k+2}I\left(k-a-2;\frac{c|x|^2}{x_n^2}, \frac{c|x|^2}{t-1}\right)\\
&\quad+x_n^2|x|^{2a-2k}I\left(k-a-1;2c|x|^2,\frac{c|x|^2}{x_n^2}\right)\\
%& \approx    \int^{x_n^2}_{t-1}  s^{a-\frac{n+2}2}  e^{-\frac{c|x|^2}{s}}    ds + x_n^2 \int^\frac{1}{2}_{x_n^2}  s^{a-\frac{n+4}2}  e^{-\frac{c|x|^2}{s}}   ds \\
%& \approx    \int_{\frac{|x|^2}{x_n^2}}^{\frac{|x|^2}{t-1}}  (\frac{|x|^2}s)^{a-\frac{n+2}2}  e^{-s}  s^{-2 } |x|^2  ds + x_n^2 \int_{2|x|^2}^{\frac{|x|^2}{x_n^2}}  (\frac{|x|^2}s)^{a-\frac{n+4}2}  e^{-s} |x|^2 s^{-2}  ds\\
%& \approx    |x|^{2a -n}  \int_{\frac{|x|^2}{x_n^2}}^{\frac{|x|^2}{t-1}} s^{-a+\frac{n-2}2}  e^{-s}     ds + x_n^2 |x|^{2a -n -2}\int_{2|x|^2}^{\frac{|x|^2}{x_n^2}} s^{-a+\frac{n}2}  e^{-s}   ds\\
%& \approx    |x|^{2a -n}    ( \frac{|x|^2}{x_n^2} )^{-a+\frac{n-2}2}  e^{-\frac{|x|^2}{x_n^2}}   + x_n^2 |x|^{2a -n -2}\  (|x|^2)^{-a+\frac{n}2}  e^{-|x|^2}  \\
%& \approx    |x|^{-2}      x_n^{2a- n+2}  e^{-\frac{|x|^2}{x_n^2}}   + x_n^2|x|^{-2}   e^{-|x|^2}\\
%&\approx x_n^2 |x|^{-2}   e^{-|x|^2}.  
&\approx x_n^{2a-2k+4}|x|^{-2}e^{-\frac{c|x|^2}{x_n^2
}}+x_n^2|x|^{-2}e^{-2c|x|^2}\approx x_n^2|x|^{-2}e^{-2c|x|^2}.
\end{align*}
The last "\(\approx\)" can be proved by considering the cases \(a-k+1\geq 0\) and \(a-k+1<0\) separately.

iii) If $ x_n \geq  \frac12$, then we have \(\min\left\{1,\frac{x_n^2}{s}\right\}=1\) for \(t-1\leq s\leq \frac12\) and thus by a change of variable and (iii) of Lemma \ref{twosidedgammaestimate},
\begin{align*}
  \int^\frac{1}{2}_{t-1}  s^{a-k}  e^{-\frac{c|x|^2}{s}}  \min \left\{1,\frac{x_n^2}{s}\right\} ds
%& \approx  \int^\frac{1}{2}_{t-1}  s^{a-\frac{n+2}2}  e^{-\frac{c|x|^2}{s}}  ds \\
%& \approx  \int_{2|x|^2}^{\frac{|x|^2}{t-1}}  (\frac{|x|^2}s)^{a-\frac{n+2}2}  e^{-s}  |x|^2 s^{-2} ds \\
%& \approx   |x|^{2a-n }  \int_{2|x|^2}^{\frac{|x|^2}{t-1}}  s^{-a+\frac{n-2}2}  e^{-s}   ds\\
%& \approx      |x|^{ -2 }  e^{-|x|^2}.  
&\approx |x|^{2a-2k+2}I\left(k-a-2; 2c|x|^2, \frac{c|x|^2}{t-1}\right)\approx |x|^{-2}e^{-2c|x|^2}.
\end{align*}
This completes the proof of Lemma \ref{elementarylemma3}.
\subsection{Proof of Lemma \ref{lemma0630}}\label{proofoflemma0630}

(1) Note that \(h(\theta)\) is decreasing on \(\theta\in(0, e^{-\frac1a})\). Define \(\theta_1^*:=(aM)^{\frac1a}\left(\ln \frac{1}{aM}\right)^{-\frac{1}{a}}\). Then \(\theta_1^*<e^{-\frac1a}\) and 
\begin{align*}
h(\theta_1^*)=-M-M\left(\ln \frac{1}{aM}\right)^{-1}\ln \ln \frac{1}{aM}<-M.
\end{align*}
Thus we have \(h(\theta)>-M\) for \(0<\theta<\theta_1^*\). 

On the other hand, define \(\theta_2^*:=\left(\frac{eaM}{e+1}\right)^{\frac1a}\left(\ln \frac{e+1}{eaM}\right)^{-\frac{1}{a}}\). Then \(\theta_2^*<e^{-\frac1a}\) and 
\begin{align*}
h(\theta_2^*)=-\frac{e}{e+1}M-\frac{e}{e+1}M\left(\ln \frac{e+1}{eaM}\right)^{-1}\ln\ln \frac{e+1}{eaM}> -\frac{e}{e+1}M-\frac{1}{e+1}M=-M.
\end{align*}
Thus we have \(h(\theta)<-M\) for \(\theta_2^*<\theta<e^{-\frac1a}\).

(2) Note that \(h(\theta)\) is an increasing function on \(\theta>1\). Let $ \theta_i^* := A_i M^{\frac1a} (\ln M)^{-\frac1a}, \,\, i = 1,2$ where $A _i > 0$ are to be determined. Note that $ -1 < \frac{\ln (\ln M)}{\ln M} < 1$ for $ M>2$.  Then for \(2\leq \theta\leq \theta_2^*\), choosing \(A_2\leq 2^{2+\frac1a}\) gives
\begin{align*}
h(\theta_2^*)=A_2^a M\left(\frac{\ln A_2}{\ln M}+\frac1a-\frac1a\frac{\ln\ln M}{\ln M}\right)\leq A_2^a M\left(\left(2+\frac1a\right)\frac{\ln 2}{\ln M}+\frac2a\right)\leq A_2^a M\frac{2a+3}{a}.
\end{align*}
Now further choose \(A_2\) such that \(A_2^a\leq \frac{ca}{2a+3}\). Then we obtain \(h(\theta_2^*)\leq cM\).
\(h_1(\theta)>0\) for \(\theta> \theta_1\) can be proved similarly.
 
\subsection{Proof of Lemma \ref{lemma0928}}\label{proofoflemma0928}
Using the change of variables, we note that
\begin{align*}
A(\alpha, \theta, t)  =\int_{1-t}^{\frac12}  s^{\alpha}   e^{-\frac{\theta^2}{4s}} ds =\theta^{2\alpha +2} \int_{2\theta^2}^{\frac{\theta^2}{1-t}}  s^{-\alpha-2}   e^{-s} ds,\quad
 B(\alpha, \theta, t)  = \theta^{2\alpha +2} \int^\infty_{\frac{\theta^2}{1-t}}  s^{-\alpha -2 }e^{-s} ds.
\end{align*}
Since  $ \frac1{1-t} -2 \geq 2$. 
If $ \frac{\theta^2}{1-t}  \leq 1$, it follows that
\begin{align*}
A(\alpha, \theta, t) &   \approx \theta^{2\alpha +2} \int_{2\theta^2}^{\frac{\theta^2}{1-t}}  s^{-\alpha-2}   ds   = \frac{2^{-\alpha-1}-(1-t)^{\alpha+1}}{\alpha+1}\mathbf{1}_{\al \neq -1} + |\ln ( 2 (1-t))|\mathbf{1}_{ \al =-1},\\ 
 B(\alpha, \theta, t) & \approx \theta^{2\alpha +2}  \Big(   \int^2_{\frac{\theta^2}{1-t}}  s^{-\alpha-2 }ds  +  1\Big)\approx\frac{(1-t)^{\alpha+1}-(\theta^2/2)^{\alpha+1}}{\alpha+1}\mathbf{1}_{\alpha\neq 1}+\ln \frac{2(1-t)}{\theta^2}\mathbf{1}_{\alpha=-1}+\theta^{2\alpha+2}.
\end{align*}
In case that $ 1-t \leq \theta^2 \leq \frac14 $, we obtain
\begin{align*}
A(\alpha, \theta, t) & \approx \theta^{2\alpha +2} \Big( \int_{2\theta^2}^\frac34  s^{-\alpha-2}   ds +  1 \Big)\approx \frac{2^{-\alpha-1}-(4\theta^2/3)^{\alpha+1}}{\alpha+1}\mathbf{1}_{\alpha\neq -1} + |\ln\theta|\mathbf{1}_{\alpha=-1} + \theta^{2\alpha+2},\\
 B(\alpha, \theta, t)&   = \theta^{2\alpha +2} \int^\infty_{\frac{\theta^2}{1-t}}  s^{-\alpha-2 }e^{-s} ds  
% \approx  \theta^{2\alpha +2} (\frac{\theta^2}{1-t})^{-\alpha -2}  e^{-\frac{\theta^2}{1-t}}
  \approx  \theta^{-2} (1-t)^{\alpha +2}  e^{-\frac{\theta^2}{1-t}}.
\end{align*}
This completes the proof.
{\color{blue}
%\begin{remark}
%We note the following limit: for \(-1<a<0\), \(|x'|\gg 1\) and \(x_n>1\),
%\begin{align*}
%\lim_{t\rightarrow 1^-}|\partial_{1}u_1(x',x_n,t)| =\infty
%\end{align*}
%and for \(-1<a\leq -\frac12\), \(|x'|\gg 1\) and \(x_n>1\),
%\begin{align*}
%\lim_{t\rightarrow 1^+} |\partial_1 w_1(x',x_n,t)|=\infty.
%\end{align*}
%Check the sign due to the derivative of the spatial boundary data. Seek from the solution formula.
%
%For \(t>1\), \(a=-\frac12\) is the critical for \(w_i\) and \(w_n\).
%\end{remark}

}

\end{document}